\documentclass[12pt,reqno]{amsart}
\usepackage{fullpage}
\usepackage{times}
\usepackage[active]{srcltx}
\usepackage{graphicx}

\newtheorem{theorem}{Theorem}[section]
\newtheorem{lemma}[theorem]{Lemma}
\newtheorem{prop}[theorem]{Proposition}
\newtheorem{cor}[theorem]{Corollary}
\theoremstyle{remark}
\newtheorem{definition}[theorem]{Definition}
\newcommand{\qedef}{\hfill$\diamondsuit$}
\newtheorem*{remark}{Remark}
\numberwithin{equation}{section}
\renewcommand{\theenumi}{\alph{enumi}}

\renewcommand{\mod}[1]{{\ifmmode\text{\rm\ (mod~$#1$)}\else\discretionary{}{}{\hbox{ }}\rm(mod~$#1$)\fi}}
\newcommand{\ep}{\varepsilon}
\newsymbol\nmid 232D
\newcommand{\exdiv}{\mathrel{\|}}
\newcommand{\stirling}[2]{\genfrac{\{}{\}}{0pt}{}{#1}{#2}}

\newcommand{\C}{{\mathbb C}}
\newcommand{\E}{{\mathbb{E}}}
\newcommand{\F}{{\mathcal F}}
\renewcommand{\L}{{\mathcal L}}
\newcommand{\M}{{\mathcal M}}
\newcommand{\N}{{\mathbb N}}
\newcommand{\Q}{{\mathbb Q}}
\newcommand{\R}{{\mathbb R}}
\newcommand{\Rcal}{{\mathcal R}}
\newcommand{\Z}{{\mathbb Z}}
\renewcommand{\Re}{\mathop{\rm Re}}
\renewcommand{\Im}{\mathop{\rm Im}}
\newcommand{\Erf}{\mathop{\rm Erf}}
\newcommand{\Obar}{\overline{O}}

\newcommand{\Lsum}[1]{\sum_{\substack{#1 \\ L(1/2+i\gamma,\chi)=0}}}
\newcommand{\Lbarsum}[1]{\sum_{\substack{#1 \\ L(1/2+i\gamma,\bar\chi)=0}}}
\newcommand{\mostchisum}{\sum_{\substack{\chi\mod q \\ \chi\ne\chi_0}}}
\newcommand{\allchisum}{\sum_{\chi\mod q}}
\newcommand{\Lprod}[1]{\prod_{\substack{#1 \\ L(1/2+i\gamma,\chi)=0}}}

\newcommand{\allchiprod}{\prod_{\chi\mod q}}

\vfuzz=2pt

\begin{document}

\title[Inequities in the Shanks--R\'enyi Prime Number~Race]{Inequities in the Shanks--R\'enyi Prime Number~Race: An asymptotic formula for the densities}
\author[Daniel Fiorilli and Greg Martin]{Daniel Fiorilli}
\address{D\'epartement de math\'ematiques et de statistique \\ Universit\'e de Montr\'eal \\ CP 6128, succ.\ Centre-ville \\ Montr\'eal, QC \\ Canada H3C 3J7}
\email{fiorilli@dms.umontreal.ca}
\author{Greg Martin}
\address{Department of Mathematics \\ University of British Columbia \\ Room
121, 1984 Mathematics Road \\ Canada V6T 1Z2}
\email{gerg@math.ubc.ca}
\subjclass[2000]{11N13 (11Y35)}
\maketitle

\begin{abstract}
Chebyshev was the first to observe a bias in the distribution of primes in residue classes. The general phenomenon is that if $a$ is a nonsquare\mod q and $b$ is a square\mod q, then there tend to be more primes congruent to $a\mod q$ than $b\mod q$ in initial intervals of the positive integers; more succinctly, there is a tendency for $\pi(x;q,a)$ to exceed $\pi(x;q,b)$. Rubinstein and Sarnak defined $\delta(q;a,b)$ to be the logarithmic density of the set of positive real numbers $x$ for which this inequality holds; intuitively, $\delta(q;a,b)$ is the ``probability'' that $\pi(x;q,a) > \pi(x;q,b)$ when $x$ is ``chosen randomly''. In this paper, we establish an asymptotic series for $\delta(q;a,b)$ that can be instantiated with an error term smaller than any negative power of $q$. This asymptotic formula is written in terms of a variance $V(q;a,b)$ that is originally defined as an infinite sum over all nontrivial zeros of Dirichlet $L$-functions corresponding to characters\mod q; we show how $V(q;a,b)$ can be evaluated exactly as a finite expression. In addition to providing the exact rate at which $\delta(q;a,b)$ converges to $\frac12$ as $q$ grows, these evaluations allow us to compare the various density values $\delta(q;a,b)$ as $a$ and $b$ vary modulo $q$; by analyzing the resulting formulas, we can explain and predict which of these densities will be larger or smaller, based on arithmetic properties of the residue classes $a$ and $b\mod q$. For example, we show that if $a$ is a prime power and $a'$ is not, then $\delta(q;a,1) < \delta(q;a',1)$ for all but finitely many moduli $q$ for which both $a$ and $a'$ are nonsquares. Finally, we establish rigorous numerical bounds for these densities $\delta(q;a,b)$ and report on extensive calculations of them, including for example the determination of all 117 density values that exceed $\frac9{10}$.
\end{abstract}

\tableofcontents

\section{Introduction}
\label{intro}

We have known for over a century now that the prime numbers are asymptotically evenly distributed among the reduced residue classes modulo any fixed positive integer $q$. In other words, if $\pi(x;q,a)$ denotes the number of primes not exceeding $x$ that are congruent to $a\mod q$, then $\lim_{x\to\infty} \pi(x;q,a)/\pi(x;q,b) = 1$ for any integers $a$ and $b$ that are relatively prime to $q$. However, this information by itself is not enough to tell us about the distribution of values of the difference $\pi(x;q,a)-\pi(x;q,b)$, in particular whether this difference must necessarily take both positive and negative values. Several authors---notably Chebyshev in 1853 and Shanks \cite{shanks} in 1959---observed that $\pi(x;4,3)$ has an extremely strong tendency to be greater than $\pi(x;4,1)$, and similar biases exist for other moduli as well. The general phenomenon is that $\pi(x;q,a)$ tends to exceed $\pi(x;q,b)$ when $a$ is a nonsquare modulo $q$ and $b$ is a square modulo $q$.

In 1994, Rubinstein and Sarnak \cite{RS} developed a framework for studying these questions that has proven to be quite fruitful. Define $\delta(q;a,b)$ to be the logarithmic density of the set of real numbers $x\ge1$ satisfying $\pi(x;q,a)>\pi(x;q,b)$. (Recall that the logarithmic density of a set $S$ of positive real numbers is
\[
\lim_{X\to\infty} \bigg( \frac1{\log X} \int\limits_{\substack{1\le x\le X \\ x\in S}} \frac{dx}x \bigg),
\]
or equivalently the natural density of the set $\{\log x\colon x\in S\}$.)
Rubinstein and Sarnak investigated these densities under the following two hypotheses:
\begin{itemize}
\item The Generalized Riemann Hypothesis (GRH): all nontrivial zeros of Dirichlet $L$-functions have real part equal to $\frac12$
\item A linear independence hypothesis (LI): the nonnegative imaginary parts of these nontrivial zeros are linearly independent over the rationals
\end{itemize}
Under these hypotheses, they proved that the limit defining $\delta(q;a,b)$ always exists and is strictly between 0 and 1. Among other things, they also proved that $\delta(q;a,b)$ tends to $\frac12$ as $q$ tends to infinity, uniformly for all pairs $a,b$ of distinct reduced residues\mod q.

In the present paper, we examine these densities $\delta(q;a,b)$ more closely. We are particularly interested in a quantitative statement of the rate at which $\delta(q;a,b)$ approaches $\frac12$. In addition, computations show that for a fixed modulus $q$, the densities $\delta(q;a,b)$ vary as $a$ and $b$ range over nonsquares and squares modulo $q$, respectively. We are also interested in determining which pairs $a,b\mod q$ give rise to larger or smaller values of $\delta(q;a,b)$, and especially in giving criteria that depend as directly as possible on $a$ and $b$ rather than on analytic data such as the zeros of Dirichlet $L$-functions.

Our first theorem, which is proved in Section~\ref {derivation section}, exhibits an asymptotic series for $\delta(q;a,b)$:

\begin{theorem}
\label{delta series theorem}
Assume GRH and LI. Let $q$ be a positive integer, and let $\rho(q)$ be the function defined in Definition~\ref {rho def}. Let $a$ and $b$ be reduced residues\mod q such that $a$ is a nonsquare\mod q and $b$ is a square\mod q, and let $V(q;a,b)$ be the variance defined in Definition~\ref {bchi and Vqab def}. Then for any nonnegative integer $K$,
\begin{equation}
\label{asymptotic formula general case}
\delta(q;a,b) = \frac12 + \frac{\rho(q)}{\sqrt{2\pi V(q;a,b)}} \sum_{\ell=0}^K \frac1{V(q;a,b)^\ell} \sum_{j=0}^\ell \rho(q)^{2j} s_{q;a,b}(\ell,j) + O_K \bigg( \frac{\rho(q)^{2K+3}}{V(q;a,b)^{K+3/2}} \bigg),
\end{equation}
where the real numbers $s_{q;a,b}(\ell,j)$, which are bounded in absolute value by a function of $\ell$ uniformly in $q$, $a$, $b$, and $j$, are defined in Definition~\ref {s-coeffs def}. In particular, $s_{q;a,b}(0,0) = 1$, so that
\begin{equation}
\label{asymptotic formula K=0 case}
\delta(q;a,b) = \frac12 + \frac{\rho(q)}{\sqrt{2\pi V(q;a,b)}} + O \bigg( \frac{\rho(q)^3}{V(q;a,b)^{3/2}} \bigg).
\end{equation}
\end{theorem}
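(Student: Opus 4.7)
The plan is to prove Theorem~\ref{delta series theorem} via an Edgeworth-type asymptotic expansion of the probability that the relevant limiting random variable is positive. Under GRH and LI, the normalized error $\frac{\log x}{\sqrt{x}}\phi(q)\bigl(\pi(x;q,a)-\pi(x;q,b)\bigr)$ admits a limiting logarithmic distribution by Rubinstein--Sarnak, and $\delta(q;a,b)=\Pr(X_{q;a,b}>0)$, where $X_{q;a,b}$ has mean $\rho(q)$, variance $V(q;a,b)$, and characteristic function
$$
\widehat{\mu}_{q;a,b}(t) = e^{i\rho(q)t}\mostchiprod\prod_{\gamma>0}J_0\!\left(\frac{2|b_\chi(a,b)|\,t}{\sqrt{1/4+\gamma^2}}\right),
$$
the inner product running over positive ordinates $\gamma$ of nontrivial zeros of $L(s,\chi)$ (the product structure following from LI). The standard identity $\Pr(X>0)=\tfrac12+\tfrac1\pi\int_0^\infty \Im\widehat{\mu}(t)/t\,dt$ then gives an integral representation of $\delta(q;a,b)$.

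To extract the Gaussian behaviour, I would expand $\log J_0(z) = -z^2/4 - z^4/64 - \cdots$ and write
$$
\widehat{\mu}_{q;a,b}(t) = \exp\!\Big(i\rho(q)t - \tfrac12 V(q;a,b)t^2 - \sum_{k\ge 2}\alpha_k(q;a,b)\,t^{2k}\Big),
$$
where each $\alpha_k(q;a,b)$ is a universal constant times $\sum_\chi|b_\chi(a,b)|^{2k}\sum_\gamma(1/4+\gamma^2)^{-k}$ (the $k=1$ case recovering $V(q;a,b)$ by Definition~\ref{bchi and Vqab def}), and each $\alpha_k$ with $k\ge 2$ is bounded by a constant times $V(q;a,b)$. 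After the change of variables $t=s/\sqrt{V(q;a,b)}$, I would expand both $e^{i\rho(q)s/\sqrt{V}}$ and $\exp\bigl(-\sum_{k\ge 2}\alpha_k V^{-k}s^{2k}\bigr)$ as Taylor polynomials of degree depending on $K$, reducing the main-term integral to a finite linear combination of Gaussian moments $\int_0^\infty s^m e^{-s^2/2}\,ds$. The leading contribution $\int_0^\infty (\sin(\rho(q)s/\sqrt{V})/s)e^{-s^2/2}\,ds$ evaluates to $\rho(q)\sqrt{\pi/(2V)}$, and after the $1/\pi$ prefactor yields the announced leading term and $s_{q;a,b}(0,0)=1$. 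The remaining Taylor terms, integrated against the Gaussian, generate the other coefficients $s_{q;a,b}(\ell,j)$, whose identification with Definition~\ref{s-coeffs def} is a matter of combinatorial bookkeeping.

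The main obstacle is the rigorous control of two error sources. First, one must justify cutting off the Fourier integral at some bound $|t|\le T$; for this I would use the unconditional inequality $|J_0(z)|\le\exp(-c\min(z^2,1))$, which yields $|\widehat{\mu}_{q;a,b}(t)|\le\exp(-cV(q;a,b)t^2)$ on a central range and faster decay beyond, making the tail far smaller than $\rho(q)^{2K+3}/V(q;a,b)^{K+3/2}$. Second, the truncation errors in the two Taylor polynomials must be estimated; the bounds $\alpha_k\ll_k V(q;a,b)$ imply that each additional Taylor step contributes a factor of size $1/V(q;a,b)$ or $\rho(q)^2/V(q;a,b)$, exactly matching the structure of the error term in~\eqref{asymptotic formula general case}. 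The uniform boundedness of $s_{q;a,b}(\ell,j)$ in $q$, $a$, $b$, $j$ then follows from the fact that each such coefficient is a fixed polynomial in the bounded ratios $\alpha_k(q;a,b)/V(q;a,b)$ multiplied by an absolute Gaussian-moment constant.
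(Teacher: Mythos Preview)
Your proposal is correct and follows essentially the same route as the paper: starting from the Fourier-inversion identity $\delta(q;a,b)=\tfrac12+\tfrac1{2\pi}\int_{-\infty}^\infty \frac{\sin(\rho(q)t)}{t}\Phi_{q;a,b}(t)\,dt$ (the paper cites this from Feuerverger--Martin), writing $\log\Phi_{q;a,b}(t)=-\tfrac12Vt^2-\sum_{k\ge2}W_kVt^{2k}$ via the power series of $\log J_0$, rescaling $t=y/\sqrt{V}$, Taylor-expanding both $\sin(\rho y/\sqrt V)/(\rho y/\sqrt V)$ and the higher exponential factors, and integrating the resulting polynomials against $e^{-y^2/2}$ to produce the coefficients $s_{q;a,b}(\ell,j)$. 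The paper's tail control is more elaborate than your sketch (it combines a monotonicity argument for $|\Phi_{q;a,b}|$ on $[\kappa,5/24]$ with an explicit exponential bound $|\Phi_{q;a,b}(t)|\le e^{-c\phi(q)t}$ for $t\ge200$ coming from zero-counting estimates), but your proposed inequality $|J_0(z)|\le\exp(-c\min(z^2,1))$ would suffice for the qualitative statement.
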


We will see in Proposition~\ref{Vqab asymptotic prop} that $V(q;a,b)\sim 2\phi(q) \log q$, and so the error term in equation~\eqref{asymptotic formula general case} is $\ll_{K,\ep} 1/q^{K+3/2-\ep} $.

The assumption that $a$ is a nonsquare\mod q and $b$ is a square\mod q is natural in this context, reflecting the bias observed by Chebyshev. Rubinstein and Sarnak showed (assuming GRH and LI) that $\delta(q;b,a) + \delta(q;a,b) = 1$; therefore if $a$ is a square\mod q and $b$ is a nonsquare\mod q, the right-hand sides of the asymptotic formulas~\eqref{asymptotic formula general case} and~\eqref{asymptotic formula K=0 case} become $\frac12 - \cdots$ instead of $\frac12 + \cdots$. Rubinstein and Sarnak also showed that $\delta(q;b,a) = \delta(q;a,b) = \frac12$ if $a$ and $b$ are both squares or both nonsquares\mod q.

The definitions of $\rho(q)$ and of $V(q;a,b)$ are as follows:

\begin{definition}
As usual, $\omega(q)$ denotes the number of distinct prime factors of $q$. Define $\rho(q)$ to be the number of real characters\mod q, or equivalently the index of the subgroup of squares in the full multiplicative group\mod q, or equivalently still the number of solutions of $x^2\equiv1\mod q$. An exercise in elementary number theory shows that
\begin{equation*}
\rho(q) = \begin{cases}
2^{\omega(q)}, &\text{if } 2\nmid q, \\
2^{\omega(q)-1}, &\text{if } 2\mid q \text{ but } 4\nmid q, \\
2^{\omega(q)}, &\text{if } 4\mid q \text{ but } 8\nmid q, \\
2^{\omega(q)+1}, &\text{if } 8\mid q, \\
\end{cases}
\end{equation*}
which implies that $\rho(q) \ll_\ep q^\ep$ for every $\ep>0$.
\qedef
\label{rho def}
\end{definition}

\begin{definition}
For any Dirichlet character $\chi\mod q$, define
\[
b(\chi) = \Lsum{\gamma\in\R} \frac1{\frac14+\gamma^2}.
\]
We adopt the convention throughout this paper that the zeros are listed with multiplicity in all such sums (though note that the hypothesis LI, when in force, implies that all such zeros are simple).
For any reduced residues $a$ and $b\mod q$, define
\begin{equation*}
V(q;a,b) = \allchisum |\chi(b)-\chi(a)|^2 b(\chi).
\end{equation*}
We will see in Proposition~\ref {variance prop} that $V(q;a,b)$ is the variance of a particular distribution associated with the difference $\pi(x;q,a) - \pi(x;q,b)$.
\qedef
\label{bchi and Vqab def}
\end{definition}

As the asymptotic series in Theorem~\ref {delta series theorem} depends crucially on the variance $V(q;a,b)$, we next give a formula for it (established in Section~\ref {variance formula section}) that involves only a finite number of easily computed quantities:

\begin{theorem}
\label{variance evaluation theorem}
Assume GRH. For any pair $a,b$ of distinct reduced residues modulo $q$,
\begin{equation*}
V(q;a,b) = 2\phi(q) \big( \L(q) + K_q(a-b) + \iota_q(-ab^{-1})\log2 \big) + 2M^*(q;a,b),
\end{equation*}
where the functions $\L$, $K_q$, and $\iota_q$ are defined in Definition~\ref{iota and Lq and Rqn def} and the quantity $M^*(q;a,b)$ is defined in Definition~\ref{Mqab def}.
\end{theorem}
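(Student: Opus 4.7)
The plan is to expand $|\chi(b)-\chi(a)|^2 = 2 - \chi(ab^{-1}) - \overline{\chi(ab^{-1})}$ for each $\chi\mod q$, noting that the principal character contributes zero, and then exploit two standard reductions. First, the Euler factors lost when passing from an induced character to its primitive inducer $\chi^*$ contribute zeros only on the line $\Re s = 0$, so $b(\chi) = b(\chi^*)$; second, $\chi(a) = \chi^*(a)$ whenever $(a,q) = 1$. Together these recast
\[
V(q;a,b) = \sum_{\substack{q^*\mid q\\ q^*>1}}\;\sum_{\chi^*\text{ prim.\ mod } q^*} |\chi^*(b)-\chi^*(a)|^2\,b(\chi^*),
\]
reducing the problem to a sum of $b(\chi^*)$ over primitive characters weighted by $|\chi^*(b)-\chi^*(a)|^2$.

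The analytic heart of the proof is a closed-form expression for $b(\chi^*)$ when $\chi^*$ is primitive and non-principal. Taking the logarithmic derivative of the Hadamard product for the completed $L$-function $\xi(s,\chi^*) = (q^*/\pi)^{(s+\mathfrak{a})/2}\Gamma\bigl(\tfrac{s+\mathfrak{a}}{2}\bigr)L(s,\chi^*)$, evaluating at $s=1$, and using the GRH-valid pointwise identity $\tfrac{1}{\rho} + \tfrac{1}{1-\rho} = \tfrac{1}{1/4+\gamma^2}$, I obtain a relation expressing $L'/L(1,\chi^*)$ in terms of the Hadamard constant $B(\chi^*)$, of $b(\chi^*)$, of $\log(q^*/\pi)$, and of $\Gamma'/\Gamma\bigl(\tfrac{1+\mathfrak{a}}{2}\bigr)$. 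Applying the functional equation to $\xi'/\xi$ to pin down $B(\chi^*)$ in terms of $L'/L(1,\bar\chi^*)$ and eliminating yields
\[
b(\chi^*) = 2\,\mathrm{Re}\,\tfrac{L'}{L}(1,\chi^*) + \log q^* - \log\pi + \tfrac{\Gamma'}{\Gamma}\!\bigl(\tfrac{1+\mathfrak{a}}{2}\bigr),
\]
where $\mathfrak{a} = (1-\chi^*(-1))/2$ encodes the parity of $\chi^*$.

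Substituting this formula back decomposes $V(q;a,b)$ into four additive pieces. The $2\,\mathrm{Re}\,L'/L(1,\chi^*)$ piece is identified with $2M^*(q;a,b)$ by Definition~\ref{Mqab def}. The $\log q^*$ and $-\log\pi$ pieces, together with the $\chi^*$-independent Euler-constant contribution to $\Gamma'/\Gamma$, are regrouped by swapping the order of summation and M\"obius inverting from primitive characters to all characters; since $\sum_{\chi\mod q^*}\chi(n) = \phi(q^*)$ when $n \equiv 1 \pmod{q^*}$ and vanishes otherwise, the resulting arithmetic sums split cleanly into an $(a,b)$-independent part assembling to $2\phi(q)\L(q)$ and an $(a-b)$-dependent part assembling to $2\phi(q)K_q(a-b)$. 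The remaining $\Gamma'/\Gamma$ contribution splits by parity of $\chi^*$: since $\Gamma'/\Gamma(1/2) - \Gamma'/\Gamma(1) = -2\log 2$, only the sum over even primitive characters picks up the extra $-2\log 2$. Using $\sum_{\chi^*\text{ even}}\chi^*(n) = \tfrac12\sum_{\chi^*}(\chi^*(n)+\chi^*(-n))$ introduces $-ab^{-1}$ in place of $ab^{-1}$, and M\"obius inversion then assembles the sum into $2\phi(q)\iota_q(-ab^{-1})\log 2$ according to Definition~\ref{iota and Lq and Rqn def}.

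The principal difficulty is the final bookkeeping: verifying that each of the M\"obius-inverted arithmetic sums coincides precisely with one of the functions $\L(q)$, $K_q$, or $\iota_q$ as defined in the paper. The parity decomposition that produces the $\log 2$ term is the most delicate point, as one must track the sign twist introduced by restricting to even characters and confirm that the resulting indicator condition matches the definition of $\iota_q(-ab^{-1})$.
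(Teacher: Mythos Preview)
Your proposal is correct and follows essentially the same approach as the paper. The paper's proof likewise substitutes the closed-form expression for $b(\chi^*)$ (its Lemma~\ref{b.chi.nonprimitive.lemma}, identical to your formula with $\Gamma'/\Gamma\bigl(\tfrac{1+\mathfrak a}{2}\bigr)$ written out as $-\gamma_0-(1+\chi(-1))\log 2$), then splits $V(q;a,b)$ into the same four pieces and evaluates each by character orthogonality. The one organizational difference is that the paper never reindexes by primitive characters: it keeps the sum over all $\chi\pmod q$ throughout, treating $q^*$ as a function of $\chi$, and handles the $\log q^*$ piece via a direct identity (its Proposition~\ref{log qstar sum prop}, proved from $\log(q/q^*)=\sum_{c\mid q/q^*}\Lambda(c)$ and Lemma~\ref{lambda.phi.sum.lemma}) rather than by M\"obius-inverting from primitive back to all characters. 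Your route and the paper's are equivalent here, and the paper's direct computation is arguably cleaner than the M\"obius step you sketch; the parity argument for the $\iota_q(-ab^{-1})\log 2$ term is handled in the paper exactly as you describe.
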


The definitions of these three arithmetic functions and of the analytic quantity $M^*$ are as follows:

\begin{definition}
As usual, $\phi(q)$ denotes Euler's totient function, and $\Lambda(q)$ denotes the von Mangoldt function, which takes the value $\log p$ if $q$ is a power of the prime $p$ and 0 otherwise. For any positive integer $q$, define
\begin{equation*}
\L(q) = \log q - \sum_{p\mid q} \frac{\log p}{p-1} + \frac{\Lambda(q)}{\phi(q)} - (\gamma_0+\log2\pi),
\end{equation*}
where $\gamma_0 = \lim_{x\to\infty} \big( \sum_{n\le x} \frac1n - \log x \big)$ is Euler's constant; it can be easily shown that $\L(q)$ is positive when $q\ge43$. Note that $\L(q) = \log(q/2\pi e^{\gamma_0})$ when $q$ is prime and that $\L(q) = \log q + O(\log\log q)$ for any integer $q\ge3$. Also let
\begin{equation*}
\iota_q(n) = \begin{cases}
1, &\text{if }n\equiv1\mod q, \\
0, &\text{if }n\not\equiv1\mod q
\end{cases}
\end{equation*}
denote the characteristic function of the integers that are congruent to 1\mod q. Finally, define
\begin{equation*}
K_q(n) = \frac{\Lambda(q/(q,n))}{\phi(q/(q,n))} - \frac{\Lambda(q)}{\phi(q)}.
\end{equation*}
Note that these last two functions depend only on the residue class of $n$ modulo $q$. For this reason, in expressions such as $\iota_q(n^{-1})$ or $K_q(n^{-1})$, the argument $n^{-1}$ is to be interpreted as an integer that is the multiplicative inverse of $n\mod q$. In addition, note that $K_q(n) \ge0$, since the only way that the second term can contribute is if $q$ is a prime power, in which case the first term contributes at least as much. On the other hand $K_q$ is bounded above, since if $q/(q,n)$ is a power of the prime $p$ then $K_q(n) \le (\log p)/(p-1) \le \log2$. Note also that $K_q(n)=0$ when $(n,q)=1$.
\qedef
\label{iota and Lq and Rqn def}
\end{definition}

\begin{definition}
As usual, $L(s,\chi) = \sum_{n=1}^\infty \chi(n)n^{-s}$ denotes the $L$-function associated to the Dirichlet character $\chi$. Given such a character $\chi\mod q$, let $q^*$ denote its conductor (that is, the smallest integer $d$ such that $\chi$ is induced by a character modulo $d$), and let $\chi^*$ be the unique character modulo $q^*$ that induces $\chi$. Now define
\begin{equation*}
M^*(q;a,b) = \mostchisum |\chi(a)-\chi(b)|^2 \frac{L'(1,\chi^*)}{L(1,\chi^*)}
\end{equation*}
and
\begin{equation*}
M(q;a,b) = \mostchisum |\chi(a)-\chi(b)|^2 \frac{L'(1,\chi)}{L(1,\chi)}.
\end{equation*}
\qedef
\label{Mqab def}
\end{definition}

The formula for $V(q;a,b)$ in Theorem~\ref{variance evaluation theorem}  is exact and hence well suited for computations. For theoretical purposes, however, we need a better understanding of $M^*(q;a,b)$, which our next theorem (proved in Section~\ref {M evaluation section}) provides:

\begin{theorem}
Assume GRH. For any pair $a,b$ of distinct reduced residues modulo $q$, let $r_1$ and $r_2$ denote the least positive residues of $ab^{-1}$ and $ba^{-1}\mod q$, and let the quantity $H(q;a,b)$ be defined in Definition~\ref {hqpr def}. Then
\begin{equation*}
M^*(q;a,b) = \phi(q) \bigg( \frac{\Lambda(r_1)}{r_1} +\frac{\Lambda(r_2)}{r_2} + H(q;a,b) + O \bigg( \frac{\log^2 q}{q} \bigg) \bigg),
\end{equation*}
where the implied constant is absolute.
\label{M evaluation theorem}
\end{theorem}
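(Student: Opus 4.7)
The plan is to expand $|\chi(a)-\chi(b)|^2 = 2 - \chi(ab^{-1}) - \chi(ba^{-1})$, then use the Dirichlet series identity $-L'(s,\chi^*)/L(s,\chi^*) = \sum_n \Lambda(n)\chi^*(n)n^{-s}$ together with orthogonality of characters\mod q to extract the main terms. Write $c = ab^{-1}$, so that $c \equiv r_1\mod q$ and $c^{-1} \equiv r_2\mod q$ with $r_1, r_2 < q$. Decompose
\[
M^*(q;a,b) = 2T_0 - T_1 - T_2,
\]
with $T_0 = \sum_{\chi\ne\chi_0} L'(1,\chi^*)/L(1,\chi^*)$, $T_1 = \sum_{\chi\ne\chi_0} \chi(c) L'(1,\chi^*)/L(1,\chi^*)$, and $T_2 = \sum_{\chi\ne\chi_0} \bar\chi(c) L'(1,\chi^*)/L(1,\chi^*)$. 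The entire $a,b$-asymmetry is carried by $T_1$ and $T_2$: in the orthogonality step below, $T_1$ will isolate the progression $n\equiv r_2\mod q$ (producing the $\Lambda(r_2)/r_2$ main term) while $T_2$ isolates $n\equiv r_1\mod q$ (producing $\Lambda(r_1)/r_1$).

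Under GRH one has the quantitative truncated approximation
\[
\frac{L'(1,\chi)}{L(1,\chi)} = -\sum_{n\le y}\frac{\Lambda(n)\chi(n)}{n} + O\Big(\frac{\log^2(qy)}{\sqrt y}\Big),
\]
uniformly for nonprincipal $\chi\mod q$, a standard consequence of a Perron-type contour shift past the critical line. The finite Euler-factor difference between $L(s,\chi)$ and $L(s,\chi^*)$ produces additional terms supported on primes $p\mid q$ with $p\nmid q^*$, which, like the tail sums below, will be absorbed into $H(q;a,b)$. Substituting the approximation into $T_1$, interchanging sums, and applying the orthogonality identity
\[
\sum_{\chi\ne\chi_0}\chi(cn) = \phi(q)\,\iota_q(cn) - [(n,q)=1]
\]
yields a decomposition in which the unique smallest $n = r_2$ contributes the main term $-\phi(q)\Lambda(r_2)/r_2$ (uniqueness coming from $r_2 < q$), while the tail $n > r_2$ in the progression, the $(n,q)=1$ sum, the Euler-factor corrections, and the $T_0$ contribution combine into $\phi(q)H(q;a,b)$. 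The analogous treatment of $T_2$ yields the other main term $-\phi(q)\Lambda(r_1)/r_1$, completing the identification.

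Choosing $y \asymp q^2$ matches the truncation error $\phi(q)\log^2(qy)/\sqrt y$ to the claimed $O(\phi(q)\log^2 q/q)$ bound. The principal obstacle is twofold: first, establishing the GRH-conditional truncated approximation above uniformly in $\chi$ with precisely the stated error shape---a classical but technically delicate contour estimate---and second, the careful bookkeeping required to verify that the accumulated secondary terms (the tails beyond $n = r_1, r_2$, the $(n,q)=1$ sum, the Euler corrections, and $2T_0$) reassemble exactly into $\phi(q) H(q;a,b)$ as specified in Definition~\ref{hqpr def}, rather than a nearby variant. Given these ingredients, the theorem follows by combining the evaluations of $T_0$, $T_1$, and $T_2$.
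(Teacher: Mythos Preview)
Your approach is essentially the same as the paper's: expand $|\chi(a)-\chi(b)|^2$, approximate $L'/L(1,\chi)$ by a truncated von Mangoldt sum under GRH, apply orthogonality to isolate the progressions $n\equiv r_1, r_2, 1\pmod q$, and extract the leading terms $\Lambda(r_i)/r_i$. Two organizational differences are worth noting. First, the paper uses a smooth cutoff $e^{-n/y}$ (Proposition~\ref{Sound method prop}) rather than a sharp truncation, which simplifies the contour work slightly; it does mention the sharp-cutoff version in equation~\eqref{M explicit formula version}, so your choice is fine. Second, and more importantly, the paper cleanly separates the two sources of secondary terms: it first proves $M^*(q;a,b) = M(q;a,b) + \phi(q)H_0(q;a,b)$ (Lemma~\ref{Mstar to M lemma}), where $H_0$---not $H$---captures exactly the Euler-factor discrepancy between $\chi$ and $\chi^*$ and includes the extra terms $-2h_0(q;p,1)$ coming from your $2T_0$; only afterward does it show $H_0(q;a,b) = H(q;a,b) + O((\log q)/q)$. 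Your proposal folds these steps together and asserts the leftovers ``reassemble exactly into $\phi(q)H(q;a,b)$'', but in fact they reassemble into $\phi(q)H_0(q;a,b)$, and a separate (easy) estimate is needed to pass to $H$. This is precisely the ``careful bookkeeping'' you flag as an obstacle, so the gap is acknowledged rather than hidden, but be aware that the match to Definition~\ref{hqpr def} is only up to the stated error, not exact.
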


(The unexpected appearance of the specific integers $r_1$ and $r_2$, in a formula for a quantity depending upon entire residue classes\mod q, is due to the approximation of infinite series by their first terms---see Proposition~\ref{M exact evaluation prop}.)
The quantity $H(q;a,b)$ is usually quite small, unless there is an extreme coincidence in the locations of $a$ and $b$ relative to the prime divisors of $q$, which would be reflected in a small value of the quantity $e(q;p,r)$ defined as follows:

\begin{definition}
Given an integer $q$ and a prime $p$, let $\nu \ge 0$ be the integer such that $ p^{\nu} \parallel q$ (that is, $p^\nu \mid q$ but $p^{\nu+1} \nmid q$). For any reduced residue $r\mod q$, define
$
e(q;p,r) = \min\{ e \ge 1 \colon p^{e} \equiv r^{-1} \mod{q/p^\nu} \},
$
and define
$$
h(q;p,r) = \frac{1}{\phi(p^{\nu})}\frac{\log p}{p^{e(q;p,r)}}.
$$
When $r$ is not in the multiplicative subgroup generated by $p\mod{q/p^\nu}$, we make the convention that $e(q;p,r) = \infty$ and $h(q;p,r) = 0$. Finally, for any integers $a$ and $b$, define
\[
H(q; a, b) = \sum_{p\mid q} \big( h(q;p,ab^{-1}) + h(q;p,ba^{-1}) \big).
\]
Note that if $q=p^\nu$ is a prime power, then $h(q;p,r) = (\log p)/p^\nu(p-1)$ is independent of~$r$, which implies that $H(q;a,b)\ll (\log q)/q$ when $q$ is a prime power.
\qedef
\label{hqpr def}
\end{definition}

The extremely small relative error in Theorem~\ref{delta series theorem} implies that the formula given therein is useful even for moderate values of $q$. The following corollary of the above theorems, the proof of which is given in Section~\ref {impact section}, is useful only for large $q$ due to a worse error term. It has the advantage, however, of isolating the fine-scale dependence of $\delta(q;a,b)$ on the residue classes $a$ and $b$ from its primary dependence on the modulus~$q$:

\begin{cor}
Assume GRH and LI. Let $q\ge43$ be an integer. Let $a$ and $b$ be reduced residues\mod q such that $a$ is a nonsquare\mod q and $b$ is a square\mod q, and let $r_1$ and $r_2$ denote the least positive residues of $ab^{-1}$ and $ba^{-1}\mod q$. Then
\begin{equation}
\delta(q;a,b) = \frac12 + \frac{\rho(q)}{2\sqrt{\pi\phi(q)\L(q)}} \bigg( 1 - \frac{\Delta(q;a,b)}{2\L(q)} + O\bigg( \frac1{\log^2q} \bigg) \bigg),
\label{delta first order term}
\end{equation}
where
\begin{equation}
\Delta(q;a,b) = K_q(a-b) + \iota_q(-ab^{-1})\log2 + \frac{\Lambda(r_1)}{r_1} +\frac{\Lambda(r_2)}{r_2} + H(q;a,b)
\label{Delta def}
\end{equation}
(here, the functions $\L$, $K_q$, and $\iota_q$ are defined in Definition~\ref {iota and Lq and Rqn def}, and $H$ is defined in Definition~\ref {hqpr def}). Moreover, $\Delta(q;a,b)$ is nonnegative and bounded above by an absolute constant.
\label{isolate ab contribution cor}
\end{cor}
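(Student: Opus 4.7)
The plan is to combine the three preceding theorems and perform a Taylor expansion. Start from the $K=0$ case \eqref{asymptotic formula K=0 case} of Theorem~\ref{delta series theorem}, namely $\delta(q;a,b) = \frac12 + \rho(q)/\sqrt{2\pi V(q;a,b)} + O(\rho(q)^3/V(q;a,b)^{3/2})$. Theorem~\ref{variance evaluation theorem} gives an exact formula for $V(q;a,b)$ in terms of $\L(q)$, $K_q(a-b)$, $\iota_q(-ab^{-1})\log 2$, and $M^*(q;a,b)$, while Theorem~\ref{M evaluation theorem} writes $M^*(q;a,b) = \phi(q)(\Lambda(r_1)/r_1 + \Lambda(r_2)/r_2 + H(q;a,b)) + O(\phi(q)\log^2 q/q)$. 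Substituting the second into the first yields
$$V(q;a,b) = 2\phi(q)\bigl(\L(q) + \Delta(q;a,b) + O(\log^2 q/q)\bigr),$$
where $\Delta(q;a,b)$ is exactly the combination from \eqref{Delta def}.

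Since the hypothesis $q\geq 43$ gives $\L(q) > 0$ with $\L(q) = \log q + O(\log\log q)$, and assuming the uniform boundedness of $\Delta(q;a,b)$ (proved below), the ratio $\Delta(q;a,b)/\L(q)$ is $O(1/\log q)$. Applying the binomial expansion $(1+x)^{-1/2} = 1 - x/2 + O(x^2)$, I would obtain
$$\frac{1}{\sqrt{2\pi V(q;a,b)}} = \frac{1}{2\sqrt{\pi\phi(q)\L(q)}}\left(1 - \frac{\Delta(q;a,b)}{2\L(q)} + O\!\left(\frac{1}{\log^2 q}\right)\right),$$
where the $O(1/\log^2 q)$ absorbs both the quadratic Taylor remainder $O((\Delta/\L)^2)$ and the $O(\log q/q)$ contribution arising from the $O(\log^2 q/(q\L))$ term. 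The tail error $\rho(q)^3/V(q;a,b)^{3/2}$ from \eqref{asymptotic formula K=0 case} is comfortably absorbed, since $\rho(q)\ll_\varepsilon q^\varepsilon$ and $V(q;a,b) \gg \phi(q)\log q$ together force this error to be smaller than the displayed error by a positive power of $q$. Multiplying by $\rho(q)$ and adding $\tfrac12$ yields \eqref{delta first order term}.

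It remains to verify that $\Delta(q;a,b)$ is nonnegative and bounded above by an absolute constant. Termwise nonnegativity is immediate: $K_q(a-b)\geq 0$ and $H(q;a,b)\geq 0$ by the remarks in Definitions~\ref{iota and Lq and Rqn def} and~\ref{hqpr def}, while $\iota_q(-ab^{-1})\log 2\geq 0$ and $\Lambda(r_i)/r_i\geq 0$ are trivial. For the upper bound, $K_q(a-b)\leq \log 2$ is recorded in Definition~\ref{iota and Lq and Rqn def}, $\iota_q(-ab^{-1})\log 2\leq \log 2$ trivially, and $\Lambda(n)/n$ is bounded by an absolute constant (maximized at $n=3$). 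For $H(q;a,b)$, the estimate $h(q;p,r) \leq (\log p)/(\phi(p^\nu) p) \leq (\log p)/(p(p-1))$ (valid for every $\nu \geq 1$) gives $H(q;a,b) \leq 2\sum_p (\log p)/(p(p-1)) < \infty$.

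The main bookkeeping challenge is verifying that three independent error sources---the $O(\log^2 q/q)$ from Theorem~\ref{M evaluation theorem}, the quadratic Taylor remainder $O(\Delta^2/\L^2)$, and the $O(\rho^3/V^{3/2})$ from Theorem~\ref{delta series theorem}---all collapse into the single $O(1/\log^2 q)$ factor in \eqref{delta first order term}. This requires the qualitative bound $\Delta(q;a,b) \ll 1$, which is why the nonnegativity and boundedness argument must be executed first (or in parallel with) the asymptotic expansion, even though the corollary presents these properties as a concluding assertion.
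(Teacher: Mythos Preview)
Your proposal is correct and follows essentially the same route as the paper: combine Theorems~\ref{variance evaluation theorem} and~\ref{M evaluation theorem} to write $V(q;a,b)=2\phi(q)\bigl(\L(q)+\Delta(q;a,b)+O(\log^2 q/q)\bigr)$, expand $(1+t)^{-1/2}$ to first order, and absorb the $O(\rho(q)^3/V(q;a,b)^{3/2})$ tail from Theorem~\ref{delta series theorem} into the $O(1/\log^2 q)$ term. The paper likewise establishes the uniform boundedness of $\Delta(q;a,b)$ termwise exactly as you do (including the bound $H(q;a,b)<2\sum_p(\log p)/p(p-1)$), and in fact presents that argument first precisely because, as you correctly observe, the Taylor step requires $\Delta/\L\ll 1/\log q$.
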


Armed with this knowledge of the delicate dependence of $\delta(q;a,b)$ on the residue classes $a$ and $b$, we are actually able to ``race races'', that is, investigate inequalities between various values of $\delta(q;a,b)$ as $q$ increases. We remark that Feuerverger and Martin~\cite[Theorem 2(b)]{biases} showed that $\delta(q;a,b) = \delta(q;ab^{-1},1)$ for any square $b\mod q$, and so it often suffices to consider only the densities $\delta(q;a,1)$. Some surprising inequalities come to light when we fix the residue class $a$ and allow the modulus $q$ to vary (among moduli relatively prime to $a$ for which $a$ is a nonsquare). Our next theorem, which is a special case of Corollary~\ref {partial order cor} derived in Section~\ref {predictability section}, demonstrates some of these inequalities:

\begin{theorem}
\label{fix a and b theorem}
Assume GRH and LI.
\begin{itemize}
\item For any integer $a\ne-1$, we have $\delta(q;-1,1) < \delta(q;a,1)$ for all but finitely many integers $q$ with $(q,a)=1$ such that both $-1$ and $a$ are nonsquares$\mod q$.
\item If $a$ is a prime power and $a'\ne-1$ is an integer that is not a prime power, then $\delta(q;a,1) < \delta(q;a',1)$ for all but finitely many integers $q$ with $(q,aa')=1$ such that both $a$ and $a'$ are nonsquares$\mod q$.
\item If $a$ and $a'$ are prime powers with $\Lambda(a)/a > \Lambda(a')/a'$, then $\delta(q;a,1) < \delta(q;a',1)$ for all but finitely many integers $q$ with $(q,aa')=1$ such that both $a$ and $a'$ are nonsquares$\mod q$.
\end{itemize}
\end{theorem}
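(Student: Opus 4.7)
The plan is to apply Corollary~\ref{isolate ab contribution cor} to the two densities and subtract, obtaining
\begin{equation*}
\delta(q;a',1) - \delta(q;a,1) = \frac{\rho(q)}{2\sqrt{\pi\phi(q)\L(q)}} \bigg( \frac{\Delta(q;a,1) - \Delta(q;a',1)}{2\L(q)} + O\bigg(\frac{1}{\log^2 q}\bigg) \bigg)
\end{equation*}
(for part~(a) we take $a = -1$ and rename the other residue $a'$). Since the prefactor is positive and $\L(q) \sim \log q$, the strict inequality $\delta(q;a,1) < \delta(q;a',1)$ will follow as soon as $\Delta(q;a,1) - \Delta(q;a',1)$ is bounded below by a strictly positive constant for all sufficiently large admissible $q$; such a constant dominates the error, which collapses to $O(1/\log q)$ after multiplying through by $2\L(q)$.

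The next step is to compute $\lim_{q\to\infty} \Delta(q;a,1)$ for fixed $a$ and show that only the term $\Lambda(r_1)/r_1$ (together with $\iota_q(-a)\log 2$ when $a = -1$) contributes nontrivially in the limit. For $q > |a|$ we have $r_1 = a$ if $a \ge 2$ and $r_1 = q-1$ if $a = -1$, so $\Lambda(r_1)/r_1$ is either the constant $\Lambda(a)/a$ or tends to zero. The inverse $r_2$ of $a$ modulo $q$ must itself tend to infinity, since a uniform bound $r_2 \le R$ would force $a r_2 - 1$ to be a nonzero multiple of arbitrarily large $q$; hence $\Lambda(r_2)/r_2 \to 0$. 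The indicator $\iota_q(-a)$ is zero once $q > |a|+1$ provided $a \ne -1$. The term $K_q(a-1)$ is nonzero only when $q/(q,a-1)$ is a prime power, but then this prime power is at least $q/|a-1|$ and $\Lambda(n)/\phi(n) \to 0$ as $n \to \infty$. Finally, the function $H(q;a,1)$, whose summands $h(q;p,a^{\pm 1})$ mix the factorization of $q$ with multiplicative orders in $(\Z/(q/p^\nu)\Z)^\times$, can be handled by splitting primes $p \mid q$ according to whether $p^\nu$ is large (giving a small $1/\phi(p^\nu)$ factor) or $q/p^\nu$ is large (forcing the required exponent $e(q;p,a^{\pm 1})$ to grow). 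Combining everything,
\begin{equation*}
\Delta(q;-1,1) = \log 2 + o(1), \qquad \Delta(q;a,1) = \frac{\Lambda(a)}{a} + o(1) \text{ for } a \ne -1.
\end{equation*}

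With these asymptotics in hand, the three parts reduce to elementary comparisons of positive constants. Part~(a) uses the inequality $\Lambda(n)/n \le (\log 3)/3 < \log 2$, valid for every integer $n \ge 2$, so that $\Lambda(a)/a < \log 2$ for every $a \ne -1$; part~(b) is immediate from $\Lambda(a)/a > 0 = \Lambda(a')/a'$; and part~(c) follows directly from the hypothesis $\Lambda(a)/a > \Lambda(a')/a'$. In each case the limit of $\Delta(q;a,1) - \Delta(q;a',1)$ is a strictly positive constant, and the first-paragraph estimate yields the claimed inequality of densities for all but finitely many admissible $q$.

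The main technical obstacle I anticipate is establishing the uniform $o(1)$ bound on $H(q;a,1)$ over the restricted set of moduli (those coprime to $aa'$ for which $a$ and $a'$ are both nonsquares), since this quantity intermixes the factorization of $q$ with the multiplicative structure of the residues $a$ and $a^{-1}$ in each cofactor $q/p^\nu$; the absolute boundedness of $\Delta$ asserted in Corollary~\ref{isolate ab contribution cor} already provides substantial control, and the case analysis sketched above should make the vanishing in the limit rigorous.
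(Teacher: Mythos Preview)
Your approach is essentially the paper's: subtract the two instances of Corollary~\ref{isolate ab contribution cor} and show that $\Delta(q;a,1)$ tends to $\log 2$ when $a=-1$ and to $\Lambda(a)/a$ otherwise, exactly as the paper does via Theorem~\ref{Delta in terms of rating theorem} and Corollary~\ref{partial order cor} (specialized to $s=0$). The paper in fact proves the sharper quantitative statement $\Delta(q;a,1) = R(a,0) + O_a((\log q)/q)$ and works more generally with linear forms $r+sq$, but for the theorem as stated your $o(1)$ suffices.

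Regarding the obstacle you flagged for $H(q;a,1)$: no dichotomy is needed. The coprimality hypothesis $(a,q)=1$ forces $p\nmid a$ for every $p\mid q$, so the congruence $p^e\equiv a\pmod{q/p^\nu}$ can never be solved by $p^e=a$; hence $p^{e(q;p,a^{-1})} > q/p^\nu$ once $q/p^\nu>|a|$, giving $h(q;p,a^{-1})\ll_a (\log p)/q$. The companion congruence $ap^e\equiv1\pmod{q/p^\nu}$ similarly forces $p^e\ge (q/p^\nu)/|a|$, since $ap^e-1$ is a nonzero multiple of $q/p^\nu$. Summing over $p\mid q$ yields $H(q;a,1)\ll_a(\log q)/q$, which is the paper's Corollary~\ref{big h cor}(c).
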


Finally, these results have computational utility as well. A formula~\cite[equation~(2-57)]{biases} for calculating the value of $\delta(q;a,b)$ is known. However, this formula requires knowledge of a large number of zeros of all Dirichlet $L$-functions associated to characters\mod q even to estimate via numerical integration; therefore it becomes unwieldy to use the formula when $q$ becomes large. On the other hand, the asymptotic series in Theorem~\ref{delta series theorem} can be made completely effective, and the calculation of $V(q;a,b)$ is painless thanks to Theorem~\ref {variance evaluation theorem}. Therefore the densities $\delta(q;a,b)$ can be individually calculated, and collectively bounded, for large~$q$.

\begin{figure}[b]
\caption{All densities $\delta(q;a,b)$ with $q\le1000$}
\includegraphics[width=6in]{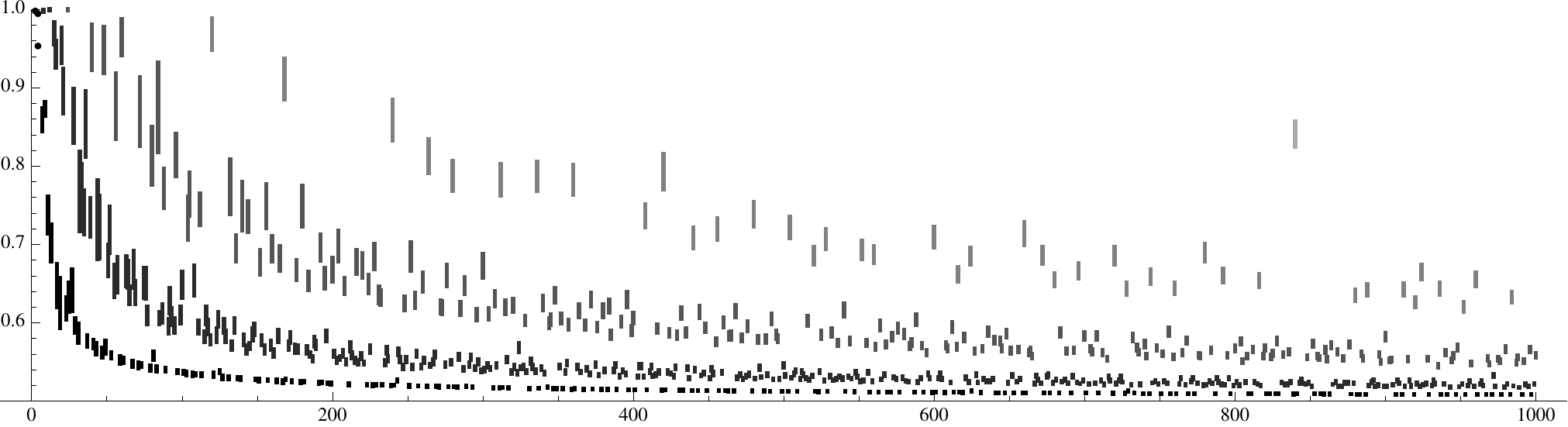}
\label{alldeltas1000}
\end{figure}

For example, the values of $\delta(q;a,b)$ for all moduli up to $1000$ are plotted in Figure~\ref {alldeltas1000}. The modulus $q$ is given on the horizontal axis; the vertical line segment plotted for each $q$ extends between the maximal and minimal values of $\delta(q;a,b)$, as $a$ runs over all nonsquares\mod q and $b$ runs over all squares\mod q. (Of course both $a$ and $b$ should be relatively prime to~$q$. We also omit moduli of the form $q\equiv2\mod4$, since the distribution of primes into residue classes modulo such $q$ is the same as their distribution into residue classes modulo $q/2$.)

The values shown in Figure~\ref {alldeltas1000} organize themselves into several bands; each band corresponds to a constant value of $\rho(q)$, the effect of which on the density $\delta(q;a,b)$ can be clearly seen in the second term on the right-hand side of equation~\eqref{delta first order term}. For example, the lowest (and darkest) band corresponds to moduli $q$ for which $\rho(q)=2$, meaning odd primes and their powers (as well as $q=4$); the second-lowest band corresponds to those moduli for which $\rho(q)=4$, consisting essentially of numbers with two distinct prime factors; and so on, with the first modulus $q=840$ for which $\rho(q)=32$ (the segment closest to the upper right-hand corner of the graph) hinting at the beginning of a fifth such band. Each band decays roughly at a rate of $1/\sqrt{q\log q}$, as is also evident from the aforementioned term of equation~\eqref{delta first order term}.

To give one further example of these computations, which we describe in Section~\ref {explicit computation of the delta}, we are able to find the largest values of $\delta(q;a,b)$ that ever occur. (All decimals listed in this paper are rounded off in the last decimal place.)

\begin{theorem}
\label{top ten thm}
Assume GRH and LI. The ten largest values of $\delta(q;a,b)$ are given in Table~\ref{10 most unfair}.
\end{theorem}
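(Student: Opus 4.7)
The proof plan is to combine a large-$q$ a priori upper bound (coming from Theorem~\ref{delta series theorem} and Corollary~\ref{isolate ab contribution cor}) with an exhaustive and rigorous computation for all remaining moduli. The idea is: first establish a numerical threshold $c_{10}$ (equal to the tenth-largest density listed in Table~\ref{10 most unfair}), then show that every $\delta(q;a,b)$ exceeding $c_{10}$ must come from a modulus $q$ in an explicitly bounded finite set, and finally compute all such densities rigorously to confirm the table.

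First I would make the error term in~\eqref{asymptotic formula general case} completely explicit for, say, $K=1$, giving an honest upper bound of the shape
\begin{equation*}
\delta(q;a,b) \le \tfrac12 + \frac{\rho(q)}{\sqrt{2\pi V(q;a,b)}}\bigl(1 + C_1/V(q;a,b)\bigr)
\end{equation*}
valid for all $q$ and admissible $a,b$. Inserting the lower bound $V(q;a,b) \ge 2\phi(q)\L(q)$ coming from Theorem~\ref{variance evaluation theorem} (since $K_q$, $\iota_q\log2$, and $M^*$ are all nonnegative), this becomes a bound in terms of $\rho(q)^2/(\phi(q)\L(q))$. Using the explicit formula for $\rho(q)$ in Definition~\ref{rho def} together with the classical lower bound $\phi(q) \gg q/\log\log q$, one sees that the quantity $\rho(q)/\sqrt{\phi(q)\log q}$ tends to $0$ as $q\to\infty$, with an effective rate. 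Solving the resulting inequality for when the right-hand side can possibly exceed $c_{10}$ produces an explicit cutoff $Q_0$, beyond which no modulus can contribute to the top ten. Since extremal values of $\rho(q)^2/\phi(q)$ occur at highly composite moduli with many small prime factors, the cutoff $Q_0$ will be modest (a few thousand, say), and one should check that the moduli in Table~\ref{10 most unfair} are indeed consistent with this bound being attained only for small $q$.

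For the finite range $q \le Q_0$ (restricted to $q\not\equiv 2\pmod 4$, as noted in the text), and for each pair of reduced residues $a,b$ with $a$ nonsquare and $b$ square mod $q$, I would compute $\delta(q;a,b)$ by using Theorem~\ref{delta series theorem} at sufficiently large $K$ to make the error dwarfed by the spacing required to separate adjacent density values. The variance $V(q;a,b)$ is evaluated exactly via Theorem~\ref{variance evaluation theorem}, which reduces the problem to computing $M^*(q;a,b)$, i.e.\ finite sums of the form $L'(1,\chi^*)/L(1,\chi^*)$; these in turn can be computed to arbitrary precision by standard rigorous methods (truncated Euler products plus zero-free-region corrections, or numerical evaluation via the functional equation). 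The coefficients $s_{q;a,b}(\ell,j)$ are given by their definition in Definition~\ref{s-coeffs def} and are also computable. One then sorts the resulting finite list of densities and reads off the top ten.

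The main obstacle is genuinely rigorous numerics rather than any conceptual difficulty: one must (i) quantify the implicit constant in~\eqref{asymptotic formula general case} tightly enough that the cutoff $Q_0$ is small enough to make the exhaustive computation feasible, and (ii) certify the numerical evaluation of each $L'(1,\chi^*)/L(1,\chi^*)$ and of the moments entering the $s_{q;a,b}(\ell,j)$, so that the computed $\delta(q;a,b)$ come with provable error bars smaller than the gap between the tenth and eleventh largest values. Both tasks are in principle routine given the preceding theorems; the work consists in controlling every constant and every truncation explicitly. Once these are in hand, the comparison among the finitely many candidate densities yields Table~\ref{10 most unfair} directly.
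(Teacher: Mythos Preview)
Your overall two-stage strategy (explicit large-$q$ upper bound, then exhaustive computation for small $q$) matches the paper's, but there is a genuine gap in the second stage that would cause the argument to fail.

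The critical issue is your plan to compute $\delta(q;a,b)$ for small moduli ``using Theorem~\ref{delta series theorem} at sufficiently large $K$.'' The moduli that actually appear in Table~\ref{10 most unfair} are $q\in\{3,8,12,24\}$. For $q=24$ one has $\rho(24)=8$ and $\phi(24)=8$, so $V(24;a,b)$ is only on the order of $50$, giving $\rho(q)^2/V(q;a,b)>1$. The terms in the series~\eqref{asymptotic formula general case} then do not shrink as $K$ grows, and no choice of $K$ yields a usable error bound; the same obstruction occurs for $q=3,8,12$. The paper's implementations (Proposition~\ref{delta comes down to small interval prop}, Proposition~5.16) explicitly require $V(q;a,b)\ge 531$ before any asymptotic-series argument applies. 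For the small moduli that dominate Table~\ref{10 most unfair}, the paper instead evaluates the Rubinstein--Sarnak integral formula~\eqref{deus ex machina} directly, using numerically computed zeros of the relevant Dirichlet $L$-functions (see the description of the sets $S_1,S_2$ in Section~\ref{explicit computation of the delta}). Your proposal lacks any mechanism for handling these moduli.

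A secondary problem: your lower bound $V(q;a,b)\ge 2\phi(q)\L(q)$ rests on the claim that $M^*(q;a,b)\ge 0$, which is neither stated nor proved anywhere; $M^*$ is a signed combination of the quantities $L'(1,\chi^*)/L(1,\chi^*)$, which can individually be negative. Moreover $\L(q)$ itself is negative for $q<43$ (for instance $\L(24)<0$), so the inequality is vacuous precisely where you need it. The paper obtains its explicit lower bounds on $V(q;a,b)$ (Propositions~\ref{prime V bounds prop} and~\ref{composite V bound prop}) by instead bounding $M^*$ via Proposition~\ref{M exact evaluation prop}, not by asserting nonnegativity.
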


\begin{table}[h]
\label{10 most unfair}
\caption{The top 10 most unfair prime number races}
\begin{tabular}{|c|c|c|c|}
\hline
$q$ & $a$ & $b$ & $\delta(q;a,b)$ \\
\hline
24  &   5   &   1   &   0.999988   \\
24  &   11  &   1  &   0.999983   \\
12  &   11  &   1  &   0.999977   \\
24  &   23  &   1  &   0.999889   \\
24  &   7   &   1   &   0.999834   \\
24  &   19  &   1  &   0.999719   \\
8   &   3   &   1   &   0.999569   \\
12  &   5   &   1   &   0.999206   \\
24  &   17  &   1  &   0.999125   \\
3   &   2   &   1   &   0.999063   \\
\hline
\end{tabular}
\end{table}

Our approach expands upon the seminal work of Rubinstein and Sarnak~\cite{RS}, who introduced a random variable whose distribution encapsulates the information needed to understand $\pi(x;q,a) - \pi(x;q,b)$. We discuss these random variables, formulas and estimates for their characteristic functions (that is, Fourier transforms), and the subsequent derivation of the asymptotic series from Theorem~\ref{delta series theorem} in Section~\ref {asymptotic series section}. In Section~\ref {variance analysis section} we demonstrate how to transform the variance $V(q;a,b)$ from an infinite sum into a finite expression; we can even calculate it extremely precisely using only arithmetic (rather than analytic) information. We also show how the same techniques can be used to establish a central limit theorem for the aforementioned distributions, and we outline how modifications of our arguments can address the two-way race between all nonresidues and all residues\mod q. We investigate the fine-scale effect of the particular residue classes $a$ and $b$ upon the density $\delta(q;a,b)$ in Section~\ref {fine-scale section}; we also show how a similar analysis can explain a ``mirror image'' phenomenon noticed by Bays and Hudson~\cite{cyclic}. Finally, Section~\ref {computations section} is devoted to explicit estimates and a description of our computations of the densities and the resulting conclusions, including Theorem~\ref{top ten thm}.

\subsection*{Acknowledgments} The authors thank Brian Conrey and K.~Soundararajan for suggesting proofs of Lemma~\ref {log bessel lemma}(c) and Proposition~\ref{Sound method prop}, respectively, that were superior to our original proofs. We also thank Andrew Granville for indicating how to improve the error term in Proposition~\ref{only the first term survives prop}, as well as Colin Myerscough for correcting a numerical error in Proposition~\ref {Phi basically decreasing prop} that affected our computations in Sections~\ref {density bounds section}--\ref {explicit computation of the delta}. Robert Rumely and Michael Rubinstein provided lists of zeros of Dirichlet $L$-functions and the appropriate software to compute these zeros, which are needed for the calculations of the densities in Section~\ref{computations section}, and we thank them as well. Finally, we express our gratitude to our advisors past and present, Andrew Granville, Hugh Montgomery, and Trevor Wooley, both for their advice about this paper and for their guidance in general. Le premier auteur est titulaire d'une bourse doctorale du Conseil de recherches en sciences naturelles et en g\'enie du Canada. The second author was supported in part by grants from the Natural Sciences and Engineering Research Council of Canada.

\section{The asymptotic series for the density $\delta(q;a,b)$}
\label{asymptotic series section}

The ultimate goal of this section is to prove Theorem~\ref{delta series theorem}. We begin in Section~\ref {RV section} by describing a random variable whose distribution is the same as the limiting logarithmic distribution of a suitably normalized version of $\pi(x;q,a)-\pi(x;q,b)$, as well as calculating its variance. This approach is the direct descendant of that of Rubinstein and Sarnak~\cite{RS}; one of our main innovations is the exact evaluation of the variance $V(q;a,b)$ in a form that does not involve the zeros of Dirichlet $L$-functions. In Section~\ref {cumulant section} we derive the formula for the characteristic function (Fourier transform) of that random variable; this formula is already known, but our derivation is slightly different and allows us to write the characteristic function in a convenient form (see Proposition~\ref {expressions for Phi prop}). We then use our knowledge of the characteristic function to write the density $\delta(q;a,b)$ as the truncation of an infinite integral in Section~\ref {bounds section}, where the error terms are explicitly bounded using knowledge of the counting function $N(T,\chi)$ of zeros of Dirichlet $L$-functions. Finally, we derive the asymptotic series from Theorem~\ref{delta series theorem} from this truncated integral formula in Section~\ref {derivation section}.

\subsection{Distributions and random variables}
\label{RV section}

We begin by describing random variables related to the counting functions of primes in arithmetic progressions. As is typical when considering primes in arithmetic progressions, we first consider expressions built out of Dirichlet characters.

\begin{definition}
For any Dirichlet character $\chi$ such that GRH holds for $L(s,\chi)$, define
\begin{equation*}
E(x,\chi) = \Lsum{\gamma\in\R} \frac{x^{i\gamma}}{\frac12+i\gamma}.
\end{equation*}
This sum does not converge absolutely, but (thanks to GRH and the functional equation for Dirichlet $L$-functions) it does converge conditionally when interpreted as the limit of $\sum_{|\gamma|<T}$ as $T$ tends to infinity. All untruncated sums over zeros of Dirichlet $L$-functions in this paper should be similarly interpreted.
\qedef
\label{Exchi def}
\end{definition}

\begin{definition}
For any real number $\gamma$, let $Z_\gamma$ denote a random variable that is uniformly distributed on the unit circle, and let $X_\gamma$ denote the random variable that is the real part of $Z_\gamma$. We stipulate that the collection $\{Z_\gamma\}_{\gamma\ge0}$ is independent and that $Z_{-\gamma} = \overline{Z_\gamma}$; this implies that the collection $\{X_\gamma\}_{\gamma\ge0}$ is also independent and that $X_{-\gamma} = X_\gamma$.
\qedef
\label{Z and X def}
\end{definition}

By the limiting logarithmic distribution of a real-valued function $f(t)$, we mean the measure $d\nu$ having the property that the limiting logarithmic density of the set of positive real numbers such that $f(t)$ lies between $\alpha$ and $\beta$ is $\int_\alpha^\beta d\nu$ for any interval $(\alpha,\beta)$.

\begin{prop}
Assume LI. Let $\{c_\chi\colon \chi\mod q\}$ be a collection of complex numbers, indexed by the Dirichlet characters\mod q, satisfying $c_{\bar\chi} = \overline{c_\chi}$. The limiting logarithmic distribution of the function
\begin{equation*}
\allchisum c_\chi E(x,\chi)
\end{equation*}
is the same as the distribution of the random variable
\begin{equation*}
2 \allchisum |c_\chi| \Lsum{\gamma>0} \frac{X_\gamma}{\sqrt{\frac14+\gamma^2}}.
\end{equation*}
\label{to random variable prop}
\end{prop}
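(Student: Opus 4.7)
The plan is to rewrite $\allchisum c_\chi E(x,\chi)$, via pairing zeros $\pm\gamma$, as a real trigonometric series in $y=\log x$ whose frequencies are the positive zero ordinates, and then to apply the Kronecker--Weyl equidistribution theorem under LI, following Rubinstein--Sarnak~\cite{RS}. First I would symmetrize: the identity $\overline{L(s,\chi)}=L(\bar s,\bar\chi)$ shows that $1/2+i\gamma$ is a zero of $L(\,\cdot\,,\chi)$ iff $1/2-i\gamma$ is a zero of $L(\,\cdot\,,\bar\chi)$. Substituting $\gamma\mapsto-\gamma$ in the contribution of $\bar\chi$ to $E(x,\bar\chi)$ and using $c_{\bar\chi}=\overline{c_\chi}$, the contribution of $\bar\chi$ at $-\gamma$ is the complex conjugate of that of $\chi$ at $\gamma$, whence
\[
\allchisum c_\chi E(x,\chi) = 2\Re\allchisum c_\chi \Lsum{\gamma>0} \frac{x^{i\gamma}}{\tfrac12+i\gamma}.
\]
Writing $c_\chi=|c_\chi|e^{i\alpha_\chi}$ and $1/(\tfrac12+i\gamma)=e^{-i\arctan(2\gamma)}/\sqrt{1/4+\gamma^2}$, this becomes the real trigonometric series
\[
2\allchisum|c_\chi|\Lsum{\gamma>0}\frac{\cos\bigl(\alpha_\chi-\arctan(2\gamma)+\gamma\log x\bigr)}{\sqrt{1/4+\gamma^2}}.
\]

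Next I would truncate to $0<\gamma\le T$, denoting the resulting finite sum by $f_T$. After the change of variables $x=e^y$ (which converts logarithmic density on $[1,X]$ into natural density on $[0,\log X]$), $f_T(e^y)$ is a finite real trigonometric polynomial in $y$ whose frequencies are the distinct positive zero ordinates $\gamma\le T$ coming from all characters $\chi\mod q$; by LI these are linearly independent over $\Q$. Kronecker--Weyl equidistribution then gives that $(\gamma y\bmod 2\pi)_{\gamma\le T}$ equidistributes on the finite-dimensional torus as $Y\to\infty$, so the limiting natural-density distribution of $f_T(e^y)$ equals that of the random variable obtained by replacing each $\gamma y$ by an independent uniform phase $\Theta_\gamma\in[0,2\pi)$. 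Since $\cos(c+\Theta_\gamma)$ has the same distribution as $\cos\Theta_\gamma=X_\gamma$ for any constant $c$, this limiting distribution matches
\[
2\allchisum|c_\chi|\Lsum{0<\gamma\le T}\frac{X_\gamma}{\sqrt{1/4+\gamma^2}}.
\]

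Finally I would send $T\to\infty$. On the random-variable side, since $b(\chi)=\sum_\gamma 1/(1/4+\gamma^2)$ is finite, the truncated sums converge in $L^2$, hence in distribution, to the untruncated random variable. On the function side, expanding the square and using that LI makes the Ces\`aro average of each cross-term cosine vanish, I would obtain
\[
\lim_{Y\to\infty}\frac1Y\int_0^Y|f(e^y)-f_T(e^y)|^2\,dy = 2\allchisum|c_\chi|^2\Lsum{\gamma>T}\frac1{1/4+\gamma^2}\longrightarrow 0 \text{ as } T\to\infty,
\]
i.e.\ $f_T\to f$ in the Besicovitch $B^2$ sense, which implies convergence in distribution. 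Combining the two ends yields the asserted equality of limiting distributions. The main obstacle is precisely this last interchange of limits---truncation $T\to\infty$ against density $Y\to\infty$---since $E(x,\chi)$ converges only conditionally; this forces one to work within the Besicovitch $B^2$ framework rather than rely on absolute convergence. Once this is secured, the heart of the argument is Kronecker--Weyl applied to trigonometric polynomials with linearly independent frequencies.
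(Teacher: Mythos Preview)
Your proposal is correct and follows essentially the same route as the paper: symmetrize using the functional equation together with $c_{\bar\chi}=\overline{c_\chi}$ to reduce to $2\Re$ of a sum over positive $\gamma$, apply Kronecker--Weyl under LI to the truncated trigonometric polynomial, then pass $T\to\infty$. The paper simply cites Rubinstein--Sarnak~\cite[Section 3.1]{RS} for the limit interchange, whereas you spell out the Besicovitch $B^2$ argument more explicitly; otherwise the structure is identical.
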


\begin{proof}
We have
\begin{align*}
\allchisum c_\chi E(x,\chi) &= \lim_{T\to\infty} \allchisum c_\chi \Lsum{|\gamma|<T} \frac{x^{i\gamma}}{\frac12+i\gamma} \\
&= \lim_{T\to\infty} \allchisum c_\chi \bigg( \Lsum{0<\gamma<T} \frac{x^{i\gamma}}{\frac12+i\gamma} + \Lsum{-T<\gamma<0} \frac{x^{i\gamma}}{\frac12+i\gamma} \bigg).
\end{align*}
(The assumption of LI precludes the possibility that $\gamma=0$.) By the functional equation, the zeros of $L(s,\chi)$ below the real axis correspond to those of $L(s,\bar\chi)$ above the real axis. Therefore
\begin{align}
\allchisum c_\chi E(x,\chi) &= \lim_{T\to\infty} \allchisum c_\chi \bigg( \Lsum{0<\gamma<T} \frac{x^{i\gamma}}{\frac12+i\gamma} + \Lbarsum{0<\gamma<T} \frac{x^{-i\gamma}}{\frac12-i\gamma} \bigg) \label{functional} \\
&= \lim_{T\to\infty} \bigg( \allchisum c_\chi \Lsum{0<\gamma<T} \frac{x^{i\gamma}}{\frac12+i\gamma} + \allchisum \overline{c_{\bar\chi} \Lbarsum{0<\gamma<T} \frac{x^{i\gamma}}{\frac12+i\gamma}} \bigg). \notag
\end{align}
Reindexing this last sum by replacing $\bar\chi$ by $\chi$, we obtain
\begin{align}
\allchisum c_\chi E(x,\chi) &= \lim_{T\to\infty} \bigg( \allchisum c_\chi \Lsum{0<\gamma<T} \frac{x^{i\gamma}}{\frac12+i\gamma} + \allchisum \overline{c_\chi \Lsum{0<\gamma<T} \frac{x^{i\gamma}}{\frac12+i\gamma}} \bigg) \notag \\
&= \lim_{T\to\infty} 2\Re \bigg( \allchisum c_\chi \Lsum{0<\gamma<T} \frac{x^{i\gamma}}{\frac12+i\gamma} \bigg) \label{equation} \\
&= 2\lim_{T\to\infty} \allchisum |c_\chi| \Re \bigg( \Lsum{0<\gamma<T} \frac{e^{i\gamma\log x}\theta_{\chi,\gamma}}{\sqrt{\frac14+\gamma^2}} \bigg), \notag
\end{align}
where $\theta_{\chi,\gamma} = c_\chi|\frac12+i\gamma|/|c_\chi|(\frac12+i\gamma)$ is a complex number of modulus 1. The quantity $e^{i\gamma\log x}\theta_{\chi,\gamma}$ is uniformly distributed (as a function of $\log x$) on the unit circle as $x$ tends to infinity, and hence its limiting logarithmic distribution is the same as the distribution of $Z_\gamma$. Since the various $\gamma$ in each inner sum are linearly independent over the rationals by LI, the tuple $(e^{i\gamma\log x}\theta_{\chi,\gamma})_{0<\gamma<T}$ is uniformly distributed in the $N(T,\chi)$-dimensional torus by Kronecker's theorem. Therefore the limiting logarithmic distribution of the sum
\[
\Lsum{0<\gamma<T} \frac{e^{i\gamma\log x}\theta_{\chi,\gamma}}{\sqrt{\frac14+\gamma^2}}\]
is the same as the distribution of the random variable
\[
\Lsum{0<\gamma<T} \frac{Z_\gamma}{\sqrt{\frac14+\gamma^2}}.
\]
Finally, the work of Rubinstein and Sarnak \cite[Section 3.1]{RS} shows that the limiting logarithmic distribution of
\begin{equation*}
\allchisum c_\chi E(x,\chi) = 2\lim_{T\to\infty} \allchisum |c_\chi| \Re \bigg( \Lsum{0<\gamma<T} \frac{e^{i\gamma\log x}\theta_{\chi,\gamma}}{\sqrt{\frac14+\gamma^2}} \bigg)
\end{equation*}
is the same as the distribution of the random variable
\begin{align*}
\allchisum c_\chi E(x,\chi) &= 2\lim_{T\to\infty} \allchisum |c_\chi| \Lsum{0<\gamma<T} \frac{X_\gamma}{\sqrt{\frac14+\gamma^2}} \\
&= 2 \allchisum |c_\chi| \Lsum{\gamma>0} \frac{X_\gamma}{\sqrt{\frac14+\gamma^2}},
\end{align*}
the convergence of this last limit being ensured by the fact that the $X_\gamma$ are bounded and that each of the sums
\begin{equation*}
\Lsum{\gamma>0} \bigg( \frac1{\sqrt{\frac14+\gamma^2}} \bigg)^2 \le b(\chi)
\end{equation*}
is finite. This establishes the lemma.
\end{proof}

We shall have further occasion to change the indexing of sums, between over all $\gamma$ and over only positive $\gamma$, in the same manner as in equations \eqref{functional} and \eqref{equation}; henceforth we shall justify such changes ``by the functional equation for Dirichlet $L$-functions'' and omit the intermediate steps.

\begin{definition}
For any relative prime integers $q$ and $a$, define
\[
c(q;a) = -1 + \#\{x\mod q\colon x^2\equiv a\mod q\}.
\]
Note that $c(q;a)$ takes only the values $-1$ and $\rho(q)-1$. Now, with $X_\gamma$ as defined in Definition~\ref{Z and X def}, define the random variable
\begin{equation*}
X_{q;a,b} = c(q,b)-c(q,a) + 2 \allchisum |\chi(b)-\chi(a)| \Lsum{\gamma>0} \frac{X_\gamma}{\sqrt{\frac14+\gamma^2}}.
\label{X(q;a,b) definition}
\end{equation*}
Note that the expectation of the random variable $X_{q;a,b}$ is either $\pm\rho(q)$ or~0, depending on the values of $c(q,a)$ and $c(q,b)$.
\qedef
\label{Xqab def}
\end{definition}

\begin{definition}
With $\pi(x;q,a) = \#\{p\le x\colon p\text{ prime, }p\equiv a\mod q\}$ denoting the counting function of primes in the arithmetic progression $a\mod q$, define the normalized error term
\[
E(x;q,a) = \frac{\log x}{\sqrt x} \big( \phi(q)\pi(x;q,a)-\pi(x) \big).
\]
\qedef
\label{Exqa def}
\end{definition}

The next proposition characterizes the limiting logarithmic distribution of the difference of two of these normalized counting functions.

\begin{prop}
Assume GRH and LI. Let $a$ and $b$ be reduced residues modulo $q$. The limiting logarithmic distribution of $E(x;q,a)-E(x;q,b)$ is the same as the distribution of the random variable $X_{q;a,b}$ defined in Definition~\ref{Xqab def}.
\label{same limiting distributions prop}
\end{prop}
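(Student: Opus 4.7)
The plan is to reduce the proposition to Proposition~\ref{to random variable prop} by writing $E(x;q,a)-E(x;q,b)$, up to a deterministic $o(1)$ error, as a finite linear combination of the oscillatory functions $E(x,\chi)$ from Definition~\ref{Exchi def}. This is the standard Rubinstein--Sarnak translation from prime-counting functions to zero-sums under GRH, and the work consists of carefully executing it.

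First, I would invoke the explicit formula for $\psi(x,\chi)=\sum_{n\le x}\chi(n)\Lambda(n)$. Under GRH this gives $\psi(x,\chi)=-\sqrt{x}\,E(x,\chi)+O((\log qx)^2)$ for every nonprincipal $\chi\mod q$, while $\psi(x,\chi_0)=\psi(x)+O((\log qx)^2)$. Orthogonality of characters then yields
\[
\phi(q)\psi(x;q,a)-\psi(x) = -\sqrt{x}\mostchisum \bar\chi(a)E(x,\chi)+O((\log qx)^2).
\]
Second, I would pass from $\psi$ to $\vartheta$ by accounting for prime powers. The dominant contribution to $\psi(x;q,a)-\vartheta(x;q,a)$ comes from the $k=2$ term; by the prime number theorem in arithmetic progressions, summed over the $c(q;a)+1$ residue classes $r\mod q$ satisfying $r^2\equiv a\mod q$, this contribution equals $(c(q;a)+1)\sqrt{x}/\phi(q)+O(x^{1/3+\ep})$, while analogously $\psi(x)-\vartheta(x)=\sqrt{x}+O(x^{1/3+\ep})$. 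Subtracting,
\[
\phi(q)\vartheta(x;q,a)-\vartheta(x) = -\sqrt{x}\Bigl(c(q;a)+\mostchisum \bar\chi(a)E(x,\chi)\Bigr) + O(x^{1/3+\ep}).
\]

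Third, partial summation from $\vartheta$ to $\pi$ followed by multiplication by the normalizing factor $\log x/\sqrt{x}$ produces
\[
E(x;q,a) = -c(q;a) - \mostchisum \bar\chi(a)E(x,\chi) + o(1)
\]
uniformly in $x$, and subtracting the analogous identity for $b$ gives
\[
E(x;q,a)-E(x;q,b) = \bigl(c(q;b)-c(q;a)\bigr) + \mostchisum(\bar\chi(b)-\bar\chi(a))E(x,\chi) + o(1).
\]
I would then apply Proposition~\ref{to random variable prop} with coefficients $c_\chi=\bar\chi(b)-\bar\chi(a)$ for $\chi\ne\chi_0$ and $c_{\chi_0}=0$ (so that $\mostchisum$ can be replaced by $\allchisum$ without changing the value). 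Since $c_{\bar\chi}=\overline{c_\chi}$ and $|c_\chi|=|\chi(b)-\chi(a)|$, the proposition identifies the limiting logarithmic distribution of the zero-sum as $2\allchisum|\chi(b)-\chi(a)|\Lsum{\gamma>0}X_\gamma/\sqrt{\frac14+\gamma^2}$. Adding the deterministic constant $c(q;b)-c(q;a)$ then reproduces the random variable $X_{q;a,b}$ of Definition~\ref{Xqab def} exactly.

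The main technical subtlety is the passage from $\vartheta$ to $\pi$: naive partial summation generates integrated errors whose deterministic part is only shown to be $o(1)$ after careful bookkeeping, and one may alternatively use the refined explicit formula for $\pi(x;q,a)$ involving $\mathrm{Li}(x^\rho)$ as in Rubinstein and Sarnak~\cite{RS}, which attaches the $1/\log x$ factor directly to the zero-sum. A minor but necessary point is that a deterministic additive $o(1)$ error has $\delta_0$ as its limiting logarithmic distribution, so convolving with it preserves the law; this is what licenses us to discard the $o(1)$ terms when passing to limiting distributions.
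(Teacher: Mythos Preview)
Your proposal is correct and follows essentially the same route as the paper: reduce to Proposition~\ref{to random variable prop} via the identity $E(x;q,a)=-c(q,a)-\mostchisum\bar\chi(a)E(x,\chi)+o(1)$, then subtract and note $|\bar\chi(b)-\bar\chi(a)|=|\chi(b)-\chi(a)|$. The only difference is that where the paper simply cites \cite[Lemma~2.1]{RS} for equation~\eqref{Exqa in terms of Exchi}, you sketch its derivation through the $\psi\to\vartheta\to\pi$ chain; this is the same argument with one citation unpacked.
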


\begin{remark}
Since $\delta(q;a,b)$ is defined to be the logarithmic density of those real numbers $x$ for which $\pi(x;q,a) > \pi(x;q,b)$, or equivalently for which $E(x;q,a) > E(x;q,b)$, we see that $\delta(q;a,b)$ equals the probability that $X_{q;a,b}$ is greater than~0. However, we never use this fact directly in the present paper, instead quoting from~\cite{biases} a consequence of that fact in equation~\eqref{deus ex machina} below.
\end{remark}

\begin{proof}
As is customary, define
\begin{equation*}
\psi(x,\chi) = \sum_{n\le x} \chi(n)\Lambda(n)
\end{equation*}
A consequence of the explicit formula for $\psi(x,\chi)$ that arises from the analytic proof of the prime number theorem for arithmetic progressions (\cite[Corollary 12.11]{magicbook} combined with \cite[(12.12)]{magicbook}) is that for $\chi\ne\chi_0$,
\begin{equation*}
\psi(x,\chi) = -\Lsum{\gamma\in\R} \frac{x^{1/2+i\gamma}}{\frac12+i\gamma} + O(\log q \cdot \log x)
\end{equation*}
under the assumption of GRH. We also know \cite[Lemma 2.1]{RS} that
\begin{equation}
E(x;q,a) = -c(q,a) + \mostchisum \bar\chi(a) \frac{\psi(x,\chi)}{\sqrt x} + O_q\bigg( \frac1{\log x} \bigg).
\label{Exqa in terms of Exchi}
\end{equation}
Combining these last two equations with Definition~\ref{Exchi def} for $E(x,\chi)$, we obtain
\begin{equation*}
E(x;q,a) = -c(q,a) - \mostchisum \bar\chi(a) E(x,\chi) + O_q\bigg( \frac1{\log x} \bigg).
\end{equation*}
We therefore see that
\[
E(x;q,a)-E(x;q,b) = c(q,b)-c(q,a) + \allchisum (\bar\chi(b)-\bar\chi(a)) E(x,\chi) + O_q\bigg( \frac1{\log x} \bigg)
\]
(where we have added in the $\chi=\chi_0$ term for convenience). The error term tends to zero as $x$ grows and thus doesn't affect the limiting distribution, and the constant $c(q,b)-c(q,a)$ is independent of~$x$. Therefore, by Proposition~\ref {to random variable prop}, the limiting logarithmic distribution of $E(x;q,a)-E(x;q,b)$ is the same as the distribution of the random variable
\[
c(q,b)-c(q,a) + 2 \allchisum |\bar\chi(b)-\bar\chi(a)| \Lsum{\gamma>0} \frac{X_\gamma}{\sqrt{\frac14+\gamma^2}}.
\]
Since $|\bar\chi(b)-\bar\chi(a)| = |\chi(b)-\chi(a)|$, this last expression is exactly the random variable $X_{q;a,b}$ as claimed.
\end{proof}

To conclude this section, we calculate the variance of the random variable $X_{q;a,b}$.

\begin{prop}
Assume LI. Let $\{c_\chi\colon \chi\mod q\}$ be a collection of complex numbers satisfying $c_{\bar\chi} = \overline{c_\chi}$. For any constant $\mu$, the variance of the random variable
\begin{equation}
\mu + 2 \allchisum c_\chi \Lsum{\gamma>0} \frac{X_\gamma}{\sqrt{\frac14+\gamma^2}}
\label{need.LI}
\end{equation}
equals
$
\allchisum |c_\chi|^2 b(\chi),
$
where $b(\chi)$ was defined in Definition~\ref {bchi and Vqab def}. In particular, the variance of the random variable $X_{q;a,b}$ defined in Definition~\ref {Xqab def} is equal to the quantity $V(q;a,b)$ defined in Definition~\ref {bchi and Vqab def}.
\label{variance prop}
\end{prop}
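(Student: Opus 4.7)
The plan is to compute the variance directly from the series representation. Let $Y$ denote the random variable in~\eqref{need.LI}. Each $X_\gamma$ has mean zero (being the real part of a uniform random variable on the unit circle), so $E[Y] = \mu$, and $\mathrm{Var}(Y) = E\bigl[|Y-\mu|^2\bigr]$. Expanding the squared modulus produces the quadruple sum
\[
\mathrm{Var}(Y) = 4 \sum_{\chi_1,\chi_2} c_{\chi_1} \overline{c_{\chi_2}} \sum_{\gamma_1,\gamma_2 > 0} \frac{E[X_{\gamma_1} X_{\gamma_2}]}{\sqrt{(\tfrac14+\gamma_1^2)(\tfrac14+\gamma_2^2)}},
\]
where $\gamma_j$ runs over positive imaginary parts of zeros of $L(s,\chi_j)$.

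The crucial covariance computation uses two ingredients. First, since $X_\gamma = \cos U$ for $U$ uniform on $[0,2\pi)$, one has $E[X_\gamma^2] = \tfrac12$; and by Definition~\ref{Z and X def} the collection $\{X_\gamma\}_{\gamma > 0}$ is independent, so $E[X_{\gamma_1} X_{\gamma_2}] = \tfrac12 \delta_{\gamma_1=\gamma_2}$. Second, the hypothesis LI enters decisively: it asserts that the positive imaginary parts of nontrivial zeros of all Dirichlet $L$-functions modulo $q$ are linearly independent over $\Q$, and in particular pairwise distinct. Consequently, the condition $\gamma_1 = \gamma_2$ in the double sum forces $\chi_1 = \chi_2$, and every cross-character contribution vanishes. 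This collapses the quadruple sum to
\[
\mathrm{Var}(Y) = 2 \allchisum |c_\chi|^2 \Lsum{\gamma>0} \frac{1}{\tfrac14+\gamma^2}.
\]

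The remaining step is to identify this with $\sum_\chi |c_\chi|^2 b(\chi)$, the subtlety being that $b(\chi)$ sums over \emph{all} real $\gamma$ while the covariance calculation naturally produced only $\gamma > 0$. For a real character $\chi$ this is straightforward: the functional equation makes the set of zeros of $L(s,\chi)$ symmetric under $\gamma \mapsto -\gamma$, so $b(\chi) = 2\Lsum{\gamma>0} \frac{1}{\tfrac14+\gamma^2}$ directly. For complex $\chi$ the zeros of $L(s,\chi)$ themselves are not symmetric, but the functional equation pairs zeros of $L(s,\chi)$ at $\tfrac12+i\gamma$ with zeros of $L(s,\bar\chi)$ at $\tfrac12-i\gamma$; grouping the pair $\{\chi,\bar\chi\}$ together yields
\[
\Lsum{\gamma>0} \frac{1}{\tfrac14+\gamma^2} + \Lbarsum{\gamma>0} \frac{1}{\tfrac14+\gamma^2} = b(\chi) = b(\bar\chi),
\]
and combined with $|c_\chi|^2 = |c_{\bar\chi}|^2$ (from the hypothesis $c_{\bar\chi} = \overline{c_\chi}$) this completes the identification. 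The statement for $X_{q;a,b}$ then follows by taking $c_\chi = |\chi(b)-\chi(a)|$ (which is real and invariant under $\chi \mapsto \bar\chi$) together with $\mu = c(q,b)-c(q,a)$. The main subtlety I anticipate is exactly this final bookkeeping: reconciling the one-sided sum produced naturally by the covariance calculation with the two-sided definition of $b(\chi)$, which in the complex-character case requires careful invocation of the functional equation to pair $\chi$ with $\bar\chi$.
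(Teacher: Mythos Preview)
Your proof is correct and follows essentially the same approach as the paper's: both use LI to ensure that no positive ordinate $\gamma$ is shared between distinct characters (so the cross terms vanish), compute $E[X_\gamma^2]=\tfrac12$, and then invoke the functional equation to convert the one-sided sum over $\gamma>0$ into the two-sided sum defining $b(\chi)$. The only cosmetic difference is that the paper cites additivity of variance for independent summands directly, whereas you expand $|Y-\mu|^2$ into a covariance sum and collapse it to the diagonal---these are the same computation.
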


\begin{proof}
The random variables $\{X_\gamma\colon \gamma>0\}$ form an independent collection by definition; it is important to note that no single variable $X_\gamma$ can correspond to multiple characters $\chi$, due to the assumption of LI. The variance of the sum~\eqref{need.LI} is therefore simply the sum of the individual variances, that is,
\[
\sigma^2\bigg( 2 \allchisum |c_\chi| \Lsum{\gamma>0} \frac{X_\gamma}{\sqrt{\frac14+\gamma^2}} \bigg) = 4 \allchisum |c_\chi|^2 \Lsum{\gamma>0} \frac{\sigma^2(X_\gamma)}{\frac14+\gamma^2}.
\]
The variance of any $X_\gamma$ is $\frac12$, and so this last expression equals
\begin{align*}
&2 \allchisum |c_\chi|^2 \Lsum{\gamma>0} \frac1{\frac14+\gamma^2} \\
&\qquad{}= \allchisum |c_\chi|^2 \Lsum{\gamma>0} \frac1{\frac14+\gamma^2} + \allchisum |c_\chi|^2 \Lbarsum{\gamma<0} \frac1{\frac14+\gamma^2} \\
&\qquad{}= \allchisum |c_\chi|^2 \Lsum{\gamma\in\R} \frac1{\frac14+\gamma^2} = \allchisum |c_\chi|^2 b(\chi)
\end{align*}
by the functional equation for Dirichlet $L$-functions. The fact that $V(q;a,b)$ is the variance of $X_{q;a,b}$ now follows directly from their definitions.
\end{proof}

\subsection{Calculating the characteristic function}
\label{cumulant section}

The characteristic function $\hat X_{q;a,b}(z)$ of the random variable $X_{q;a,b}$ will be extremely important to our analysis of the density $\delta(q;a,b)$. To derive the formula for this characteristic function, we begin by setting down some relevant facts about the standard Bessel function $J_0$ of order zero. Specifically, we collect in the following lemma some useful information about the power series coefficients $\lambda_n$ for
\begin{equation}
\log J_0(z) = \sum_{n=0}^\infty \lambda_n z^n,
\label{lambda def}
\end{equation}
which is valid for $|z| \le \frac{12}5$ since $J_0$ has no zeros in a disk of radius slightly larger than $\frac{12}5$ centered at the origin.

\begin{lemma}
\label{log bessel lemma}
Let the coefficients $\lambda_n$ be defined in equation~\eqref{lambda def}. Then:
\begin{enumerate}
  \item $\lambda_n \ll \big(\frac5{12}\big)^n$ uniformly for $n\ge0$;
  \item $\lambda_0=0$ and $\lambda_{2m-1}=0$ for every $m\ge1$;
  \item$\lambda_{2m}<0$ for every $m\ge1$;
  \item$\lambda_n$ is a rational number for every $n\ge0$.
\end{enumerate}
\end{lemma}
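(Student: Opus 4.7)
The plan is to treat the four parts separately, using essentially independent tools.

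Parts (b) and (d) are direct. For (b), the standard series $J_0(z) = \sum_{k\ge 0} \frac{(-1)^k}{(k!)^2}(z/2)^{2k}$ shows that $J_0$ is even with $J_0(0)=1$, so $\log J_0$ is even and vanishes at the origin, forcing $\lambda_0 = 0$ and $\lambda_{2m-1} = 0$ for all $m\ge 1$. For (d), I would write $J_0(z) = 1 + u(z)$ with $u(z) = \sum_{k\ge 1} \frac{(-1)^k}{4^k (k!)^2} z^{2k}$, whose Taylor coefficients are manifestly rational, and then use
$$\log J_0(z) = \sum_{n=1}^\infty \frac{(-1)^{n-1}}{n} u(z)^n.$$
Since $u(z)^n$ begins at $z^{2n}$, only finitely many $n$ (namely $n\le m$) contribute to the coefficient of $z^{2m}$, so $\lambda_{2m}$ is a finite sum of rationals and therefore lies in $\Q$.

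Part (a) I would handle with a direct Cauchy estimate. The smallest positive zero $j_{0,1} \approx 2.4048\ldots$ of $J_0$ strictly exceeds $\tfrac{12}{5} = 2.4$, and since $J_0(0)=1$ and $J_0$ has no zero in $\{|z| < j_{0,1}\}$, $\log J_0$ is holomorphic on that disk. Applying the Cauchy integral formula on the circle $|z|=\tfrac{12}{5}$ yields
$$|\lambda_n| \le M\,\Bigl(\tfrac{5}{12}\Bigr)^n, \qquad M := \max_{|z|=12/5}|\log J_0(z)|,$$
with $M<\infty$ by continuity on the compact contour.

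Part (c) will be the main obstacle, and my plan is to appeal to the Hadamard factorization of $J_0$. Since $J_0$ is an entire function of finite order whose only zeros are the simple real points $\pm j_{0,k}$ ($k\ge 1$), and $J_0(0)=1$, we have
$$J_0(z) = \prod_{k=1}^\infty\left(1 - \frac{z^2}{j_{0,k}^2}\right).$$
For $|z|<j_{0,1}$ every factor lies in $(0,1]$, so I may take logarithms termwise and expand $\log(1 - z^2/j_{0,k}^2) = -\sum_{m\ge 1} z^{2m}/(m\,j_{0,k}^{2m})$. The resulting double series converges absolutely (using $\sum_k j_{0,k}^{-2}<\infty$, which follows from the well-known asymptotic $j_{0,k} \sim \pi(k - \tfrac14)$), so interchanging the order of summation gives
$$\lambda_{2m} = -\frac{1}{m}\sum_{k=1}^\infty \frac{1}{j_{0,k}^{2m}} < 0$$
for every $m\ge 1$. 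The authors credit Brian Conrey with a proof of (c), hinting at a slicker route, but the Hadamard-product argument above is standard and self-contained; the only nontrivial input is the Hadamard factorization together with the real-zero property of $J_0$.
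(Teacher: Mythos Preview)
Your proof is correct and follows essentially the same route as the paper: parts (b) and (d) are handled identically, part (a) via the Cauchy estimate on the circle $|z|=\tfrac{12}{5}$ (the paper phrases this as analyticity in a slightly larger disk), and part (c) via the infinite product for $J_0$ over its real zeros, which is exactly the argument the paper presents (this \emph{is} the Conrey proof alluded to in the acknowledgments, not a separate slicker one).
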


\begin{proof}
The fact that $\log J_0$ is analytic in a disk of radius slightly larger than $\frac{12}5$ centered at the origin immediately implies part~(a). Part~(b) follows from the fact that $J_0$ is an even function with $J_0(0)=1$. Next, $J_0$ has the product expansion~\cite[Section 15.41, equation (3)]{watson} 
\[
J_0(z) = \prod_{k=1}^\infty \bigg( 1 - \frac{z^2}{z_k^2} \bigg),
\]
where the $z_k$ are the positive zeros of $J_0$. Taking logarithms of both sides and expanding each summand in a power series (valid for $|z| \le \frac{12}5$ as before) gives
\[
\log J_0(z) = \sum_{k=1}^\infty \log \bigg( 1 - \frac{z^2}{z_k^2} \bigg) = - \sum_{n=1}^\infty \frac{z^{2n}}n \sum_{k=1}^\infty \frac1{z_k^{2n}},
\]
which shows that $\lambda_{2n} = -n^{-1} \sum_{k=1}^\infty z_k^{-2n}$ is negative, establishing part~(c). Finally, the Bessel function $J_0(z) = \sum_{m=0}^\infty \big({-}\frac14)^m z^{2m}/(m!)^2$ itself has a power series with rational coefficients, as does $\log(1+z)$; therefore the composition $\log(1+(J_0(z)-1))$ also has rational coefficients, establishing part~(d).
\end{proof}

\begin{definition}
Let $\lambda_n$ be defined in equation~\eqref{lambda def}. For any distinct reduced residues $a$ and $b\mod q$, define
\begin{equation}
W_n(q;a,b) = \frac{2^{2n}|\lambda_{2n}|}{V(q;a,b)} \allchisum |\chi(a)-\chi(b)|^{2n} \Lsum{\gamma>0} \frac{1} {(1/4+\gamma^2)^n},
\end{equation}
where $V(q;a,b)$ was defined in Definition~\ref {bchi and Vqab def}, so that $W_1(q;a,b) = \frac12$ for example.
\qedef
\label{Wkqab def}
\end{definition}

In fact, $W_n(q;a,b) V(q;a,b)$ is (up to a constant factor depending on $n$) the $2n$th cumulant of $X(q;a,b)$, which explains why it will appear in the lower terms of the asymptotic formula. We have normalized by $V(q;a,b)$ so that the $W_n(q;a,b)$ depend upon $q$, $a$, and $b$ in a bounded way:

\begin{prop}
We have $W_n(q;a,b) \ll \big( \frac{10}3 \big)^{2n}$ uniformly for all integers~$q$ and all reduced residues $a$ and $b\mod q$.
\label{Wk bound prop}
\end{prop}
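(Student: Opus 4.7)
The plan is to bound $W_n(q;a,b)$ by reducing the double sum defining it to a constant multiple of $V(q;a,b)$, which then cancels with the denominator, and then invoking Lemma~\ref{log bessel lemma}(a) to control $|\lambda_{2n}|$.

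First, I would use the two trivial estimates
\[
|\chi(a)-\chi(b)|^{2n} \le 4^{n-1} |\chi(a)-\chi(b)|^2 \qquad (n\ge1)
\]
(since $|\chi(a)-\chi(b)|\le2$), and
\[
\frac1{(\tfrac14+\gamma^2)^n} \le 4^{n-1}\cdot\frac1{\tfrac14+\gamma^2} \qquad (n\ge1)
\]
(since $\tfrac14+\gamma^2\ge\tfrac14$ forces $(\tfrac14+\gamma^2)^{n-1}\ge 4^{-(n-1)}$). Multiplying these bounds and extending the inner sum from $\gamma>0$ to $\gamma\in\R$ (which can only enlarge it, the $\gamma=0$ term being excluded by LI), the functional equation identifies the result with $b(\chi)$. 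Summing over characters gives
\[
\allchisum |\chi(a)-\chi(b)|^{2n} \Lsum{\gamma>0} \frac1{(\tfrac14+\gamma^2)^n}
\;\le\; 4^{2n-2} \allchisum |\chi(a)-\chi(b)|^2\, b(\chi)
\;=\; 4^{2n-2}\, V(q;a,b).
\]

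Substituting this into the definition of $W_n(q;a,b)$ causes the factor $V(q;a,b)$ to cancel entirely, yielding
\[
W_n(q;a,b) \;\le\; 2^{2n}\cdot 4^{2n-2}\, |\lambda_{2n}|.
\]
By Lemma~\ref{log bessel lemma}(a) we have $|\lambda_{2n}|\ll\bigl(\tfrac5{12}\bigr)^{2n}$, so
\[
W_n(q;a,b) \;\ll\; 2^{2n}\cdot 4^{2n-2}\cdot\Bigl(\tfrac5{12}\Bigr)^{2n}
\;=\; \tfrac1{16}\Bigl(\tfrac{4\cdot16\cdot25}{144}\Bigr)^{n}
\;=\; \tfrac1{16}\Bigl(\tfrac{100}{9}\Bigr)^{n}
\;\ll\; \Bigl(\tfrac{10}{3}\Bigr)^{2n},
\]
which is the desired bound. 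The estimate is entirely uniform in $q$, $a$, and $b$ because all three have been absorbed into $V(q;a,b)$ before the cancellation.

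There is no real obstacle here; the only subtlety is that the two trivial estimates must be coordinated so that exactly $|\chi(a)-\chi(b)|^2/(\tfrac14+\gamma^2)$ is left standing, permitting the clean recognition of $V(q;a,b)$ and hence the full cancellation against the denominator. The numerical verification that $2^2\cdot 4^2\cdot(5/12)^2 = 100/9 = (10/3)^2$ is precisely what forces the constant $10/3$ to appear in the statement.
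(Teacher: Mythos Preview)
Your proof is correct and follows essentially the same approach as the paper's: bound $|\chi(a)-\chi(b)|^{2n}$ by $2^{2n-2}|\chi(a)-\chi(b)|^2$, bound $(\tfrac14+\gamma^2)^{-n}$ by $4^{n-1}(\tfrac14+\gamma^2)^{-1}$, recognize the resulting sum as (at most) $V(q;a,b)$, cancel, and apply Lemma~\ref{log bessel lemma}(a). Two minor remarks: the invocation of LI to exclude $\gamma=0$ is unnecessary (the inequality $\sum_{\gamma>0}\le\sum_{\gamma\in\R}$ holds regardless, and the proposition does not assume LI), and the functional equation is not needed either, since $b(\chi)$ is by definition the full sum over $\gamma\in\R$.
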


\begin{proof}
From Definition~\ref{Wkqab def} and Lemma~\ref {log bessel lemma}(a), we see that
\begin{align*}
W_n (q;a,b) &\ll \frac{2^ {2n}} {V(q;a,b)} \big( \tfrac5{12} \big)^{2n} \allchisum |\chi(a)-\chi(b)|^{2n} \Lsum{\gamma>0} \frac{1} {(1/4+\gamma^2)^n} \notag \\
&\ll \frac{(5/6)^{2n}} {V(q;a,b)} \allchisum 2^{2n-2} |\chi(b)-\chi(a)|^2 \Lsum{\gamma>0} \frac{4^{n-1}}{1/4+\gamma^2} \notag \\
&= \big( \tfrac5{6} \big)^{2n} 2^{2n-2} 4^{n-1} \ll \big( \tfrac{10}3 \big)^{2n},
\end{align*}
as claimed.
\end{proof}

The following functions are necessary to write down the formula for the characteristic function $\hat X_{q;a,b}$.

\begin{definition}
For any Dirichlet character $\chi$, define
\[
F(z,\chi) = \prod_{\substack{\gamma>0 \\ L(\frac12+i\gamma,\chi)=0}} J_0\bigg( \frac{2z}{\sqrt{\frac14+\gamma^2}}\bigg).
\]
Then define
\begin{equation*}
\Phi_{q;a,b}(z) = \allchiprod F\big( |\chi(a)-\chi(b)|z, \chi \big)
\end{equation*}
for any reduced residues $a$ and $b\mod q$. Note that $|F(x,\chi)| \le 1$ for all real numbers $x$, since the same is true of~$J_0$.
\qedef
\label{Fzchi and Phiqabz def}
\end{definition}

The quantity $W_n(q;a,b)$ owes its existence to the following convenient expansion:

\begin{prop}
For any reduced residue classes $a$ and $b\mod q$,
\begin{equation*}
\Phi_{q;a,b}(z) = \exp\bigg( {-V(q;a,b)} \sum_{m=1}^{\infty} W_m(q;a,b) z^{2m} \bigg)
\end{equation*}
for $|z| < \frac3{10}$. In particular,
\begin{equation*}
\Phi_{q;a,b}(z) = e^{-V(q;a,b)z^2/2} \big(1 + O(V(q;a,b)z^4) \big)
\end{equation*}
for $|z| \le \min\{V(q;a,b)^{-1/4}, \frac14\}$.
\label{expressions for Phi prop}
\end{prop}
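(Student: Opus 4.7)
The plan is to take logarithms and use the power series expansion of $\log J_0$ from Lemma~\ref{log bessel lemma} to convert the doubly-infinite product defining $\Phi_{q;a,b}$ into a single exponentiated power series in $z^{2m}$. First I would verify that for $|z| < \tfrac{3}{10}$, every argument of $J_0$ appearing in $\Phi_{q;a,b}(z)$ lies inside the disk of radius $\tfrac{12}{5}$ where $\log J_0$ is analytic. Indeed, for any character $\chi\mod q$ and any zero ordinate $\gamma$,
\[
\bigg| \frac{2|\chi(a)-\chi(b)|z}{\sqrt{\tfrac14+\gamma^2}} \bigg| \le \frac{4|z|}{1/2} = 8|z| < \tfrac{12}{5},
\]
using $|\chi(a)-\chi(b)|\le 2$ and $\sqrt{\tfrac14+\gamma^2}\ge \tfrac12$. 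This is exactly the place where the radius $\tfrac{3}{10}$ comes from.

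Next I would apply the expansion $\log J_0(w) = \sum_{m=1}^\infty \lambda_{2m} w^{2m}$ (using parts (b) and (c) of Lemma~\ref{log bessel lemma}, which give $\lambda_0=\lambda_{2m-1}=0$ and $\lambda_{2m}=-|\lambda_{2m}|$), substitute into the product definitions of $F$ and $\Phi_{q;a,b}$, and interchange the orders of summation to collect equal powers of $z$. The result is
\[
\log \Phi_{q;a,b}(z) = -\sum_{m=1}^\infty 2^{2m}|\lambda_{2m}|\,z^{2m} \allchisum |\chi(a)-\chi(b)|^{2m} \Lsum{\gamma>0}\frac{1}{(\tfrac14+\gamma^2)^m},
\]
and the inner double sum equals exactly $V(q;a,b)\,W_m(q;a,b)/(2^{2m}|\lambda_{2m}|)$ by Definition~\ref{Wkqab def}. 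This gives the first asserted identity. The interchange of summation is justified by absolute convergence: using Lemma~\ref{log bessel lemma}(a) together with the bounds $|\chi(a)-\chi(b)|^{2m}\le 2^{2m-2}|\chi(a)-\chi(b)|^2$ and $(\tfrac14+\gamma^2)^{-m}\le 4^{m-1}(\tfrac14+\gamma^2)^{-1}$ (exactly the inequalities used in the proof of Proposition~\ref{Wk bound prop}), the $m$th term is dominated by a constant multiple of $(10|z|/3)^{2m} V(q;a,b)/4$, which is summable as long as $|z|<\tfrac{3}{10}$.

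For the second assertion, I would split off the $m=1$ term, noting that $W_1(q;a,b)=\tfrac12$, to write
\[
\log \Phi_{q;a,b}(z) = -\tfrac12 V(q;a,b)\,z^2 - V(q;a,b) \sum_{m=2}^\infty W_m(q;a,b) z^{2m}.
\]
Proposition~\ref{Wk bound prop} bounds the tail by $V(q;a,b)\sum_{m\ge 2} (\tfrac{10}{3})^{2m}|z|^{2m}$, which for $|z|\le \tfrac14$ is a convergent geometric series with first term $O(z^4)$, yielding a tail of size $O\bigl(V(q;a,b)\,z^4\bigr)$. Finally, the condition $|z|\le V(q;a,b)^{-1/4}$ ensures that this error exponent is bounded, so exponentiation gives
\[
\Phi_{q;a,b}(z) = e^{-V(q;a,b)z^2/2}\exp\!\bigl(O(V(q;a,b)z^4)\bigr) = e^{-V(q;a,b)z^2/2}\bigl(1+O(V(q;a,b)z^4)\bigr)
\]
via $e^u = 1+O(u)$ for bounded $u$. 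The main obstacle is really just the careful bookkeeping that pins down the convergence radius $\tfrac3{10}$ and verifies that all three sums (over $m$, $\chi$, and $\gamma$) may be freely interchanged; once the geometric bounds on $|\lambda_{2m}|$ and $W_m$ are in place, the rest is mechanical.
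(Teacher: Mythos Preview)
Your proposal is correct and follows essentially the same approach as the paper's proof: take logarithms, verify the $\tfrac{12}{5}$ radius condition, expand $\log J_0$ via Lemma~\ref{log bessel lemma}, regroup using Definition~\ref{Wkqab def}, and then handle the second assertion by splitting off the $m=1$ term and bounding the tail with Proposition~\ref{Wk bound prop}. The only minor difference is that you make the justification for interchanging the triple sum more explicit than the paper does, which is a presentational improvement rather than a different method.
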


\begin{proof}
Taking logarithms of both sides of the definition of $\Phi_{q;a,b}(z)$ in Definition~\ref{Fzchi and Phiqabz def} yields
\begin{equation*}
\log \Phi_{q;a,b}(z) = \allchisum \Lsum{\gamma>0} \log J_0\bigg( \frac{2|\chi(a)-\chi(b)|z}{\sqrt{\frac14+\gamma^2}} \bigg).
\end{equation*}
Since $|z| < \frac3{10}$, the argument of the logarithm of $J_0$ is at most $2\cdot2\cdot\frac3{10}/\frac12 = \frac{12}5$, and so the power series expansion~\eqref{lambda def} converges absolutely, giving
\begin{equation*}
\log \Phi_{q;a,b}(z) = \allchisum \Lsum{\gamma>0} \sum_{n=0}^\infty \lambda_n \bigg( \frac{2|\chi(a)-\chi(b)|z}{\sqrt{\frac14+\gamma^2}} \bigg)^n.
\end{equation*}
By Lemma~\ref{log bessel lemma}(b) only the terms $n=2m$ with $m\ge1$ survive, and by Lemma~\ref{log bessel lemma}(c) we may replace $\lambda_{2m}$ by $-|\lambda_{2m}|$. We thus obtain
\begin{align*}
\log \Phi_{q;a,b}(z) &= {-} \sum_{m=0}^\infty z^{2m}\cdot |\lambda_{2m}|2^{2m} \allchisum |\chi(a)-\chi(b)|^{2m} \Lsum{\gamma>0} \frac{1}{(\frac14+\gamma^2)^m} \\
&= {-} \sum_{m=1}^{\infty} V(q;a,b)W_m(q;a,b) z^{2m}
\end{align*}
for $|z| < \frac3{10}$, by Definition~\ref{Wkqab def} for $W_m(q;a,b)$. This establishes the first assertion of the proposition.

By Proposition~\ref{Wk bound prop}, we also have
\begin{equation}
\sum_{m=2}^{\infty} W_m(q;a,b) z^{2m} \ll \sum_{m=2}^{\infty} \big( \tfrac{10}3 \big)^{2m} z^{2m} = \frac{(10/3)^4 z^4}{1-100z^2/9} \ll z^4
\label{rest of W terms}
\end{equation}
uniformly for $|z|\le\frac14$, say. Therefore by the first assertion of the proposition,
\begin{multline*}
\Phi_{q;a,b}(z) = \exp\big( {-V(q;a,b)} W_1(q;a,b) z^{2} \big) \exp\bigg( {-V(q;a,b)} \sum_{m=2}^{\infty} W_m(q;a,b) z^{2m} \bigg) \\
{}= e^{-V(q;a,b)z^2/2} \exp\big( O( V(q;a,b) z^4) \big) = e^{-V(q;a,b)z^2/2} \big( 1+O( V(q;a,b) z^4) \big)
\end{multline*}
as long as $V(q;a,b) z^4 \le 1$. This establishes the second assertion of the proposition.
\end{proof}

All the tools are now in place to calculate the characteristic function $\hat X_{q;a,b}(z) = \E\big( e^{izX_{q;a,b}} \big)$.

\begin{prop}
For any reduced residue classes $a$ and $b\mod q$,
\[
\hat X_{q;a,b} (z) = e^{iz (c(q,b)-c(q,a))} \Phi_{q;a,b}(z).
\]
In particular,
\begin{equation*}
\log \hat X_{q;a,b}(z) = i\big( c(q,a)-c(q,b) \big)z - \tfrac12{V(q;a,b)}z^2 + O\big( V(q;a,b)z^4 \big)
\end{equation*}
for $|z|\le\frac14$.
\label{used in central limit theorem prop}
\end{prop}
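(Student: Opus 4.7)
The plan is to compute $\hat X_{q;a,b}(z)=\E(e^{izX_{q;a,b}})$ directly from Definition~\ref{Xqab def} by factoring out the deterministic shift $c(q,b)-c(q,a)$, exploiting independence of the collection $\{X_\gamma\}_{\gamma>0}$ to turn the remaining expectation into a double product over $\chi$ and $\gamma>0$ of one-variable expectations, and then identifying each such factor with a value of $J_0$ via its standard Bessel integral representation.

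First I would write $X_{q;a,b}=(c(q,b)-c(q,a))+Y$ with
\[
Y=2\allchisum|\chi(a)-\chi(b)|\Lsum{\gamma>0}\frac{X_\gamma}{\sqrt{\tfrac14+\gamma^2}},
\]
so that $\hat X_{q;a,b}(z)=e^{iz(c(q,b)-c(q,a))}\E(e^{izY})$. Under LI, distinct pairs $(\chi,\gamma)$ index genuinely distinct (and hence independent) random variables $X_\gamma$, so $\E(e^{izY})$ factors across $\chi$ and $\gamma>0$ into terms of the form $\E(e^{it_{\chi,\gamma}X_\gamma})$ with $t_{\chi,\gamma}=2z|\chi(a)-\chi(b)|/\sqrt{\tfrac14+\gamma^2}$. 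The key identity is that $X_\gamma=\Re Z_\gamma$ with $Z_\gamma$ uniform on the unit circle, giving
\[
\E(e^{itX_\gamma})=\frac{1}{2\pi}\int_0^{2\pi}e^{it\cos\theta}\,d\theta=J_0(t),
\]
so each factor becomes $J_0(t_{\chi,\gamma})$. The resulting double product matches Definition~\ref{Fzchi and Phiqabz def} of $\Phi_{q;a,b}(z)$ exactly, which establishes the first identity.

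The second identity will then follow immediately by taking logarithms and invoking the expansion of $\log\Phi_{q;a,b}(z)$ supplied by Proposition~\ref{expressions for Phi prop}, namely $-\tfrac12 V(q;a,b)z^2+O(V(q;a,b)z^4)$ on $|z|\le\tfrac14$. The main (though modest) technical obstacle I expect is the justification for interchanging expectation with the infinite product, since the sum defining $Y$ converges only in $L^2$ and not absolutely. The cleanest route is to work first with truncated sums $Y_T$ (over $|\gamma|<T$), whose characteristic function is a finite product of $J_0$'s by independence; the bound $|J_0|\le 1$ keeps partial products uniformly bounded, while $Y_T\to Y$ in $L^2$ (ensured by $\Lsum{\gamma>0}(\tfrac14+\gamma^2)^{-1}=b(\chi)<\infty$), so L\'evy's continuity theorem identifies the limit of the finite products with $\hat Y(z)$ and closes the proof.
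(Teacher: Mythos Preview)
Your proposal is correct and follows essentially the same approach as the paper: both arguments factor out the deterministic shift, use independence of the $X_\gamma$ to express the characteristic function as a product, identify each factor with $J_0$ via the integral $\frac{1}{2\pi}\int_0^{2\pi}e^{it\cos\theta}\,d\theta$, recognize the product as $\Phi_{q;a,b}(z)$, and then invoke Proposition~\ref{expressions for Phi prop} for the second assertion. The only notable difference is that the paper phrases everything in terms of the cumulant-generating function $g_X(t)=\log\hat X(t)$ and its additivity under independence, while you work directly with characteristic functions; these are equivalent viewpoints. Your explicit handling of the infinite-product justification via truncation, $L^2$ convergence, and L\'evy's continuity theorem is a point the paper passes over silently, so if anything your version is slightly more careful.
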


\begin{remark}
The first assertion of the proposition was shown by Feuerverger and Martin~\cite{biases} by a slightly different method. Unfortunately an $i$ in the exponential factor of \cite[equation (2-21)]{biases} is missing, an omission that is repeated in the statement of \cite[Theorem 4]{biases}.
\end{remark}

\begin{proof}
For a random variable $X$, define the cumulant-generating function
\[
g_X(t)= \log \hat X(t) = \log \E(e^{i t X})
\]
to be the logarithm of the characteristic function of~$X$. It is easy to see that $g_{\alpha X}(t)=g_{X}(\alpha t)$ for any constant $\alpha$. Moreover, if $X$ and $Y$ are independent random variables, then $\E(e^{i t X}e^{i t Y}) = \E(e^{i t X}) \E(e^{i t Y})$ and so $g_{X+Y}(t)=g_X(t)+g_Y(t)$. Note that if the random variable $C$ is constant with value $c$, then $g_C(t) = itc$.

We can also calculate $g_{X_\gamma}(t)$ where $X_\gamma$ was defined in Definition~\ref {Z and X def}. Indeed, if $\Theta$ is a random variable uniformly distributed on the interval $[-\pi,\pi]$, then $Z_{\gamma} = e^{i\Theta}$ and thus $X_{\gamma} = \cos \Theta$, whence
\[
g_{X_{\gamma}}(t) =\log \E\big( e^{it\cos \Theta} \big) = \log\left( \int_{-\pi}^{\pi} e^{it \cos \theta} \,\frac{d\theta}{2\pi} \right) = \log J_0(t),
\]
where $J_0$ is the Bessel function of order zero \cite[9.1.21]{handbook}.

From Definition~\ref {Xqab def}, the above observations yield
\begin{equation*}
g_{X_{q;a,b}}(t) = it (c(q,b)-c(q,a)) + \allchisum \Lsum{\gamma>0} g_{X_{\gamma}} \bigg( \frac{2 |\chi(a)-\chi(b)|}{\sqrt{1/4+\gamma^2}}t \bigg);
\end{equation*}
in other words,
\begin{align}
\log \hat X_{q;a,b} (t)  &= it (c(q,b)-c(q,a)) + \allchisum \Lsum{\gamma>0} \log J_0 \bigg( \frac{2 |\chi(a)-\chi(b)|}{\sqrt{1/4+\gamma^2}}t \bigg) \notag \\
&= it (c(q,b)-c(q,a)) + \log \Phi_{q;a,b}(x)
\label{back to the log step}
\end{align}
according to Definition~\ref {Fzchi and Phiqabz def}. Exponentiating both sides establishes the first assertion of the proposition. To establish the second assertion, we combine equation~\eqref {back to the log step} with Proposition~\ref {expressions for Phi prop} to see that for $|z|\le\frac14$,
\begin{align*}
\log \hat X_{q;a,b} (t) &= it (c(q,b)-c(q,a)) - V(q;a,b) \sum_{m=1}^{\infty} W_m(q;a,b) z^{2m} \\
&= it (c(q,b)-c(q,a)) - \tfrac12V(q;a,b)z^2 + O(V(q;a,b)z^4)
\end{align*}
by the estimate~\eqref{rest of W terms} and the fact that $W_1(q;a,b) = \frac12$.
\end{proof}

\subsection{Bounds for the characteristic function}
\label{bounds section}

A formula (namely equation~\eqref {deus ex machina} below) is known that relates $\delta(q;a,b)$ to an integral involving $\Phi_{q;a,b}$. Using this formula to obtain explicit estimates for $\delta(q;a,b)$ requires explicit estimates upon $\Phi_{q;a,b}$; our first estimate shows that this function takes its largest values near~$0$.

\begin{prop}
Let $0\le\kappa\le\frac5{24}$. For any reduced residue classes $a$ and $b\mod q$, we have $|\Phi_{q;a,b}(t)| \le |\Phi_{q;a,b}(\kappa)|$ for all $t\ge \kappa$.
\label{Phi basically decreasing prop}
\end{prop}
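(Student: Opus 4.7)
The natural strategy is to split the range $t \ge \kappa$ at $t = 5/24$ and handle the two subranges separately. Unpacking Definition~\ref{Fzchi and Phiqabz def}, we have
\[
\Phi_{q;a,b}(t) = \allchiprod \,\Lprod{\gamma>0} J_0\bigl(\alpha_{\chi,\gamma}\, t\bigr), \qquad \alpha_{\chi,\gamma} = \frac{2|\chi(a)-\chi(b)|}{\sqrt{\frac14+\gamma^2}},
\]
so that $\Phi_{q;a,b}$ is a (conditionally convergent) infinite product of scaled Bessel factors.

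For the easy subrange $\kappa \le t \le 5/24$, I would first observe that $\alpha_{\chi,\gamma} \le 2\cdot 2/\sqrt{1/4} = 8$ uniformly (using $|\chi(a)-\chi(b)| \le 2$ and $\gamma>0$), and hence every argument $\alpha_{\chi,\gamma} t$ with $t \le 5/24$ satisfies $\alpha_{\chi,\gamma} t \le 8\cdot 5/24 = 5/3$, comfortably below the first positive zero $z_1 \approx 2.4048$ of $J_0$. Consequently each factor $J_0(\alpha_{\chi,\gamma} t)$ is strictly positive and, because $J_0$ is strictly decreasing on $[0, z_1]$, strictly decreasing in~$t$. The product $\Phi_{q;a,b}$ is therefore positive and strictly decreasing on $[0, 5/24]$, which immediately gives $\Phi_{q;a,b}(t) \le \Phi_{q;a,b}(\kappa)$ with the absolute value bars harmless because both sides are positive.

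The main obstacle lies in the subrange $t > 5/24$, where individual Bessel factors oscillate in sign and $\Phi_{q;a,b}(t)$ is no longer monotone. Here the target is the bound $|\Phi_{q;a,b}(t)| \le \Phi_{q;a,b}(5/24)$; combined with the previous step this yields the proposition for every $\kappa \in [0, 5/24]$. My plan is to split the product at a suitable threshold $T$ into a head (pairs $(\chi,\gamma)$ with $\gamma \le T$) and a tail. For the head, consisting of only finitely many factors controlled by $N(T,\chi)$, use the trivial pointwise bound $|J_0| \le 1$ together with the classical decay $|J_0(x)| \ll x^{-1/2}$ for $x \ge 1$, so that the head contributes at most a decreasing function of $t$. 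For the tail, the arguments $\alpha_{\chi,\gamma}t$ are small and the Taylor expansion from Lemma~\ref{log bessel lemma} (whose coefficients are negative and summable) yields a bound of the form $\exp\bigl(-\tfrac14 t^2 \sum_{\gamma>T} \alpha_{\chi,\gamma}^2 + O(\text{quartic})\bigr)$ on the tail. Comparing this to the convergent expansion of $\Phi_{q;a,b}(5/24)$ guaranteed by Proposition~\ref{expressions for Phi prop} (which applies since $5/24 < 3/10$), one verifies that the tail at $t$ is already dominated by the tail at $5/24$, and the head factor is absorbed by the strict slack coming from the larger scaling $t > 5/24$.

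The delicate point will be calibrating the threshold $T$ so that the head/tail split is balanced: for $T$ too small the tail Taylor expansion loses precision, while for $T$ too large the head bound $|J_0(x)| \ll x^{-1/2}$ requires $t$ to be very large before it becomes useful. The particular constant $5/24$ in the statement likely reflects precisely this balance, being small enough to sit inside both convergence regions $|z| < 3/10$ and $|z| \le 1/4$ appearing in Proposition~\ref{expressions for Phi prop}, while $8 \cdot 5/24 = 5/3$ still lies safely below $z_1$, so that the elementary monotonicity argument covers $[0, 5/24]$ and only a single transition point needs to be analyzed.
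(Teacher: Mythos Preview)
Your argument for the subrange $\kappa \le t \le 5/24$ is correct and matches the paper's approach: each Bessel argument lies in $[0,5/3]$, where $J_0$ is positive and decreasing, so the product is monotone there.

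However, your plan for $t > 5/24$ is both incomplete and unnecessarily complicated, and you have missed the single numerical fact that makes the whole proof immediate. The paper observes that, in addition to $J_0$ being positive and decreasing on $[0,5/3]$, one has
\[
J_0(5/3) \ge |J_0(x)| \quad\text{for all } x \ge 5/3.
\]
(Numerically, $J_0(5/3)\approx 0.41$, while the first and largest subsequent extremum of $|J_0|$ is about $0.403$.) With this in hand the proof is purely factor-by-factor and requires no range splitting at all: for any $t \ge \kappa$ and any pair $(\chi,\gamma)$, the argument $\alpha_{\chi,\gamma}\kappa$ lies in $[0,5/3]$; if $\alpha_{\chi,\gamma}t \le 5/3$ then monotonicity of $J_0$ on $[0,5/3]$ gives $|J_0(\alpha_{\chi,\gamma}t)| \le J_0(\alpha_{\chi,\gamma}\kappa)$, and if $\alpha_{\chi,\gamma}t > 5/3$ then $|J_0(\alpha_{\chi,\gamma}t)| \le J_0(5/3) \le J_0(\alpha_{\chi,\gamma}\kappa)$. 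Multiplying over all factors gives the proposition outright.

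Your proposed head/tail decomposition with a threshold $T$, balancing $|J_0(x)|\ll x^{-1/2}$ against Taylor expansions of the tail, is not needed and, as you yourself note, would be delicate to calibrate. The reason the constant $5/24$ appears is precisely that $8\cdot 5/24 = 5/3$ hits the threshold where the inequality $J_0(5/3)\ge \sup_{x\ge 5/3}|J_0(x)|$ kicks in---not because of the radii $3/10$ or $1/4$ from Proposition~\ref{expressions for Phi prop}.
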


\begin{proof}
From Definition~\ref{Fzchi and Phiqabz def}, it suffices to show that for any real number $\gamma>0$,
\begin{equation}
\bigg| J_0\bigg( \frac{2|\chi(a)-\chi(b)|t}{\sqrt{1/4+\gamma^2}} \bigg) \bigg| \le \bigg| J_0\bigg( \frac{2|\chi(a)-\chi(b)|\kappa}{\sqrt{1/4+\gamma^2}} \bigg) \bigg|
\label{bessel.need}
\end{equation}
for all $t\ge \kappa$. We use the facts that $J_0$ is a positive, decreasing function on the interval $[0,\frac{5}{3}]$ and that $J_0\big(\frac{5}{3}\big) \ge |J_0(x)|$ for all $x\ge\frac{5}{3}$. Since
\[
0 \le \frac{2|\chi(a)-\chi(b)|\kappa}{\sqrt{1/4+\gamma^2}} \le \frac{2\cdot2\cdot5/24}{\sqrt{1/4}} = \frac{5}{3},
\]
we see that $J_0$ is positive and decreasing on the interval
\[
\bigg[ \frac{2|\chi(a)-\chi(b)|\kappa}{\sqrt{1/4+\gamma^2}}, \frac{5}{3} \bigg].
\]
Together with $J_0\big(\frac{5}{3}\big) \ge |J_0(x)|$ for all $x\ge\frac{5}{3}$, this establishes equation~\eqref{bessel.need} and hence the lemma.
\end{proof}

Let $N(T,\chi)$ denote, as usual, the number of nontrivial zeros of $L(s,\chi)$ having imaginary part at most $T$ in absolute value. Since the function $\Phi_{q;a,b}$ is a product indexed by these nontrivial zeros, we need to establish the following explicit estimates for $N(T,\chi)$. Although exact values for the constants in the results of this section are not needed for proving Theorem~\ref{delta series theorem}, they will become necessary in Section~\ref {computations section} when we explicitly calculate values and bounds for $\delta(q;a,b)$.

\begin{prop}
Let the nonprincipal character $\chi \mod q$ be induced by $\chi^* \mod {q^*}$. For any real number $T\ge 1$,
$$
N(T,\chi) \le \frac T\pi \log \frac{q^*T}{2\pi e} + 0.68884 \log \frac{q^*T}{2\pi e} + 10.6035.
$$
For $T\ge 100$,
$$
N(T,\chi) \ge \frac{44T}{45\pi} \log \frac{q^*T}{2\pi e} - 10.551.
$$
\label{McCurley bound}
\end{prop}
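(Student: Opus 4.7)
The plan is to follow the classical argument-principle approach due to Backlund and refined explicitly by McCurley, working with the primitive character $\chi^{*}\mod{q^{*}}$ (which has the same nontrivial zeros as $\chi$, so $N(T,\chi)=N(T,\chi^{*})$). Let $a\in\{0,1\}$ be the parity of $\chi^{*}$ and consider the completed $L$-function
\begin{equation*}
\xi(s,\chi^{*}) = \bigl(q^{*}/\pi\bigr)^{(s+a)/2}\,\Gamma\!\bigl((s+a)/2\bigr)\,L(s,\chi^{*}),
\end{equation*}
which is entire of order $1$ and whose zeros inside the critical strip coincide with the nontrivial zeros of $L(s,\chi^{*})$. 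First I would apply the argument principle to the rectangle with vertices $2-iT$, $2+iT$, $-1+iT$, $-1-iT$ (adjusting slightly if a zero lies on the boundary), writing $2\pi N(T,\chi^{*})=\Delta\arg\xi(s,\chi^{*})$ around this rectangle.

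Next, I would exploit the functional equation $\xi(s,\chi^{*}) = \varepsilon(\chi^{*})\xi(1-s,\overline{\chi^{*}})$ to fold the argument variation on the left half of the rectangle onto the right half, so that $N(T,\chi^{*})$ is expressed (up to a bounded term from the horizontal segments) as $\tfrac{1}{\pi}\Delta\arg\xi(s,\chi^{*})$ along the vertical segment from $\tfrac12$ to $2+iT$. Decomposing $\log\xi(s,\chi^{*}) = \tfrac{s+a}{2}\log(q^{*}/\pi) + \log\Gamma\!\bigl((s+a)/2\bigr) + \log L(s,\chi^{*})$ separates this into three pieces. The first two are evaluated explicitly: the $(q^{*}/\pi)$ factor contributes $\tfrac{T}{2\pi}\log(q^{*}/\pi)$, and Stirling's formula applied to $\Gamma((s+a)/2)$ (in the effective form with explicit remainder, e.g.\ from Olver or Rademacher) contributes $\tfrac{T}{2\pi}\log(T/2) - \tfrac{T}{2\pi} + $ explicit lower-order terms, together yielding the main term $\tfrac{T}{\pi}\log\tfrac{q^{*}T}{2\pi e}$.

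The remaining piece is $S(T,\chi^{*}) := \tfrac{1}{\pi}\arg L(\tfrac12+iT,\chi^{*})$, and bounding this is the main obstacle. I would follow Backlund's technique: relate $\arg L(\tfrac12+iT,\chi^{*})$ to the number of zeros of $\Re L(\sigma+iT,\chi^{*})$ on the segment $\tfrac12\le\sigma\le 2$, and in turn to the number of zeros of $f(s) := \tfrac12\bigl(L(s,\chi^{*})+\overline{L(\bar s,\chi^{*})}\bigr)$ in a small disk centered at $2+iT$. Jensen's inequality on such a disk, combined with an explicit convexity bound for $|L(s,\chi^{*})|$ on the line $\sigma=-\tfrac12$ obtained from the functional equation and elementary bounds on $\zeta(s)$ near $\sigma=\tfrac32$, converts this into an inequality of the shape $|S(T,\chi^{*})| \le C_1\log(q^{*}T) + C_2$ with fully explicit constants. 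Careful tracking of numerical factors (and the small amount of slack coming from the horizontal edges and the ``$\arg$ at $s=2$'' endpoint evaluation) yields the stated $0.68884$ and $10.6035$ for the upper bound.

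For the lower bound, the same identity gives $N(T,\chi^{*})$ exactly as (main term) $+ S(T,\chi^{*}) + $ (bounded terms), so I would apply the upper bound on $|S(T,\chi^{*})|$ in the opposite direction. The coefficient $44/(45\pi)$ rather than $1/\pi$ in the lower bound reflects absorbing the $S(T,\chi^{*})$ contribution into the main term at the cost of shrinking the coefficient slightly, valid once $T\ge 100$ so that the Stirling remainder and the $S$-term are dominated by a small fraction of $\tfrac{T}{\pi}\log\tfrac{q^{*}T}{2\pi e}$; the constant $-10.551$ is then chosen to absorb all remaining explicit constants. I expect the tightest step to be the numerical optimization in the Jensen bound for $S(T,\chi^{*})$, where the choice of disk radius and the explicit convexity bound on $L(-\tfrac12+it,\chi^{*})$ interact to determine the final constants.
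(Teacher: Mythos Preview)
Your approach is correct in principle but takes a much longer route than the paper. The paper does not rederive the zero-counting estimate from scratch; it simply cites McCurley's explicit result \cite[Theorem 2.1]{mcc}, which states that for $T\ge 1$ and $\eta\in(0,0.5]$,
\[
\bigg| N(T,\chi) - \frac{T}{\pi}\log\frac{q^*T}{2\pi e} \bigg| < C_1\log q^*T + C_2
\]
with explicit $C_1,C_2$ depending on $\eta$. Choosing $\eta=0.25$ gives $C_1 = 0.68884$ and a value of $C_2$ which, after rewriting $\log q^*T = \log\frac{q^*T}{2\pi e} + \log 2\pi e$, produces the constant $10.6035$ in the upper bound. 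For the lower bound, the paper writes $\frac{T}{\pi} = \frac{44T}{45\pi} + \frac{T}{45\pi}$ and checks by direct calculation that for $T\ge 100$ and $q^*\ge 3$ the leftover term $\big(\frac{T}{45\pi}-0.68884\big)\log\frac{q^*T}{2\pi e} - 10.6035$ is at least $-10.551$.

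Your proposal is essentially an outline of McCurley's own proof (argument principle on the completed $L$-function, Stirling for the $\Gamma$-contribution, Backlund's method via Jensen's inequality for $S(T,\chi^*)$). That is a valid and standard route, and your description of how the $44/(45\pi)$ arises is accurate. However, it is far more labor-intensive, and arriving at the precise constants $0.68884$ and $10.6035$ would require replicating McCurley's specific parameter choices and numerical optimizations rather than just ``careful tracking''. For a paper that is \emph{using} explicit zero-density estimates rather than establishing them, quoting the literature and doing a two-line manipulation is the appropriate level of detail; your version would belong in McCurley's paper, not this one.
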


\begin{proof}
We cite the following result of McCurley~\cite[Theorem 2.1]{mcc}: for $T \ge 1$ and $\eta \in (0,0.5]$,
$$
\bigg| N(T,\chi) - \frac{T}{\pi} \log \frac{q^*T}{2\pi e} \bigg| < C_1 \log q^*T +C_2,
$$
with $C_1=\frac{1+2\eta}{\pi \log 2}$ and $C_2 =.3058-.268 \eta +4\frac{\log\zeta(1+\eta)}{\log 2}-2\frac{\log\zeta(2+2\eta)}{\log 2}+\frac{2}{\pi}\frac{\log\zeta(\frac{3}{2}+2\eta)}{\log 2}$. (McCurley states his result for primitive nonprincipal characters, but since $L(s,\chi)$ and $L(s,\chi^*)$ have the same zeros inside the critical strip, the result holds for any nonprincipal character.) Taking $\eta = 0.25$, we obtain
\begin{equation*}
\bigg| N(T,\chi) - \frac{T}{\pi} \log \frac{q^*T}{2\pi e} \bigg| < 0.68884 \log q^*T + 8.64865 < 0.68884 \log \frac{q^*T}{2\pi e} + 10.6035.
\end{equation*}
This inequality establishes the first assertion of the proposition. The inequality also implies that
\begin{equation*}
N(T,\chi) > \frac{44T}{45\pi} \log \frac{q^*T}{2\pi e} + \bigg( \bigg( \frac{T}{45\pi} - .68884 \bigg) \log \frac{q^*T}{2\pi e} - 10.6035 \bigg);
\end{equation*}
the second assertion of the proposition follows upon calculating that the expression in parentheses is at least $-10.551$ when $T\ge100$ (we know that $q^*\ge3$ as there are no nonprincipal primitive characters modulo~1 or~2).
\end{proof}

The next two results establish an exponentially decreasing upper bound for $\Phi_{q;a,b}(t)$ when $t$ is large.

\begin{lemma}
For any nonprincipal character $\chi\mod q$, we have $|F(x,\chi)F(x,\bar\chi)| \le e^{-0.2725x}$ for $x\ge200$.
\label{F.upper.bound.lemma}
\end{lemma}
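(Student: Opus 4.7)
The plan is to combine $F(x,\chi)$ and $F(x,\bar\chi)$ into a single product over all nontrivial zeros of $L(s,\chi)$ via the functional equation, then apply a uniform Bessel-function bound, and finally count the relevant zeros using Proposition~\ref{McCurley bound}.

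First, by the functional equation for Dirichlet $L$-functions, the map $\gamma\mapsto-\gamma$ interchanges the nontrivial zeros of $L(s,\chi)$ and $L(s,\bar\chi)$; since $\sqrt{1/4+\gamma^2}$ is even in $\gamma$, this yields
\begin{equation*}
F(x,\chi)\,F(x,\bar\chi) = \Lprod{\gamma\in\R} J_0\!\left(\frac{2x}{\sqrt{1/4+\gamma^2}}\right),
\end{equation*}
the product now running over all nontrivial zeros of $L(s,\chi)$ (of either sign of imaginary part). Next I would apply two bounds on $J_0$: the trivial $|J_0(y)|\le 1$, and the stronger $|J_0(y)|\le J_0(5/3)$ valid for $y\ge 5/3$ (the classical fact already invoked in the proof of Proposition~\ref{Phi basically decreasing prop}). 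Setting $T(x)=\sqrt{(6x/5)^2-1/4}$ so that the condition $2x/\sqrt{1/4+\gamma^2}\ge 5/3$ becomes exactly $|\gamma|\le T(x)$, these bounds give
\begin{equation*}
|F(x,\chi)F(x,\bar\chi)| \le J_0(5/3)^{N(T(x),\chi)},
\end{equation*}
and taking logarithms reduces the lemma to verifying $N(T(x),\chi)\,|\log J_0(5/3)|\ge 0.2725\,x$ for $x\ge 200$.

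The final step uses Proposition~\ref{McCurley bound}: since $T(x)\ge 240>100$ whenever $x\ge 200$, the lower bound
\begin{equation*}
N(T(x),\chi)\ge \frac{44\,T(x)}{45\pi}\log\frac{q^*T(x)}{2\pi e}-10.551
\end{equation*}
applies, and $q^*\ge 3$ because $\chi$ is nonprincipal (no nontrivial characters exist modulo $1$ or $2$). The main term on the right is of order $x\log x$, which dominates the required linear quantity $0.2725\,x/|\log J_0(5/3)|$ (numerically about $0.31\,x$, since $|\log J_0(5/3)|\approx 0.87$) with substantial room to spare. The only remaining obstacle, and it is a routine one, is the numerical bookkeeping: one must verify the inequality at the endpoint $x=200$ with the constant $10.551$ subtracted off, after which monotonicity of both sides in $x$ finishes the argument. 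The comfortable gap between the leading term and the target confirms that the specific constant $0.2725$ has been chosen with plenty of slack.
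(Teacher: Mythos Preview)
Your proposal is correct and follows essentially the same architecture as the paper's proof: combine $F(x,\chi)F(x,\bar\chi)$ into a single product over all nontrivial zeros via the functional equation, bound each Bessel factor by a constant less than~$1$ on a range of zeros, and then invoke Proposition~\ref{McCurley bound} to count those zeros.

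The only difference is in the choice of Bessel bound. The paper uses the envelope $|J_0(z)|\le\sqrt{2/(\pi|z|)}$, restricts to $|\gamma|<x/2$, and observes that each factor is then at most $\tfrac12$, yielding $|F(x,\chi)F(x,\bar\chi)|\le 2^{-N(x/2,\chi)}$. You instead use the uniform bound $|J_0(y)|\le J_0(5/3)$ for $y\ge 5/3$ (already quoted in Proposition~\ref{Phi basically decreasing prop}), which lets you take the larger cutoff $T(x)\approx 1.2x$ and the smaller per-zero factor $J_0(5/3)\approx 0.42$. Your version therefore has even more numerical slack than the paper's, and the endpoint check at $x=200$ goes through comfortably (the McCurley lower bound gives $N(T(200),\chi)\gtrsim 269$, far exceeding the $\approx 63$ required). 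Both routes are equally valid; yours is arguably slightly cleaner since it reuses a fact already established in the paper rather than importing a separate Bessel estimate.
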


\begin{proof}
First note that
\[
F(x,\bar\chi) = \prod_{\substack{\gamma>0 \\ L(1/2+i\gamma,\bar\chi)=0}} J_0\bigg( \frac{2x}{\sqrt{1/4+\gamma^2}} \bigg) = \Lprod{\gamma<0} J_0\bigg( \frac{2x}{\sqrt{1/4+(-\gamma)^2}} \bigg)
\]
by the identity $L(s,\bar\chi) = \overline{L(\bar s,\chi)}$, and therefore
\[
F(x,\chi)F(x,\bar\chi) = \Lprod{\gamma\in\R} J_0\bigg( \frac{2x}{\sqrt{1/4+\gamma^2}} \bigg).
\]
Using the bound \cite[equation (4.5)]{RS}
\[
|J_0(z)| \le \min\bigg\{ 1, \sqrt{\tfrac2{\pi|x|}} \bigg\},
\]
we see that for $x\ge1$,
\[
|F(x,\chi)F(x,\bar\chi)| \le \Lprod{-x/2<\gamma<x/2} \bigg| J_0\bigg( \frac{2x}{\sqrt{1/4+\gamma^2}} \bigg) \bigg| \le \Lprod{|\gamma|<x/2} \frac{(1/4+\gamma^2)^{1/4}}{\sqrt{\pi x}}.
\]
When $x\ge1$ and $|\gamma|<x/2$, the factor $(1/4+\gamma^2)^{1/4}(\pi x)^{-1/2}$ never exceeds $1/2$. Therefore
\[
|F(x,\chi)F(x,\bar\chi)| \le 2^{-N(x/2,\chi)} = \exp\big( {-}(\log 2) N(x/2,\chi) \big).
\]
By Proposition~\ref {McCurley bound}, we thus have for $x\ge200$
\begin{align*}
|F(x,\chi)F(x,\bar\chi)| &\le 2^{10.558} \exp\bigg( {-}\frac{22\log 2}{45\pi} x \log \frac{q^*x}{4\pi e} \bigg) \\
&\le \exp\bigg( {-}0.107866 x \log \frac{3 x}{4\pi e}+7.3183 \bigg) \le e^{-0.2725 x},
\end{align*}
as claimed.
\end{proof}

\begin{prop}
For any distinct reduced residue classes $a$ and $b\mod q$ such that $(ab,q)=1$, we have
$|\Phi_{q;a,b}(t)| \le e^{-0.0454\phi(q)t}$ for $t\ge200$.
\label{Phi upper bound prop}
\end{prop}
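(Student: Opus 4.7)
The plan is to exponentially bound $|\Phi_{q;a,b}(t)|$ by combining Lemma~\ref{F.upper.bound.lemma} across characters, paired with their conjugates, after isolating the characters for which the argument of $F$ exceeds the $200$-threshold needed to apply that lemma.

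First I would use the definition of $\Phi_{q;a,b}$ as a product over all Dirichlet characters $\chi\mod q$, together with the fact that $|\chi(a)-\chi(b)| = |\bar\chi(a)-\bar\chi(b)|$ and that $F(x,\chi)$ is real for real $x$, to write
\[
|\Phi_{q;a,b}(t)|^2 = \allchiprod \bigl|F\bigl(|\chi(a)-\chi(b)|t,\chi\bigr)\,F\bigl(|\chi(a)-\chi(b)|t,\bar\chi\bigr)\bigr|.
\]
This form is convenient because Lemma~\ref{F.upper.bound.lemma} bounds exactly the pair $|F(x,\chi)F(x,\bar\chi)|$. For every $\chi$ with $|\chi(a)-\chi(b)|\,t\ge 200$, I would invoke that lemma to replace the corresponding factor by $e^{-0.2725\,|\chi(a)-\chi(b)|t}$; for the remaining $\chi$, the trivial bound $|F|\le 1$ (from Definition~\ref{Fzchi and Phiqabz def}, since $|J_0|\le 1$) suffices. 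This yields
\[
|\Phi_{q;a,b}(t)|^2 \le \exp\!\bigg({-}0.2725\,t\!\!\sum_{\substack{\chi\mod q \\ |\chi(a)-\chi(b)|\,t\ge 200}}\!\!|\chi(a)-\chi(b)|\bigg).
\]

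Next I would bound the character sum from below, which is the step that ties the bound to $\phi(q)$. The basic input is the orthogonality identity
\[
\allchisum |\chi(a)-\chi(b)|^2 = 2\phi(q)
\]
(valid since $a\not\equiv b\mod q$ and $(ab,q)=1$). The ``bad'' characters, for which $|\chi(a)-\chi(b)|<200/t$, contribute at most $(200/t)^2\phi(q)\le\phi(q)$ to the left-hand side whenever $t\ge 200$, so
\[
\sum_{|\chi(a)-\chi(b)|\,t\ge 200} |\chi(a)-\chi(b)|^2 \ge \phi(q).
\]
Now, since $|\chi(a)-\chi(b)|\le 2$ in every case, one has $|\chi(a)-\chi(b)|\ge \tfrac12|\chi(a)-\chi(b)|^2$, and summing gives
\[
\sum_{|\chi(a)-\chi(b)|\,t\ge 200} |\chi(a)-\chi(b)| \ge \tfrac12\phi(q).
\]

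Combining the two displays, I obtain an inequality of the shape $|\Phi_{q;a,b}(t)|^2 \le \exp(-c\,\phi(q)\,t)$ for an explicit positive $c$, and then take the square root to recover the stated exponential decay $|\Phi_{q;a,b}(t)|\le e^{-0.0454\phi(q)t}$. The main obstacle is the lower bound on $\sum |\chi(a)-\chi(b)|$: the passage from the second moment $\sum|\chi(a)-\chi(b)|^2=2\phi(q)$ to a first-moment bound is where one loses constants, and one has to be careful that the ``bad'' characters (where the Lemma's hypothesis $x\ge 200$ fails) do not absorb too much of the $\ell^2$ mass—this is precisely what the assumption $t\ge 200$ controls.
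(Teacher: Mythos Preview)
Your argument is correct and follows essentially the same strategy as the paper: pair each $\chi$ with $\bar\chi$, apply Lemma~\ref{F.upper.bound.lemma} to the factors where its hypothesis holds, bound the rest trivially by~$1$, and feed in the orthogonality relation $\sum_\chi |\chi(a)-\chi(b)|^2 = 2\phi(q)$. The only difference is bookkeeping in the last step: the paper counts the characters with $|\chi(a)-\chi(b)|\ge 1$ (obtaining at least $\phi(q)/3$ of them, each contributing $e^{-0.2725t}$), whereas you keep the full weight $|\chi(a)-\chi(b)|$ in the exponent and pass from the second moment to the first via $|\chi(a)-\chi(b)|\ge\tfrac12|\chi(a)-\chi(b)|^2$; your route in fact yields the slightly sharper constant $0.2725/4\approx 0.0681$ in place of $0.2725/6\approx 0.0454$.
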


\begin{proof}
We begin by noting that the orthogonality relations for Dirichlet characters imply that $\sum_{\chi\mod q} |\chi(a)-\chi(b)|^2 = 2\phi(q)$ (as we show in Proposition~\ref {just orthogonality prop} below). On the other hand, if $S$ is the set of characters $\chi\mod q$ such that $|\chi(a)-\chi(b)|\ge1$, then
\[
\sum_{\chi\mod q} |\chi(a)-\chi(b)|^2 \le \sum_{\substack{\chi\mod q \\ \chi\notin S}} 1 + \sum_{\chi\in S} 4 = \phi(q)-\#S + 4\#S.
\]
Combining these two inequalities shows that $2\phi(q) \le \phi(q) + 3\#S$, or equivalently $\#S \ge \frac13\phi(q)$. Note that clearly $\chi_0 \notin S$.

From Definition~\ref {Fzchi and Phiqabz def}, we have
\[
|\Phi_{q;a,b}(t)|^2 = \prod_{\chi\mod q} |F(|\chi(a)-\chi(b)|t,\chi)|^2 = \prod_{\chi\mod q} \big| F(|\chi(a)-\chi(b)|t,\chi) F(|\chi(a)-\chi(b)|t,\bar\chi) \big|,
\]
since every character appears once as $\chi$ and once as $\bar\chi$ in the product on the right-hand side. Since $|F(x,\chi)| \le 1$ for all real numbers $x$, we can restrict the product on the right-hand side to those characters $\chi\in S$ and still have a valid upper bound. For any $\chi\in S$, Lemma~\ref{F.upper.bound.lemma} gives us $|F(|\chi(a)-\chi(b)|t,\chi)F(|\chi(a)-\chi(b)|t,\bar\chi)| \le e^{{-0.2725}|\chi(a)-\chi(b)|t} \le e^{{-0.2725}t}$ for $t\ge200$, whence

\[
|\Phi_{q;a,b}(t)|^2 \le \prod_ {\chi\in S} \big| F(|\chi(a)-\chi(b)|t,\chi) F(|\chi(a)-\chi(b)|t,\bar\chi) \big| \le ( e^{{-0.2725} t} )^{\#S} \le (e^{-0.0454\phi(q)t})^2,
\]
which is equivalent to the assertion of the proposition.
\end{proof}

At this point we can establish the required formula for $\delta(q;a,b)$, in terms of a truncated integral involving $\Phi_{q;a,b}$, with an explicit error term. To more easily record the explicit bounds for error terms, we employ a variant of the $O$-notation: we write $A = \Obar(B)$ if $|A| \le B$ (as opposed to a constant times $B$) for all values of the parameters under consideration.

\begin{prop}
\label{delta comes down to small interval prop}
Assume GRH and LI. Let $a$ and $b$ be reduced residues\mod q such that $a$ is a nonsquare\mod q and $b$ is a square\mod q. If $V(q;a,b) \ge 531$, then
\begin{multline*}
\delta(q;a,b) = \frac12 + \frac1{2\pi} \int_{-V(q;a,b)^{-1/4}}^{V(q;a,b)^{-1/4}} \frac{\sin \rho(q) x}x \Phi_{q;a,b}(x) \,dx \\
{}+ \Obar\bigg( 0.03506 \frac{e^{-9.08\phi(q)}}{\phi(q)} + 63.67 \rho(q) e^ {-V(q;a,b)^{1/2}/2}\bigg),
\end{multline*}
\end{prop}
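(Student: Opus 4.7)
The starting point is the Gil-Pelaez-style inversion formula \eqref{deus ex machina} referenced after Proposition~\ref{same limiting distributions prop} (namely \cite[equation~(2-57)]{biases}). Since $a$ is a nonsquare and $b$ a square\mod q we have $c(q,b)-c(q,a)=\rho(q)$, so by Proposition~\ref{used in central limit theorem prop}, $\hat X_{q;a,b}(x)=e^{i\rho(q)x}\Phi_{q;a,b}(x)$; because $\Phi_{q;a,b}$ is real-valued (it is a product of Bessel values at real arguments), the inversion identity collapses to
\[
\delta(q;a,b)=\frac12+\frac1{2\pi}\int_{-\infty}^{\infty}\frac{\sin(\rho(q)x)}{x}\Phi_{q;a,b}(x)\,dx.
\]
My plan is then simply to split this integral into the central segment $|x|\le V(q;a,b)^{-1/4}$ that appears in the statement and two error regions $V(q;a,b)^{-1/4}<|x|\le 200$ and $|x|>200$, to be bounded separately by the two already-established estimates on $\Phi_{q;a,b}$.

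For the far tail $|x|>200$, I apply Proposition~\ref{Phi upper bound prop} to get $|\Phi_{q;a,b}(x)|\le e^{-0.0454\phi(q)|x|}$, and use the trivial estimate $|\sin(\rho(q)x)/x|\le 1/|x|\le 1/200$. Using evenness of the integrand and integrating gives
\[
\frac{1}{2\pi}\bigg|\int_{|x|>200}\frac{\sin(\rho(q)x)}{x}\Phi_{q;a,b}(x)\,dx\bigg|\le \frac{2}{2\pi\cdot 200}\int_{200}^{\infty}e^{-0.0454\phi(q)x}\,dx=\frac{1}{9.08\,\pi}\cdot\frac{e^{-9.08\phi(q)}}{\phi(q)},
\]
and $1/(9.08\,\pi)=0.03506\ldots$, producing the first error term.

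For the middle range $V(q;a,b)^{-1/4}<|x|\le 200$, the hypothesis $V(q;a,b)\ge 531$ was chosen precisely to force $V(q;a,b)^{-1/4}\le 5/24$ (since $(24/5)^4\approx 530.84$), so Proposition~\ref{Phi basically decreasing prop} applies with $\kappa=V(q;a,b)^{-1/4}$ and yields $|\Phi_{q;a,b}(x)|\le|\Phi_{q;a,b}(V(q;a,b)^{-1/4})|$ throughout. Proposition~\ref{expressions for Phi prop}, evaluated at $z=V(q;a,b)^{-1/4}$ (a point at which both constraints $|z|\le V(q;a,b)^{-1/4}$ and $|z|\le \tfrac14$ hold because $V(q;a,b)\ge 531>256$), then bounds that value by a constant very close to $1$ times $e^{-V(q;a,b)^{1/2}/2}$. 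Combining with $|\sin(\rho(q)x)/x|\le\rho(q)$ and the range length at most $200$, evenness gives a total contribution of at most $(200/\pi)\rho(q)e^{-V(q;a,b)^{1/2}/2}\approx 63.66\,\rho(q)e^{-V(q;a,b)^{1/2}/2}$, which after absorbing the $1+O(1)$ factor from Proposition~\ref{expressions for Phi prop} becomes the claimed $63.67\,\rho(q)e^{-V(q;a,b)^{1/2}/2}$.

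Conceptually the argument is routine once the three-region split is in place; the main obstacle is purely numerical bookkeeping: one has to render the ``$O(1)$'' factor in Proposition~\ref{expressions for Phi prop} as an \emph{explicit} constant small enough that $(200/\pi)$ times it is still below $63.67$, and to verify that the threshold $V(q;a,b)\ge 531$ is exactly what makes the monotonicity hypothesis of Proposition~\ref{Phi basically decreasing prop} kick in. All the analytic work was done in Propositions~\ref{Phi basically decreasing prop}, \ref{Phi upper bound prop}, and~\ref{expressions for Phi prop}; no new estimates on zeros or characters are needed here.
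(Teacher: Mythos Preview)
Your overall strategy---the three-region split at $|x|=V^{-1/4}$ and $|x|=200$, with Proposition~\ref{Phi upper bound prop} handling the far tail and Proposition~\ref{Phi basically decreasing prop} freezing $|\Phi_{q;a,b}|$ on the middle range---is exactly what the paper does, and your treatment of the tail $|x|>200$ is identical to theirs.

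The one genuine weak spot is your bound on $\Phi_{q;a,b}\big(V^{-1/4}\big)$ in the middle range. You invoke the \emph{second} assertion of Proposition~\ref{expressions for Phi prop}, namely $\Phi_{q;a,b}(z)=e^{-Vz^2/2}\big(1+O(Vz^4)\big)$, and then hope the implicit constant is close enough to~$1$ that $(200/\pi)$ times it stays below $63.67$. But at $z=V^{-1/4}$ one has $Vz^4=1$, so the $O$-term is genuinely $O(1)$, not small; tracing the constants in the proof of Proposition~\ref{expressions for Phi prop} gives a multiplicative factor far larger than the $1.0002$ you would need. This is not mere bookkeeping---that route does not close.

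The paper sidesteps this entirely by using the \emph{first} assertion of Proposition~\ref{expressions for Phi prop} instead: since every $W_m(q;a,b)\ge 0$ (immediate from Definition~\ref{Wkqab def} and Lemma~\ref{log bessel lemma}(c)), the identity $\Phi_{q;a,b}(z)=\exp\big({-}V\sum_{m\ge1}W_m z^{2m}\big)$ gives the clean inequality $\Phi_{q;a,b}(x)\le e^{-Vx^2/2}$ for all real $|x|<\tfrac{3}{10}$, with no constant to manage. Plugging in $x=V^{-1/4}$ yields $|\Phi_{q;a,b}(x)|\le e^{-V^{1/2}/2}$ exactly, and then $(2\cdot 200)/(2\pi)=200/\pi<63.67$ finishes the middle range. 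Swap that one step and your proof is complete and matches the paper's.
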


\begin{proof}
Our starting point is the formula of Feuerverger and Martin \cite[equation (2.57)]{biases}, which is valid under the assumptions of GRH and LI:
\begin{equation}
\delta(q;a,b) = \frac12 - \frac1{2\pi} \int_{-\infty}^\infty \frac{\sin((c(q,a)-c(q,b))x)}x \Phi_{q;a,b}(x) \,dx.
\label{deus ex machina}
\end{equation}
In the case where $a$ is a nonsquare modulo $q$ and $b$ is a square modulo $q$, the constant $c(q,a)-c(q,b)$ equals $-\rho(q)$, so that
\begin{equation*}
\delta(q;a,b) = \frac12 + \frac1{2\pi} \int_{-\infty}^\infty \frac{\sin\rho(q)x}x \Phi_{q;a,b}(x) \,dx.
\end{equation*}
The part of the integral where $x\ge200$ can be bounded using Proposition~\ref {Phi upper bound prop}:
\[
\bigg| \frac1{2\pi} \int_{200}^\infty \frac{\sin\rho(q)x}x \Phi_{q;a,b}(x) \,dx \bigg| \le \frac1{400\pi} \int_{200}^\infty e^{-0.0454\phi(q)x} \,dx < \frac{0.01753e^{-9.08\phi(q)}}{\phi(q)}.
\]
The part where $x\le-200$ is bounded by the same amount, and so
\begin{equation}
\delta(q;a,b) = \frac12 + \frac1{2\pi} \int_{-200}^{200} \frac{\sin\rho(q)x}x \Phi_{q;a,b}(x) \,dx + \Obar\bigg( 0.03506 \frac{e^{-9.08\phi(q)}}{\phi(q)} \bigg).
\label{tails gone}
\end{equation}

We now consider the part of the integral where $V(q;a,b)^{-1/4} \le x \le 200$. The hypothesis that $V(q;a,b) \ge 531$ implies that $V(q;a,b)^{-1/4} < \frac5{24}$, which allows us to make two simplifications. First, by Proposition~\ref {Phi basically decreasing prop}, we know that $|\Phi_{q;a,b}(x)| \le \Phi_{q;a,b}(V(q;a,b)^{-1/4})$ for all $x$ in the range under consideration. Second, by Proposition~\ref {expressions for Phi prop} we have
\begin{equation*}
\Phi_{q;a,b}(x) = \exp\bigg( {-V(q;a,b)} \sum_{m=1}^{\infty} W_m(q;a,b) x^{2m} \bigg) \le e^{-V(q;a,b)x^2/2}
\end{equation*}
for all real numbers $|x| < \frac3{10}$, since $W_1(q;a,b) = \frac12$ and all the $W_m(q;a,b)$ are nonnegative by Definition~\ref{Wkqab def}. Since $\frac5{24}<\frac3{10}$, we see that $|\Phi_{q;a,b}(x)| \le e^{-V(q;a,b)^{1/2}/2}$ for all $x$ in the range under consideration. Noting also that $\big| \sin(\rho(q)x)/x \big| \le \rho(q)$ for all real numbers $x$, we conclude that
\[
\bigg| \int_{V(q;a,b)^{-1/4}}^{200} \frac{\sin\rho(q) x}{x} \Phi(x) dx \bigg| \le \rho(q) \int_{V(q;a,b)^{-1/4}}^{200} e^{-V(q;a,b)^{1/2}/2} dx\le 200 \rho(q) e^ {-V(q;a,b)^{1/2}/2}.
\]
The part of the integral where $-200 \le x \le -V(q;a,b)^{-1/4}$ is bounded by the same amount, and thus equation~\eqref{tails gone} becomes
\begin{multline*}
\delta(q;a,b) = \frac12 + \frac1{2\pi} \int_{-V(q;a,b)^{-1/4}}^{V(q;a,b)^{-1/4}} \frac{\sin\rho(q)x}x \Phi_{q;a,b}(x) \,dx \\
{}+ \Obar\bigg( 0.03506 \frac{e^{-9.08\phi(q)}}{\phi(q)} + \frac{200}\pi \rho(q) e^ {-V(q;a,b)^{1/2}/2}\bigg),
\end{multline*}
which establishes the proposition.
\end{proof}

\subsection{Derivation of the asymptotic series}
\label{derivation section}

In this section we give the proof of Theorem~\ref{delta series theorem}. Our first step is to transform the conclusion of Proposition~\ref {delta comes down to small interval prop}, which was phrased with a mind towards the explicit calculations in Section~\ref{computations section}, into a form more convenient for our present purposes:

\begin{lemma}
Assume GRH and LI. For any reduced residues $a$ and $b\mod q$ such that $a$ is a nonsquare\mod q and $b$ is a square\mod q, and for any fixed $J>0$,
\begin{multline*}
\delta(q;a,b) = \frac12 + \frac{\rho(q)}{2\pi\sqrt{V(q;a,b)}} \int_{-V(q;a,b)^{1/4}}^{V(q;a,b)^{1/4}} \frac{\sin\big( \rho(q) y/\sqrt{V(q;a,b)} \big)}{\rho(q) y/\sqrt{V(q;a,b)}} \Phi_{q;a,b}\bigg( \frac y{\sqrt{V(q;a,b)}} \bigg) \,dy \\
+ O_J\big( V(q;a,b)^{-J} \big).
\end{multline*}
\label{small interval, inexplicit constant lemma}
\end{lemma}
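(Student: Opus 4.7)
The plan is to derive the lemma from Proposition~\ref{delta comes down to small interval prop} via a straightforward change of variables, and then argue that the explicit error terms produced by that proposition decay faster than any fixed power of $V(q;a,b)$.

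First I would dispose of the case $V(q;a,b) < 531$. By Proposition~\ref{Vqab asymptotic prop} we have $V(q;a,b) \sim 2\phi(q)\log q$, so a bounded value of $V(q;a,b)$ forces $q$ to be bounded as well, leaving only finitely many triples $(q,a,b)$ to check. For those triples, both $\delta(q;a,b) - \tfrac12 \in [-\tfrac12,\tfrac12]$ and the integral on the right-hand side (which has length $2V(q;a,b)^{1/4}$ and integrand bounded by $1$ in absolute value) are uniformly bounded, so the asserted identity holds with an implied constant depending on~$J$. We may therefore assume $V(q;a,b) \ge 531$. Substituting $x = y/\sqrt{V(q;a,b)}$ and writing $(\sin\rho(q)x)/x = \rho(q)\cdot(\sin\rho(q)x)/(\rho(q)x)$ converts the integral in Proposition~\ref{delta comes down to small interval prop} into precisely the integral appearing in the lemma.

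It remains to show that the explicit error term
\[
0.03506 \frac{e^{-9.08\phi(q)}}{\phi(q)} + 63.67\,\rho(q)\,e^{-V(q;a,b)^{1/2}/2}
\]
is $O_J\big(V(q;a,b)^{-J}\big)$ for every $J>0$. For this I would combine Proposition~\ref{Vqab asymptotic prop}, which gives $\phi(q) \gg V(q;a,b)/\log q$, with the standard bound $\phi(q) \gg q/\log\log q$, which together imply $q \ll V(q;a,b)$; then Definition~\ref{rho def} gives $\rho(q) \ll_\ep q^\ep \ll_\ep V(q;a,b)^\ep$. The first summand is thus at most $\exp\big({-}cV(q;a,b)/\log q\big)$ for some absolute $c>0$, and the second at most $V(q;a,b)^\ep\exp\big({-}V(q;a,b)^{1/2}/2\big)$; both decay faster than any fixed negative power of $V(q;a,b)$. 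The only point requiring care is the derivation of these polynomial-in-$V$ bounds on $\phi(q)$ and $\rho(q)$ from the asymptotic for $V(q;a,b)$, but once Proposition~\ref{Vqab asymptotic prop} is in hand the argument is routine.
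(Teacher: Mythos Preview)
Your argument is correct and follows essentially the same route as the paper: apply Proposition~\ref{delta comes down to small interval prop}, substitute $x = y/\sqrt{V(q;a,b)}$, and then absorb the explicit error terms into $O_J(V(q;a,b)^{-J})$ using $V(q;a,b)\sim 2\phi(q)\log q$ together with $\phi(q)\gg q/\log\log q$ and $\rho(q)\ll_\ep q^\ep$. Your explicit handling of the finitely many triples with $V(q;a,b)<531$ is a nice touch that the paper leaves implicit.
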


\begin{proof}
We make the change of variables $x = y/\sqrt{V(q;a,b)}$ in Proposition~\ref{delta comes down to small interval prop}, obtaining
\begin{multline*}
\delta(q;a,b) = \frac12 + \frac1{2\pi} \int_{-V(q;a,b)^{1/4}}^{V(q;a,b)^{1/4}} \frac{\sin\big( \rho(q) y/\sqrt{V(q;a,b)} \big)}{y/\sqrt{V(q;a,b)}} \Phi_{q;a,b}\bigg( \frac y{\sqrt{V(q;a,b)}} \bigg) \frac{dy}{\sqrt{V(q;a,b)}} \\
{}+ \Obar\bigg( 0.06217 \frac{e^{-5.12\phi(q)}}{\phi(q)} + 63.67 \rho(q) e^ {-V(q;a,b)^{1/2}/2}\bigg),
\end{multline*}
the main terms of which are exactly what we want. The lemma then follows from the estimates
\[
e^{-5.12\phi(q)} \ll_J V(q;a,b)^{-J} \quad\text{and}\quad \rho(q)e^ {-V(q;a,b)^{1/2}/2} \ll_J V(q;a,b)^{-J}
\]
for any fixed constant $J$: these estimates hold because $V(q;a,b) \sim 2\phi(q)\log q$ by Proposition~\ref{Vqab asymptotic prop}, while the standard lower bound $\phi(q) \gg q/\log\log q$ follows from equation~\eqref {phi lower bound}.
\end{proof}

We will soon be expanding most of the integrand in Lemma~\ref {small interval, inexplicit constant lemma} into a power series; the following definition and lemma treat the integrals that so arise.

\begin{definition}
For any nonnegative integer $k$, define $(2k-1)!! = (2k-1)(2k-3)\cdots 3\cdot1$, where we make the convention that $(-1)!!=1$. Also, for any nonnegative integer $k$ and any positive real number $B$, define
\[
M_k(B) = \int_{-B}^B y^{2k}e^{-y^2/2}\,dy.
\]
\qedef
\label{double factorial and normal moment def}
\end{definition}

\begin{lemma}
Let $J$ and $B$ be positive real numbers. For any nonnegative integer $k$, we have $M_k(B) = (2k-1)!!\sqrt{2\pi} + O_{k,J}\big( B^{-J} \big)$.
\label{normal moments lemma}
\end{lemma}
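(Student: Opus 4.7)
The plan is to identify the main term $(2k-1)!!\sqrt{2\pi}$ as the full Gaussian moment $\int_{-\infty}^\infty y^{2k}e^{-y^2/2}\,dy$, and then bound the tails by the super-polynomial decay of $e^{-y^2/2}$.

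First I would establish that
\[
\int_{-\infty}^\infty y^{2k} e^{-y^2/2}\,dy = (2k-1)!!\sqrt{2\pi}.
\]
This is standard: the $k=0$ case is the Gaussian integral, and the recursion follows from integration by parts, differentiating $y^{2k-1}e^{-y^2/2}$ to get $(2k-1)y^{2k-2}e^{-y^2/2} - y^{2k}e^{-y^2/2}$ and noting the boundary terms vanish at $\pm\infty$. Induction on $k$ then gives the double-factorial formula.

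Second, since $M_k(B)$ is a symmetric truncation, I would write
\[
M_k(B) - (2k-1)!!\sqrt{2\pi} = -2\int_B^\infty y^{2k} e^{-y^2/2}\,dy,
\]
so it suffices to show that the one-sided tail is $O_{k,J}(B^{-J})$. For $B\ge 1$ and any fixed $k,J$, the continuous function $y\mapsto y^{2k+J+1}e^{-y^2/2}$ tends to $0$ as $y\to\infty$ and is thus bounded by a constant $C_{k,J}$ on $[1,\infty)$. Hence $y^{2k}e^{-y^2/2}\le C_{k,J}y^{-J-1}$ for $y\ge B\ge1$, and integration yields
\[
\int_B^\infty y^{2k}e^{-y^2/2}\,dy \le C_{k,J}\int_B^\infty y^{-J-1}\,dy = \frac{C_{k,J}}{J}B^{-J}.
\]
For $0<B<1$ the bound is trivial because both $M_k(B)$ and $(2k-1)!!\sqrt{2\pi}$ are bounded by absolute constants (depending on $k$), whereas $B^{-J}\ge 1$, so the difference is automatically $O_{k,J}(B^{-J})$. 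Combining these two cases completes the proof.

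There is no real obstacle here; the argument is a routine tail estimate for the Gaussian together with the standard moment formula. The only mild subtlety is the uniformity in $B$ at the low end, which is handled trivially as above.
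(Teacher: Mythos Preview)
Your proof is correct and essentially the same as the paper's: both use integration by parts to obtain the $(2k-1)!!$ recursion and then invoke the super-polynomial decay of the Gaussian tail. The only organizational difference is that the paper inducts directly on the truncated integral $M_k(B)$ (so the boundary terms $B^{2k-1}e^{-B^2/2}$ appear as part of the error at each step), whereas you first identify the main term as the full moment and then bound the tail once; the content is the same.
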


\begin{proof}
We proceed by induction on $k$. In the case $k=0$, we have
\begin{align*}
M_0(B) = \int_{-B}^B e^{-y^2/2}\,dy &= \int_{-\infty}^\infty e^{-y^2/2}\,dy - 2 \int_B^\infty e^{-y^2/2}\,dy \\
&= \sqrt{2\pi} + O\bigg( \int_B^\infty e^{-By/2}\,dy \bigg) \\
&= \sqrt{2\pi} + O\bigg( \frac2B e^{-B^2/2} \bigg) = \sqrt{2\pi} + O_J( B^{-J} )
\end{align*}
as required. On the other hand, for $k\ge1$ we can use integration by parts to obtain
\begin{align*}
M_k(B) = \int_{-B}^B y^{2k-1}\cdot ye^{-y^2/2}\,dy &= -y^{2k-1}e^{-y^2/2} \bigg|_{-B}^B + (2k-1) \int_{-B}^B y^{2k-2}e^{-y^2/2}\, dy \\
&= O\big( B^{2k-1}e^{-B^2/2} \big) + (2k-1)M_{k-1}(B).
\end{align*}
Since the error term $B^{2k-1}e^{-B^2/2}$ is indeed $O_{k,J}\big( B^{-J} \big)$, the lemma follows from the inductive hypothesis for $M_{k-1}(B)$.
\end{proof}

The following familiar power series expansions can be truncated with reasonable error terms:

\begin{lemma}
\label{taylor series}
Let $K$ be a nonnegative integer and $C>1$ a real number. Uniformly for $|z|\le C$, we have the series expansions
\begin{align*}
e^z &= \sum_{j=0}^{K} \frac{z^{j}}{j!} + O_{C,K}(|z|^{K+1}); \\
\frac{\sin z}{z} &= \sum_{j=0}^{K} (-1)^j \frac{z^{2j}}{(2j+1)!} + O_{C,K}(|z|^{2(K+1)}).
\end{align*}
\end{lemma}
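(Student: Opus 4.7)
My plan is that both identities are immediate consequences of the Maclaurin expansions of the entire functions $e^z$ and $(\sin z)/z$, so I would reduce each statement to a routine tail bound. Since the implied constants need only depend on $C$ and $K$, there is no need to invoke Taylor's theorem with an explicit remainder formula; a direct absolute-value estimate on the tail series will do.

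For the first identity, I would begin from $e^z = \sum_{j=0}^{\infty} z^j/j!$, which converges everywhere, and bound the tail by
\begin{equation*}
\left| \sum_{j=K+1}^{\infty} \frac{z^j}{j!} \right|
\;\le\; |z|^{K+1} \sum_{k=0}^{\infty} \frac{|z|^{k}}{(K+1+k)!}
\;\le\; \frac{|z|^{K+1}}{(K+1)!} \sum_{k=0}^{\infty} \frac{|z|^{k}}{k!}
\;=\; \frac{|z|^{K+1}}{(K+1)!}\, e^{|z|}
\;\le\; \frac{e^{C}}{(K+1)!}\, |z|^{K+1},
\end{equation*}
which yields the claimed $O_{C,K}(|z|^{K+1})$ error uniformly on $|z|\le C$.

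For the second identity, the series $\sin z = \sum_{j=0}^{\infty}(-1)^j z^{2j+1}/(2j+1)!$ is entire, so dividing by $z$ gives $(\sin z)/z = \sum_{j=0}^{\infty}(-1)^j z^{2j}/(2j+1)!$ as an entire function. The analogous tail estimate is
\begin{equation*}
\left| \sum_{j=K+1}^{\infty} \frac{(-1)^j z^{2j}}{(2j+1)!} \right|
\;\le\; |z|^{2(K+1)} \sum_{k=0}^{\infty} \frac{|z|^{2k}}{(2K+2k+3)!}
\;\le\; \frac{|z|^{2(K+1)}}{(2K+3)!}\sum_{k=0}^\infty \frac{|z|^{2k}}{(2k)!}
\;\le\; \frac{e^{C}}{(2K+3)!}\, |z|^{2(K+1)},
\end{equation*}
again uniformly on $|z|\le C$, which is the required $O_{C,K}(|z|^{2(K+1)})$.

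There is no real obstacle here: the only mild subtlety is making sure that the factor of $|z|^{K+1}$ (respectively $|z|^{2(K+1)}$) pulled out of the tail is the \emph{smallest} power remaining, which is automatic because every subsequent term in the tail carries a higher power of $|z|$ and a larger factorial in the denominator. Both estimates therefore follow by dominating the remaining geometric-type series by $e^{|z|} \le e^{C}$.
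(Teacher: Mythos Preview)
Your proof is correct and follows essentially the same approach as the paper: factor $z^{K+1}$ (respectively $z^{2(K+1)}$) out of the tail of the Maclaurin series and observe that the remaining series is bounded on $|z|\le C$. The only difference is cosmetic—the paper notes that the leftover series defines an entire function and hence is bounded on the compact disc, whereas you supply an explicit bound $e^C/(K+1)!$ (respectively $e^C/(2K+3)!$); both arguments are equally valid.
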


\begin{proof}
The Taylor series for $e^z$, valid for all complex numbers $z$, can be written as
$$
e^z=\sum_{j=0}^K \frac{z^j}{j!} + z^{K+1}\sum_{j=0}^{\infty} \frac{z^j}{(j+K+1)!}.
$$
The function $\sum_{j=0}^{\infty} {z^j/(j+K+1)!}$ converges for all complex numbers $z$ and hence represents an entire function; in particular, it is continuous and hence bounded in the disc $|z|\le C$. This establishes the first assertion of the lemma, and the second assertion is proved in a similar fashion.
\end{proof}

Everything we need to prove Theorem~\ref{delta series theorem} is now in place, once we give the definition of the constants $s_{q;a,b}(\ell,j)$ that appear in its statement:

\begin{definition}
For any reduced residues $a$ and $b\mod q$, and any positive integers $j\le\ell$, define
\begin{equation*}
s_{q;a,b}(\ell,j) = \frac{(-1)^j}{(2j+1)!} \mathop{\sum \cdots \sum}_{i_2+2i_3+\dots+ \ell i_{\ell +1}=\ell-j} \big( 2(\ell+i_2+\dots+i_{\ell+1})-1 \big)!! \prod_{k=2}^{\ell+1} \frac{(-W_k(q;a,b))^{i_k}}{i_k!},
\end{equation*}
where the indices $i_2,\dots,i_{\ell+1}$ take all nonnegative integer values that satisfy the constraint $i_2+2i_3+\dots+ \ell i_{\ell +1}=\ell-j$. Note that $s_{q;a,b}(0,0)=1$ always. Since $W_k(q;a,b) \ll \big(\frac{10}3 \big)^k$ by Proposition~\ref{Wk bound prop}, we see that $s_{q;a,b}(\ell,j)$ is bounded in absolute value by some (combinatorially complicated) function of $\ell$ uniformly in $q$, $a$, and $b$ (and uniformly in $j$ as well, since there are only finitely many possibilities $\{0, 1, \dots, \ell\}$ for $j$).
\qedef
\label{s-coeffs def}
\end{definition}

\begin{proof}[Proof of Theorem~\ref{delta series theorem}]
To lighten the notation in this proof, we temporarily write $\rho$ for $\rho(q)$, $\delta$ for $\delta(q;a,b)$, $V$ for $V(q;a,b)$, and $W_k$ for $W_k(q;a,b)$. We also allow all $O$-constants to depend on~$K$. Since $\delta$ is bounded, the theorem is trivially true when $V$ is bounded, since the error term is at least as large as any other term in that case; therefore we may assume that $V$ is sufficiently large. For later usage in this proof, we note that $\rho \ll V^{1/4}$, which follows amply from the bound $\rho \ll_\ep q^\ep$ mentioned in Definition~\ref{rho def} and the asymptotic formula $V \sim 2\phi(q)\log q$ proved in Proposition~\ref{Vqab asymptotic prop}.

We begin by noting that from Proposition~\ref {expressions for Phi prop},
\begin{align}
  \Phi_{q;a,b}(x) = \exp \bigg( {-}V\sum_{k=1}^\infty W_k x^{2k} \bigg) &= \exp \bigg( {-}V\sum_{k=1}^{K+1} W_k x^{2k} + O( V x^{2(K+2)} ) \bigg) \notag \\
  &= \exp \bigg( {-}V\sum_{k=1}^{K+1} W_k x^{2k} \bigg) \big( 1 + O( V x^{2(K+2)} ) \big)
\label{only finitely many W}
\end{align}
uniformly for all $|x|\le\min(\frac14,V^{-1/4})$, where the second equality follows from the upper bound given in Proposition~\ref{Wk bound prop}.
Inserting this formula into the expression for $\delta(q;a,b)$ from Lemma~\ref {small interval, inexplicit constant lemma}, applied with $J=K+2$, gives
\begin{multline*}
  \delta = \frac12 + \frac\rho{2\pi\sqrt V} \int_{-V^{1/4}}^{V^{1/4}} \frac{\sin (\rho y/\sqrt V)}{\rho y/\sqrt V} \exp \bigg( {-}\sum_{k=1}^ {K+1} \frac{W_k y^{2k}}{V^{k-1}} \bigg) \bigg( 1 + O \bigg( \frac{y^{2(K+2)}}{V^{K+1}} \bigg) \bigg) \,dy \\
  {}+ O \big( V^{-K-2} \big).
\end{multline*}
This use of equation~\eqref{only finitely many W} is justified because the argument $y/\sqrt V$ of $\Phi_{q;a,b}$ in the integral in Lemma~\ref {small interval, inexplicit constant lemma} is at most $V^{1/4}/\sqrt V \le \frac14$, by the assumption that $V$ is sufficiently large. To simplify the error term in the integral, we ignore all of the factors in the integrand (which are bounded by~1 in absolute value) except for the $k=1$ term, in which $W_1=\frac12$, to derive the upper bound
\[
\int_{-V^{1/4}}^{V^{1/4}} \frac{\sin (\rho y/\sqrt V)}{\rho y/\sqrt V} \exp \bigg( {-}\sum_{k=1}^ {K+1} \frac{W_k y^{2k}}{V^{k-1}} \bigg) \frac{y^{2(K+2)}}{V^{K+1}} \,dy \ll \frac1{V^{K+1}} \int_{-\infty}^\infty e^{-y^2/2} y^{2K+4}\,dy \ll_K \frac1{V^{K+1}}.
\]
Therefore
\begin{equation}
  \delta = \frac12 + \frac\rho{2\pi\sqrt V} \int_{-V^{1/4}}^{V^{1/4}} \frac{\sin (\rho y/\sqrt V)}{\rho y/\sqrt V} \exp \bigg( {-}\sum_{k=1}^ {K+1} \frac{W_k y^{2k}}{V^{k-1}} \bigg) \,dy + O \bigg( \frac\rho{V^{K+3/2}} \bigg).
\label{after COV}
\end{equation}

The integrand in equation~\eqref{after COV} is the product of $K+2$ functions, namely $K+1$ exponential factors and a factor involving the function $(\sin z)/z$. Our plan is to keep the first exponential function as it is and expand the other factors into their power series at the origin. Note that the argument of the $k$th exponential factor is at most $W_k V^{1-k/2}$ in absolute value, which is bounded (by a constant depending on $K$) for all $k\ge2$ by Proposition~\ref{Wk bound prop}. Similarly, the argument of the function $(\sin z)/z$ is bounded by $\rho V^{1/4}/\sqrt V \ll 1$. Therefore the expansion of all of these factors, excepting the exponential factor corresponding to $k=1$, into their power series is legitimate in the range of integration.

Specifically, we have the two identities
\begin{align*}
\sum_{j=0}^K \frac{(-1)^j}{(2j+1)!} \frac{(\rho y)^{2j}}{V^j} &= \frac{\sin (\rho y/\sqrt V)}{\rho y/\sqrt V} + O\bigg( \frac{(\rho y)^{2(K+1)}}{V^{K+1}} \bigg); \\
\sum_{i_k=0}^K \frac{(-1)^{i_k}}{i_k!} \bigg( \frac{W_k y^{2k}}{V^{k-1}} \bigg)^{i_k} &= \exp \bigg( {-} \frac{W_k y^{2k}}{V^{k-1}} \bigg) + O\bigg( \bigg( \frac{W_k y^{2k}}{V^{k-1}} \bigg)^{\!K+1} \bigg) \\
&= \exp \bigg( {-} \frac{W_k y^{2k}}{V^{k-1}} \bigg) + O\bigg( \frac{y^{2k(K+1)}}{V^{K+1}} \bigg),
\end{align*}
where the error terms are justified by Lemma~\ref{taylor series}; in
the last equality we have used Proposition~\ref{Wk bound prop} to
ignore the contribution of the factor $W_k$ to the error term (since
the $O$-constant may depend on $K$). From these identities, we
deduce that
\begin{align*}
\bigg( \sum_{j=0}^K \frac{(-1)^j}{(2j+1)!} \frac{(\rho y)^{2j}}{V^j} \bigg) & e^{-y^2/2} \prod_{k=2}^{K+1} \bigg( \sum_{i_k=0}^K \frac{(-1)^{i_k}}{i_k!} \bigg( \frac{W_k y^{2k}}{V^{k-1}} \bigg)^{i_k} \bigg) \\
&= \bigg( \frac{\sin (\rho y/\sqrt V)}{\rho y/\sqrt V} + O\bigg( \frac{(\rho y)^{2(K+1)}}{V^{K+1}} \bigg) \bigg) \\
&\qquad{}\times e^{-y^2/2} \prod_{k=2}^{K+1} \bigg( \exp \bigg( {-} \frac{W_k y^{2k}}{V^{k-1}} \bigg) + O\bigg( \bigg( \frac{W_k y^{2k}}{V^{k-1}} \bigg)^{\!K+1} \bigg) \bigg) \\
&= \frac{\sin (\rho y/\sqrt V)}{\rho y/\sqrt V} \prod_{k=1}^{K+1} \exp \bigg( {-} \frac{W_k y^{2k}}{V^{k-1}} \bigg) + O\bigg( y^{(K+2)(K+1)^2} e^{-y^2/2} \frac{\rho^{2K+2}}{V^{K+1}} \bigg).
\end{align*}
(The computation of the error term is simplified by the fact that all the main terms on the right-hand side are at most~1 in absolute value, so that we need only figure out the largest powers of $y$ and $\rho$, and the smallest power of $V$, that can be obtained by the cross terms.)

Substituting this identity into equation~\eqref{after COV} yields
\begin{align*}
\delta &= \frac12 + \frac\rho{2\pi\sqrt V} \int_{-V^{1/4}}^{V^{1/4}} \bigg( \sum_{j=0}^K \frac{(-1)^j}{(2j+1)!} \frac{(\rho y)^{2j}}{V^j} \bigg) e^{-y^2/2} \prod_{k=2}^{K+1} \bigg( \sum_{i_k=0}^K \frac{(-1)^{i_k}}{i_k!} \bigg( \frac{W_k y^{2k}}{V^{k-1}} \bigg)^{i_k} \bigg) \,dy \\
&\hskip1.5in{}+ O \bigg( \frac\rho{\sqrt V} \int_{-\infty}^\infty y^{(K+2)(K+1)^2} e^{-y^2/2} \frac{\rho^{2K+2}}{V^{K+1}} \,dy + \frac\rho{V^{K+3/2}} \bigg) \\
&= \frac12 + \frac\rho{2\pi\sqrt V} \sum_{j=0}^K \sum_{i_2=0}^K \cdots \sum_{i_{K+1}=0}^K \bigg( \frac{(-1)^j}{(2j+1)!} \frac{\rho^{2j}}{V^j} \\
&\hskip1.5in{}\times \prod_{k=2}^{K+1} \frac1{i_k!} \bigg( \frac{-W_k}{V^{k-1}} \bigg)^{i_k} M_{j+2i_2+\dots+(K+1)i_{K+1}}\big( V^{1/4} \big) \bigg) + O \bigg( \frac{\rho^{2K+3}}{V^{K+3/2}} \bigg),
\end{align*}
where $M$ was defined in Definition~\ref {double factorial and normal moment def}. Invoking Lemma~\ref{normal moments lemma} and then collecting the summands according to the power $\ell = j + i_1 + 2i_2 + \dots + Ki_{K+1}$ of $V$ in the denominator, we obtain
\begin{align}
\delta &= \frac12 + \frac\rho{\sqrt{2\pi V}} \sum_{j=0}^K \sum_{i_2=0}^K \cdots \sum_{i_{K+1}=0}^K \bigg( \frac{(-1)^j}{(2j+1)!} \frac{\rho^{2j}}{V^j} \prod_{k=2}^{K+1} \frac1{i_k!} \bigg( \frac{-W_k}{V^{k-1}} \bigg)^{i_k} \notag \\
&\qquad{}\times \Big( \big( 2(j+2i_2+\dots+(K+1)i_{K+1})-1 \big)!! + O\big( V^{-(K+1)} \big) \Big) \bigg) + O \bigg( \frac {\rho^{2K+3}} {V^{K+3/2}} \bigg) \notag \\
&= \frac12 + \frac\rho {\sqrt{2\pi V}} \sum_{\ell=0}^{K(1+K(K+1)/2)} \frac1{V^\ell} \sum_{j=0}^K \frac{(-1)^j \rho^{2j}}{(2j+1)!} \mathop{\sum_{i_2=0}^K \cdots \sum_{i_{K+1}=0}^K}_{i_2+2i_3+\dots+Ki_{K+1}=\ell-j} \bigg( \prod_{k=2}^{K+1} \frac{(-W_k)^{i_k}}{i_k!} \notag \\
&\qquad{}\times \big( 2(\ell+i_2+\dots+i_{K+1})-1 \big)!! \bigg) + O \bigg( \frac{\rho^{2K+3}}{V^{K+3/2}} \bigg),
\label{hey we're done}
\end{align}
where we have subsumed the first error term into the second with the help of Proposition~\ref{Wk bound prop}.

The proof of Theorem~\ref{delta series theorem} is actually now complete, although it takes a moment to recognize it. For $0\le\ell\le K$, the values of $j$ that contribute to the sum are $0\le j\le\ell$, since $\ell-j$ must be a sum of nonnegative numbers due to the condition of summation of the inner sum. In particular, all possible values of $j$ and the $i_k$ are represented in the sum, and the upper bound of $K$ for these variables is unnecessary. We therefore see that the coefficient of $\rho^{2j}V^{-\ell}$ on the right-hand side of equation~\eqref{hey we're done} matches Definition~\ref{s-coeffs def} for $s_{q;a,b}(\ell,j)$. On the other hand, for each of the finitely many larger values of $\ell$, the $\ell$th summand is bounded above by $\rho^{2K}V^{-K-1}$ times some constant depending only on $K$ (again we have used Proposition~\ref{Wk bound prop} to bound the quantities $W_k$ uniformly), which is smaller than the indicated error term once the leading factor $\rho/\sqrt{2\pi V}$ is taken into account.
\end{proof}

\section{Analysis of the variance $V(q;a,b)$}
\label{variance analysis section}

In this section we prove Theorems~\ref {variance evaluation theorem} and~\ref {M evaluation theorem}, as well as discussing related results to which our methods apply. We begin by establishing some arithmetic identities involving Dirichlet characters and their conductors in Section~\ref {characters section}. Using these identities and a classical formula for $b(\chi)$, we complete the proof of Theorem~\ref {variance evaluation theorem} in Section~\ref {variance formula section}. The linear combination of values $\frac{L'}L(1,\chi)$ that defines $M^*(q;a,b)$ can be converted into an asymptotic formula involving the von Mangoldt $\Lambda$-function, as we show in Section~\ref {M evaluation section}, and in this way we establish Theorem~\ref {M evaluation theorem}.

Our analysis to this point has the interesting consequence that the densities $\delta(q;a,b)$ can be evaluated extremely precisely using only arithmetic content, that is, arithmetic on rational numbers (including multiplicative functions of integers) and logarithms of integers; we explain this consequence in Section~\ref {arithmetic only}. Next, we show in Section~\ref {CLT section} that the limiting logarithmic distributions of the differences $E(x;q,a) - E(x;q,b)$ obey a central limit theorem as $q$ tends to infinity. Finally, we explain in Section~\ref {quadratic race section} how our analysis can be modified to apply to the race between the aggregate counting functions $\pi(x;q,N)=\#\{ p\le x\colon p$ is a quadratic nonresidue\mod q\} and $\pi(x;q,R)=\#\{ p\le x\colon p$ is a quadratic residue\mod q\}.

\subsection{Arithmetic sums over characters}
\label{characters section}

We begin by establishing some preliminary arithmetic identities that will be needed in later proofs.

\begin{prop}
Let $a$ and $b$ be distinct reduced residue classes\mod q. Then
\[
\allchisum |\chi(a)-\chi(b)|^2 = 2\phi(q),
\]
while for any reduced residue $c\not\equiv1\mod q$ we have
\[
\allchisum |\chi(a)-\chi(b)|^2 \chi(c) = -\phi(q) \big( \iota_q(cab^{-1})+\iota_q(cba^{-1}) \big),
\]
where $\iota_q$ is defined in Definition~\ref {iota and Lq and Rqn def}.
\label{just orthogonality prop}
\end{prop}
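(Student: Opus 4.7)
The plan is to expand the squared modulus as $|\chi(a) - \chi(b)|^2 = (\chi(a) - \chi(b))\overline{(\chi(a) - \chi(b))}$ and then invoke the orthogonality relations for Dirichlet characters. Since $a$ and $b$ are reduced residues, $\overline{\chi(a)} = \chi(a^{-1})$ and $\overline{\chi(b)} = \chi(b^{-1})$, so multiplying out and using the multiplicativity of $\chi$ yields
\begin{equation*}
|\chi(a) - \chi(b)|^2 = 2 - \chi(ab^{-1}) - \chi(ba^{-1}),
\end{equation*}
where the constant $2$ comes from $\chi(a)\chi(a^{-1}) + \chi(b)\chi(b^{-1}) = \chi(1) + \chi(1) = 2$ (valid since $(ab,q)=1$).

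For the first identity, I would sum this expression over all $\chi \bmod q$ and apply the orthogonality relation
\begin{equation*}
\sum_{\chi \bmod q} \chi(n) = \phi(q) \iota_q(n)
\end{equation*}
for $(n,q) = 1$. The constant term contributes $2\phi(q)$, while the terms $\sum_\chi \chi(ab^{-1})$ and $\sum_\chi \chi(ba^{-1})$ both vanish because the hypothesis $a \not\equiv b \bmod q$ ensures $ab^{-1} \not\equiv 1$ and $ba^{-1} \not\equiv 1$, so $\iota_q(ab^{-1}) = \iota_q(ba^{-1}) = 0$. This gives the required value $2\phi(q)$.

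For the second identity, I would multiply the expansion by $\chi(c)$ and again use multiplicativity to obtain
\begin{equation*}
|\chi(a) - \chi(b)|^2 \chi(c) = 2\chi(c) - \chi(cab^{-1}) - \chi(cba^{-1}).
\end{equation*}
Summing over $\chi \bmod q$ and applying orthogonality produces $2\phi(q)\iota_q(c) - \phi(q)\iota_q(cab^{-1}) - \phi(q)\iota_q(cba^{-1})$. The hypothesis $c \not\equiv 1 \bmod q$ forces $\iota_q(c) = 0$, eliminating the first term and leaving exactly the claimed expression $-\phi(q)\bigl(\iota_q(cab^{-1}) + \iota_q(cba^{-1})\bigr)$.

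There is no real obstacle here; the proposition is essentially a direct computation from the orthogonality of characters once the squared modulus is expanded correctly. The only subtlety worth flagging is tracking the role of the hypothesis $a \not\equiv b$ in the first identity and of $c \not\equiv 1$ in the second, each of which is precisely what kills the ``diagonal'' contribution that would otherwise spoil the stated formula.
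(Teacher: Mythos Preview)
Your proposal is correct and follows essentially the same approach as the paper: both expand $|\chi(a)-\chi(b)|^2 = 2 - \chi(ab^{-1}) - \chi(ba^{-1})$ and apply the orthogonality relation $\sum_{\chi\bmod q}\chi(m)=\phi(q)\iota_q(m)$, using the hypotheses $a\not\equiv b$ and $c\not\equiv 1$ to kill the appropriate terms.
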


\begin{proof}
These sums are easy to evaluate using the orthogonality relation~\cite[Corollary 4.5]{magicbook}
\begin{equation}
\sum_{\chi\mod q} \chi(m) = \begin{cases}
\phi(q), &\text{if }m\equiv1\mod q \\
0, &\text{if }m\not\equiv1\mod q \\
\end{cases} \!\Bigg\}
= \phi(q)\iota_q(m).
\label{orthogonality.relation}
\end{equation}
We have
\begin{align*}
\allchisum |\chi(a)-\chi(b)|^2 &= \allchisum (2-\chi(a)\overline{\chi(b)} - \chi(b)\overline{\chi(a)}) \\
&= \allchisum 2 - \allchisum \chi(ab^{-1}) - \allchisum \chi(ba^{-1}) = 2\phi(q)+0+0,
\end{align*}
since $a\not\equiv b\mod q$. Similarly,
\begin{align*}
\allchisum |\chi(a)-\chi(b)|^2 \chi(c) &= \allchisum (2-\chi(a)\overline{\chi(b)} - \chi(b)\overline{\chi(a)}) \chi(c) \\
&= \allchisum 2\chi(c) - \allchisum \chi(cab^{-1}) - \allchisum \chi(cba^{-1}) \\
&= 0-\phi(q) \big( \iota_q(cab^{-1})+\iota_q(cba^{-1}) \big).
\end{align*}
\end{proof}

The results in the next two lemmas were discovered independently by Vorhauer (see \cite[Section 9.1, problem 8]{magicbook}).

\begin{lemma}
For any positive integer $q$, we have
\[
\sum_{d\mid q} \Lambda(q/d)\phi(d) = \phi(q) \sum_{p\mid q} \frac{\log p}{p-1},
\]
while for any proper divisor $s$ of $q$ we have
\[
\sum_{d\mid s} \Lambda(q/d)\phi(d) = \phi(q) \frac{\Lambda(q/s)}{\phi(q/s)}.
\]
\label{lambda.phi.sum.lemma}
\end{lemma}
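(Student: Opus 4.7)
The plan is to reduce both sums to explicit calculations over prime powers, using the fact that $\Lambda(q/d)$ vanishes unless $q/d$ is a prime power. For the first identity, I would parametrize the nonzero terms by setting $q/d=p^k$ with $p\mid q$ and $1\le k\le\nu$, where $p^{\nu}\parallel q$; this rewrites the sum as $\sum_{p\mid q}\log p\sum_{k=1}^{\nu}\phi(q/p^k)$. Factoring $q=p^\nu m$ with $(m,p)=1$, the inner sum equals $\phi(m)+\sum_{k=1}^{\nu-1}p^{\nu-k-1}(p-1)\phi(m)$, and the geometric series collapses this to $\phi(m)p^{\nu-1}=\phi(q)/(p-1)$. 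Summing over primes $p\mid q$ then yields the stated identity.

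For the second identity, I would write $q=st$ with $t>1$ (since $s$ is a proper divisor) and impose the two conditions $d\mid s$ and $q/d=p^k$ simultaneously. Putting $d=q/p^k$, the divisibility $d\mid s$ is equivalent to $t\mid p^k$. If $t$ is not a prime power then no such $k$ exists, so the left-hand side is an empty sum; the right-hand side also vanishes, since $\Lambda(q/s)=\Lambda(t)=0$. Otherwise $t=p^j$ for some prime $p$ and integer $j\ge 1$, in which case the permitted values of $k$ are $j\le k\le\nu$. Running the same geometric-series computation over this shorter range yields $\sum_{k=j}^\nu\phi(q/p^k)=\phi(m)p^{\nu-j}$, while the right-hand side is $\phi(q)\log p/\phi(p^j)=p^{\nu-j}\phi(m)\log p$, and the two agree.

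The only real obstacle is the bookkeeping: both identities ultimately rest on the single combinatorial observation $1+(p-1)(1+p+\cdots+p^{N-1})=p^N$, applied in two slightly different guises. A slicker derivation might try to recognize $\Lambda*\phi$ as a Dirichlet convolution and manipulate it via M\"obius inversion, but the direct case analysis above seems the most transparent route, and it also makes the vanishing of both sides in the non-prime-power case of the second identity completely manifest.
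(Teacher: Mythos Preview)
Your argument is correct and is essentially the same as the paper's: both proofs parametrize the nonzero terms by writing $q/d=p^k$ for a prime $p\mid q$, reduce to computing $\sum_k \phi(q/p^k)$ over the appropriate range, and evaluate that inner sum (you via the explicit formula $\phi(p^m)=p^{m-1}(p-1)$ and a geometric series, the paper via the equivalent identity $\sum_{a\mid b}\phi(a)=b$). The treatment of the second identity, splitting into the cases where $q/s$ is or is not a prime power, is also identical in spirit.
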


\begin{proof}
For the first identity, we group together the contributions from the divisors $d$ such that $q/d$ is a power of a particular prime factor $p$ of $q$. If $p^r\exdiv q$, write $q=mp^r$, so that $p\nmid m$. We get a contribution to the sum only when $d=mp^{r-k}$ for some $1\le k\le r$. Therefore
\begin{equation*}
\sum_{d\mid q} \Lambda(q/d)\phi(d) = \sum_{p^r\exdiv q} \sum_{k=1}^r \Lambda(p^k) \phi(mp^{r-k}) = \sum_{p^r\exdiv q} \phi(m)\log p \sum_{k=1}^r \phi(p^{r-k}).
\end{equation*}
Since $\sum_{a\mid b} \phi(a) = b$ for any positive integer $b$, the inner sum is exactly $p^{r-1}$. Noting that $\phi(m) = \phi(q)/\phi(p^r)$ since $p\nmid n$, we obtain
\begin{equation*}
\sum_{d\mid q} \Lambda(q/d)\phi(d) = \sum_{p^r\exdiv q} \frac{\phi(q)}{\phi(p^r)} p^{r-1}\log p = \phi(q) \sum_{p\mid q} \frac{\log p}{p-1}
\end{equation*}
as claimed.

We turn now to the second identity. If $q/s$ has at least two distinct prime factors, then so will $q/d$ for every divisor $d$ of $s$, and hence all of the $\Lambda(q/d)$ terms will be 0. Therefore the entire sum equals 0, which is consistent with the claimed identity as $R_q(s)=0$ as well in this case. Therefore we need only consider the case where $q/s$ equals a prime power $p^t$.

Again write $q=mp^r$ with $p\nmid m$. Since $s=q/p^t = mp^{t-r}$, the only terms that contribute to the sum are $d=mp^{r-k}$ for $t\le k\le r$. By a similar calculation as before,
\begin{align*}
\sum_{d\mid s} \Lambda(q/d)\phi(d) = \sum_{k=t}^r \Lambda(p^k)\phi(mp^{r-k}) &= \phi(m) \log p \sum_{k=t}^r \phi(p^{r-k}) \\
&= \frac{\phi(q)}{\phi(p^r)} p^{r-t} \log p = \phi(q) \frac{\log p}{p^{t-1}(p-1)} = \phi(q) \frac{\Lambda(q/s)}{\phi(q/s)},
\end{align*}
since $q/s=p^t$. This establishes the second identity.
\end{proof}

Recall that $\chi^*$ denotes the primitive character that induces $\chi$ and that $q^*$ denotes the conductor of $\chi^*$.

\begin{prop}
For any positive integer $q$,
\[
\allchisum \log q^* = \phi(q) \bigg( \log q - \sum_{p\mid q} \frac{\log p}{p-1} \bigg),
\]
while if $a\not\equiv1\mod q$ is a reduced residue,
\[
\allchisum \chi(a) \log q^* = -\phi(q) \frac{\Lambda(q/(q,a-1))}{\phi(q/(q,a-1))}.
\]
\label{log qstar sum prop}
\end{prop}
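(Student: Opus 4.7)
The plan is to convert $\log q^*$ into a sum that can be exchanged with the summation over $\chi$, then apply Lemma~\ref{lambda.phi.sum.lemma}. The key identity is
\[
\log \frac{q}{q^*} = \sum_{k \mid q/q^*} \Lambda(k) = \sum_{\substack{d \mid q \\ q^* \mid d}} \Lambda(q/d),
\]
obtained from $\log n = \sum_{k\mid n}\Lambda(k)$ by writing $k=q/d$. Since a character $\chi\mod q$ is induced from a character modulo $d$ (with $d\mid q$) precisely when $q^*\mid d$, this becomes
\[
\log q^* = \log q - \sum_{d\mid q} \Lambda(q/d)\, \mathbf{1}[\chi\text{ is induced from a character}\mod d].
\]
I would open the proof by deriving this identity and then swapping orders of summation.

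For the first identity, I would sum over $\chi$ and swap, giving
\[
\allchisum \log q^* = \phi(q)\log q - \sum_{d\mid q} \Lambda(q/d) \cdot \#\{\chi\mod q : \chi \text{ induced}\mod d\}.
\]
The inner count is $\phi(d)$ (characters modulo $q$ induced from some character mod $d$ are in bijection with characters mod $d$, since $\gcd$ with $q$ and $d$ yield the same reduced residue structure). Lemma~\ref{lambda.phi.sum.lemma} then converts $\sum_{d\mid q}\Lambda(q/d)\phi(d)$ into $\phi(q)\sum_{p\mid q}(\log p)/(p-1)$, and the first identity follows immediately.

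For the second identity, I would again swap, obtaining
\[
\allchisum \chi(a)\log q^* = \log q\!\allchisum\chi(a) - \sum_{d\mid q}\Lambda(q/d) \sum_{\substack{\chi\mod q \\ \chi\text{ induced}\mod d}} \chi(a).
\]
The first sum vanishes by orthogonality (equation~\eqref{orthogonality.relation}) since $a\not\equiv 1\mod q$. For the inner sum on the right, characters mod $q$ induced from mod $d$ correspond bijectively to characters mod $d$, and since $(a,q)=1$ implies $(a,d)=1$, the values agree: $\chi(a) = \chi_d(a\mod d)$. Hence this inner sum equals $\phi(d)\,\iota_d(a)$ by orthogonality mod $d$. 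The condition $\iota_d(a)=1$ is $d\mid (a-1)$, which together with $d\mid q$ is equivalent to $d\mid s$ where $s=(q,a-1)$. Since $a\not\equiv 1\mod q$, $s$ is a proper divisor of $q$, so the second half of Lemma~\ref{lambda.phi.sum.lemma} applies and yields
\[
\allchisum \chi(a)\log q^* = -\sum_{d\mid s}\Lambda(q/d)\phi(d) = -\phi(q)\frac{\Lambda(q/s)}{\phi(q/s)},
\]
which is the claim.

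The only subtle step is the bijection $\{\chi\mod q : \chi\text{ is induced mod }d\} \leftrightarrow \{\text{characters mod }d\}$ together with the value-matching $\chi(a)=\chi_d(a\mod d)$; I would spend a sentence justifying this (it is immediate once one recalls the definition of an induced character and the hypothesis $(a,q)=1$). Otherwise the argument is a clean unwinding of Lemma~\ref{lambda.phi.sum.lemma} via the exchange of summation described above.
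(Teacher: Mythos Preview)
Your proof is correct and follows essentially the same approach as the paper: both arguments hinge on the identity $\log(q/q^*)=\sum_{d\mid q,\,q^*\mid d}\Lambda(q/d)$, swap the order of summation, use the bijection between characters\mod q induced from\mod d and characters\mod d (together with orthogonality) to evaluate the inner sum, and then invoke Lemma~\ref{lambda.phi.sum.lemma}. The only cosmetic difference is that the paper packages the swap as the single intermediate identity~\eqref{splitting.claim} valid for all reduced residues $a$, whereas you split into the two cases from the start.
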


\begin{proof}
First we show that
\begin{equation}
\sum_{\chi\mod q} \chi(a) \log q^* = \log q \sum_{\chi\mod q} \chi(a) - \sum_{d\mid q} \Lambda(q/d) \sum_{\chi\mod d} \chi(a)
\label{splitting.claim}
\end{equation}
for any reduced residue $a\mod q$. Given a character $\chi\mod q$ and a divisor $d$ of $q$, the character $\chi$ is induced by a character\mod d if and only if $d$ is a multiple of $q^*$. Therefore
\[
\sum_{d\mid q} \Lambda(q/d) \sum_{\chi\mod d} \chi(a) = \sum_{\chi\mod q} \chi(a) \sum_{\substack{d\mid q \\ q^*\mid d}} \Lambda(q/d).
\]
Making the change of variables $c=q/d$, this identity becomes
\begin{multline*}
\sum_{d\mid q} \Lambda(q/d) \sum_{\chi\mod d} \chi(a) = \sum_{\chi\mod q} \chi(a) \sum_{c\mid q/q^*} \Lambda(c) \\ = \sum_{\chi\mod q} \chi(a) \log\tfrac q{q^*} = \log q \sum_{\chi\mod q} \chi(a) - \sum_{\chi\mod q} \chi(a) \log q^*,
\end{multline*}
which verifies equation \eqref{splitting.claim}.

If $a\equiv1\mod q$, then equation \eqref{splitting.claim} becomes
\begin{multline*}
\sum_{\chi\mod q} \log q^* = \log q \sum_{\chi\mod q} 1 - \sum_{d\mid q} \Lambda(q/d) \sum_{\chi\mod d} 1 \\
= \phi(q)\log q - \sum_{d\mid q} \Lambda(q/d) \phi(d) = \phi(q)\log q - \phi(q) \sum_{p\mid q} \frac{\log p}{p-1}
\end{multline*}
by Lemma~\ref{lambda.phi.sum.lemma}, establishing the first assertion of the lemma. If on the other hand $a\not\equiv1\mod q$, then applying the orthogonality relation \eqref{orthogonality.relation} to equation \eqref{splitting.claim} yields
\begin{align*}
\sum_{\chi\mod q} \chi(a) \log q^* &= 0 - \sum_{d\mid q} \Lambda(q/d) \phi(d)\iota_d(a) \\
&= - \sum_{d\mid(q,a-1)} \Lambda(q/d) \phi(d) = -\phi(q) \frac{\Lambda(q/(q,a-1))}{\phi(q/(q,a-1))}
\end{align*}
by Lemma~\ref{lambda.phi.sum.lemma} again, establishing the second assertion of the lemma.
\end{proof}

Finally we record a proposition that involves values of both primitive characters and characters induced by them.

\begin{prop}
Let $p$ be a prime and $e$ a positive integer, and let $r$ be a reduced residue\mod q. If $p\nmid q$, then
\[
\sum_ {\chi\mod q} \chi(r) \big( \chi^*(p^e) - \chi(p^e) \big) = 0.
\]
On the other hand, if $p\mid q$ then
\[
\sum_ {\chi\mod q} \chi(r)  \big( \chi^*(p^e) - \chi(p^e) \big) = \begin{cases}
\phi \big( {q/p^{\nu}} \big), & \text{if } rp^e \equiv 1 \mod{q/p^\nu}, \\
0, & \text{otherwise},
\end{cases}
\]
where $\nu\ge1$ is the integer such that $p^\nu \parallel q$.
\label{semi orthogonality relations prop}
\end{prop}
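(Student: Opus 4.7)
The plan is to split the sum according to whether $p$ divides the conductor $q^*$ of $\chi$, and then reduce the surviving contribution to an orthogonality relation for characters on a smaller modulus.

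The starting observation is that $\chi(p^e)$ and $\chi^*(p^e)$ coincide except in a very specific situation: since $\chi^*$ is the unique primitive character inducing $\chi$, and $\chi(n)$ differs from $\chi^*(n)$ only on integers $n$ with $\gcd(n,q) > 1$ but $\gcd(n,q^*) = 1$. Applied to $n = p^e$, this means $\chi(p^e) - \chi^*(p^e)$ is nonzero only when $p \mid q$ and $p \nmid q^*$; in that case $\chi(p^e) = 0$ while $\chi^*(p^e) = \chi^*(p)^e$ has modulus one.

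This immediately handles the case $p \nmid q$: every term of the sum is zero, so the sum vanishes. For the case $p \mid q$, write $q = p^\nu q_1$ with $q_1 = q/p^\nu$ and $\gcd(p,q_1) = 1$. Then $\chi(p^e) = 0$ always, and the sum reduces to
\[
\sum_{\substack{\chi \mod q \\ p \,\nmid\, q^*}} \chi(r)\chi^*(p^e).
\]
The condition $p \nmid q^*$ is equivalent to $q^* \mid q_1$, which in turn is equivalent to $\chi$ being induced by some character $\psi \mod q_1$. The map $\psi \mapsto \chi$ (lifting $\psi \mod q_1$ to a character $\mod q$) is a bijection between characters $\mod q_1$ and characters $\mod q$ whose conductor divides $q_1$. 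Under this bijection, $\chi$ and $\psi$ share the same primitive inducing character, so $\chi^*(p^e) = \psi(p^e)$ since $\gcd(p^e,q_1)=1$; and since $r$ is coprime to $q$ (hence to $q_1$), $\chi(r) = \psi(r)$.

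The sum therefore becomes
\[
\sum_{\psi \mod q_1} \psi(r)\psi(p^e) = \sum_{\psi \mod q_1} \psi(rp^e),
\]
to which the standard orthogonality relation~\eqref{orthogonality.relation} applies, yielding $\phi(q_1)$ when $rp^e \equiv 1 \mod q_1$ and $0$ otherwise. This is precisely the claimed formula. The only step requiring care is the bijection between characters $\mod q$ with conductor dividing $q_1$ and characters $\mod q_1$, but this is a standard fact about induction of Dirichlet characters and poses no real obstacle.
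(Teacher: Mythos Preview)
Your proof is correct and follows essentially the same route as the paper's: reduce to the case $p\mid q$, note $\chi(p^e)=0$ there, restrict to characters with $p\nmid q^*$ (equivalently $q^*\mid q/p^\nu$), identify these with characters $\psi\mod{q/p^\nu}$, and apply orthogonality to $\sum_\psi \psi(rp^e)$. The only cosmetic difference is that the paper first rewrites $\chi(r)\chi^*(p^e)$ as $\chi^*(rp^e)$ before restricting, whereas you restrict first and then pass to $\psi$; the substance is identical.
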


\begin{proof}
The first assertion is trivial: if $p\nmid q$ then $\chi^*(p^e) = \chi(p^e)$ for every character $\chi\mod q$. If $p\mid q$, then $\chi(p^e)=0$ for every $\chi$, and so
$$
\sum_ {\chi\mod q} \chi(r)  \big( \chi^*(p^e) - \chi(p^e) \big) = \sum_{\chi \mod q} \chi(r) \chi^* (p^e) = \sum_{\chi \mod q} \chi^*(rp^e)
$$
since $\chi(r)=\chi^*(r)$ for every $\chi \mod q$ due to the hypothesis that $(r,q)=1$. Also, we have $\chi^* (p^e) = 0$ for any character $\chi$ such that $p\mid q^*$, and so
\[
\sum_{\chi \mod q} \chi^*(rp^e) = \sum_{\substack{\chi \mod q \\ q^* \mid q/p^\nu}} \chi^*(rp^e) = \sum_{\chi \mod {q/p^\nu}} \chi(rp^e),
\]
since $(p^e, q/p^\nu)=1$. The second assertion now follows from the orthogonality relation~\eqref {orthogonality.relation}.
\end{proof}

\subsection{A formula for the variance}
\label{variance formula section}

Recall that $b(\chi)$ was defined in Definition~\ref {bchi and Vqab def}; we record a classical formula for $b(\chi)$ in the next lemma, after which we will be able to prove Theorem~\ref{variance evaluation theorem}.

\begin{lemma}
\label{b.chi.nonprimitive.lemma}
Assume GRH. Let $q\ge3$, and let $\chi$ be any nonprincipal character modulo $q$. Then
\begin{equation*}
b(\chi) = \log\frac{q^*}\pi - \gamma_0 - (1+\chi(-1))\log2 + 2\Re\frac{L'(1,\chi^*)}{L(1,\chi^*)}.
\end{equation*}
\end{lemma}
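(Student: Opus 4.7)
The plan is to use the Hadamard product expansion of the completed $L$-function and pair zeros with themselves (which is legitimate under GRH since each zero $\rho = 1/2 + i\gamma$ satisfies $1-\rho = \overline{\rho}$). Specifically, I would work with the completed function
\[
\xi(s,\chi^*) = \left(\frac{q^*}{\pi}\right)^{(s+a)/2} \Gamma\!\left(\frac{s+a}{2}\right) L(s,\chi^*),
\]
where $a = 0$ if $\chi^*(-1) = 1$ and $a = 1$ if $\chi^*(-1) = -1$. Since $\xi(s,\chi^*)$ is entire of order one, it admits a Hadamard product $\xi(s,\chi^*) = e^{A+Bs}\prod_\rho(1-s/\rho)e^{s/\rho}$ over the nontrivial zeros. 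Taking logarithmic derivatives and equating with the direct differentiation of the definition of $\xi$ gives, at $s=1$,
\[
\tfrac12 \log\frac{q^*}{\pi} + \tfrac12 \psi\!\left(\frac{1+a}{2}\right) + \frac{L'(1,\chi^*)}{L(1,\chi^*)} = B + \sum_\rho \left(\frac{1}{1-\rho} + \frac{1}{\rho}\right).
\]

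Next, I would observe that, under GRH, for each zero $\rho = 1/2 + i\gamma$ we have
\[
\frac{1}{1-\rho} + \frac{1}{\rho} = \frac{1}{1/2 - i\gamma} + \frac{1}{1/2+i\gamma} = \frac{1}{1/4+\gamma^2},
\]
so the full sum on the right collapses to $b(\chi^*) = b(\chi)$ (the equality $b(\chi)=b(\chi^*)$ holds because the Euler factors $1-\chi^*(p)p^{-s}$ removed upon passing from $\chi$ to $\chi^*$ contribute no zeros on the critical line, only on $\Re s = 0$).

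The key remaining step, and the one requiring the most care, is identifying $\Re B$. I would invoke the classical identity $\Re B(\chi^*) = -\sum_\rho \Re(1/\rho)$ derivable from the functional equation $\xi(s,\chi^*) = \epsilon \xi(1-s,\bar\chi^*)$ by taking logarithmic derivatives at $s=0$, conjugating, and using $B(\bar\chi^*) = \overline{B(\chi^*)}$ together with the fact that the zeros of $L(s,\bar\chi^*)$ are the complex conjugates of those of $L(s,\chi^*)$. Under GRH this identity reads $\Re B = -\tfrac12 b(\chi)$.

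Finally, I would take the real part of the displayed equation (every other term on each side being manifestly real), substitute $\Re B = -\tfrac12 b(\chi)$, and solve for $b(\chi)$ to obtain
\[
b(\chi) = \log\frac{q^*}{\pi} + \psi\!\left(\frac{1+a}{2}\right) + 2\Re \frac{L'(1,\chi^*)}{L(1,\chi^*)}.
\]
Evaluating $\psi(1/2) = -\gamma_0 - 2\log 2$ in the even case ($a=0$, $\chi(-1)=1$) and $\psi(1) = -\gamma_0$ in the odd case ($a=1$, $\chi(-1)=-1$), both cases merge into the stated formula through the uniform expression $(1+\chi(-1))\log 2$, which equals $2\log 2$ or $0$ respectively.
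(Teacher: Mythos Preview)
Your proposal is correct and follows essentially the same route as the paper: both reduce to the primitive character $\chi^*$, both use the Hadamard-product logarithmic derivative of the completed $L$-function, and both hinge on the identity $\Re B(\chi^*) = -\tfrac12 b(\chi)$ under GRH. The only cosmetic difference is that the paper quotes Vorhauer's closed-form expression for $B(\chi)$ from the literature and then takes real parts, whereas you effectively rederive the needed real part of that formula by evaluating $\xi'/\xi$ at $s=1$ directly and inserting the digamma values $\psi(1/2)=-\gamma_0-2\log 2$ and $\psi(1)=-\gamma_0$.
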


\begin{proof}
Since the zeros of $L(s,\chi)$ and $L(s,\chi^*)$ on the line $\Re z=\frac12$ are identical, it suffices to show that for any primitive character $\chi$ modulo $q$,
\begin{equation*}
\Lsum{\gamma\in\R} \frac1{\frac14+\gamma^2} = \log\frac q\pi - \gamma_0 - (1+\chi(-1))\log2 + 2\Re\frac{L'(1,\chi)}{L(1,\chi)}.
\end{equation*}
There is a certain constant $B(\chi)$ that appears in the Hadamard product formula for $L(s,\chi)$. One classical formula related to it~\cite[equation (10.38)]{magicbook} is
\begin{equation}
\Re B(\chi) = - \sum_{\substack{\rho\in\C \\ 0<\Re\rho<1 \\ L(\rho,\chi)=0}} \Re \frac1\rho.
\label{Re Bchi}
\end{equation}
We can relate $B(\chi)$ to $b(\chi)$ under GRH by rewriting the previous equation as
\begin{equation}
  -2\Re B(\chi) = \Lsum{\gamma\in\R} \Re \bigg( \frac2{\frac12+i\gamma} \bigg) = \Lsum{\gamma\in\R} \Re \bigg( \frac{1-2i\gamma}{\frac14+\gamma^2} \bigg) = b(\chi).
\label{relate B to b}
\end{equation}
On the other hand, Vorhauer showed in 2006 (see~\cite[equation (10.39)]{magicbook}) that
\[
  B(\chi) = -\frac12\log \frac q\pi - \frac{L'}L(1,\bar\chi) + \frac{\gamma_0}2 + \frac{1+\chi(-1)}2\log 2.
\]
Taking real parts (which renders moot the difference between $\bar\chi$ and $\chi$) and comparing to equation~\eqref{relate B to b} establishes the lemma.
\end{proof}

\begin{proof}[Proof of Theorem~\ref{variance evaluation theorem}]
We begin by applying Lemma~\ref{b.chi.nonprimitive.lemma} to Definition~\ref {bchi and Vqab def} for $V(q;a,b)$, which yields
\begin{align}
V(q;a,b) &= \mostchisum |\chi(a)-\chi(b)|^2 \Big( \log\frac{q^*}\pi - \gamma_0 - (1+\chi(-1))\log2 + 2\Re\frac{L'(1,\chi^*)}{L(1,\chi^*)} \Big) \notag \\
&= \allchisum |\chi(a)-\chi(b)|^2 \log q^* - (\gamma_0+\log2\pi) \allchisum |\chi(a)-\chi(b)|^2 \notag \\
&\qquad{}- \log2 \allchisum |\chi(a)-\chi(b)|^2 \chi(-1) + 2 M^*(q;a,b), \label{V.with.b.substituted}
\end{align}
recalling Definition~\ref{Mqab def} for $M^*(q;a,b)$. We are permitted to reinclude the principal character $\chi_0$ in the three sums on the right-hand side, since the coefficient $|\chi_0(a)-\chi_0(b)|^2$ always equals~0.

The second and third terms on the right-hand side of equation \eqref{V.with.b.substituted} are easy to evaluate using Proposition~\ref {just orthogonality prop}: we have
\begin{equation}
-(\gamma_0+\log2\pi) \allchisum |\chi(a)-\chi(b)|^2 = -2(\gamma_0+\log2\pi)\phi(q)
\label{constant.term}
\end{equation}
and
\begin{align}
-\log2 \allchisum |\chi(a)-\chi(b)|^2 \chi(-1) &= (\log2) \phi(q) \big( \iota_q(-ab^{-1})+\iota_q(-ba^{-1}) \big)  \notag \\ &= (2\log2) \phi(q) \iota_q(-ab^{-1}).
\label{minus.one.term}
\end{align}
The first sum on the right-hand side of equation \eqref{V.with.b.substituted} can be evaluated using Proposition~\ref {log qstar sum prop}:
\begin{align*}
\allchisum |\chi(a)-{} & \chi(b)|^2 \log q^* = \sum_{\chi\mod q} (2-\chi(ab^{-1})-\chi(ba^{-1})) \log q^* \notag \\
&= 2\phi(q)\bigg( \log q - \sum_{p\mid q} \frac{\log p}{p-1} \bigg) + \phi(q) \frac{\Lambda(q/(q,ab^{-1}-1))}{\phi(q/(q,ab^{-1}-1))} + \phi(q) \frac{\Lambda(q/(q,ba^{-1}-1))}{\phi(q/(q,ba^{-1}-1))}.
\end{align*}
Since $(q,mn) = (q,n)$ for any integer $m$ that is relatively prime to $q$, we see that $(q,ab^{-1}-1) = (q,a-b) = (q,b-a) = (q,ba^{-1}-1)$, and therefore
\begin{equation}
\allchisum |\chi(a)-\chi(b)|^2 \log q^* = 2\phi(q) \bigg( \log q - \sum_{p\mid q} \frac{\log p}{p-1} + \frac{\Lambda(q/(q,a-b))}{\phi(q/(q,a-b))} \bigg).
\label{R.has.been.simplified}
\end{equation}
Substituting the evaluations \eqref{constant.term}, \eqref{minus.one.term}, and~\eqref{R.has.been.simplified} into equation~\eqref{V.with.b.substituted}, we obtain
\begin{align*}
V(q;a,b) &= 2\phi(q) \bigg( \log q - \sum_{p\mid q} \frac{\log p}{p-1} + \frac{\Lambda(q/(q,a-b))}{\phi(q/(q,a-b))} \bigg) \\
&\qquad{} - 2(\gamma_0+\log2\pi)\phi(q) + (2\log2)\phi(q) \iota_q(-ab^{-1}) + 2 M^*(q;a,b) \\
&= 2\phi(q) \big( \L(q) + K_q(a-b) + \iota_q(-ab^{-1})\log2 \big) + 2M^*(q;a,b),
\end{align*}
where $\L(q)$ and $K_q(n)$ were defined in Definition~\ref {iota and Lq and Rqn def}. This establishes the theorem.
\end{proof}

Theorem~\ref{variance evaluation theorem} has the following asymptotic formula as a corollary:

\begin{prop}
\label{Vqab asymptotic prop}
Assuming GRH, we have $V(q;a,b) = 2\phi(q)\log q + O(\phi(q)\log\log q)$.
\end{prop}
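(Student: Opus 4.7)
The plan is to derive the proposition directly from Theorem \ref{variance evaluation theorem}, which gives the exact formula
\[
V(q;a,b) = 2\phi(q)\bigl(\L(q) + K_q(a-b) + \iota_q(-ab^{-1})\log 2\bigr) + 2M^*(q;a,b),
\]
and then show that every contribution other than $2\phi(q)\log q$ is of size $O(\phi(q)\log\log q)$. The three summands inside the parentheses are dispatched immediately: the remarks accompanying Definition \ref{iota and Lq and Rqn def} already record $\L(q) = \log q + O(\log\log q)$ (this is just Mertens' bound $\sum_{p\mid q}(\log p)/(p-1) \ll \log\log q$ together with the trivial $\Lambda(q)/\phi(q) = O((\log q)/\phi(q)) = o(\log\log q)$), while the same definition observes $0 \le K_q(a-b) \le \log 2$ and $\iota_q(-ab^{-1})\log 2 \le \log 2$. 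Hence the first term of Theorem \ref{variance evaluation theorem} contributes exactly $2\phi(q)\log q + O(\phi(q)\log\log q)$.

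The remaining task is to bound $M^*(q;a,b)$. My main route is to invoke Theorem \ref{M evaluation theorem}, which writes
\[
M^*(q;a,b) = \phi(q)\Bigl( \tfrac{\Lambda(r_1)}{r_1} + \tfrac{\Lambda(r_2)}{r_2} + H(q;a,b) + O\bigl( \tfrac{\log^2 q}{q} \bigr) \Bigr).
\]
Each $\Lambda(r)/r$ is trivially $O(1)$. For $H(q;a,b)$, inspection of Definition \ref{hqpr def} shows that whenever $h(q;p,r)$ is finite one has $h(q;p,r) \le (\log p)/(\phi(p^\nu)p) \ll (\log p)/p^2$ (since $\phi(p^\nu) \ge 1$ when $\nu=1$, giving $(\log p)/((p-1)p)$, and $\phi(p^\nu) \ge p(p-1)$ when $\nu\ge 2$). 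Therefore $H(q;a,b) \ll \sum_p (\log p)/p^2 \ll 1$ uniformly in $q$, $a$, $b$. This yields the (more than sufficient) bound $M^*(q;a,b) = O(\phi(q))$.

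A direct alternative, if one wishes to sidestep the forward reference to Theorem \ref{M evaluation theorem}, is to combine the classical GRH-conditional estimate $L'(1,\chi^*)/L(1,\chi^*) \ll \log\log q^*$ of Littlewood with the orthogonality identity $\sum_{\chi\bmod q}|\chi(a)-\chi(b)|^2 = 2\phi(q)$ from Proposition \ref{just orthogonality prop}, bounding $M^*(q;a,b) \ll \phi(q)\log\log q$ straight from Definition \ref{Mqab def}. Either route, combined with the first paragraph, yields $V(q;a,b) = 2\phi(q)\log q + O(\phi(q)\log\log q)$. There is really no obstacle here: the proposition is a bookkeeping corollary of Theorem \ref{variance evaluation theorem}, and the $\log\log q$ in the error genuinely originates from the Mertens term $\sum_{p\mid q}(\log p)/(p-1)$ inside $\L(q)$.
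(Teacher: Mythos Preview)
Your proposal is correct. Your ``direct alternative'' is exactly the argument the paper gives: the paper bounds $M^*(q;a,b)$ by invoking Littlewood's GRH estimate $L'(1,\chi^*)/L(1,\chi^*) \ll \log\log q$ (or equivalently Proposition~\ref{Sound method prop} with $y=\log^2 q$) together with $\sum_\chi|\chi(a)-\chi(b)|^2 = 2\phi(q)$, obtaining $M^*(q;a,b)\ll\phi(q)\log\log q$; the $\L(q)$, $K_q$, and $\iota_q$ terms are handled identically. Your ``main route'' via Theorem~\ref{M evaluation theorem} is a forward reference (that theorem is proved in Section~\ref{M evaluation section}, after the present proposition) but is logically independent of Proposition~\ref{Vqab asymptotic prop}, so there is no circularity; it even yields the sharper $M^*(q;a,b)=O(\phi(q))$, though this extra precision is swallowed by the $\log\log q$ coming from $\L(q)$.
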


\begin{proof}
First note that the function $(\log t)/(t-1)$ is decreasing for $t>1$. Consequently, $\Lambda(q)/\phi(q)$ is bounded by $\log 2$. Also, letting $p_j$ denote the $j$th prime, we see that
\[
\sum_{p\mid q} \frac{\log p}{p-1} \le \sum_{j=1}^{\omega(q)} \frac{\log p_j}{p_j-1} \ll \log p_{\omega(q)} \ll \log {\omega(q)} \ll \log\log q,
\]
where the final inequality uses the trivial bound $\omega(q) \le (\log q)/(\log 2)$. From Definition~\ref {iota and Lq and Rqn def}, we conclude that $\L(q) = \log q + O(\log\log q)$. Next, $K_q(a-b)$ is bounded by $\log 2$ as above, and $\iota_q(ab^{-1})$ is of course bounded as well. Finally, on GRH we know that $L'(1,\chi^*)/L(1,\chi^*) \ll \log\log q^* \le \log\log q$ (either see \cite{littlewood}, or take $y=\log ^2 q$ in Proposition~\ref{Sound method prop}), which immediately implies that $M^*(q;a,b) \ll \phi(q)\log\log q$ by Definition~\ref {Mqab def}. The proposition now follows from Theorem~\ref{variance evaluation theorem}.
\end{proof}

\subsection{Evaluation of the analytic term $M^*(q;a,b)$}
\label{M evaluation section}

The goal of this section is a proof of Theorem~\ref{M evaluation theorem}. We start by examining more closely, in the next two lemmas, the relationship between the quantities $M^*(q;a,b)$ and $M(q;a,b)$ defined in Definition~\ref{Mqab def}. Recall that $e(q;p,r)$ was defined in Definition~\ref {hqpr def}.

\begin{lemma}
If $p^\nu \parallel q$, then $\displaystyle \sum_{\substack{e\ge1 \\ rp^e \equiv 1\mod {q/p^\nu}}} \frac1{p^e} = \frac1{p^{e(q;p,r)}(1-p^{-e(q;p,1)})}$.
\label{geometric series lemma}
\end{lemma}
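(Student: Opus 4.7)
The plan is to interpret the summation set as a coset and then sum a geometric series. Write $m = q/p^\nu$; since $p^\nu \parallel q$ we have $\gcd(p,m) = 1$, so $p$ represents a well-defined element of the multiplicative group $(\Z/m\Z)^\times$, and in particular has a finite multiplicative order.

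First I would observe that by Definition~\ref{hqpr def}, the quantity $e(q;p,1)$ is precisely this multiplicative order of $p\mod m$: indeed, $e(q;p,1)$ is the smallest $e\ge 1$ with $p^e \equiv 1^{-1}=1 \mod m$. Let $d = e(q;p,1)$, so that for integers $e\ge 1$ the congruence $p^e \equiv 1\mod m$ holds iff $d\mid e$.

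Next I would handle the trivial case: if $r$ (equivalently $r^{-1}$) does not lie in the cyclic subgroup $\langle p\rangle \le (\Z/m\Z)^\times$, then by convention $e(q;p,r)=\infty$ and $p^{-e(q;p,r)}=0$, while the summation set $\{e\ge 1 : p^e\equiv r^{-1}\mod m\}$ is empty, so both sides are $0$. Otherwise, set $e_0 = e(q;p,r)$, which is finite. For any $e\ge 1$, the congruence $p^e\equiv r^{-1}\mod m$ is equivalent (after multiplying by $p^{-e_0}$) to $p^{e-e_0}\equiv 1\mod m$, which in turn is equivalent to $d\mid (e-e_0)$. Hence the summation set is exactly $\{e_0 + kd : k\ge 0\}$.

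Finally I would just sum the resulting geometric series:
\[
\sum_{\substack{e\ge1 \\ rp^e \equiv 1\mod{m}}}\frac{1}{p^e}
= \sum_{k=0}^{\infty}\frac{1}{p^{e_0+kd}}
= \frac{1}{p^{e_0}}\cdot\frac{1}{1-p^{-d}}
= \frac{1}{p^{e(q;p,r)}\bigl(1-p^{-e(q;p,1)}\bigr)},
\]
which is the claimed identity. There is no genuine obstacle here; the only point requiring a moment of care is the bookkeeping with the convention $e(q;p,r)=\infty$ in the degenerate case, which as noted makes both sides equal to zero.
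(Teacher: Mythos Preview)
Your proof is correct and follows essentially the same approach as the paper: both handle the degenerate case via the $e(q;p,r)=\infty$ convention, identify the summation set as the arithmetic progression $\{e(q;p,r)+k\,e(q;p,1):k\ge0\}$ using that $e(q;p,1)$ is the multiplicative order of $p$ modulo $q/p^\nu$, and then sum the resulting geometric series.
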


\begin{proof}
If $r$ is not in the multiplicative subgroup$\mod{q/p^\nu}$ generated by $p$, then the left-hand side is clearly zero, while the right-hand side is zero by the convention that $e(q;p,r) = \infty$ in this case. Otherwise, the positive integers $e$ for which $rp^e \equiv 1\mod{q/p^\nu}$ are precisely the ones of the form $e(q;p,r) + ke(q;p,1)$ for $k\ge0$, since $e(q;p,r)$ is the first such integer and $e(q;p,1)$ is the order of $p\mod{q/p^\nu}$. Therefore we obtain the geometric series
\[
\sum_{\substack{e\ge1 \\ rp^e \equiv 1\mod {q/p^\nu}}} \frac1{p^e} = \sum_{k=0}^\infty \frac1{p^{e(q;p,r) + ke(q;p,1)}} = \frac1{p^{e(q;p,r)}(1-p^{-e(q;p,1)})}
\]
as claimed.
\end{proof}

\begin{definition}
If $p^\nu\parallel q$, define
\[
h_0(q;p,r) = \frac{1}{\phi(p^{\nu})}\frac{\log p}{p^{e(q;p,r)} (1-p^{-e(q;p,1)})}
\]
and
\[
H_0(q;a,b) = \sum_{p\mid q} \big( h_0(q;p,ab^{-1}) +h_0(q;p,ba^{-1}) -2h_0(q;p,1) \big).
\]
We will see later in this section, in the proof of Theorem~\ref{M evaluation theorem}, that $h_0$ and $H_0$ are very close to the functions $h$ and $H$ also defined in Definition~\ref {hqpr def}. Notice that if $q$ is prime, then $h_0(q;q,r) = (\log q)/q(q-1)$ independent of $r$ and thus $H(q;a,b)=0$ for any $a$ and $b$.
\qedef
\label{h0 and H0 def}
\end{definition}

The next lemma could be proved under a hypothesis much weaker than GRH, but this is irrelevant to our present purposes.

\begin{lemma}
Assume GRH. If $a$ and $b$ are reduced residues\mod q, then
\[
M^*(q;a,b) = M(q;a,b) + \phi(q) H_0(q;a,b),
\]
where $M^*(q;a,b)$ and $M(q;a,b)$ are defined in Definition~\ref{Mqab def}.
\label{Mstar to M lemma}
\end{lemma}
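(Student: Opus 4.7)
The plan is to compute the difference $M^*(q;a,b) - M(q;a,b)$ directly from the Euler product relation between $L(s,\chi)$ and $L(s,\chi^*)$, then invoke the orthogonality relation of Proposition~\ref{semi orthogonality relations prop} to transform the resulting character sum into $\phi(q) H_0(q;a,b)$.

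First I would recall that $L(s,\chi) = L(s,\chi^*) \prod_{p\mid q}(1-\chi^*(p)p^{-s})$. Taking logarithmic derivatives gives
\[
\frac{L'(1,\chi)}{L(1,\chi)} - \frac{L'(1,\chi^*)}{L(1,\chi^*)} = \sum_{p\mid q} \frac{\chi^*(p)\log p}{p-\chi^*(p)} = \sum_{p\mid q}(\log p)\sum_{e=1}^\infty \chi^*(p^e)p^{-e},
\]
the last equality being the geometric series expansion, which is absolutely convergent since $|\chi^*(p)|\le 1$ and $p\ge 2$. Since $\chi(p^e)=0$ whenever $p\mid q$, the summand $\chi^*(p^e)$ may be rewritten as $\chi^*(p^e)-\chi(p^e)$, preparing the expression for Proposition~\ref{semi orthogonality relations prop}. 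Reinserting the principal character into the sums (its contribution vanishes because $|\chi_0(a)-\chi_0(b)|^2=0$), I obtain
\[
M^*(q;a,b) - M(q;a,b) = -\sum_{p\mid q}(\log p)\sum_{e=1}^\infty p^{-e} \allchisum |\chi(a)-\chi(b)|^2 \big(\chi^*(p^e)-\chi(p^e)\big).
\]

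Next I would expand $|\chi(a)-\chi(b)|^2 = 2 - \chi(ab^{-1}) - \chi(ba^{-1})$ and apply Proposition~\ref{semi orthogonality relations prop} to each of the three resulting character sums; each contributes $\phi(q/p^\nu)$ precisely when $p^e$, $ab^{-1}p^e$, or $ba^{-1}p^e$ is congruent to $1\pmod{q/p^\nu}$, and vanishes otherwise. This produces
\[
M^*(q;a,b) - M(q;a,b) = \sum_{p\mid q}\phi(q/p^\nu)(\log p) \sum_{e=1}^\infty p^{-e} \big( \mathbf{1}[ab^{-1}p^e\equiv 1] + \mathbf{1}[ba^{-1}p^e\equiv 1] - 2\,\mathbf{1}[p^e\equiv 1]\big),
\]
with all congruences taken modulo $q/p^\nu$ and with the overall sign flipping because of the factor of $-1$ out front combined with the $-(\chi(ab^{-1})+\chi(ba^{-1}))$ inside.

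Finally I would recognize the inner geometric sum as precisely the quantity evaluated in Lemma~\ref{geometric series lemma}, so that the inner sum over $e$ for argument $r$ equals $p^{-e(q;p,r)}(1-p^{-e(q;p,1)})^{-1}$. Using the identity $\phi(q/p^\nu)=\phi(q)/\phi(p^\nu)$ to pull $\phi(q)$ out front, each $p$--summand becomes exactly $\phi(q)\big(h_0(q;p,ab^{-1})+h_0(q;p,ba^{-1})-2h_0(q;p,1)\big)$ in the notation of Definition~\ref{h0 and H0 def}, so the total equals $\phi(q)H_0(q;a,b)$, establishing the lemma. The only moderately delicate step is bookkeeping the signs and the case where $p\mid q$ forces $\chi(p^e)=0$; otherwise the argument is a straightforward chain of substitutions, and GRH enters only through the implicit assumption that $L(1,\chi^*)\ne 0$ (which is actually unconditional).
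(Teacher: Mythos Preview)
Your proof is correct and follows essentially the same route as the paper: expand $|\chi(a)-\chi(b)|^2$, apply Proposition~\ref{semi orthogonality relations prop}, and then sum the resulting geometric series via Lemma~\ref{geometric series lemma}. The one noteworthy difference is in the opening step. The paper expresses each of $\frac{L'(1,\chi)}{L(1,\chi)}$ and $\frac{L'(1,\chi^*)}{L(1,\chi^*)}$ separately as $-\lim_{y\to\infty}\sum_{p\le y}\sum_e \chi(p^e)(\log p)/p^e$, invoking GRH to justify convergence, and then subtracts. You instead differentiate the finite Euler product $L(s,\chi)=L(s,\chi^*)\prod_{p\mid q}(1-\chi^*(p)p^{-s})$ directly, obtaining the difference $\frac{L'(1,\chi)}{L(1,\chi)}-\frac{L'(1,\chi^*)}{L(1,\chi^*)}$ as a finite, absolutely convergent sum over $p\mid q$ from the outset. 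This is cleaner: it sidesteps the conditional convergence issue entirely and, as you correctly observe, shows that the lemma holds unconditionally (the paper itself remarks that GRH is not really needed here). After that point the two arguments are identical.
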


\begin{proof}
We begin with the identity
\[
\frac{L'(1,\chi)}{L(1,\chi)} = - \lim_{y\to\infty} \sum_{p\le y} \sum_{e=1}^\infty \frac{\chi(p^e)\log p}{p^e}.
\]
This identity follows from the fact that the Euler product of $L(s,\chi)$ converges uniformly for $\Re(s)\ge 1/2+\ep$; this is implied by the estimate $\sum_{p\le x} \chi(p) \ll_{q} x^{1/2} \log^2 x$ which itself is a consequence of GRH.

Therefore
\begin{align*}
M^*(q;a,b) - M(q;a,b) &= \mostchisum \big| \chi(a)-\chi(b) \big|^2 \bigg( \frac{L'(1,\chi^*)}{L(1,\chi^*)} - \frac{L'(1,\chi)}{L(1,\chi)} \bigg) \\
&= - \mostchisum \big| \chi(a)-\chi(b) \big|^2 \lim_{y\to\infty} \sum_{p\le y} \log p \sum_{e=1}^\infty \frac{\chi^*(p^e)-\chi(p^e)}{p^e} \\
&=  \lim_{y\to\infty} \sum_{p\le y} \log p \sum_{e=1}^\infty \frac1{p^e} \allchisum \big( \chi(ab^{-1}) + \chi(ba^{-1}) - 2 \big) \big( \chi^*(p^e)-\chi(p^e) \big),
\end{align*}
where the inserted term involving $\chi_0$ is always zero. Proposition~\ref {semi orthogonality relations prop} tells us that the inner sum vanishes except possibly when the prime $p$ divides $q$; invoking that proposition three times, we see that
\begin{multline*}
M^*(q;a,b) - M(q;a,b) = \sum_{p^\nu \parallel q} \phi(q/p^\nu) \log p \\
{}\times \bigg( \sum_{\substack{e\ge1 \\ ab^{-1}p^e \equiv 1\mod{q/p^\nu}}} \frac1{p^e} + \sum_{\substack{e\ge1 \\ ba^{-1}p^e \equiv 1\mod{q/p^\nu}}} \frac1{p^e} - 2 \sum_{\substack{e\ge1 \\ p^e \equiv 1\mod{q/p^\nu}}} \frac1{p^e} \bigg).
\end{multline*}
We can evaluate these inner sums using Lemma~\ref{geometric series lemma}: by comparison with Definition~\ref {h0 and H0 def},
\begin{align*}
M^*(q;a,b) - M(q;a,b) &= \phi(q) \sum_{p^\nu \parallel q} \frac{\log p}{\phi(p^\nu)} \bigg( \frac1{p^{e(q;p,ab^{-1})}(1-p^{-e(q;p,1)})} \\
&\qquad{} + \frac1{p^{e(q;p,ba^{-1})}(1-p^{-e(q;p,1)})} - 2\frac1{p^{e(q;p,1)}(1-p^{-e(q;p,1)})} \bigg) \\
&= \phi(q)^{\mathstrut} H_0(q;a,b),
\end{align*}
which establishes the lemma.
\end{proof}

We will need the following three propositions, with explicit constants given, when we undertake our calculations and estimations of $\delta(q;a,b)$. Because the need for explicit constants makes their derivations rather lengthy, we will defer the proofs of the first two propositions until Section~\ref {variance bound section} and derive only the third one in this section.

\begin{prop}
\label{Sound method prop}
Assume GRH. Let $\chi$ be a nonprincipal character\mod q. For any positive real number $y$,
$$
\frac{L'(1,\chi)}{L(1,\chi)} = - \sum_{n=1}^{\infty} \frac{\chi(n) \Lambda(n)}{n} e^{-n/y} + \Obar\bigg( \frac{14.27\log q + 16.25}{y^{1/2}}  +  \frac{16.1\log q+17.83} {y^{3/4}} \bigg).
$$
\end{prop}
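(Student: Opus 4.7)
The plan is to realise the exponential weight as a Mellin--Barnes integral and then move the contour past the critical line. Starting from the identity $e^{-u}=\frac{1}{2\pi i}\int_{(c)}\Gamma(s)u^{-s}\,ds$ valid for every $c>0$, I would substitute $u=n/y$ and sum termwise against $\chi(n)\Lambda(n)/n$; absolute convergence on the line $\Re(s)=1$ permits the interchange and yields
\begin{equation*}
\sum_{n=1}^\infty \frac{\chi(n)\Lambda(n)}{n}e^{-n/y}=\frac{1}{2\pi i}\int_{(1)}\Gamma(s)\,y^s\bigg(-\frac{L'}{L}(1+s,\chi)\bigg)\,ds.
\end{equation*}

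I would then shift the contour from $\Re(s)=1$ to $\Re(s)=-\tfrac{3}{4}$. The integrand has a simple pole at $s=0$ coming from $\Gamma$, with residue $-L'(1,\chi)/L(1,\chi)$, and under GRH it has poles at $s=\rho-1=-\tfrac12+i\gamma$ coming from the nontrivial zeros $\rho$ of $L(s,\chi)$, each contributing $-\Gamma(\rho-1)y^{\rho-1}$ (counted with multiplicity). Because $-\tfrac{3}{4}>-1$, no trivial zero is crossed. Rearranging yields
\begin{equation*}
\frac{L'}{L}(1,\chi)=-\sum_{n=1}^\infty\frac{\chi(n)\Lambda(n)}{n}e^{-n/y}-\sum_\rho \Gamma(\rho-1)y^{\rho-1}+\frac{1}{2\pi i}\int_{(-3/4)}\Gamma(s)\,y^s\bigg(-\frac{L'}{L}(1+s,\chi)\bigg)\,ds,
\end{equation*}
so the remaining task is to produce explicit estimates for the sum over zeros and for the displaced integral.

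For the sum over zeros, $|y^{\rho-1}|=y^{-1/2}$ is independent of $\gamma$; combining a Stirling-type bound for $|\Gamma(-\tfrac12+i\gamma)|$ with Riemann--Stieltjes integration against $N(T,\chi)$ and applying Proposition~\ref{McCurley bound} would produce an estimate of the shape $(A\log q+B)y^{-1/2}$, contributing the first term of the claimed error. For the displaced integral, $|\Gamma(-\tfrac{3}{4}+it)|$ supplies exponential decay in $t$, while on the line $\Re(1+s)=\tfrac{1}{4}$ the partial-fraction expansion of $L'/L$ coming from the Hadamard product of $L(s,\chi)$ gives $|L'/L(\tfrac14+it,\chi)|\ll\log q(|t|+2)$ under GRH: every zero $\rho$ obeys $|\tfrac14+it-\rho|\ge\tfrac14$, the $O(\log q(|t|+2))$ zeros within one vertical unit of $t$ (Proposition~\ref{McCurley bound} again) each contribute at most $4$, and the tail is controlled analogously. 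Integrating against $|\Gamma(-\tfrac{3}{4}+it)|$ produces the second error term of shape $(C\log q+D)y^{-3/4}$.

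The principal obstacle is purely quantitative: obtaining the precise numerical constants $14.27$, $16.25$, $16.1$, $17.83$ requires replacing every $\ll$ by a tracked $\Obar$. Concretely, one needs a fully explicit Stirling inequality for $|\Gamma(\sigma+it)|$ on the two vertical lines $\Re s=-\tfrac12$ and $\Re s=-\tfrac{3}{4}$, the exact numerical constants of Proposition~\ref{McCurley bound}, and an explicit form of the Hadamard partial-fraction identity for $L'/L(s,\chi)$ with its remainder and tail sums over zeros again bounded using the same zero-counting estimate.
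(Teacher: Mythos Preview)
Your proposal is correct and follows essentially the same route as the paper: Mellin inversion, contour shift to $\Re s=-\tfrac34$, residues at $s=0$ and $s=\rho-1$, then bounding $\sum_\gamma |\Gamma(-\tfrac12+i\gamma)|$ by Riemann--Stieltjes integration against $N(T,\chi)$ (the paper's Lemma~\ref{second term lemma}) and bounding the shifted integral via an explicit $|L'/L(\tfrac14+it,\chi)|$ estimate from the Hadamard partial-fraction formula (the paper's Proposition~\ref{with and without primitive prop} and Lemma~\ref{third term}). Your diagnosis that the only remaining work is carefully tracking the numerical constants is exactly right.
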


\begin{prop}
\label{only the first term survives prop}
If $1\le a <q$, then
$$
\sum_ {n\equiv a \mod q} \frac{\Lambda(n)}{n} e^{-{n/q^2}} = \frac{\Lambda(a)}{a} + \Obar\bigg( \frac{2\log^2 q}{q} + \frac{3.935\log q}{q} \bigg).
$$
\end{prop}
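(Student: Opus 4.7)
The plan is to isolate the $n=a$ term, which supplies the claimed main term $\Lambda(a)/a$, and then bound the tail $n=a+kq$ ($k\ge 1$) using only the most trivial estimate $\Lambda(n) \le \log n$. No Brun--Titchmarsh or density information on prime powers in arithmetic progressions is needed: the exponential weight $e^{-n/q^2}$ together with the fact that $a<q$ forces the tail to be of size $(\log^2 q)/q$ even without exploiting the rarity of prime powers.

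For the $n=a$ term, I would use $1 \le a < q$ to get $a/q^2 < 1/q$, so $|e^{-a/q^2}-1| \le a/q^2$. Combined with $\Lambda(a)\le\log q$, this gives
$$
\frac{\Lambda(a)}{a}e^{-a/q^2} = \frac{\Lambda(a)}{a} + \Obar\bigg(\frac{\Lambda(a)}{q^2}\bigg) = \frac{\Lambda(a)}{a} + \Obar\bigg(\frac{\log q}{q^2}\bigg),
$$
which fits comfortably inside the error term of the proposition.

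For the tail I would apply three crude bounds simultaneously: $\Lambda(a+kq)\le \log(a+kq)$, $a+kq\ge kq$, and $e^{-(a+kq)/q^2}\le e^{-k/q}$ (since $a\ge 0$). Together with $\log(a+kq) \le \log q + \log(k+1)$ (which uses $a<q$), one obtains
$$
\sum_{k\ge 1}\frac{\Lambda(a+kq)}{a+kq}e^{-(a+kq)/q^2} \le \frac{\log q}{q}\sum_{k\ge 1}\frac{e^{-k/q}}{k} + \frac{1}{q}\sum_{k\ge 1}\frac{\log(k+1)}{k}e^{-k/q}.
$$
The first inner sum equals $-\log(1-e^{-1/q}) = \log q + O(1/q)$, contributing the $2\log^2 q/q$ piece (after collecting with the second sum). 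For the second inner sum, the exponential weight effectively truncates at $k$ of order $q$, and direct comparison with $\int_0^\infty \log(1+t)/t \cdot e^{-t/q}\,dt$ bounds it by a constant multiple of $\log^2 q$; dividing by $q$ yields another $O(\log^2 q / q)$ contribution that can be absorbed into the $2\log^2 q/q$ term by optimizing constants, with the remainder contributing to the $3.935\log q/q$ piece.

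The main obstacle is not conceptual but rather the explicit bookkeeping of the constants $2$ and $3.935$. Each of the simple estimates must be applied with quantitatively sharp upper bounds: $\sum_{k\le q}1/k \le 1+\log q$, $\sum_{k\le q}\log(k+1)/k \le \tfrac12(\log q)^2 + O(\log q)$, and careful evaluation of the exponential-integral-type tail $\int_1^\infty e^{-u}/u\,du = E_1(1) \approx 0.219$ and the related integral $\int_1^\infty (\log u) e^{-u}/u\,du$. The wastefulness of the bound $\Lambda(n)\le \log n$ (which forgets that most integers in an arithmetic progression are not prime powers) costs a logarithmic factor and is exactly what accounts for the $\log^2 q/q$ rather than $\log q/q$ in the final estimate; tightening this would require inputs well beyond what the proposition demands.
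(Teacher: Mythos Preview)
Your approach is correct and follows the same strategy as the paper: isolate the $n=a$ term and bound the tail using only the trivial estimate $\Lambda(n)\le\log n$. The one organizational difference is that the paper splits the tail at $n=q^2$ rather than keeping the exponential weight throughout: for $q\le n\le q^2$ it drops $e^{-n/q^2}$ and bounds $\sum_{j=1}^{q-1}\frac{2\log q}{qj+a}\le\frac{2\log q}{q}(\log q+1)$, while for $n>q^2$ it uses $\Lambda(n)/n\le(2\log q)/q^2$ and sums the geometric series $\sum_{j\ge q}e^{-j/q}$ to get $0.935(\log q)/q$; together with the cruder main-term bound $\Lambda(a)(e^{-a/q^2}-1)/a=\Obar((\log q)/q)$ this yields $2\log^2 q/q+(2+0.935+1)(\log q)/q$ on the nose. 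Your single-sum treatment via $-\log(1-e^{-1/q})$ and integral comparison is equally valid but, as you anticipated, makes the specific constants $2$ and $3.935$ harder to read off; the paper's split was chosen precisely to make that bookkeeping mechanical.
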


Assuming these propositions for the moment, we can derive the following explicit estimate for $M^*(q;a,b)$, after which we will be able to finish the proof of Theorem~\ref{M evaluation theorem}.

\begin{prop}
\label{M exact evaluation prop}
Assume GRH. For any pair $a,b$ of distinct reduced residues modulo $q$, let $r_1$ and $r_2$ denote the least positive residues of $ab^{-1}$ and $ba^{-1}\mod q$. Then for $q\ge150$,
\begin{equation*}
M^*(q;a,b) = \phi(q) \bigg( \frac{\Lambda(r_1)}{r_1} +\frac{\Lambda(r_2)}{r_2} + H_0(q;a,b) \bigg) + \Obar\bigg( \frac{23.619\phi(q)\log^2 q}q \bigg).
\end{equation*}
\end{prop}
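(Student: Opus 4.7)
The plan is to combine the three preceding results --- Lemma~\ref{Mstar to M lemma}, Proposition~\ref{Sound method prop}, and Proposition~\ref{only the first term survives prop} --- with the character orthogonality identities proved in Section~\ref{characters section}. All the analytic work has been done; what remains is bookkeeping.

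First I would apply Lemma~\ref{Mstar to M lemma} to write $M^*(q;a,b) = M(q;a,b) + \phi(q)H_0(q;a,b)$, so that it suffices to estimate $M(q;a,b)$. Next, for each nonprincipal $\chi\mod q$, I would invoke Proposition~\ref{Sound method prop} with the choice $y=q^2$ to replace $L'(1,\chi)/L(1,\chi)$ by the smoothed sum $-\sum_{n\ge1}\chi(n)\Lambda(n)n^{-1}e^{-n/q^2}$. Since $\sum_{\chi\mod q}|\chi(a)-\chi(b)|^2=2\phi(q)$ by Proposition~\ref{just orthogonality prop}, the aggregate error contributed by Proposition~\ref{Sound method prop} to $M(q;a,b)$ is bounded by $2\phi(q)\big(\frac{14.27\log q+16.25}{q}+\frac{16.1\log q+17.83}{q^{3/2}}\big)$.

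Now I would swap the order of summation, using $|\chi(a)-\chi(b)|^2=2-\chi(ab^{-1})-\chi(ba^{-1})$ and the orthogonality relation~\eqref{orthogonality.relation} (the principal character contributes $0$ to $|\chi(a)-\chi(b)|^2$, so it can be harmlessly reincluded). This collapses the character sum to
\[
M(q;a,b) = \phi(q)\bigg(\sum_{n\equiv r_1\mod q}\frac{\Lambda(n)}{n}e^{-n/q^2} + \sum_{n\equiv r_2\mod q}\frac{\Lambda(n)}{n}e^{-n/q^2} - 2\sum_{n\equiv 1\mod q}\frac{\Lambda(n)}{n}e^{-n/q^2}\bigg) + E_1,
\]
where $E_1$ is the Sound-method error just described and $r_1,r_2$ are the least positive residues of $ab^{-1}$ and $ba^{-1}\mod q$. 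Both lie in $\{2,\dots,q-1\}$ since $a\not\equiv b\mod q$, so Proposition~\ref{only the first term survives prop} applies to each sum (the $n\equiv 1$ sum has leading term $\Lambda(1)/1=0$), giving $\Lambda(r_1)/r_1$, $\Lambda(r_2)/r_2$, and $0$ respectively, each with error at most $\frac{2\log^2 q}{q}+\frac{3.935\log q}{q}$. Multiplying by the coefficients $1,1,2$ and by $\phi(q)$ yields a second error term $E_2$ bounded by $4\phi(q)\big(\frac{2\log^2 q}{q}+\frac{3.935\log q}{q}\big)$. Adding back $\phi(q)H_0(q;a,b)$ then gives the claimed main term.

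Finally, I would verify that $|E_1|+|E_2|\le 23.619\phi(q)(\log^2 q)/q$ for $q\ge 150$. Combining the bounds, the total error is at most
\[
\frac{\phi(q)}{q}\bigg(8\log^2 q + 44.28\log q + 32.5 + \frac{32.2\log q+35.66}{\sqrt q}\bigg);
\]
for $q\ge 150$ we have $\log q\ge\log 150>5.01$ and $\sqrt q\ge 12.24$, so each of the sub-leading terms, divided by $\log^2 q$, is small enough that the total is well under $23.619\log^2 q$. The main obstacle is simply executing this final numerical verification cleanly while keeping all constants consistent with the explicit bounds from Propositions~\ref{Sound method prop} and~\ref{only the first term survives prop}; the underlying argument is entirely a composition of the preceding lemmas.
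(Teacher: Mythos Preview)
Your proof is correct and follows essentially the same route as the paper: reduce $M^*$ to $M$ via Lemma~\ref{Mstar to M lemma}, apply Proposition~\ref{Sound method prop} with $y=q^2$, use orthogonality to collapse the character sum into three arithmetic-progression sums, and finish with Proposition~\ref{only the first term survives prop}. Your bookkeeping is in fact slightly sharper than the paper's---you bound the aggregate Sound-method error by $2\phi(q)(\cdots)$ using $\sum_\chi|\chi(a)-\chi(b)|^2=2\phi(q)$, whereas the paper uses the cruder $4\phi(q)$ coming from $|2-\chi(ab^{-1})-\chi(ba^{-1})|\le4$; and you correctly weight the three applications of Proposition~\ref{only the first term survives prop} by $1+1+2=4$ rather than~$3$.
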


\begin{proof}
The bulk of the proof is devoted to understanding $M(q;a,b)$. From Proposition~\ref {Sound method prop}, we have
\begin{align}
M(q;a,b) &= \allchisum \big| \chi(a) - \chi(b) \big|^2 \frac{L'(1,\chi)}{L(1,\chi)} \notag \\
&= \allchisum \big( 2 - \chi(ba^{-1}) - \chi(ab^{-1})\big) \bigg( {-} \sum_{n=1}^\infty \frac{\Lambda(n)\chi(n)}n e^{-n/y} \notag \\
&\qquad\qquad{}+ \Obar\bigg( \frac{14.27\log q + 10.6}{y^{1/2}}  +  \frac{16.1\log q+13.1} {y^{3/4}} \bigg) \bigg) \notag \\
&= \sum_{n=1}^\infty \frac{\Lambda(n)}n e^{-n/y} \allchisum \big( \chi(ba^{-1}n) + \chi(ab^{-1}n) - 2\chi(n) \big) \notag \\
&\qquad\qquad{}+ 4\phi(q) \Obar\bigg( \frac{14.27\log q + 16.25}{y^{1/2}}  +  \frac{16.1\log q+17.83} {y^{3/4}} \bigg) \bigg),
\label{will take y to infinity}
\end{align}
and using the orthogonality relations in Proposition~\ref {just orthogonality prop}, we see that
\begin{multline*}
M(q;a,b) = \phi(q) \bigg( \sum_{n\equiv ab^{-1}\mod q} \frac{\Lambda(n)}n e^{-n/y} + \sum_{n\equiv ba^{-1}\mod q} \frac{\Lambda(n)}n e^{-n/y} - 2\sum_{n\equiv1\mod q} \frac{\Lambda(n)}n e^{-n/y} \bigg)
\\
+ 4\phi(q) \Obar\bigg( \frac{14.27\log q + 16.25}{y^{1/2}}  +  \frac{16.1\log q+17.83} {y^{3/4}} \bigg) \bigg).
\end{multline*}
At this point we choose $y=q^2$. We calculate that ${(14.27\log q + 16.25)/q}  + {(16.1\log q+17.83)/q^{3/2}} < 3.816(\log^2q)/q$ for $q\ge150$, and so
\begin{multline*}
M(q;a,b) = \phi(q) \bigg( \sum_{n\equiv ab^{-1}\mod q} \frac{\Lambda(n)}n e^{-n/q^2} + \sum_{n\equiv ba^{-1}\mod q} \frac{\Lambda(n)}n e^{-n/q^2} - 2\sum_{n\equiv1\mod q} \frac{\Lambda(n)}n e^{-n/q^2} \bigg)
\\
{}+ \Obar\bigg(  \frac{15.263\phi(q)\log^2 q}q \bigg).
\end{multline*}

Let $r_1$ and $r_2$ denote the least positive residues of $ab^{-1}$ and $ba^{-1}\mod q$. Using Proposition~\ref {only the first term survives prop} three times, we see that
\begin{align*}
M(q;a,b) &= \phi(q) \bigg( \frac{\Lambda(r_1)}{r_1} + \frac{\Lambda(r_2)}{r_2} - 2\frac{\Lambda(1)}1 + \Obar\bigg( 3 \Big( \frac{2\log^2 q}{q} + \frac{3.935\log q}{q} \Big) \bigg) \bigg) + \Obar\bigg(  \frac{15.263\phi(q)\log^2 q}q \bigg) \\
&= \phi(q) \bigg( \frac{\Lambda(r_1)}{r_1} + \frac{\Lambda(r_2)}{r_2} \bigg) + \Obar\bigg( \frac{36.619\phi(q)\log^2 q}q \bigg)
\end{align*}
for $q\ge150$. With this understanding of $M(q;a,b)$, the proposition now follows for $M^*(q;a,b)$ by Lemma~\ref{Mstar to M lemma}.
\end{proof}

\begin{proof}[Proof of Theorem~\ref{M evaluation theorem}]
Since Proposition~\ref{M exact evaluation prop} tells us that
\begin{equation*}
M^*(q;a,b) = \phi(q) \bigg( \frac{\Lambda(r_1)}{r_1} +\frac{\Lambda(r_2)}{r_2} + H_0(q;a,b) + O\bigg( \frac{\log^2 q}q \bigg) \bigg),
\end{equation*}
all we need to do to prove the theorem is to show that
\[
H_0(q;a,b) = H(q;a,b) + O\bigg( \frac{\log^2 q}q \bigg).
\]
The key observation is that $p^{e(q;p,1)} \equiv 1\mod{q/p^\nu}$ and $p^{e(q;p,1)} \ge p^1 > 1$, and so $p^{e(q;p,1)} > q/p^\nu$. Therefore by Definitions~\ref{hqpr def} and~\ref{h0 and H0 def}, we have $h_0(q;p,r) = h(q;p,r) (1+O(p^\nu/q))$ and $h(q;p,1) \ll (\log p)/\phi(p^\nu)(q/p^\nu) \ll (\log p)/q$. We see that
\begin{align*}
H_0(q;a,b) &= \sum_{p^{\nu} \parallel q} \big( h_0(q;p,ab^{-1}) +h_0(q;p,ba^{-1}) -2h_0(q;p,1) \big) \\
&= \sum_{p^{\nu} \parallel q} \bigg( \big( h(q;p,ab^{-1}) +h(q;p,ba^{-1}) \big)\big( 1 + O\big( \tfrac{p^\nu}q \big) \big) + O\big( \tfrac{\log p}q \big) \bigg).
\end{align*}
It is certainly true that $h(q;p,r) \ll (\log p)/\phi(p^\nu)  \ll (\log p)/p^\nu$, and so the previous equation becomes
\begin{equation*}
H_0(q;a,b) = H(q;a,b) + O\bigg( \sum_{p^{\nu} \parallel q} \bigg( \frac{\log p}{p^\nu} \frac{p^\nu}q + \frac{\log p}q \bigg) \bigg) = H(q;a,b) + O\bigg( \frac{\log q}q \bigg),
\end{equation*}
which establishes the theorem.
\end{proof}

\subsection{Estimates in terms of arithmetic information only}
\label{arithmetic only}

The purpose of this section is to show that the densities $\delta(q;a,b)$ can be calculated extremely precisely using only ``arithmetic information''. For the purposes of this section, ``arithmetic information'' means finite expressions composed of elementary arithmetic operations involving only integers, logarithms of integers, values of the Riemann zeta function at positive integers, and the constants $\pi$ and $\gamma_0$. (In fact, all of these quantities themselves can in principal be calculated arbitrarily precisely using only elementary arithmetic operations on integers.) The point is that ``arithmetic information'' excludes integrals and such quantities as Dirichlet characters and $L$-functions, Bessel functions, and trigonometric functions. The formula we can derive, with only arithmetic information in the main term, has an error term of the form $O_A(q^{-A})$ for any constant $A>0$ we care to specify in advance.

To begin, we note that letting $y$ tend to infinity in equation~\eqref{will take y to infinity} leads to the heuristic statement
\begin{align*}
M(q;a,b) &= \sum_{n=1}^\infty \frac{\Lambda(n)}n \allchisum \big( \chi(ba^{-1}n) + \chi(ab^{-1}n) - 2\chi(n) \big) \notag \\
&= \phi(q) \sum_{n=1}^\infty \frac{\Lambda(n)}n \big( \iota_q(ba^{-1}n) + \iota_q(ab^{-1}n) - 2 \iota_q(n) \big) \notag \\
&{}\mathrel{\text{``$=$''}} \phi(q) \bigg( \sum_{n\equiv ab^{-1}\mod q} \frac{\Lambda(n)}n + \sum_{n\equiv ba^{-1}\mod q} \frac{\Lambda(n)}n - 2\sum_{n\equiv1\mod q} \frac{\Lambda(n)}n \bigg),
\end{align*}
where the ``$=$'' warns that the sums on the right-hand side do not individually converge. In fact, using a different approach based on the explicit formula, one can obtain
\begin{multline}
M(q;a,b) = \phi(q) \bigg( \sum_{\substack{1\le n\le y \\ n\equiv ab^{-1}\mod q}} \frac{\Lambda(n)}n + \sum_ {\substack{1\le n\le y \\ n\equiv ba^{-1}\mod q}} \frac{\Lambda(n)}n - 2\sum_ {\substack{1\le n\le y \\ n\equiv 1\mod q}} \frac{\Lambda(n)}n \bigg) \\
+ O\bigg( \frac{\phi(q) \log^2qy}{\sqrt y} \bigg).
\label{M explicit formula version}
\end{multline}
In light of Theorem~\ref{variance evaluation theorem} in conjunction with Lemma~\ref{Mstar to M lemma}, we see that we can get an arbitrarily good approximation to $V(q;a,b)$ using only arithmetic information.

By Theorem~\ref {delta series theorem}, we see we can thus obtain an extremely precise approximation for $\delta(q;a,b)$ as long as we can calculate the coefficients $s_{q;a,b}(\ell,j)$ defined in Definition~\ref {s-coeffs def}. Inspecting that definition reveals that it suffices to be able to calculate $W_m(q;a,b)$ (or equivalently $W_m(q;a,b)V(q;a,b)$) arbitrarily precisely using only arithmetic content. With the next several lemmas, we describe how such a calculation can be made.

\begin{lemma}
\label{partial fractions decomposition}
Let $n$ be a positive integer, and set $\ell=\lfloor\frac{n}{2} \rfloor$. There exist rational numbers $C_{n,1}$, \dots, $C_{n,\ell}$ such that
\[
\frac1{(1/4+t^2)^n} = 2\Re \bigg( \frac1 {(1/2-it)^n} \bigg) + \frac{C_{n,1}}{(1/4+t^2)^{n-1}} + \frac{C_{n,2}}{(1/4+t^2)^{n-2}} + \dots + \frac{C_{n,\ell}}{(1/4+t^2)^{n-\ell}}
\]
for any complex number~$t$.
\end{lemma}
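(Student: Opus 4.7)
The plan is to exploit the factorization $1/4 + t^2 = (1/2-it)(1/2+it)$, which lets us write
\[
2\Re\frac{1}{(1/2-it)^n} = \frac{1}{(1/2-it)^n} + \frac{1}{(1/2+it)^n} = \frac{(1/2+it)^n + (1/2-it)^n}{(1/4+t^2)^n}.
\]
So if I define $P_n(t) = (1/2+it)^n + (1/2-it)^n$, everything reduces to understanding this polynomial. Expanding by the binomial theorem, the odd powers of $t$ cancel and each surviving coefficient is an integer multiple of a power of $1/2$; hence $P_n$ is an even polynomial in $t$, of degree $2\lfloor n/2\rfloor$, with rational coefficients.

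Next I would pass from the variable $t^2$ to the variable $u := 1/4 + t^2$. Since $t^2 = u - 1/4$, any even polynomial in $t$ of degree $2m$ rewrites uniquely as a polynomial of degree $m$ in $u$ with rational coefficients. Applying this to $P_n$, with $\ell = \lfloor n/2\rfloor$, I obtain
\[
P_n(t) = \sum_{k=0}^{\ell} d_{n,k} \, (1/4+t^2)^k, \qquad d_{n,k} \in \Q.
\]
To pin down the constant term $d_{n,0}$, I substitute $t = i/2$, at which $1/4 + t^2 = 0$ and $\{1/2 - i\cdot(i/2), 1/2 + i\cdot(i/2)\} = \{1, 0\}$. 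Hence $P_n(i/2) = 1^n + 0^n = 1$ for $n\ge 1$, so $d_{n,0} = 1$.

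Dividing through by $(1/4+t^2)^n$ and isolating the $k=0$ term yields
\[
\frac{1}{(1/4+t^2)^n} = 2\Re\frac{1}{(1/2-it)^n} - \sum_{k=1}^{\ell} \frac{d_{n,k}}{(1/4+t^2)^{n-k}},
\]
so the claimed decomposition holds with $C_{n,k} = -d_{n,k} \in \Q$. There is no real obstacle in this argument; it is a short piece of bookkeeping once the factorization is noticed. The only point that needs a moment's care is the evaluation showing $d_{n,0} = 1$, because that is precisely what guarantees that the coefficient of $1/(1/4+t^2)^n$ on the right-hand side is exactly $1$, so that the inversion produces the desired identity with no extra scalar factor.
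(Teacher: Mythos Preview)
Your proof is correct and follows essentially the same approach as the paper: both start from the identity $2\Re\big((1/2-it)^{-n}\big) = \big((1/2+it)^n+(1/2-it)^n\big)/(1/4+t^2)^n$, expand the numerator as an even rational polynomial in $t$, rewrite it as a rational polynomial in $u=1/4+t^2$ of degree $\ell$, and use the substitution $t=i/2$ to identify the constant term as~$1$. The only cosmetic difference is that the paper uses a limit $t\to i/2$ after multiplying through by $(1/4+t^2)^n$, whereas you evaluate the numerator $P_n(i/2)$ directly; these amount to the same computation.
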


\begin{proof}
Since
\[
2\Re \frac{1}{(1/2-it)^n} = \frac{1}{(1/2-it)^n} + \frac{1}{(1/2+it)^n} = \frac{(1/2+it)^n+(1/2-it)^n}{(1/4+t^2)^n},
\]
it suffices to show that
\begin{equation}
\label{partial fractions equation}
\frac{(1/2+it)^n+(1/2-it)^n}{(1/4+t^2)^n} = \frac{C_{n,0}}{(1/4+t^2)^{n}} + \frac{-C_{n,1}}{(1/4+t^2)^{n-1}} + \dots + \frac{-C_{n,\ell}}{ (1/4+t^2)^{n-\ell}},
\end{equation}
where each $C_{n,m}$ is a rational number and $C_{n,0}=1$. In fact, we need only show that this identity holds for some rational number $C_{n,0}$, since multiplying both sides by $(1/4+t^2)^n$ and taking the limit as $t$ tends to $i/2$ proves that $C_{n,0}$ must equal~$1$.

Using the binomial theorem,
\begin{align*}
(1/2+it)^n+(1/2-it)^n &= \sum_{k=0}^n \textstyle \binom nk \big( \frac12 \big)^{n-k} \big( (it)^k + (-it)^k \big) \\
&= \sum_{j=0}^\ell \textstyle \binom n{2j} \big( \frac12 \big)^{n-2j} \big( 2(-1)^j t^{2j} \big) \\
&= 2 \sum_{j=0}^\ell \textstyle \binom n{2j} \big( \frac12 \big)^{n-2j} (-1)^j \big(  (\frac14 + t^{2}) - \frac14 \big)^j \\
&= 2 \sum_{j=0}^\ell {\textstyle \binom n{2j} \big( \frac12 \big)^{n-2j} (-1)^j} \sum_{m=0}^j \textstyle \binom jm  (\frac14 + t^2)^m \big( {-} \frac14 \big)^{j-m},
\end{align*}
which is a linear combination of the expressions $(1/4+t^2)^m$, for $0\le m\le \ell$, with rational coefficients not depending on~$t$. Dividing both sides by $(1/4+t^2)^n$ establishes equation~\eqref {partial fractions equation} for suitable rational numbers $C_{n,m}$ and hence the lemma.
\end{proof}

For the rest of this section, we say that a quantity is a {\em fixed $\Q$-linear combination} of certain elements if the coefficients of this linear combination are rational numbers that are independent of $q,a,b$ and $\chi$ (but may depend on~$n$ and~$j$ where appropriate). Our methods allow the exact calculation of these rational coefficients, but the point of this section would be obscured by the bookkeeping required to record them.

\begin{definition}
As usual, $\Gamma(z)$ denotes Euler's Gamma function.
For any positive integer $n$ and any Dirichlet character $\chi\mod q$, define
\[
b_n(\chi) = \Lsum{\gamma\in\R} \frac1{(\frac14+\gamma^2)^n},
\]
so that $b_1(\chi) = b(\chi)$ for example.
\qedef
\label{bnchi def}
\end{definition}

\begin{lemma}
\label{b(chi) lemma}
Assume GRH. Let $n$ be a positive integer, and let $\chi$ be a primitive character\mod q. Then $b_n(\chi)$ is a fixed $\Q$-linear combination of the quantities
\begin{multline}
\bigg\{\log \frac q\pi, \bigg[ \frac{d}{ds}\log \Gamma(s) \bigg]_{s=(1+\xi)/2}, \dots, \bigg[ \frac{d^n}{ds^n}\log \Gamma(s) \bigg]_{s=(1+\xi)/2} , \\
\Re\bigg[ \frac{d}{ds}\log L(s,\chi) \bigg]_{s=1}, \dots, \Re\bigg[ \frac{d^n}{ds^n}\log L(s,\chi) \bigg]_{s=1}\bigg\},
\label{set def}
\end{multline}
where $\xi=0$ if $\chi(-1)=1$ and $\xi=1$ if $\chi(-1)=-1$.
\end{lemma}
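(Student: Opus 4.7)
The plan is to induct on $n$, using Lemma~\ref{b.chi.nonprimitive.lemma} for the base case $n=1$. That lemma gives
\begin{equation*}
b_1(\chi) = \log \frac{q}{\pi} - \gamma_0 - (1+\chi(-1))\log 2 + 2\Re \frac{L'(1,\chi)}{L(1,\chi)}.
\end{equation*}
The classical values $\left[\frac{d}{ds}\log \Gamma(s)\right]_{s=1} = -\gamma_0$ and $\left[\frac{d}{ds}\log \Gamma(s)\right]_{s=1/2} = -\gamma_0 - 2\log 2$ show that $-\gamma_0 - (1+\chi(-1))\log 2 = \left[\frac{d}{ds}\log \Gamma(s)\right]_{s=(1+\xi)/2}$ in both parities of $\chi$, so $b_1(\chi)$ is already a $\Q$-linear combination of the set~\eqref{set def}, with coefficients $(1,1,2)$ that are independent of $q$ and $\chi$.

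For the inductive step $n \ge 2$, I would apply the identity of Lemma~\ref{partial fractions decomposition} at $t = \gamma$ and sum termwise over the zeros of $L(s,\chi)$; the standard count $N(T,\chi) \ll T\log qT$ gives absolute convergence of every series involved once $n \ge 2$, justifying the interchange. This yields
\begin{equation*}
b_n(\chi) = 2\Re \sum_\rho \frac{1}{(1-\rho)^n} + \sum_{m=1}^{\lfloor n/2\rfloor} C_{n,m}\, b_{n-m}(\chi),
\end{equation*}
where $\rho = 1/2+i\gamma$ runs over the nontrivial zeros of $L(s,\chi)$ and the $C_{n,m}$ are the rational constants of that lemma. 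Each $b_{n-m}(\chi)$ with $m \ge 1$ is handled by the inductive hypothesis, so the task reduces to expressing $\Re \sum_\rho (1-\rho)^{-n}$ in the prescribed form.

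For this last step, I would start from the Hadamard-product logarithmic derivative of the completed $L$-function,
\begin{equation*}
\frac{L'(s,\chi)}{L(s,\chi)} = -\frac{1}{2}\log \frac{q}{\pi} - \frac{1}{2}\left[\frac{d}{du}\log \Gamma(u)\right]_{u=(s+\xi)/2} + B(\chi) + \sum_\rho \left(\frac{1}{s-\rho}+\frac{1}{\rho}\right),
\end{equation*}
and differentiate $n-1$ times in $s$ before specializing to $s=1$. Since $n-1 \ge 1$, the constants $\log(q/\pi)$, $B(\chi)$, and every $1/\rho$ vanish, while the chain rule on $\log \Gamma((s+\xi)/2)$ produces a factor $(1/2)^n$, so the resulting identity reads
\begin{equation*}
\left[\frac{d^n}{ds^n}\log L(s,\chi)\right]_{s=1} = -\frac{1}{2^n}\left[\frac{d^n}{ds^n}\log\Gamma(s)\right]_{s=(1+\xi)/2} + (-1)^{n-1}(n-1)!\sum_\rho \frac{1}{(1-\rho)^n}.
\end{equation*}
Solving for the zero sum and taking real parts (the $\Gamma$-derivative is real on the real axis) expresses $\Re\sum_\rho (1-\rho)^{-n}$ as a $\Q$-linear combination of $\left[\frac{d^n}{ds^n}\log \Gamma(s)\right]_{s=(1+\xi)/2}$ and $\Re\left[\frac{d^n}{ds^n}\log L(s,\chi)\right]_{s=1}$, closing the induction.

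The main obstacle is purely bookkeeping, namely verifying that the rational coefficients produced depend only on $n$ rather than on $q$ or $\chi$; this is transparent because the partial-fractions constants $C_{n,m}$, the chain-rule factor $(1/2)^n$, and the differentiation factor $(-1)^{n-1}(n-1)!$ are all manifestly universal in $\chi$ and $q$. The convergence concerns dissolve for $n \ge 2$, and the $n=1$ case is taken care of by the already-proven Lemma~\ref{b.chi.nonprimitive.lemma}.
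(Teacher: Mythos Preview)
Your proposal is correct and follows essentially the same route as the paper: induction on $n$ with Lemma~\ref{b.chi.nonprimitive.lemma} as the base case, Lemma~\ref{partial fractions decomposition} to reduce $b_n(\chi)$ to lower $b_{n-m}(\chi)$ plus $2\Re\sum_\rho(1-\rho)^{-n}$, and $(n-1)$-fold differentiation of the Hadamard logarithmic derivative at $s=1$ to handle that last sum. The only cosmetic difference is that you apply the chain rule immediately to produce the $2^{-n}$ factor on the $\Gamma$-derivative, whereas the paper keeps $\log\Gamma((s+\xi)/2)$ intact and remarks on the factor of $2^n$ at the end.
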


\begin{remark}
Since the critical zeros of $L(s,\chi)$ and $L(s,\chi^*)$ are identical, the lemma holds for any nonprincipal character $\chi$ if, in the set~\eqref{set def}, we replace $q$ by $q^*$ and $L(s,\chi)$ by $L(s,\chi^*)$.
\end{remark}

\begin{proof}
For primitive characters $\chi$, Lemma~\ref{b.chi.nonprimitive.lemma} tells us that
\begin{align*}
b(\chi) &= \log\frac q\pi - \gamma_0 - (1+\chi(-1))\log2 + 2\Re\frac{L'(1,\chi)}{L(1,\chi)} \\
&= \log\frac q\pi + \bigg[ \frac{\Gamma'(s)}{\Gamma(s)} \bigg]_{s=(1+\xi)/2}  + 2\Re\frac{L'(1,\chi)}{L(1,\chi)},
\end{align*}
which establishes the lemma for $n=1$. We proceed by induction on $n$. By Lemma~\ref {partial fractions decomposition}, we see that
\begin{align}
b_n(\chi) &= \Lsum{\gamma\in\R} \frac1{(1/4+\gamma^2)^n} \notag \\
&= \Lsum{\gamma\in\R} \bigg( 2\Re \frac1 {(1/2-i\gamma)^n} + \frac{C_{n,1}}{(1/4+\gamma^2)^{n-1}} + \frac{C_{n,2}}{(1/4+\gamma^2)^{n-2}} + \dots + \frac{C_{n,\ell}}{(1/4+\gamma^2)^{n-\ell}} \bigg) \notag \\
&= C_{n,1} b_{n-1}(\chi) + \dots + C_{n,\ell} b_{n-\ell}(\chi) + 2 \Lsum{\gamma\in\R} \Re \frac1 {(1/2-i\gamma)^n}
\label{leftover sum}
\end{align}
(where $\ell=\lfloor \frac n2 \rfloor$). By the induction hypothesis, each term of the form $C_{n,m} b_{n-m}(\chi)$ is a fixed $\Q$-linear combination of the elements of the set~\eqref {set def}; therefore all that remains is to show that the sum on the right-hand side of equation~\eqref{leftover sum} is also a fixed $\Q$-linear combination of these elements.

Consider the known formula~\cite[equation (10.37)]{magicbook}
\[
\frac d{ds}\log L(s,\chi) = B(\chi) - \frac d{ds}\log\Gamma\bigg( \frac{s+\xi}2 \bigg) - \frac12\log\frac q\pi + \sum_\rho \bigg( \frac1{s-\rho} + \frac1\rho \bigg),
\]
where $\sum_\rho$ denotes a sum over all nontrivial zeros of $L(s,\chi)$ and $B(\chi)$ is a constant (alluded to in the proof of Lemma~\ref {b.chi.nonprimitive.lemma}). If we differentiate this formula $n-1$ times with respect to $s$, we obtain
\[
\frac{d^n}{ds^n}\log L(s,\chi) = - \frac{d^n}{ds^n}\log\Gamma\bigg( \frac{s+\xi}2 \bigg) + \sum_\rho \frac{(-1)^{n-1}(n-1)!}{(s-\rho)^n}.
\]
Setting $s=1$ and taking real parts, and using GRH, we conclude that
\begin{align*}
\Lsum{\gamma\in\R} \Re\frac1{(1/2-i\gamma)^n} &= \sum_\rho \Re\frac1{(1-\rho)^n} \\
&= \frac{(-1)^{n-1}}{(n-1)!} \bigg[ \Re \frac{d^n}{ds^n} \log L(s,\chi) + \frac{d^n}{ds^n} \log \Gamma \bigg( \frac{s+\xi}{2} \bigg) \bigg]_{s=1},
\end{align*}
which is a fixed $\Q$-linear combination of the elements of the set~\eqref {set def} as desired. (Although $\big[ \frac{d^n}{ds^n}\log \Gamma(s) \big]_{s=(1+\xi)/2}$ and $\big[ \frac{d^n}{ds^n} \log \Gamma \big( \frac{s+\xi}{2} \big) \big]_{s=1}$ differ by a factor of $2^n$, this does not invalidate the conclusion.)
\end{proof}

The following three definitions, which generalize earlier notation, will be important in our analysis of the higher-order terms $W_n(q;a,b)V(q;a,b)$.

\begin{definition}
For any positive integers $q$ and $n$, define
\begin{multline*}
\L_n(q) = \sum_{|i|\le n} (-1)^i \binom{2n}{n+i} \bigg(\iota_q(a^ib^{-i}) \bigg( \log \frac q\pi - \sum_{p\mid q} \frac{\log p}{p-1} \bigg) \\
- \big( 1-\iota_q(a^ib^{-i}) \big) \frac{\Lambda(q/(q,a^i-b^i))}{\phi(q/(q,a^i-b^i))} \bigg).
\end{multline*}
\qedef
\label{Lnq def}
\end{definition}

\begin{definition}
Let $\chi$ be a Dirichlet character\mod q, and let $a$ and $b$ be integers. For any positive integers $j\le n$, define
\begin{equation*}
\M_{n,j}^*(q;a,b) = \frac1{\phi(q)} \mostchisum |\chi(a)-\chi(b)|^{2n} \bigg[ \frac{d^j}{ds^j}\log L(s,\chi^*) \bigg]_{s=1}
\end{equation*}
and
\begin{equation*}
\M_{n,j}(q;a,b) = \frac1{\phi(q)} \mostchisum |\chi(a)-\chi(b)|^{2n} \bigg[ \frac{d^j}{ds^j}\log L(s,\chi) \bigg]_{s=1},
\end{equation*}
so that $\M_{1,1}^*(q;a,b) = M^*(q;a,b)/\phi(q)$ and $\M_{1,1}(q;a,b) = M(q;a,b)/\phi(q)$ for example. One can use Lemma~\ref {d lemma} and Perron's formula to show that
$$
\M_{n,j}(q;a,b) = (-1)^j \sum_{|i|\le j} (-1)^i \binom{2j}{j+i} \sum_{\substack{n\le y \\ n\equiv a^ib^{-i} \mod q}} \frac{\Lambda(n)\log^{j-1} n}{n} + O_j\bigg( \frac {\log^{j+1} qy}{\sqrt y} \bigg),
$$
in analogy with equation~\eqref{M explicit formula version}.
\qedef
\label{Mnjqab def}
\end{definition}

\begin{definition}
For any distinct reduced residue classes $a$ and $b\mod q$, define
\[
H_{n,j}(q;a,b) = (-1)^j \sum_{p^{\nu} \parallel q} \frac{(\log p)^j} {\phi(p^{\nu})} \sum_{|i|\le j} (-1)^i \binom{2n}{n+i} \sum_{\substack{e\ge1 \\ a^ib^{-i}p^e \equiv 1\mod{q/p^\nu}}} \frac{e^{j-1}}{p^e}
\]
for any integers $1\le j\le n$. Notice that the inner sum is
\[
\sum_{\substack{e\ge1 \\ a^ib^{-i}p^e \equiv 1\mod{q/p^\nu}}} \frac{e^{j-1}}{p^e} = \sum_{\substack{e\ge1 \\ e \equiv e(q;p,a^ib^{-i}) \mod{e(q;p,1)}}} \frac{e^{j-1}}{p^e},
\]
where $e(q;p,r)$ is defined in Definition~\ref {hqpr def}. It turns out that the identity
\[
\sum_{\substack{e\ge1 \\ e\equiv r\mod s}} \frac{e^m}{p^e} = \frac1{p^r(1-p^{-s})} \sum_{g=0}^m \binom mg s^g r^{m-g} \sum_{\ell=0}^g \stirling g\ell \frac{\ell!}{(p^s-1)^\ell}
\]
(in which $\stirling g\ell$ denotes the Stirling number of the second kind) is valid for any positive integers $m$, $p$, $r$, and $s$ such that $r\le s$ (as one can see by expanding $(sk+r)^m$ by the binomial theorem and then invoking the identity~\cite[(equation 7.46)]{GKP}). Consequently, we see that $H_{n,j}(q;a,b)$ is a rational linear combination of the elements of the set $\{ (\log p)^j\colon p\mid q\}$ (although the rational coefficients depend upon $q$, $a$, and~$b$).
\qedef
\label{d definition}
\end{definition}

Once we determine how to expand the coefficient $|\chi(a)-\chi(b)|^{2n}$ as a linear combination of individual values of $\chi$, we can establish Proposition~\ref {cumulants in elementary terms} which describes how the cumulant $W_n(q;a,b)V(q;a,b)$ can be evaluated in terms of the arithmetic information already defined.

\begin{lemma}
\label{d lemma}
Let $\chi$ be a Dirichlet character$\mod q$, and let $a$ and $b$ be reduced residues\mod q. For any nonnegative integer $n$, we have
$$
|\chi(a)-\chi(b)|^{2n} = \sum_{|i|\le n} (-1)^i \binom{2n}{n+i} \chi(a^ib^{-i}).
$$
\end{lemma}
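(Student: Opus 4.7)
The plan is to reduce the identity to a clean binomial expansion by pulling out a unit-modulus factor, and then to collect terms using a Vandermonde-type convolution. Since $a$ and $b$ are reduced residues$\mod q$, the values $\chi(a)$ and $\chi(b)$ lie on the unit circle, so $|\chi(b)|=1$ and we may write
\[
|\chi(a)-\chi(b)|^{2} = |\chi(b)|^2 |1 - \chi(a)\chi(b)^{-1}|^2 = |1 - \chi(ab^{-1})|^2.
\]
Setting $z = \chi(ab^{-1})$, which satisfies $|z|=1$ and hence $\bar z = z^{-1}$, this gives
\[
|\chi(a)-\chi(b)|^{2n} = \big( (1-z)(1-z^{-1}) \big)^n = (1-z)^n (1-z^{-1})^n.
\]

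Next I would expand each factor by the binomial theorem and collect by power of $z$. Writing
\[
(1-z)^n (1-z^{-1})^n = \sum_{j=0}^n \sum_{k=0}^n \binom{n}{j}\binom{n}{k}(-1)^{j+k} z^{j-k},
\]
I would set $i = j-k$ (so $|i|\le n$), and note that $(-1)^{j+k} = (-1)^{2k+i} = (-1)^i$. Grouping by $i$, the coefficient of $z^i$ becomes
\[
(-1)^i \sum_{k} \binom{n}{k+i}\binom{n}{k} = (-1)^i \sum_{k} \binom{n}{k+i}\binom{n}{n-k} = (-1)^i \binom{2n}{n+i},
\]
where the final equality is Vandermonde's convolution $\sum_k \binom{n}{k+i}\binom{n}{n-k} = \binom{2n}{n+i}$.

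Substituting back and recalling $z^i = \chi((ab^{-1})^i) = \chi(a^i b^{-i})$ (with the convention that $a^{-i}$ denotes an integer multiplicative inverse of $a^i \mod q$, which makes sense since $(a,q)=(b,q)=1$), I obtain
\[
|\chi(a)-\chi(b)|^{2n} = \sum_{|i|\le n} (-1)^i \binom{2n}{n+i} \chi(a^i b^{-i}),
\]
as claimed. There is no real obstacle here: the only thing to be careful about is the index bookkeeping when combining the two binomial expansions, and verifying that the range $|i|\le n$ covers all nonzero contributions (which it does, since $\binom{n}{k+i}$ vanishes when $k+i\notin[0,n]$, automatically enforcing $|i|\le n$).
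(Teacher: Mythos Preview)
Your proof is correct and follows essentially the same route as the paper: both reduce to an identity in the single variable $t=\chi(ab^{-1})$ via $|\chi(a)-\chi(b)|^2 = 2 - t - t^{-1} = (1-t)(1-t^{-1})$. The only difference is in how the resulting combinatorial identity is justified: the paper states $(2-t-t^{-1})^n = \sum_{|i|\le n} (-1)^i \binom{2n}{n+i} t^i$ and appeals to a straightforward induction on $n$, whereas you factor as $(1-t)^n(1-t^{-1})^n$, expand both binomials, and collect coefficients via Vandermonde's convolution. Your version has the slight advantage of explaining where the central binomial coefficient $\binom{2n}{n+i}$ comes from, while the paper's induction is marginally quicker to state; neither approach requires anything beyond elementary combinatorics.
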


\begin{proof}
The algebraic identity
\[
\big( 2-t-t^{-1} \big)^n = \sum_{|i|\le n} (-1)^i \binom{2n}{n+i} t^i
\]
can be verified by a straightforward induction on~$n$. Since
\[
|\chi(a)-\chi(b)|^2 = \big( \chi(a)-\chi(b) \big) \overline{\big( \chi(a)-\chi(b) \big)} = 2 - \chi(ab^{-1}) - \chi(ab^{-1})^{-1},
\]
the lemma follows immediately.
\end{proof}

\begin{prop}
\label{cumulants in elementary terms}
Assume GRH. Let $a$ and $b$ be reduced residues\mod q. For any positive integer $n$, the expression $W_n(q;a,b) V(q;a,b) / \phi(q)$ can be written as a fixed $\Q$-linear combination of elements in the set
\begin{multline}
\label{other set def}
\{ \L_n(q)\} \cup \big\{ \iota_q\big( a^ib^{-i} \big)\log2,\, \iota_q\big( {-}a^ib^{-i} \big)\log2,\, \iota_q\big( a^ib^{-i} \big) \gamma_0 \colon |i|\le n \big\} \\
\cup \big\{ \iota_q\big( a^ib^{-i} \big)\zeta(j),\, \iota_q\big( {-}a^ib^{-i} \big)\zeta(j) \colon |i|\le n,\, 2\le j \le n \big\} \\
\cup \big\{ H_{n,j}(q;a,b),\, \M_{n,j}(q;a,b) \colon 1\le j\le n \big\}.
\end{multline}
\end{prop}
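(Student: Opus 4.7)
The plan is to unfold $W_n(q;a,b)V(q;a,b)$ into a character sum involving the quantities $b_n(\chi)$ from Definition~\ref{bnchi def}, decompose each $b_n(\chi)$ via Lemma~\ref{b(chi) lemma}, and then evaluate the resulting character sums using the orthogonality-type tools developed in Sections~\ref{characters section} and~\ref{M evaluation section}.

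First, by the functional equation for Dirichlet $L$-functions (exactly as in the passage from~\eqref{functional} to~\eqref{equation}), the sum over $\gamma>0$ in Definition~\ref{Wkqab def} equals half the sum over all $\gamma\in\R$, giving
\[
W_n(q;a,b)V(q;a,b) = 2^{2n-1}\,\lvert\lambda_{2n}\rvert \sum_{\chi\mod q} \lvert\chi(a)-\chi(b)\rvert^{2n}\, b_n(\chi).
\]
Lemma~\ref{d lemma} expands the coefficient as $\sum_{\lvert i\rvert\le n}(-1)^i\binom{2n}{n+i}\chi(a^ib^{-i})$, and Lemma~\ref{b(chi) lemma} (together with its remark, applied to $\chi^*$ modulo $q^*$) writes $b_n(\chi)$ as a fixed $\Q$-linear combination of $\log(q^*/\pi)$, the derivatives $[\tfrac{d^j}{ds^j}\log\Gamma(s)]_{s=(1+\xi_\chi)/2}$ for $1\le j\le n$, and the real parts $\Re[\tfrac{d^j}{ds^j}\log L(s,\chi^*)]_{s=1}$ for $1\le j\le n$. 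The problem thus reduces to showing that each of the three resulting character sums, divided by $\phi(q)$, is a $\Q$-linear combination of the elements of~\eqref{other set def}.

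For the $\log(q^*/\pi)$ contribution, Proposition~\ref{log qstar sum prop} evaluates $\sum_\chi\chi(a^ib^{-i})\log q^*$; combining with the trivial $\sum_\chi\chi(a^ib^{-i})\log\pi=\phi(q)\iota_q(a^ib^{-i})\log\pi$ and using $(q,a^ib^{-i}-1)=(q,a^i-b^i)$, the result matches exactly the $i$-th summand of Definition~\ref{Lnq def} for $\L_n(q)$. For the Gamma-derivative contributions, splitting characters by parity via $\tfrac12(1\pm\chi(-1))$ and applying the orthogonality relation~\eqref{orthogonality.relation} produces
\[
\frac{1}{\phi(q)}\sum_\chi\chi(a^ib^{-i})\Bigl[\tfrac{d^j}{ds^j}\log\Gamma(s)\Bigr]_{s=(1+\xi_\chi)/2} = \tfrac12\iota_q(a^ib^{-i})\bigl[\psi^{(j-1)}(\tfrac12)+\psi^{(j-1)}(1)\bigr] + \tfrac12\iota_q(-a^ib^{-i})\bigl[\psi^{(j-1)}(\tfrac12)-\psi^{(j-1)}(1)\bigr].
\]
The classical values $\psi(1)=-\gamma_0$, $\psi(\tfrac12)=-\gamma_0-2\log 2$, $\psi^{(k)}(1)=(-1)^{k+1}k!\,\zeta(k+1)$, and $\psi^{(k)}(\tfrac12)=(-1)^{k+1}k!(2^{k+1}-1)\zeta(k+1)$ for $k\ge 1$ then show that $\gamma_0$ appears only in the symmetric $\iota_q(+a^ib^{-i})$ coefficient (since $\psi(\tfrac12)-\psi(1)=-2\log 2$ carries no $\gamma_0$), $\log 2$ appears only when $j=1$, and for $j\ge 2$ the contribution is a rational multiple of $\zeta(j)$, landing in the target set precisely as prescribed.

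For the $L$-derivative contribution, $\tfrac{1}{\phi(q)}\sum_\chi\lvert\chi(a)-\chi(b)\rvert^{2n}\Re[\tfrac{d^j}{ds^j}\log L(s,\chi^*)]_{s=1}$ equals $\M_{n,j}^*(q;a,b)$ (the real part being automatic by invariance under $\chi\mapsto\bar\chi$), and the final task is to convert $\M_{n,j}^*$ into $\M_{n,j}+H_{n,j}$, generalizing Lemma~\ref{Mstar to M lemma}. Expanding $\log(L(s,\chi^*)/L(s,\chi))=-\sum_{p\mid q}\log(1-\chi^*(p)/p^s)$ as a power series in $\chi^*(p)/p^s$ and differentiating $j$ times at $s=1$ yields
\[
\Bigl[\tfrac{d^j}{ds^j}\log L(s,\chi^*)\Bigr]_{s=1} - \Bigl[\tfrac{d^j}{ds^j}\log L(s,\chi)\Bigr]_{s=1} = (-1)^j\sum_{p\mid q}\log^j p\sum_{e\ge 1}\chi^*(p^e)\,\frac{e^{j-1}}{p^e};
\]
multiplying by $\lvert\chi(a)-\chi(b)\rvert^{2n}$ (via Lemma~\ref{d lemma}), summing over $\chi$, and invoking Proposition~\ref{semi orthogonality relations prop} (noting $\chi(p^e)=0$ for $p\mid q$, so $\chi^*(p^e)-\chi(p^e)=\chi^*(p^e)$) reproduces exactly the formula of Definition~\ref{d definition}, giving $\M_{n,j}^*(q;a,b)=\M_{n,j}(q;a,b)+H_{n,j}(q;a,b)$. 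The main obstacle is this last identification: one must carefully track signs, the factors $\phi(p^\nu)$ arising from Proposition~\ref{semi orthogonality relations prop}, and the interplay of the binomial coefficients from Lemma~\ref{d lemma} with the indices $|i|\le n$, to verify that everything assembles exactly into the formula for $H_{n,j}(q;a,b)$.
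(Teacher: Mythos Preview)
Your proposal is correct and follows essentially the same approach as the paper's proof: rewrite $W_n(q;a,b)V(q;a,b)$ as $2^{2n-1}\lvert\lambda_{2n}\rvert\cdot\frac{1}{\phi(q)}\sum_\chi\lvert\chi(a)-\chi(b)\rvert^{2n}b_n(\chi)$, invoke Lemma~\ref{b(chi) lemma} to split into the three types (the $\log(q^*/\pi)$ piece yielding $\L_n(q)$, the $\Gamma$-derivative piece yielding the $\iota_q(\pm a^ib^{-i})$ terms via the classical polygamma values, and the $L$-derivative piece yielding $\M_{n,j}^*=\M_{n,j}+H_{n,j}$), and evaluate each via Lemma~\ref{d lemma}, Proposition~\ref{log qstar sum prop}, and Proposition~\ref{semi orthogonality relations prop}. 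The one point you pass over silently is that the leading factor $2^{2n-1}\lvert\lambda_{2n}\rvert$ is rational, which is supplied by Lemma~\ref{log bessel lemma}(d).
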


\begin{proof}
From the definitions~\eqref{Wkqab def} and~\eqref {bnchi def} of $W_n(q;a,b)$ and $b_n(\chi)$, we have
\begin{align*}
\frac{W_n(q;a,b) V(q;a,b)}{\phi(q)} &=  \frac{2^{2n}|\lambda_{2n}|}{\phi(q)} \allchisum |\chi(a)-\chi(b)|^{2n} \Lsum{\gamma>0} \frac{1} {(1/4+\gamma^2)^n} \\
&=  2^{2n-1}|\lambda_{2n}| \cdot \frac1 {\phi(q)} \allchisum |\chi(a)-\chi(b)|^{2n} b_n(\chi).
\end{align*}
Lemma~\ref {log bessel lemma}(d) tells us that the numbers $\lambda_{2n}$ are rational. Therefore by Lemma~\ref{b(chi) lemma}, it suffices to establish that three types of expressions, corresponding to the three types of quantities in the set~\eqref{set def}, are fixed $\Q$-linear combinations of elements of the set~\eqref{other set def}.

\noindent{\bf Type 1}: $\displaystyle \frac1{\phi(q)} \allchisum |\chi(a)-\chi(b)|^{2n}\log \frac{q^*}\pi$.

Note that Proposition~\ref{log qstar sum prop} can be rewritten in the form
\begin{equation}
\label{log qstar sum prop rewritten}
\frac1{\phi(q)} \allchisum \chi(a)\log q^* = \iota_q(a) \bigg( \log q - \sum_{p\mid q} \frac{\log p}{p-1} \bigg) - \big( 1-\iota_q(a) \big) \frac{\Lambda(q/(q,a-1))}{\phi(q/(q,a-1))}.
\end{equation}
By Lemma~\ref {d lemma} and the orthogonality relation~\eqref {orthogonality.relation}, we have
\begin{equation}
\label{orthog for 2n}
\frac1{\phi(q)} \allchisum |\chi(a)-\chi(b)|^{2n} \chi(c) = \sum_{|i|\le n} (-1)^i \binom{2n}{n+i} \iota_q(a^ib^{-i}c).
\end{equation}
Therefore, using equation~\eqref {log qstar sum prop rewritten} and Proposition~\ref {just orthogonality prop}, we get
\begin{align*}
\frac1{\phi(q)} \allchisum & |\chi(a)-\chi(b)|^{2n} \log \frac{q^*}\pi \\
&= \frac1{\phi(q)} \allchisum |\chi(a)-\chi(b)|^{2n} \log q^* - \frac1{\phi(q)} \allchisum |\chi(a)-\chi(b)|^{2n} \log\pi \\
&= \sum_{|i|\le n} (-1)^i \binom{2n}{n+i} \bigg( \iota_q(a^ib^{-i}) \bigg( \log q- \sum_{p\mid q} \frac{\log p}{p-1} \bigg) \\
&\qquad{}- (1-\iota_q(a^ib^{-i})) \frac{\Lambda(q/(q,a^ib^{-i}-1))}{\phi(q/(q,a^ib^{-i}-1))} - \iota_q(a^ib^{-i}) \log \pi \bigg) = \L_n (q),
\end{align*}
since $(q,a^ib^{-i}-1) = (q,a^i-b^i)$.

\noindent{\bf Type 2}: $\displaystyle \frac1{\phi(q)} \allchisum |\chi(a)-\chi(b)|^{2n} \bigg[ \frac{d^j}{ds^j}\log \Gamma(s) \bigg]_{s=(1+\xi)/2}$ for some $1\le j\le n$.

The following identities hold for $j\ge2$ (see \cite[equations 6.4.2 and 6.4.4]{handbook}):
\begin{align*}
\bigg[ \frac{d^j}{ds^j}\log \Gamma(s) \bigg]_{s=1} &= (-1)^j (j-1)! \zeta(j); \\
\bigg[ \frac{d^j}{ds^j}\log \Gamma(s) \bigg]_{s=1/2} &= (-1)^j (j-1)! \zeta(j)(2^j-1).
\end{align*}
Because $\xi=0$ when $\chi(-1)=1$ and $\xi=1$ when $\chi(-1)=-1$, we may thus write
\[
\bigg[ \frac{d^j}{ds^j}\log \Gamma(s) \bigg]_{s=(1+\xi)/2} = (-1)^j (j-1)! \zeta(j) \big( 2^{j-1} + \chi(-1)(2^{j-1}-1) \big),
\]
whence by equation~\eqref {orthog for 2n},
\begin{align*}
\frac1{\phi(q)} \allchisum & |\chi(a)-\chi(b)|^{2n} \bigg[ \frac{d^j}{ds^j}\log \Gamma(s) \bigg]_{s=(1+\xi)/2} \\
&= \frac1{\phi(q)} \allchisum |\chi(a)-\chi(b)|^{2n} (-1)^j (j-1)! \zeta(j) \big( 2^{j-1} + \chi(-1)(2^{j-1}-1) \big) \\
&= (-1)^j (j-1)! \zeta(j) \bigg( 2^{j-1} \sum_{|i|\le n} (-1)^i \binom{2n}{n+i} \iota_q(a^ib^{-i}) \\
&\qquad{}+( 2^{j-1}-1) \sum_{|i|\le n} (-1)^i \binom{2n}{n+i} \iota_q(-a^ib^{-i}) \bigg),
\end{align*}
which is a linear combination of the desired type. The case $j=1$ can be handled similarly using the identity
\[
\bigg[ \frac{d}{ds}\log \Gamma(s) \bigg]_{s=(1+\xi)/2} = - \gamma_0 - \big( 1 + \chi(-1) \big) \log2.
\]

\noindent{\bf Type 3}: $\displaystyle \frac1{\phi(q)} \allchisum |\chi(a)-\chi(b)|^{2n} \Re\bigg[ \frac{d^j}{ds^j}\log L(s,\chi^*) \bigg]_{s=1}$ for some $1\le j\le n$.

The expression in question is exactly $\M_{n,j}^*(q;a,b)$, and so it suffices to show that $\M_{n,j}^*(q;a,b) = \M_{n,j}(q;a,b) + H_{n,j}(q;a,b)$. Note that the identity
\[
\frac{d^j}{ds^j} \log L(s,\chi) = \frac{d^{j-1}}{ds^{j-1}} \bigg( {-} \sum_{n=1}^\infty \frac{\Lambda(n)\chi(n)}{n^s} \bigg) = (-1)^j \sum_{n=1}^\infty \frac{\Lambda(n)(\log n)^{j-1}\chi(n)}{n^s}
\]
implies
\[
\bigg[ \frac{d^j}{ds^j} \log L(s,\chi) \bigg]_{s=1} = (-1)^j \sum_p (\log p)^j \sum_{e=1}^\infty \frac{e^{j-1}}{p^e} \chi(p^e).
\]
The proof of Lemma~\ref{Mstar to M lemma} can then be adapted to obtain the equation
\begin{align*}
\M_{n,j}^*(q;a,b) - \M_{n,j}(q;a,b) &= \frac{(-1)^j}{\phi(q)} \sum_{p\mid q} (\log p)^j \sum_{e=1}^\infty \frac{e^{j-1}}{p^e} \allchisum \big| \chi(a) - \chi(b) \big|^{2n} \chi^*(p^e) \\
&= \frac{(-1)^j}{\phi(q)} \sum_{p\mid q} (\log p)^j \sum_{e=1}^\infty \frac{e^{j-1}}{p^e} \sum_{|i|\le n} (-1)^i \binom{2n}{n+i} \allchisum \chi(a^ib^{-i}) \chi^*(p^e)
\end{align*}
by Lemma~\ref {d lemma}. Evaluating the inner sum by Proposition~\ref {semi orthogonality relations prop} shows that this last expression is precisely the definition of $H_{n,j}(q;a,b)$, as desired.
\end{proof}

As described at the beginning of this section, Proposition~\ref {other set def} is exactly what we need to justify the assertion that we can calculate $\delta(q;a,b)$, using only arithmetic information, to within an error of the form $O_A(q^{-A})$. That some small primes in arithmetic progressions\mod q enter the calculations is not surprising; interestingly, though, the arithmetic progressions involved are the residue classes $a^jb^{-j}$ for $|j|\le n$, rather than the residue classes $a$ and $b$ themselves!


To give a better flavor of the form these approximations take, we end this section by explicitly giving such a formula with an error term better than $O(q^{-5/2+\ep})$ for any $\ep>0$. Taking $K=1$ in Theorem~\ref {delta series theorem} gives the formula
\[
\delta(q;a,b) = \frac12 + \frac{\rho(q)}{\sqrt{2\pi V(q;a,b)}}\bigg( 1 - \frac {\rho(q)^2}{6V(q;a,b)} - \frac{3W_2(q;a,b)}{V(q;a,b)} \bigg) + O \bigg( \frac{\rho(q)^5}{V(q;a,b)^{5/2}} \bigg).
\label{asymptotic formula K=1 case}
\]
Going through the above proofs, one can laboriously work out that
\begin{multline*}
\frac{W_2(q;a,b)V(q;a,b)}{\phi(q)} = \tfrac14 \L_2(q) \\
- \tfrac1{4\phi(q)} \allchisum \big| \chi(a)-\chi(b) \big|^4 \big\{ \big( \gamma_0 + \log2 + \tfrac12\zeta(2) \big) + \chi(-1) \big( \log2 + \tfrac14\zeta(2) \big) \big\} \\
+ \tfrac12 \big( \M_{2,1}(q;a,b) + H_{2,1}(q;a,b) \big) - \tfrac14
\big( \M_{2,2}(q;a,b) + H_{2,2}(q;a,b) \big),
\end{multline*}
to which Lemma~\ref {d lemma} can be applied with $n=2$. Combining these two expressions and expanding $V(q;a,b)$ as described after equation~\eqref {M explicit formula version} results in the following formula:

\begin{prop}
\label{Daniel's k=2 prop}
Assume GRH and LI. Suppose $a$ and $b$ are reduced residues$\mod q$ such that $a$ is a nonsquare and $b$ is a square$\mod q$. Then
\begin{align*}
\delta(q;a,b) &= \frac{1}{2} +  \frac{\rho(q)}{2\sqrt{ \pi \phi(q) (\tilde\L(q;a,b) + \tilde\Rcal(q;a,b)) }} \bigg(  1- \frac{\rho(q)^2}{12\phi(q) \tilde\L(q;a,b) } \\
&\qquad{}- \frac{3}{16 \phi(q) \tilde\L(q;a,b)^2} \Big\{ \L_2(q) - (6+2\iota_q(a^2b^{-2}))\big( \gamma_0 + \log2 + \tfrac12\zeta(2) \big) \\
&\qquad\qquad\qquad{}- (2\iota_q(-a^2b^{-2})-8\iota_q(-ab^{-1})) \big( \log2 + \tfrac14\zeta(2) \big) \\
&\qquad\qquad\qquad{}+ 2 \F_1(q;a,b) + 2H_{2,1}(q;a,b) - \F_2(q;a,b) - H_{2,2}(q;a,b) \Big\} \bigg) \\
&\qquad{}+ O \bigg( \frac{\rho(q)^5 \sqrt{\log q}}{\phi(q)^{5/2}} \bigg),
\end{align*}
where $\L_2(q)$ is defined in Definition~\ref{Lnq def} and $H_{2,j}(q;a,b)$ is defined in Definition~\ref {d definition}, and
\begin{align*}
\tilde \L(q;a,b) &= \L(q) +K_q(a-b) + \iota_q(-ab^{-1})\log 2 +H_0(q;a,b) + \frac{\Lambda(ab^{-1})}{ab^{-1}} + \frac{\Lambda(ba^{-1})}{ba^{-1}} \\
\tilde\Rcal(q;a,b) &= \sum_{\substack{q\le n\le q^4 \\ n\equiv ab^{-1} \mod q}} \frac{\Lambda(n)}{n} + \sum_{\substack{q\le n\le q^4 \\ n\equiv ba^{-1} \mod q}} \frac{\Lambda(n)}{n} - 2 \sum_{\substack{q\le n\le q^4 \\ n\equiv 1 \mod q}} \frac{\Lambda(n)}{n} \\
\F_1(q;a,b) &= \frac{\Lambda(a^2b^{-2})}{a^2b^{-2}}-4\frac{\Lambda(ab^{-1})}{ab^{-1}}-4\frac{\Lambda(ba^{-1})}{ba^{-1}}+\frac{\Lambda(b^2a^{-2})}{b^2a^{-2}} \\
\F_2(q;a,b) &= \frac{\Lambda(a^2b^{-2})\log(a^2b^{-2})}{a^2b^{-2}}-4\frac{\Lambda(ab^{-1})\log(ab^{-1})}{ab^{-1}}-4\frac{\Lambda(ba^{-1})\log(ba^{-1})}{ba^{-1}}+\frac{\Lambda(b^2a^{-2})\log(b^2a^{-2})}{b^2a^{-2}}.
\end{align*}
In all these definitions, expressions such as $a^2b^{-2}$ refer to the smallest positive integer congruent to $a^2b^{-2}\mod q$.
\end{prop}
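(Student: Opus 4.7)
The plan is to instantiate Theorem~\ref{delta series theorem} at $K=1$ and then translate the resulting $V(q;a,b)$ and $W_2(q;a,b)V(q;a,b)$ into the explicit arithmetic form stated. Directly from Definition~\ref{s-coeffs def}, $s_{q;a,b}(0,0)=1$, $s_{q;a,b}(1,1)=-\tfrac16$, and $s_{q;a,b}(1,0)=-3W_2(q;a,b)$ (the single admissible tuple has $i_2=1$, contributing $(2\cdot 2-1)!!\cdot(-W_2)=-3W_2$). Hence Theorem~\ref{delta series theorem} gives
\[
\delta(q;a,b) = \tfrac12 + \frac{\rho(q)}{\sqrt{2\pi V(q;a,b)}}\bigg(1 - \frac{3W_2(q;a,b)}{V(q;a,b)} - \frac{\rho(q)^2}{6V(q;a,b)}\bigg) + O\bigg(\frac{\rho(q)^5}{V(q;a,b)^{5/2}}\bigg),
\]
and what remains is to identify the parenthetical expression with the one in the proposition.

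For $V$, I would apply Theorem~\ref{variance evaluation theorem}, substitute $M^{*}=M+\phi H_0$ via Lemma~\ref{Mstar to M lemma}, and use the explicit-formula representation~\eqref{M explicit formula version} for $M$ with the choice $y=q^4$. In each of the three inner sums, the $n<q$ contribution collapses to a single term, namely $\Lambda(r_1)/r_1$, $\Lambda(r_2)/r_2$, and $\Lambda(1)/1=0$ respectively, while the $q\le n\le q^4$ portion is precisely $\tilde\Rcal(q;a,b)$; the explicit-formula error is $O(\phi(q)(\log q)^2/q^2)$. Collecting this with the $\L(q)$, $K_q(a-b)$, $\iota_q(-ab^{-1})\log 2$, and $H_0(q;a,b)$ pieces produces the factor $\tilde\L(q;a,b)+\tilde\Rcal(q;a,b)$ under the square root. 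In the two subleading corrections $-3W_2/V$ and $-\rho^2/(6V)$, one may drop $\tilde\Rcal$ in the denominators since $\tilde\Rcal\ll(\log q)/\phi(q)$ by the prime number theorem in arithmetic progressions, and the resulting relative error is comfortably absorbed in the claimed bound.

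The main piece of work is evaluating $W_2(q;a,b)V(q;a,b)/\phi(q)$, for which I would follow the general strategy of Proposition~\ref{cumulants in elementary terms} at $n=2$. Reading $\lambda_4=-\tfrac{1}{64}$ from $\log J_0(z)=-z^2/4-z^4/64+O(z^6)$ and using the functional equation gives $W_2V=\tfrac18\sum_\chi|\chi(a)-\chi(b)|^4 b_2(\chi)$. Lemma~\ref{partial fractions decomposition} at $n=2$ yields $C_{2,1}=2$, so the derivation used in Lemma~\ref{b(chi) lemma} produces
\[
b_2(\chi) = 2b(\chi) - 2\Re\bigl[\tfrac{d^2}{ds^2}\log L(s,\chi^*)\bigr]_{s=1} - \tfrac{\zeta(2)}{2}\bigl(2+\chi(-1)\bigr).
\]
Substituting Lemma~\ref{b.chi.nonprimitive.lemma} for $b(\chi)$ and expanding $|\chi(a)-\chi(b)|^4$ by Lemma~\ref{d lemma} at $n=2$, I would split the sum into the three types of the proof of Proposition~\ref{cumulants in elementary terms}. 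The $\log(q^*/\pi)$ type yields $\L_2(q)$ via Proposition~\ref{log qstar sum prop}; the $\gamma_0$, $\log 2$, and $\zeta(2)$ types collapse (via Proposition~\ref{just orthogonality prop}, using $a\not\equiv b\pmod q$ to kill $\iota_q(\pm ab^{-1})$ and $q\ge 3$ to kill $\iota_q(-1)$) into the two bracketed constants $(6+2\iota_q(a^2b^{-2}))(\gamma_0+\log 2+\tfrac12\zeta(2))$ and $(2\iota_q(-a^2b^{-2})-8\iota_q(-ab^{-1}))(\log 2+\tfrac14\zeta(2))$; and the $L'/L$-derivative types give $2\M^{*}_{2,1}-\M^{*}_{2,2}=2(\M_{2,1}+H_{2,1})-(\M_{2,2}+H_{2,2})$ by the direct extension of Lemma~\ref{Mstar to M lemma} to higher derivatives. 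The Perron-type evaluation of $\M_{2,j}$ at $y=q^4$ noted after Definition~\ref{Mnjqab def} then has its $n<q$ portion evaluating to $\pm\F_j(q;a,b)$, coming from $n\in\{r_1,r_2,a^2b^{-2},b^2a^{-2}\}$, with the large-$n$ tail absorbed in the overall error.

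The main obstacle is the combinatorial bookkeeping in this last calculation: correctly matching the five-term expansion $|\chi(a)-\chi(b)|^4=\chi(a^2b^{-2})-4\chi(ab^{-1})+6-4\chi(ba^{-1})+\chi(b^2a^{-2})$ against both the $\chi(-1)$-dependent $\Gamma$-derivative evaluations at $s=(1+\xi)/2$ and the orthogonality identifications of the residue classes $a^ib^{-i}$, and keeping the $H_{n,j}$ corrections correctly paired with their $\M_{n,j}$ counterparts under the passage from primitive to induced characters. Once this is done, substitution into the $K=1$ formula is mechanical; the stated error $O(\rho(q)^5\sqrt{\log q}/\phi(q)^{5/2})$ is consistent with the main $O(\rho^5/V^{5/2})\ll\rho^5/(\phi(q)\log q)^{5/2}$ from Theorem~\ref{delta series theorem} together with the subleading contributions from the $V$-approximation and from dropping $\tilde\Rcal$ in the denominators of the correction terms.
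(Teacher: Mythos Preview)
Your proposal is correct and follows essentially the same approach as the paper. The paper does not give a detailed proof of this proposition; it simply says that combining the $K=1$ instance of Theorem~\ref{delta series theorem} with the displayed formula for $W_2(q;a,b)V(q;a,b)/\phi(q)$ (obtained by working through Proposition~\ref{cumulants in elementary terms} at $n=2$) and expanding $V(q;a,b)$ via the explicit-formula representation~\eqref{M explicit formula version} yields the stated result, which is precisely the route you outline, including the specific identifications $\lambda_4=-\tfrac1{64}$, $C_{2,1}=2$, and the three-type decomposition of the sum over characters.
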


\subsection{A central limit theorem}
\label{CLT section}

In this section we prove a central limit theorem for the functions
\[
E(x;q,a)-E(x;q,b) = \phi(q)(\pi(x;q,a)-\pi(x;q,b))x^{-1/2}\log x.
\]
The technique we use is certainly not without precedent. Hooley \cite{hooley} and Rubinstein and Sarnak \cite{RS} both prove central limit theorems for similar normalized error terms under the same hypotheses GRH and LI (though each with different acronyms).

\begin{theorem}
\label{central limit theorem}
Assume GRH and LI. As $q$ tends to infinity, the limiting logarithmic distributions of the functions
\begin{equation}
\frac{E(x;q,a)-E(x;q,b)}{\sqrt{2\phi(q)\log q}}
\label{variance.on.the.bottom}
\end{equation}
converge in measure to the standard normal distribution of mean 0 and variance 1, uniformly for all pairs $a,b$ of distinct reduced residues modulo $q$.
\end{theorem}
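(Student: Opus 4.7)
The plan is a standard characteristic function / Levy continuity argument, leveraging the machinery already developed in the paper. Let me write $Y_{q;a,b} = X_{q;a,b}/\sqrt{2\phi(q)\log q}$, where $X_{q;a,b}$ is the random variable from Definition~\ref{Xqab def}. By Proposition~\ref{same limiting distributions prop}, the limiting logarithmic distribution of the normalized function \eqref{variance.on.the.bottom} is exactly the distribution of $Y_{q;a,b}$. Thus it suffices to show that the distributions of $Y_{q;a,b}$ converge weakly to the standard normal $\mathcal{N}(0,1)$, uniformly in $a$ and $b$. Since weak convergence to an absolutely continuous limit implies convergence in measure on any bounded Borel set (and one easily controls the tails), this will yield the theorem.

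By Levy's continuity theorem it is enough to show that, for every fixed $t \in \mathbb{R}$, the characteristic functions $\hat Y_{q;a,b}(t) = \hat X_{q;a,b}\big(t/\sqrt{2\phi(q)\log q}\big)$ tend to $e^{-t^2/2}$ as $q \to \infty$, uniformly in the pair of distinct reduced residues $a, b \mod q$. Fix $t$; for $q$ large enough (depending on $t$) we have $|t|/\sqrt{2\phi(q)\log q} \le \tfrac14$, so Proposition~\ref{used in central limit theorem prop} applies with $z = t/\sqrt{2\phi(q)\log q}$ and gives
\begin{equation*}
\log \hat Y_{q;a,b}(t) = i(c(q,a)-c(q,b))\frac{t}{\sqrt{2\phi(q)\log q}} - \frac{V(q;a,b)}{2\phi(q)\log q}\cdot\frac{t^2}{2} + O\!\left(\frac{V(q;a,b)\,t^4}{(2\phi(q)\log q)^2}\right).
\end{equation*}
The linear term is $O\big(\rho(q)/\sqrt{\phi(q)\log q}\big)$, which tends to $0$ since $\rho(q) \ll_\varepsilon q^\varepsilon$ (Definition~\ref{rho def}) and $\phi(q) \gg q/\log\log q$. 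By Proposition~\ref{Vqab asymptotic prop}, $V(q;a,b)/(2\phi(q)\log q) = 1 + O(\log\log q/\log q)$, so the quadratic term tends to $-t^2/2$. The error term is $O(t^4/\phi(q)\log q) \to 0$. Hence $\log \hat Y_{q;a,b}(t) \to -t^2/2$, as required.

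The only point that requires attention is the uniformity in $a$ and $b$. But every estimate invoked above is uniform in $a,b$: the bound $|c(q,a)-c(q,b)| \le 2\rho(q)$ depends only on $q$; the asymptotic for $V(q;a,b)$ in Proposition~\ref{Vqab asymptotic prop} has an error depending only on $q$ (the terms $K_q(a-b)$, $\iota_q(-ab^{-1})\log 2$, and $M^*(q;a,b)$ are each $O(\phi(q)\log\log q)$ uniformly in $a,b$); and the $O$-constant in Proposition~\ref{used in central limit theorem prop} is absolute. Therefore the pointwise convergence of characteristic functions is uniform over pairs $(a,b)$ of distinct reduced residues, and Levy's theorem (applied with a uniform modulus of continuity at the origin of the limit) produces uniform weak convergence.

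The main obstacle I anticipate is not conceptual but bookkeeping: making sure the hypothesis $|z| \le \tfrac14$ of Proposition~\ref{used in central limit theorem prop} is satisfied for all $(a,b)$ simultaneously once $q$ is large enough (which is automatic since the bound depends only on $q$ and $t$), and verifying that weak convergence to the normal distribution implies the stated ``convergence in measure''. The latter is standard: since $\mathcal{N}(0,1)$ has a continuous distribution function $F$, the distribution functions $F_{q;a,b}$ of $Y_{q;a,b}$ converge uniformly to $F$ on $\mathbb{R}$ by Polya's theorem, and this uniform convergence is itself uniform in $(a,b)$ because the pointwise convergence of characteristic functions was uniform in $(a,b)$.
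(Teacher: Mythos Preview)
Your proposal is correct and follows essentially the same route as the paper: identify the limiting logarithmic distribution with that of $X_{q;a,b}$ via Proposition~\ref{same limiting distributions prop}, apply L\'evy's continuity theorem, and use Proposition~\ref{used in central limit theorem prop} together with the asymptotic $V(q;a,b)\sim 2\phi(q)\log q$ from Proposition~\ref{Vqab asymptotic prop} to show the characteristic functions converge to $e^{-t^2/2}$. The paper's own proof is terser on the uniformity in $(a,b)$ and on the passage from pointwise convergence of characteristic functions to the stated mode of convergence, so your added remarks on those points are welcome elaborations rather than deviations.
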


We remark that this result can in fact be derived from Rubinstein and Sarnak's 2-dimensional central limit theorem~\cite[Section 3.2]{RS} for $\big( E(x;q,a),E(x;q,b) \big)$, although this implication is not made explicit in their paper. In general, let
$$
\phi_{X,Y}(s,t) = \int_0^{\infty}\int_0^{\infty}\exp\big(i(sx+ty)\big) f_{X,Y}(x,y)\,dx\,dy
$$
denote the joint characteristic function of a pair $(X,Y)$ of real-valued random variables, where $f_{X,Y}(x,y)$ is the joint density function of the pair. Then the characteristic function of the real-valued random variable $X-Y$ is
\begin{align*}
\phi_{X-Y}(t) &= \E\big( \exp(it(X-Y)) \big) \\
&= \int_0^{\infty}\int_0^{\infty}\exp(it(x-y)) f_{X,Y}(x,y)\,dx\,dy = \phi_{X,Y}(t,-t).
\end{align*}
The derivation of Theorem~\ref{central limit theorem} from Rubinstein and Sarnak's 2-dimensional central limit theorem then follows by taking $X$ and $Y$ to be the random variables having the same limiting distributions as $E(x;q,a)$ and $E(x;q,b)$, respectively (which implies that $X-Y = X_{q;a,b}$).

On the other hand, we note that our analysis of the variances of these distributions has the benefit of providing a better quantitative statement of the convergence of our limiting distributions to the Gaussian distribution: see equation \eqref{better.quantitative} below.

\begin{proof}[Proof of Theorem~\ref{central limit theorem}]
Since the Fourier transform of the limiting logarithmic distribution of $E(x;q,a)-E(x;q,b)$ is $\hat X_{q;a,b}(\eta)$, the Fourier transform of the limiting logarithmic distribution of the quotient~\eqref{variance.on.the.bottom} is $\hat X_{q;a,b}(\eta/\sqrt{2\phi(q)\log q})$. A theorem of L\'evy from 1925 \cite[Section 4.2, Theorem 4]{rao}, the Continuity Theorem for characteristic functions, asserts that all we need to show is that
\begin{equation}
\lim_{q\to\infty} \hat X_{q;a,b}\Big( \frac\eta{\sqrt{2\phi(q)\log q}} \Big) = e^{-\eta^2/2}
\label{to.apply.Levy}
\end{equation}
for every fixed real number $\eta$. Because the right-hand side is continuous at $\eta=0$, it is automatically the characteristic function of the measure to which the limiting logarithmic distributions of the quotients~\eqref{variance.on.the.bottom} converge in distribution, according to L\'evy's theorem.

When $q$ is large enough in terms of $\eta$, we have $|\eta/\sqrt{2\phi(q)\log q}| \le \frac14$. For such $q$, Proposition~\ref{used in central limit theorem prop} implies that
\begin{align}
\log \hat X_{q;a,b}\Big( \frac\eta{\sqrt{2\phi(q)\log q}} \Big) &=  - \frac{V(q;a,b)}{2\phi(q)\log q} \frac{\eta^2}2 + O\Big( \frac{(c(q,a)-c(q,b))|\eta|}{\sqrt{\phi(q)\log q}} + \frac{V(q;a,b)\eta^4}{(\phi(q)\log q)^2} \Big) \notag \\
&= -\frac{\eta^2}2 + O\Big( \frac{\eta^2\log\log q}{\log q} + \frac{|\eta|\rho(q)}{\sqrt{\phi(q)\log q}} + \frac{\eta^4}{\phi(q)\log q} \Big)
\label{better.quantitative}
\end{align}
using the asymptotic formula for $V(q;a,b)$ given in Proposition~\ref{Vqab asymptotic prop}. Since $\eta$ is fixed, this is enough to verify \eqref{to.apply.Levy}, which establishes the theorem.
\end{proof}

\subsection{Racing quadratic nonresidues against quadratic residues}
\label{quadratic race section}

This section is devoted to understanding the effect of low-lying zeros of Dirichlet $L$-functions on prime number races between quadratic residues and quadratic nonresidues. This phenomenon has already been studied by many authors---see for instance \cite{low-lying zeros}. Let $q$ be an odd prime, and define $\pi(x;q,N)=\#\{ p\le x\colon p$ is a quadratic nonresidue\mod q\} and $\pi(x;q,R)=\#\{ p\le x\colon p$ is a quadratic residue\mod q\}. Each of $\pi(x;q,N)$ and $\pi(x;q,R)$ is asymptotic to $\pi(x)/2$, but Chebyshev's bias predicts that the difference $\pi(x;q,N)-\pi(x;q,R)$, or equivalently the normalized difference
$$
E(x;N,R)=\frac{\log x}{\sqrt{x}} \big( \pi(x;q,N)-\pi(x;q,R) \big),
$$
is more often positive than negative.

Our methods lead to an asymptotic formula for $\delta(q;N,R)$, the logarithmic density of the set of real numbers $x\ge1$ satisfying $\pi(x;q,N)>\pi(x;q,R)$, that explains the effect of low-lying zeros in a straightfoward and quantitative way. We sketch this application now.

First, define the random variable
$$
X_{q;N,R} = 2+2\sum_{\substack{\gamma>0\\L(1/2+i\gamma,\chi_1)=0}} \frac{X_{\gamma}}{\sqrt{\frac14+\gamma^2}},
$$
where $\chi_1$ is the unique quadratic character\mod q. Under GRH and LI, the distribution of $X_{q;N,R}$ is the same as the limiting distribution of the normalized error term $E(x;N,R)$. The methods of Section~\ref {variance analysis section} then lead to an asymptotic formula analogous to equation~\eqref{asymptotic formula K=0 case}:
\begin{equation}
\delta(q;N,R) = \frac12 + \sqrt{\frac{2}{\pi V(q;N,R)}} + O \bigg( \frac{1}{V(q;N,R)^{3/2}} \bigg),
\label{delta N R}
\end{equation}
where
$$
V(q;N,R)=b(\chi_1)=\sum_{\substack{\gamma\in\R \\ L(1/2+i\gamma,\chi_1)=0}} \frac1{\frac 14 + \gamma^2}.
$$
To simplify the discussion, we explore only the effect of the lowest zero (the zero closest to the real axis) on the size of $V(q;N,R)$.

By the classical formula for the zero-counting function $N(T,\chi)$, the average height of the lowest zero of $L(s,\chi_1)$ is $2\pi/\log q$. Suppose we have a lower-than-average zero, say at height $c\cdot 2\pi/\log q$ for some $0<c<1$. Then we get a higher-than-average contribution to the variance of size
\[
\frac1{1/4+(c\cdot 2\pi/\log q)^2} - \frac1{1/4+(2\pi/\log q)^2}.
\]
Since the variance $V(q;N,R) = b(\chi_1)$ is asymptotically $\log q$ by Lemma~\ref{b.chi.nonprimitive.lemma}, this increases the variance by roughly a percentage $t$ given by
\begin{equation}
t \sim \frac1{\log q} \bigg( \frac1{1/4+(c\cdot 2\pi/\log q)^2} - \frac1{1/4+(2\pi/\log q)^2} \bigg).
\label{three way}
\end{equation}
Therefore, given any two of the three parameters
\begin{itemize}
\item how low the lowest zero is (in terms of the percentage $c$ of the average),
\item how large a contribution we see to the variance (in terms of the percentage $t$), and
\item the size of the modulus $q$,
\end{itemize}
we can determine the range for the third parameter from equation~\eqref{three way}.

For example, as $c$ tends to 0, the right-hand side of equation~\eqref{three way} is asymptotically
\[
\frac{64\pi^2}{(\log^2q + 16\pi^2)\log q}.
\]
So if we want to see an increase in variance of 10\%, an approximation for the range of $q$ for which this might be possible is given by setting $64\pi^2/(\log^2q + 16\pi^2)\log q = 0.1$ and solving for $q$, which gives $\log q = 15.66$ or about $q = {}$6,300,000. This assumes that $c$ tends to~0---in other words, that $L(s,\chi_1)$ has an extremely low zero. However, even taking $c=\frac13$ on the right-hand side of equation~\eqref{three way} and setting the resulting expression equal to 0.1 yields about $q= {}$1,600,000. In other words, having a zero that's only a third as high as the average zero, for example, will give a ``noticeable'' (at least 10\%) lift to the variance up to roughly $q= {}$1,600,000.

It turns out that unusually low zeros of this sort are not particularly rare. The Katz-Sarnak model predicts that the proportion of $L$-functions in the family $\{L(s,\chi) \colon \chi \text{ primitive of order 2} \}$ having a zero as low as $c\cdot 2\pi/\log q$ is asymptotically ${2\pi^2 c^3/9}$ as $c$ tends to~0. Continuing with our example value $c=\frac13$, we see that roughly 8\% of the moduli less than 1,600,000 will have a 10\% lift in the variance $V(q;N,R)$ coming from the lowest-lying zero.

Well-known examples of $L$-functions having low-lying zeros are the $L(s,\chi_1)$ corresponding to prime moduli $q$ for which the class number $h(-q)$ equals~1, as explained in \cite{low-lying zeros} with the Chowla--Selberg formula for $q=163$. For this modulus, the imaginary part of the lowest-lying zero is $0.202901\ldots = 0.16449\ldots \cdot2\pi/\log 163$. According to our approximations, this low-lying zero increases the variance by roughly $t=56\%$; considering this increased variance in equation~\eqref {delta N R} explains why the value of $\delta(163;N,R)$ is exceptionally low. The actual value of $\delta(163;N,R)$, along with some neighboring values, are shown in Table~\ref{delta for q about 163}.

\begin{table}[b]
 \caption{Values of $\delta(q;N,R)$ for $q=163$ and nearby primes}
\begin{tabular}{|c|c|}
 \hline
$q$ & $\delta(q;N,R)$ \\
\hline
151 & 0.745487 \\
157 & 0.750767 \\
163 & 0.590585 \\
167 & 0.780096 \\
173 & 0.659642 \\
\hline
\end{tabular}
\label{delta for q about 163}
\end{table}

Other Dirichlet $L$-functions having low-lying zeros are the $L(s,\chi_1)$ corresponding to prime moduli $q$ for which the class number $h(-q)$ is relatively small; a good summary of the first few class numbers is given in \cite[Table~VI]{low-lying zeros}.


Notice that in principle, racing quadratic residues against quadratic nonresidues makes sense for any modulus $q$ for which $\rho(q)=2$, which includes powers of odd primes and twice these powers. However, being a quadratic residue modulo a prime $q$ is exactly equivalent to being a quadratic residue modulo any power of $q$, and also (for odd numbers) exactly equivalent to being a quadratic residue modulo twice a power of $q$. Therefore $\delta(q;N,R) = \delta(q^k;N,R) = \delta(2q^k;N,R)$ for every odd prime~$q$. The only other modulus for which $\rho(q)=2$ is $q=4$, which has been previously studied: Rubinstein and Sarnak~\cite{RS} calculated that $\delta(4;N,R) = \delta(4;3,1) \approx 0.9959$.

\section{Fine-scale differences among races to the same modulus}
\label{fine-scale section}

In this section we probe the effect that the specific choice of residue classes $a$ and $b$ has on the density $\delta(q;a,b)$. We begin by proving Corollary~\ref{isolate ab contribution cor}, which isolates the quantitative influence of $\delta(q;a,b)$ on $a$ and $b$ from its dependence on $q$, in Section~\ref {impact section}. We then dissect the relevant influence, namely the function $\Delta(q;a,b)$, showing how particular arithmetic properties of the residue classes $a$ and $b$ predictably affect the density; three tables of computational data are included to illustrate these conclusions. In Section~\ref {predictability section} we develop this theme even further, proving Theorem~\ref {Delta in terms of rating theorem} and hence its implication Theorem~\ref{fix a and b theorem}, which establishes a lasting ``meta-bias'' among these densities. Finally, in Section~\ref {mirror section} we apply our techniques to the seemingly unrelated ``mirror image phenomenon'' observed by Bays and Hudson, explaining its existence with a similar analysis.

\subsection{The impact of the residue classes $a$ and $b$}
\label{impact section}

The work of the previous sections has provided us with all the tools we need to establish Corollary~\ref{isolate ab contribution cor}.

\begin{proof}[Proof of Corollary~\ref{isolate ab contribution cor}]
We begin by showing that the function
\begin{equation*}
\Delta(q;a,b) = K_q(a-b) + \iota_q(-ab^{-1})\log2 + \frac{\Lambda(r_1)}{r_1} +\frac{\Lambda(r_2)}{r_2} + H(q;a,b)
\end{equation*}
defined in equation~\eqref{Delta def} is bounded above by an absolute constant (the fact that it is nonnegative is immediate from the definitions of its constituent parts). It has already been remarked in Definition~\ref {iota and Lq and Rqn def} that $K_q$ is uniformly bounded, as is $\iota_q$. We also have $\Lambda(r)/r \le (\log r)/r$, and this function is decreasing for $r\ge3$, so the third and fourth terms are each uniformly bounded as well. Finally, from Definition~\ref {hqpr def}, we see that
\[
h(q;p,r) = \frac{1}{\phi(p^{\nu})}\frac{\log p}{p^{e(q;p,r)}} \le \frac1{p-1} \frac{\log p}{p^1},
\]
and so $H(q;a,b) < \sum_p 2(\log p)/p(p-1)$ is uniformly bounded by a convergent sum as well.

We now turn to the main assertion of the corollary. By Theorems~\ref {variance evaluation theorem} and~\ref {M evaluation theorem}, we have
\begin{align*}
V(q;a,b) &= 2\phi(q) \big( \L(q) + K_q(a-b) + \iota_q(-ab^{-1})\log2 \big) + 2M^*(q;a,b) \\
&= 2\phi(q) \bigg( \L(q) + K_q(a-b) + \iota_q(-ab^{-1})\log2 + \frac{\Lambda(r_1)}{r_1} +\frac{\Lambda(r_2)}{r_2} + H(q;a,b) + O \bigg( \frac{\log^2 q}{q} \bigg) \bigg) \\
&= 2\phi(q) \bigg( \L(q) + \Delta(q;a,b) + O \bigg( \frac{\log^2 q}{q} \bigg) \bigg) = 2\phi(q)\L(q) \bigg( 1 + \frac{\Delta(q;a,b)}{\L(q)} + O \bigg( \frac{\log q}{q} \bigg) \bigg).
\end{align*}
Since $\Delta(q;a,b)$ is bounded while $\L(q) \sim \log q$, we see that $V(q) \sim 2\phi(q)\log q$; moreover, the power series expansion of $(1+t)^{-1/2}$ around $t=0$ implies that
\begin{align*}
V(q;a,b)^{-1/2} &= \big( 2\phi(q)\L(q) \big)^{-1/2} \bigg( 1 - \frac{\Delta(q;a,b)}{2\L(q)} + O \bigg( \frac{\Delta(q;a,b)^2}{\L(q)^2} + \frac{\log q}q \bigg) \bigg) \\
&= \big( 2\phi(q)\L(q) \big)^{-1/2} \bigg( 1 - \frac{\Delta(q;a,b)}{2\L(q)} + O \bigg( \frac1{\log^2 q} \bigg) \bigg).
\end{align*}
(Recall that we are assuming that $q\ge43$, which is enough to ensure that $\L(q)$ is positive.) Together with the last assertion of Theorem~\ref {delta series theorem}, this formula implies that
\[
\delta(q;a,b) = \frac12 + \frac{\rho(q)}{2\sqrt{\pi\phi(q)\L(q)}} \bigg( 1 - \frac{\Delta(q;a,b)}{2\L(q)} + O \bigg( \frac1{\log^2 q} \bigg) \bigg) + O \bigg( \frac{\rho(q)^3}{V(q;a,b)^{3/2}} \bigg).
\]
Since the last error term is $\ll_\ep q^\ep/(\phi(q)\log q)^{3/2}$, it can be subsumed into the first error term, and the proof of the corollary is complete.
\end{proof}

Corollary~\ref{isolate ab contribution cor} tells us that larger values of $\Delta(q;a,b)$ lead to smaller values of the density $\delta(q;a,b)$. Computations of the values of $\delta(q;a,b)$ (using methods described in Section~\ref {explicit computation of the delta}) illustrate this relationship nicely. Since $\delta(q;a,b) = \delta(q;ab^{-1},1)$ when $b$ is a square\mod q, we restrict our attention to densities of the form $\delta(q;a,1)$.

\begin{table}[bt]
\caption{The densities $\delta(q;a,1)$ computed for $q=163$}
\smaller\smaller
\begin{tabular}{|c|c|c|c||c|c|c|c|}
\hline
$q$ & $a$ & $a^{-1}$ & $\delta(q;a,1)$ & $q$ & $a$ & $a^{-1}$ & $\delta(q;a,1)$ \\
\hline
163 &   162 &   162 &   0.524032    &   163 &   30  &   125 &   0.526809    \\
163 &   3   &   109 &   0.525168    &   163 &   76  &   148 &   0.526815    \\
163 &   2   &   82  &   0.525370    &   163 &   92  &   101 &   0.526829    \\
163 &   5   &   98  &   0.525428    &   163 &   86  &   127 &   0.526869    \\
163 &   7   &   70  &   0.525664    &   163 &   128 &   149 &   0.526879    \\
163 &   11  &   89  &   0.525744    &   163 &   129 &   139 &   0.526879    \\
163 &   13  &   138 &   0.526079    &   163 &   80  &   108 &   0.526894    \\
163 &   17  &   48  &   0.526083    &   163 &   114 &   153 &   0.526898    \\
163 &   19  &   103 &   0.526090    &   163 &   117 &   124 &   0.526900    \\
163 &   23  &   78  &   0.526213    &   163 &   20  &   106 &   0.526906    \\
163 &   31  &   142 &   0.526378    &   163 &   42  &   66  &   0.526912    \\
163 &   67  &   73  &   0.526437    &   163 &   28  &   99  &   0.526914    \\
163 &   37  &   141 &   0.526510    &   163 &   44  &   63  &   0.526925    \\
163 &   29  &   45  &   0.526532    &   163 &   12  &   68  &   0.526931    \\
163 &   27  &   157 &   0.526578    &   163 &   72  &   120 &   0.526941    \\
163 &   32  &   107 &   0.526586    &   163 &   112 &   147 &   0.526975    \\
163 &   59  &   105 &   0.526620    &   163 &   110 &   123 &   0.526981    \\
163 &   8   &   102 &   0.526638    &   163 &   122 &   159 &   0.526996    \\
163 &   79  &   130 &   0.526682    &   163 &   50  &   75  &   0.526997    \\
163 &   94  &   137 &   0.526746    &   163 &   52  &   116 &   0.527002    \\
163 &   18  &   154 &   0.526768    &   &&&\\

\hline
\end{tabular}
\label{163 data}
\end{table}

We begin by investigating a prime modulus $q$, noting that
\[
\Delta(q;a,1) = \iota_q(-a)\log2 + \frac{\Lambda(a)}{a} +\frac{\Lambda(a^{-1})}{a^{-1}} + \frac{2\log q}{q(q-1)}
\]
when $q$ is prime (here $a^{-1}$ denotes the smallest positive integer that is a multiplicative inverse of $a\mod q$). Therefore we obtain the largest value of $\Delta(q;a,b)$ when $a\equiv-1\mod q$, and the next largest values are when $a$ is a small prime, so that the $\Lambda(a)/a$ term is large. (These next large values also occur when $a^{-1}$ is a small prime, and in fact we already know that $\delta(q;a,1) = \delta(q;a^{-1},1)$. When $q$ is large, it is impossible for both $a$ and $a^{-1}$ to be small.) Notice that $\Lambda(a)/a$ is generally decreasing on primes $a$, except that $\Lambda(3)/3 > \Lambda(2)/2$. Therefore the second, third, and fourth-largest values of $\Delta(q;a,1)$ will occur for $a$ congruent to 3, 2, and 5\mod q, respectively.

This effect is quite visible in the calculated data. We use the prime modulus $q=163$ as an example, since the smallest 12 primes, as well as $-1$, are all nonsquares\mod{163}. Table~\ref {163 data} lists the values of all densities of the form $\delta(163,a,1)$ (remembering that $\delta(q;a,1) = \delta(q;a^{-1},1)$ and that the value of any $\delta(q;a,b)$ is equal to one of these). Even though the relationship between $\Delta(q;a,1)$ and $\delta(q;a,1)$ given in Corollary~\ref{isolate ab contribution cor} involves an error term, the data is striking. The smallest ten values of $\delta(q;a,1)$ are exactly in the order predicted by our analysis of $\Delta(q;a,1)$: the smallest is $a=162\equiv-1\mod{163}$, then $a=3$ and $a=2$, then the seven next smallest primes in order. (This ordering, which is clearly related to Theorem~\ref {fix a and b theorem}, will be seen again in Figure~\ref {littleshapes}.)

One can also probe more closely the effect of the term $M(q;a,1)$ upon the density $\delta(q;a,1)$. Equation~\eqref {M explicit formula version} can be rewritten as the approximation
\begin{equation}
\frac{M(q;a,1)}{\phi(q)} + 2\sum_{\substack{n\le y \\ n\equiv1\mod q}} \frac{\Lambda(n)}n \approx \sum_{\substack{n\le y \\ n\equiv a\mod q}} \frac{\Lambda(n)}n + \sum_{\substack{n\le y \\ n\equiv a^{-1}\mod q}} \frac{\Lambda(n)}n
\label{M tilde}
\end{equation}
(where we are ignoring the exact form of the error term). Taking $y=q$ recovers the approximation $M(q;a,1) \approx \phi(q) \big( \Lambda(a)/a + \Lambda(a^{-1})/a^{-1} \big)$ used in the definition of $\Delta(q;a,b)$, but taking $y$ larger would result in a better approximation.

\begin{table}[bt]
\caption{The effect of medium-sized prime powers on the densities $\delta(q;a,1)$, illustrated with $q=101$ and $y=10^6$}
\smaller\smaller
\begin{tabular}{|c|c|c|cccc|c|c|}
\hline
$q$ & $a$ &  $a^{-1^{\mathstrut}}$    &  \multicolumn{4}{c|}{First four prime powers} & RHS of \eqref{M tilde} & $\delta(101,a,1)$    \\
\hline
101 & 7   &   29  &   7   &   29  &   433 &   512 &   0.563304    &   0.534839    \\
101 & 2   &   51  &   2   &   103 &   709 &   859 &   0.554043    &   0.534928    \\
101 & 3   &   34  &   3   &   337 &   811 &   1013    &   0.528385    &   0.535103    \\
101 & 11  &   46  &   11  &   349 &   617 &   1021    &   0.383090 &   0.536123    \\
101 & 8   &   38  &   8   &   109 &   139 &   311 &   0.332888    &   0.536499    \\
101 & 53  &   61  &   53  &   61  &   263 &   457 &   0.329038    &   0.536522    \\
101 & 12  &   59  &   59  &   113 &   463 &   719 &   0.276048    &   0.536955    \\
101 & 67  &   98  &   67  &   199 &   269 &   401 &   0.271567    &   0.536993    \\
101 & 41  &   69  &   41  &   243 &   271 &   647 &   0.268766    &   0.537013    \\
101 & 28  &   83  &   83  &   331 &   487 &   937 &   0.235130 &   0.537284    \\
101 & 15  &   27  &   27  &   128 &   229 &   419 &   0.235035    &   0.537293    \\
101 & 66  &   75  &   167 &   277 &   479 &   571 &   0.230291    &   0.537340    \\
101 & 18  &   73  &   73  &   523 &   881 &   1129    &   0.215281    &   0.537463    \\
101 & 50  &   99  &   151 &   353 &   503 &   757 &   0.211209    &   0.537500    \\
101 & 55  &   90  &   191 &   257 &   661 &   797 &   0.205833    &   0.537537    \\
101 & 42  &   89  &   89  &   547 &   1153    &   1301    &   0.202289    &   0.537586    \\
101 & 44  &   62  &   163 &   347 &   751 &   769 &   0.199652    &   0.537607    \\
101 & 72  &   94  &   173 &   397 &   577 &   599 &   0.196417    &   0.537623    \\
101 & 32  &   60  &   32  &   739 &   941 &   1171    &   0.191447    &   0.537660    \\
101 & 26  &   35  &   127 &   439 &   641 &   733 &   0.190601    &   0.537688    \\
101 & 39  &   57  &   241 &   443 &   461 &   1049    &   0.187848    &   0.537708    \\
101 & 40  &   48  &   149 &   343 &   1151    &   1361    &   0.178698    &   0.537780    \\
101 & 10  &   91  &   293 &   313 &   919 &   1303    &   0.180422    &   0.537792    \\
101 & 74  &   86  &   389 &   983 &   1399    &   1601    &   0.165153    &   0.537900    \\
101 & 63  &   93  &   467 &   1103    &   1709    &   2083    &   0.146466    &   0.538067    \\

\hline
\end{tabular}
\label{101 data}
\end{table}

We examine this effect on the calculated densities for the
medium-sized prime modulus $q=101$. In Table~\ref {101 data}, the
second group of columns records the first four prime powers that are
congruent to $a$ or $a^{-1}\mod{101}$. The second-to-last column
gives the value of the right-hand side of equation~\eqref{M tilde},
computed at $y=10^6$. Note that smaller prime powers in the second
group of columns give large contributions to this second-to-last
column, a trend that can be visually confirmed. Finally, the last
column lists the values of the densities $\delta(q;a,b)$, according
to which the rows have been sorted in ascending order. The
correlation between larger values of the second-to-last column and
smaller values of $\delta(q;a,b)$ is almost perfect (the
adjacent entries $a=40$ and $a=10$ being the only exception): the existence of smaller primes and
prime powers in the residue classes $a$ and $a^{-1}\mod{101}$ really
does contribute positively to the variance $V(q;a,1)$ and hence
decreases the density $\delta(q;a,1)$. (Note that the effect of the
term $\iota_{101}(-a)\log 2$ is not present here, since 101 is a
prime congruent to 1\mod4 and hence $-1$ is not a nonsquare.)

\begin{table}[bt]
\caption{The densities $\delta(q;a,1)$ computed for $q=420$, together with the values of $K_{q}(a-1) = \Lambda(420/(420,a-1))/\phi(420/(420,a-1))$}
\smaller\smaller
\begin{tabular}{|c|c|c|c|c|c||c|c|c|c|c|c|}
\hline
$q$ & $a$ & $a^{-1}$ & $(q,a-1)$ & $K_q(a-1)$ & $\delta(q;a,1)$ & $q$ & $a$ & $a^{-1}$ & $(q,a-1)$ & $K_q(a-1)$ & $\delta(q;a,1)$ \\
\hline
 420 & 211 & 211 & 210 & $\log 2$ & 0.770742 & 420 & 113 & 197 & 28 & 0 & 0.807031 \\
 420 & 419 & 419 & 2 & 0 & 0.772085 & 420 & 149 & 389 & 4 & 0 & 0.807209 \\
 420 & 281 & 281 & 140 & $(\log 3)/2$ & 0.779470 & 420 & 103 & 367 & 6 & 0 & 0.807284 \\
 420 & 253 & 337 & 84 & $(\log 5)/4$ & 0.788271 & 420 & 223 & 307 & 6 & 0 & 0.807302 \\
 420 & 61 & 241 & 60 & $(\log 7)/6$ & 0.788920 & 420 & 83 & 167 & 2 & 0 & 0.807505 \\
 420 & 181 & 181 & 60 & $(\log 7)/6$ & 0.789192 & 420 & 151 & 331 & 30 & 0 & 0.809031 \\
 420 & 17 & 173 & 4 & 0 & 0.795603 & 420 & 59 & 299 & 2 & 0 & 0.809639 \\
 420 & 47 & 143 & 2 & 0 & 0.796173 & 420 & 137 & 233 & 4 & 0 & 0.809647 \\
 420 & 29 & 29 & 28 & 0 & 0.796943 & 420 & 139 & 139 & 6 & 0 & 0.810290 \\
 420 & 13 & 97 & 12 & 0 & 0.797669 & 420 & 73 & 397 & 12 & 0 & 0.811004 \\
 420 & 187 & 283 & 6 & 0 & 0.797855 & 420 & 157 & 313 & 12 & 0 & 0.811197 \\
 420 & 53 & 317 & 4 & 0 & 0.798207 & 420 & 251 & 251 & 10 & 0 & 0.811557 \\
 420 & 11 & 191 & 10 & 0 & 0.798316 & 420 & 349 & 349 & 12 & 0 & 0.811706 \\
 420 & 107 & 263 & 2 & 0 & 0.798691 & 420 & 323 & 407 & 14 & 0 & 0.811752 \\
 420 & 41 & 41 & 20 & 0 & 0.800067 & 420 & 179 & 359 & 2 & 0 & 0.811765 \\
 420 & 19 & 199 & 6 & 0 & 0.800937 & 420 & 229 & 409 & 12 & 0 & 0.811776 \\
 420 & 43 & 127 & 42 & 0 & 0.801609 & 420 & 131 & 311 & 10 & 0 & 0.811913 \\
 420 & 23 & 347 & 2 & 0 & 0.802681 & 420 & 277 & 373 & 12 & 0 & 0.812052 \\
 420 & 37 & 193 & 12 & 0 & 0.803757 & 420 & 239 & 239 & 14 & 0 & 0.812215 \\
 420 & 79 & 319 & 6 & 0 & 0.804798 & 420 & 247 & 403 & 6 & 0 & 0.812215 \\
 420 & 89 & 269 & 4 & 0 & 0.804836 & 420 & 227 & 383 & 2 & 0 & 0.812777 \\
 420 & 101 & 341 & 20 & 0 & 0.805089 & 420 & 221 & 401 & 20 & 0 & 0.813594 \\
 420 & 71 & 71 & 70 & 0 & 0.805123 & 420 & 293 & 377 & 4 & 0 & 0.813793 \\
 420 & 67 & 163 & 6 & 0 & 0.805196 & 420 & 379 & 379 & 42 & 0 & 0.813818 \\
 420 & 31 & 271 & 30 & 0 & 0.806076 & 420 & 209 & 209 & 4 & 0 & 0.815037 \\
 420 & 257 & 353 & 4 & 0 & 0.806638 & 420 & 391 & 391 & 30 & 0 & 0.815604 \\
 \hline
\end{tabular}
\label{420 data}
\end{table}

Finally we investigate a highly composite modulus $q$ to witness the effect of the term $K_q(a-1) = \Lambda(q/(q,a-1))/\phi(q/(q,a-1)) -{\Lambda(q)}/{\phi(q)}$ on the size of $\Delta(q;a,1)$. This expression vanishes unless $a-1$ has such a large factor in common with $q$ that the quotient $q/(q,a-1)$ is a prime power. Therefore we see a larger value of $\Delta(q;a,1)$, and hence expect to see a smaller value of $\delta(q;a,1)$, when $q/(q,a-1)$ is a small prime, for example when $a=\frac q2+1$.

Table~\ref {420 data} confirms this observation with the modulus $q=420$. Of the six smallest densities $\delta(420;a,1)$, five of them correspond to the residue classes $a$ (and their inverses) for which $q/(q,a-1)$ is a prime power; the sixth corresponds to $a\equiv-1\mod{420}$, echoing the effect already seen for $q=163$. Moreover, the ordering of these first six densities are exactly as predicted: even the battle for smallest density between $a\equiv-1\mod{420}$ and $a=420/2-1$ is appropriate, since both residue classes cause an increase in $\Delta(420;a,1)$ of size exactly $\log 2$. (Since 420 is divisible by the four smallest primes, the largest effect that the $\Lambda(a)/a$ term could have on $\Delta(q;a,b)$ is $(\log 11)/11$, and so these effects are not nearly as large.) The magnitude of this effect is quite significant: note that the difference between the first and seventh-smallest values of $\delta(420;a,1)$ (from $a=211$ to $a=17$) is larger than the spread of the largest 46 values (from $a=17$ to $a=391$).

\subsection{The predictability of the relative sizes of densities}
\label{predictability section}

The specificity of our asymptotic formulas to this point suggests comparing, for fixed integers $a_1$ and $a_2$, the densities $\delta(q;a_1,1)$ and $\delta(q;a_2,1)$ as $q$ runs through all moduli for which both $a_1$ and $a_2$ are nonsquares. (We have already seen that every density is equal to one of the form $\delta(q;a,1)$.) Theorem~\ref {fix a and b theorem}, which we will derive shortly from Corollary~\ref{partial order cor},  is a statement about exactly this sort of comparison.

In fact we can investigate even more general families of race games: fix rwo rational numbers $r$ and $s$, and consider the family of densities $\delta(q;r+sq,1)$ as $q$ varies.  We need $r+sq$ to be an integer and relatively prime to $q$ for this density to be sensible; we further desire $r+sq$ to be a nonsquare\mod q, or else $\delta(q;r+sq,1)$ simply equals~$\frac12$. Therefore, we define the set of qualified moduli
\[
  Q(r,s) = \{ q\in\N\colon r+sq\in\Z,\, (r+sq,q)=1;\, \text{there are no solutions to } x^2\equiv r+sq\mod q\}.
\]
(Note that translating $s$ by an integer does not change the residue class of $r+sq\mod q$, so one could restrict $s$ to the interval $[0,1)$ without losing generality if desired.)

It turns out that every pair $(r,s)$ of rational numbers can be assigned a ``rating'' $R(r,s)$ that dictates how the densities in the family $\delta(q;r+sq,1)$ compare to other densities in similar families.

\begin{definition}
\label{rating definition}
Define a rating function $R(r,s)$ as follows:
\begin{itemize}
  \item Suppose that the denominator of $s$ is a prime power $p^k$ ($k\ge1$).
  \begin{itemize}
    \item If $r$ is a power $p^j$ of the same prime, then $R(r,s) = (\log p)/\phi(p^{j+k})$.
    \item If $r=1$ or $r=1/p^j$ for some integer $1\le j<k$, then $R(r,s) = (\log p)/\phi(p^k)$.
    \item If $r=1/p^k$, then $R(r,s) = (\log p)/p^k$.
    \item Otherwise $R(r,s)=0$.
  \end{itemize}
  \item Suppose that $s$ is an integer.
  \begin{itemize}
    \item If $r=-1$, then $R(r,s) = \log2$.
    \item If $r$ is a prime power $p^j$ ($j\ge1$), then $R(r,s) = (\log p)/p^j$.
    \item Otherwise $R(r,s)=0$.
  \end{itemize}
  \item $R(r,s)=0$ for all other values of $s$.
\qedef
\end{itemize}
\end{definition}

\begin{theorem}
\label{Delta in terms of rating theorem}
Let $\Delta(q;a,b)$ be defined as in equation~\eqref{Delta def}. For fixed rational numbers $r$ and $s$,
\[
\Delta(q;r+sq,1) = R(r,s) + O_{r,s}\bigg( \frac{\log q}q \bigg)
\]
as $q$ tends to infinity within the set $Q(r,s)$.
\end{theorem}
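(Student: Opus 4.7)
The plan is to expand $\Delta(q; r+sq, 1)$ into its five defining terms and analyze each for $a = r+sq$, $b = 1$, identifying the single term that produces the rating $R(r,s)$ and showing the other four contribute $O_{r,s}(\log q/q)$. The argument proceeds by cases mirroring Definition~\ref{rating definition}: whether $s$ is an integer or has denominator a prime power $p^k$, with further subcases on $r$.

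The main term matching $R(r,s)$ is identified case by case. When $s$ is an integer: $R(-1,s) = \log 2$ comes from $\iota_q(-(r+sq))\log 2$ (since $r+sq \equiv -1 \pmod q$), and $R(p^j, s) = \log p/p^j$ comes from $\Lambda(r_1)/r_1$ (since $r_1 = r$ for $q > |r|+1$). When $s = u/p^k$, the constraint $r+sq \in \mathbb Z$ combined with $\gcd(r+sq, q) = 1$ forces $p^k \parallel q$ (or, in the $r = 1/p^k$ subcase, $p \nmid q$ with $uq \equiv -1 \pmod{p^k}$), and direct congruence computations on $a \pmod{q/p^k}$ identify the rating term: for $r = p^j$ one obtains $e(q;p,a^{-1}) = j$ exactly and $h(q;p,a^{-1}) = \log p/\phi(p^{j+k})$; for $r = 1$ one has $q/(q,a-1) = p^k$ exactly and $K_q(a-1) = \log p/\phi(p^k)$; for $r = 1/p^j$ with $1 \le j < k$ the structure $a = (1+uq')/p^j$ gives $h(q;p,a) = \log p/\phi(p^k)$; for $r = 1/p^k$ the equation $a \cdot p^k = 1+uq$ forces $r_2 = p^k$ and $\Lambda(r_2)/r_2 = \log p/p^k$.

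The remaining terms are shown to be $O_{r,s}(\log q/q)$. The $K_q$ and $\iota_q$ terms vanish for $q$ large outside their rating subcases (with a $-\Lambda(q)/\phi(q) = O(\log q/q)$ correction when $q$ happens to be a prime power). The term $\Lambda(r_1)/r_1$ is $O_{r,s}(\log q/q)$ because $r_1 = a \bmod q$ either is of magnitude comparable to $q$ (when $s \ne 0$) or takes a fixed value with $\Lambda(r_1) = 0$ (the rating subcases being excluded). The term $\Lambda(r_2)/r_2$ is controlled via the crude bound $r_2 \ge (q+1)/|a|$ from $a r_2 \equiv 1 \pmod q$, sharpened by CRT in each subcase to $r_2 \ge c(r,s) q$ (for instance $r_2 \ge q/p^{j+k}$ when $r = p^j,\ s = u/p^k$); the finitely many exceptional $q \in Q(r,s)$ with unusually small $r_2$ are absorbed into the implicit $O$-constant.

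The principal obstacle is the uniform control of $H(q; a, 1) = \sum_{p' \mid q}(h(q;p',a) + h(q;p',a^{-1}))$, since $q$ may have arbitrarily many prime divisors, each contributing to the sum. For every prime $p' \mid q$ not matching the rating's special prime, the key claim is that $(p')^{e} \ge q/((p')^{\nu'} |r|)$: writing $q' = q/(p')^{\nu'}$ and $\rho = a^{\mp 1} \bmod q'$, the identity $a \equiv r \pmod{q'}$ (which follows from $(p')^{\nu'} \mid sq$ in both the $s$-integer case and the $s = u/p^k$ case with $p' \ne p$) gives $\rho \equiv r^{\pm 1} \pmod{q'}$, and $\rho = (p')^c$ with $(p')^c < q'/|r|$ would force $q' \mid (p')^c r^{\pm 1} - 1$, contradicting $q' > |r| (p')^c$. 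This yields $h(q;p',\cdot) = O_{r,s}(\log p' / q)$ per prime, and summation via $\sum_{p' \mid q} \log p' \le \log q$ delivers the required $O_{r,s}(\log q/q)$.
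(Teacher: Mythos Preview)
Your approach matches the paper's: expand $\Delta(q;r+sq,1)$ into its five summands and, case by case through Definition~\ref{rating definition}, identify the single term yielding $R(r,s)$ while bounding the others by $O_{r,s}((\log q)/q)$. The paper packages the same analysis into a sequence of preparatory lemmas (Lemmas~\ref{iota lemma}--\ref{small h lemma} and Corollary~\ref{big h cor}), but the underlying mechanisms---the divisibility constraints forcing $r_1,r_2\gg_{r,s}q$ outside the rating subcases, and the lower bounds on $(p')^{e(q;p',\cdot)}$ controlling each $h$-summand---are the ones you describe.

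There is, however, a slip in your treatment of the $H$-term. For $p'\mid q$ with $p'\ne p$ you assert $a\equiv r\pmod{q'}$ where $q'=q/(p')^{\nu'}$, justified by ``$(p')^{\nu'}\mid sq$''. But that divisibility only gives a congruence modulo $(p')^{\nu'}$, not modulo $q'$; and in fact $q'\nmid sq$ when $s=u/p^k$, since $p^k\parallel q$ forces $v_p(q')=k$ while $v_p(sq)=v_p(uq/p^k)=0$. The repair is to reduce modulo $q'/p^k$ instead (where the $p$-part vanishes): one then has $a\equiv r\pmod{q'/p^k}$, and $(p')^e\equiv r^{\pm1}\pmod{q'/p^k}$ with $p'\ne p$ forces $(p')^e\gg_{r,s} q'/p^k$, the extra factor $p^k$ being absorbed into the $O_{r,s}$ constant. (A parallel adjustment is needed in the subcases $r=1/p^j$ with $1\le j<k$, where $r$ is not an integer, $sq$ is not an integer, and ``$a\equiv r\pmod{q'}$'' is not even well-defined once $p\mid q'$; here one works modulo $q'/p^{k-j}$ using $ap^j\equiv1$.) You should also explicitly dispose of the \emph{other} $h$-summand at the special prime itself---for instance $h(q;p,a)$ in the $r=p^j$ case---since your ``$p'\ne p$'' clause skips it; this follows from $p^{e+j}\equiv1\pmod{q/p^k}$, which forces $p^e\ge q/p^{j+k}$.
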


We will be able to prove this theorem at the end of the section; first, however, we note an interesting corollary.

\begin{cor}
\label{partial order cor}
Assume GRH and LI. If $r_1,s_1,r_2,s_2$ are rational numbers such that $R(r_1,s_1) > R(r_2,s_2)$, then
  \[
    \delta(q;r_1+s_1q,1) < \delta(q;r_2+s_2q,1) \text{ for all but finitely many } q\in Q(r_1,s_1) \cap Q(r_2,s_2).
  \]
\end{cor}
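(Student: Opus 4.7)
The plan is to apply Corollary~\ref{isolate ab contribution cor} to both density values in question and then use Theorem~\ref{Delta in terms of rating theorem} to convert the resulting $\Delta$ quantities into the rating numbers. For any $q\ge 43$ with $q \in Q(r_1,s_1) \cap Q(r_2,s_2)$, Corollary~\ref{isolate ab contribution cor} gives
\[
\delta(q;r_i+s_iq,1) = \frac12 + \frac{\rho(q)}{2\sqrt{\pi\phi(q)\L(q)}} \bigg( 1 - \frac{\Delta(q;r_i+s_iq,1)}{2\L(q)} + O\bigg( \frac1{\log^2 q} \bigg) \bigg)
\]
for $i=1,2$. Since the common factor $\rho(q)/(2\sqrt{\pi\phi(q)\L(q)})$ is strictly positive, subtracting the two formulas reduces the claim to showing that the bracketed difference
\[
-\frac{\Delta(q;r_1+s_1q,1) - \Delta(q;r_2+s_2q,1)}{2\L(q)} + O\bigg( \frac1{\log^2 q} \bigg)
\]
is strictly negative for all sufficiently large $q$ in $Q(r_1,s_1) \cap Q(r_2,s_2)$.

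Next I would apply Theorem~\ref{Delta in terms of rating theorem} twice, yielding
\[
\Delta(q;r_1+s_1q,1) - \Delta(q;r_2+s_2q,1) = \big(R(r_1,s_1) - R(r_2,s_2)\big) + O_{r_1,s_1,r_2,s_2}\bigg( \frac{\log q}{q} \bigg).
\]
Because the hypothesis $R(r_1,s_1) > R(r_2,s_2)$ guarantees that $R(r_1,s_1) - R(r_2,s_2)$ is a strictly positive constant depending only on the fixed data $(r_i,s_i)$, and because $\L(q) \sim \log q$ by the remark following Definition~\ref{iota and Lq and Rqn def}, the leading contribution to the bracketed expression above is
\[
-\frac{R(r_1,s_1) - R(r_2,s_2)}{2\L(q)},
\]
which is negative and of exact order $1/\log q$. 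The remaining error contributions are of size $O(1/\log^2 q)$ from Corollary~\ref{isolate ab contribution cor} and $O((\log q)/(q\L(q)))$ from Theorem~\ref{Delta in terms of rating theorem}, both of which are negligible compared to the leading $1/\log q$ term once $q$ is taken sufficiently large.

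I would therefore conclude that for all but finitely many $q \in Q(r_1,s_1) \cap Q(r_2,s_2)$, the bracketed expression (and hence the full difference $\delta(q;r_1+s_1q,1) - \delta(q;r_2+s_2q,1)$) is strictly negative. No substantive obstacle is anticipated, since all the analytic work is already encapsulated in the two cited results; the only care required is verifying that the quantitative savings of $1/\log q$ in the main term genuinely dominates both error sources, which follows at once from the explicit orders of magnitude recorded above. The only mildly delicate point is that one must also check that the intersection $Q(r_1,s_1) \cap Q(r_2,s_2)$ is non-trivial (i.e.\ infinite) for the statement to be meaningful, but this is a matter of congruence conditions and is unrelated to the main analytic content.
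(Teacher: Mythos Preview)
Your proposal is correct and follows essentially the same approach as the paper: both combine Corollary~\ref{isolate ab contribution cor} with Theorem~\ref{Delta in terms of rating theorem}, factor out the common positive quantity $\rho(q)/(2\sqrt{\pi\phi(q)\L(q)})$, and observe that the resulting bracketed difference is $-(R(r_1,s_1)-R(r_2,s_2))/(2\L(q)) + O(1/\log^2 q)$, which is eventually negative. Your aside about the intersection $Q(r_1,s_1)\cap Q(r_2,s_2)$ being infinite is unnecessary, since the conclusion ``for all but finitely many $q$'' is vacuously true if that intersection is finite.
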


\begin{proof}
We may assume that $q\ge43$. Inserting the conclusion of Theorem~\ref {Delta in terms of rating theorem} into the formula for $\delta(q;a,b)$ in Corollary~\ref{isolate ab contribution cor}, we obtain
\begin{equation}
\label{yields strange normalization}
\delta(q;r+sq,1) = \frac12 + \frac{\rho(q)}{2\sqrt{\pi\phi(q)\L(q)}} \bigg( 1 - \frac{R(r,s)}{2\L(q)} + O\bigg( \frac1{\log^2q} \bigg) \bigg)
\end{equation}
for any $q\in Q(r,s)$. Therefore for all $q\in Q(r_1,s_1) \cap Q(r_2,s_2)$,
\[
\delta(q;r_1+s_1q,1) - \delta(q;r_2+s_2q,1) = \bigg( \frac{-R(r_1,s_1)+R(r_2,s_2)}{2\L(q)} + O\bigg( \frac1{\log^2q} \bigg) \bigg) \frac{\rho(q)}{2\sqrt{\pi\phi(q)\L(q)}}.
\]
Since the constant $-R(r_1,s_1)+R(r_2,s_2)$ is negative by hypothesis, we see that $\delta(q;r_1+s_1q,1) - \delta(q;r_2+s_2q,1)$ is negative when $q$ is sufficiently large in terms of $r_1$, $s_1$, $r_2$, and $s_2$.
\end{proof}

Notice, from the part of Definition~\ref {rating definition} where $s$ is an integer, that Theorem~\ref {fix a and b theorem} is precisely the special case of Corollary~\ref {partial order cor} where $s_1=s_2=0$. Therefore we have reduced Theorem~\ref {fix a and b theorem} to proving Theorem~\ref {Delta in terms of rating theorem}.

\begin{figure}[b]
\caption{Normalized densities $\delta(q;a,1)$ for primes $q$, using the normalization~\eqref {referred to in figure caption}}
\includegraphics[width=6.5in]{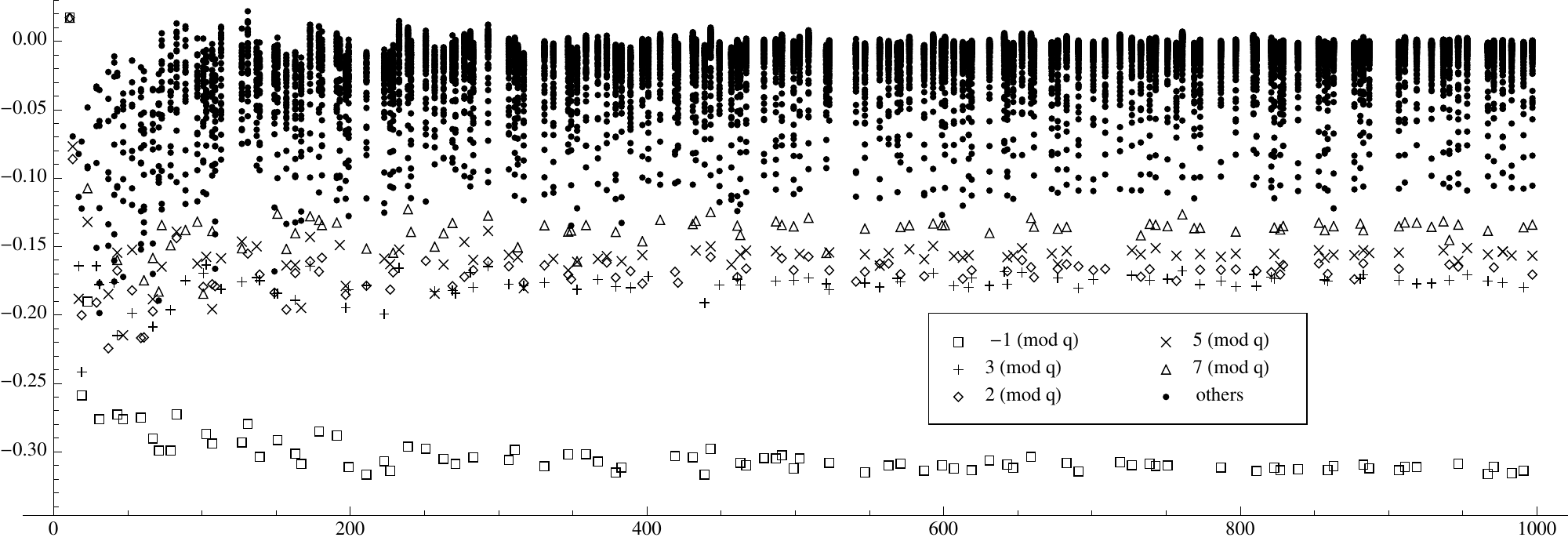}
\label{littleshapes}
\end{figure}

Theorem~\ref {fix a and b theorem} itself is illustrated in Figure~\ref {littleshapes}, using the computed densities for prime moduli to most clearly observe the relevant phenomenon. For each prime $q$ up to 1000, and for every nonsquare $a\mod q$, the point
\begin{multline}
\label{referred to in figure caption}
\bigg( q, \frac{2\sqrt{\pi\phi(q)\L(q)^3}}{\rho(q)} \big( \delta(q;a,1) - \tfrac12 \big) - {\L(q)} \bigg) \\
= \bigg( q, \sqrt{\pi(q-1)} \bigg( \log \frac q{2\pi e^{\gamma_0}} \bigg)^{3/2} \big( \delta(q;a,1) - \tfrac12 \big) - \log \frac q{2\pi e^{\gamma_0}} \bigg)
\end{multline}
has been plotted; the values corresponding to certain residue classes have been emphasized with the listed symbols. The motivation for the seemingly strange (though order-preserving) normalization in the second coordinate is equation~\eqref{yields strange normalization}, which shows that the value in the second coordinate is $-R(a,0)/2 + O(1/\log q)$. In other words, on the vertical axis the value 0 corresponds to $\delta(q;a,b)$ being exactly the ``default'' value $\frac12 + \rho(q)/2\sqrt{\pi\phi(q)\L(q)}$, the value $-0.05$ corresponds to $\delta(q;a,b)$ being less than the default value by $0.05 \rho(q)/2\sqrt{\pi\phi(q)\L(q)^3}$, and so on. We clearly see in Figure~\ref {littleshapes} the normalized values corresponding to $\delta(q;-1,1)$, $\delta(q;3,1)$, $\delta(q;2,1)$, and so on sorting themselves out into rows converging on the values $-\frac12\log2$, $-\frac16\log3$, $-\frac14\log2$, and so on.

We need to establish several lemmas before we can prove Theorem~\ref {Delta in terms of rating theorem}. The recurring theme in the following analysis is that solutions to linear congruences\mod q with fixed coefficients must be at least a constant times $q$ in size, save for specific exceptions that can be catalogued.

\begin{lemma}
\label{iota lemma}
Let $r$ and $s$ be rational numbers. If $r\ne-1$ or if $s$ is not an integer, then there are only finitely many positive integers $q$ such that $r+sq$ is an integer and $r+sq\equiv-1\mod q$.
\end{lemma}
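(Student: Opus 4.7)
The plan is to reformulate the condition $r + sq \equiv -1 \pmod q$ as a divisibility statement and then split into cases according to whether $s$ is an integer. Since we are assuming that $r+sq\in\Z$, the congruence $r+sq\equiv-1\pmod q$ is equivalent to $q\mid (r+1)+sq$.

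First I would dispose of the case where $s\in\Z$. In that case $q\mid sq$ holds trivially, so the condition collapses to $q\mid r+1$. By hypothesis $r\ne-1$ (this is the only way the ``$r\ne-1$ or $s\notin\Z$'' alternative can be satisfied when $s\in\Z$), so $r+1$ is a nonzero integer, and it has only finitely many positive divisors.

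Next I would handle $s\notin\Z$. Write $s=u/v$ in lowest terms with $v>1$. For $sq$ to be rational of bounded denominator and $r+sq$ to be an integer, I would first observe that $sq\in\Z$ iff $v\mid q$; writing $q=vm$ gives $sq=um$, and then $r+sq\in\Z$ forces $r\in\Z$. Now the condition $q\mid(r+1)+sq$ reads $vm\mid(r+1)+um$, which in particular forces $m\mid r+1$. If $r\ne-1$, then $r+1\ne0$ has only finitely many positive divisors $m$, and each yields only one candidate $q=vm$; so only finitely many $q$ can occur. If on the other hand $r=-1$, the divisibility $vm\mid um$ reduces to $v\mid u$, which contradicts $\gcd(u,v)=1$ and $v>1$; so in fact no $q$ occurs.

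The main obstacle here is merely organizational bookkeeping—separating the rational number $s$ from its denominator cleanly, and making sure that the hypothesis $r+sq\in\Z$ is used to deduce $v\mid q$ and $r\in\Z$ in the non-integer case. No analytic input is needed; the lemma is entirely elementary once the condition is written in the form $q\mid(r+1)+sq$.
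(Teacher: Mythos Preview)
Your Case~1 is essentially fine, though you should say explicitly that $s\in\Z$ together with $r+sq\in\Z$ forces $r\in\Z$; otherwise the claim ``$r+1$ is a nonzero integer'' is not justified.

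Your Case~2 has a real gap. You write ``$sq\in\Z$ iff $v\mid q$; writing $q=vm$ gives $sq=um$, and then $r+sq\in\Z$ forces $r\in\Z$'', but you never establish that $sq\in\Z$ (equivalently $v\mid q$) actually holds. The hypothesis is only that $r+sq\in\Z$, and when $r\notin\Z$ this does \emph{not} force $sq\in\Z$. Concretely, take $r=s=\tfrac12$ and $q=3$: then $r+sq=2\in\Z$ and $2\equiv-1\pmod 3$, yet $v=2\nmid 3$. Your argument never accounts for this $q$, so it fails to show that \emph{all} admissible $q$ lie in a finite set.

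The paper avoids this by clearing denominators in one stroke rather than splitting on whether $s\in\Z$: writing $r=a/b$ and $s=c/d$, multiplying the congruence $r+sq\equiv-1\pmod q$ through by $d$ shows that $q$ must divide the fixed quantity $\frac{ad}{b}+d=d(r+1)$. If this is nonzero (equivalently $r\ne-1$), only finitely many $q$ occur; if it is zero then $r=-1$ and the congruence reduces to $sq\equiv0\pmod q$, which forces $s\in\Z$. Your approach can be salvaged along the same lines by multiplying through by a common denominator of $r$ and $s$ rather than assuming $v\mid q$.
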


\begin{proof}
Write $r=\frac ab$ and $s=\frac cd$. The congruence $\frac ab+\frac cdq \equiv-1\mod q$ implies that  $\frac{ad}b+cq\equiv-d\mod q$, which means that $q$ must divide $\frac{ad}b+d$. This only happens for finitely many $q$ unless $\frac{ad}b+d=0$, which is equivalent (since $d\ne0$) to $\frac ab=-1$. In this case the congruence is $\frac cdq\equiv 0\mod q$, which can happen only if $\frac cd$ is an integer.
\end{proof}

\begin{lemma}
\label{K bound lemma}
Let $r$ and $s$ be rational numbers. Suppose that $q$ is a positive
integer such that $r+sq$ is an integer. If $r\ne1$, then
$K_q(r+sq-1) \ll_{r,s} (\log q)/q$.
\end{lemma}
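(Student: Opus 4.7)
The plan is to show that $(q, r+sq-1)$ remains bounded as $q$ varies over integers making $r+sq$ an integer, which forces $q/(q,r+sq-1)$ to grow linearly in $q$; the bound on $K_q$ will then be essentially automatic from the elementary behavior of $\Lambda/\phi$ on prime powers.

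First I would choose a positive integer $N$ such that both $Nr$ and $Ns$ are integers---for instance, the least common multiple of the denominators of $r$ and $s$ written in lowest terms. Since $r+sq-1$ is an integer by hypothesis and $(Ns)q$ is an integer multiple of $q$, we have
\[
N(r+sq-1) = N(r-1) + (Ns)q \equiv N(r-1) \pmod{q}.
\]
Therefore $(q, r+sq-1)$ divides $(q, N(r+sq-1)) = (q, N(r-1))$, which in turn divides $|N(r-1)|$. Because $r \ne 1$, the integer $N(r-1)$ is nonzero, and so $(q, r+sq-1) \le C$ for the constant $C := |N(r-1)|$ depending only on $r$ and $s$. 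Setting $M := q/(q, r+sq-1)$, this gives $M \ge q/C$.

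Next I would examine the two terms of $K_q(r+sq-1) = \Lambda(M)/\phi(M) - \Lambda(q)/\phi(q)$. For any prime power $n=p^e \ge 2$, the elementary estimate $\Lambda(n)/\phi(n) = (\log p)/\bigl(p^{e-1}(p-1)\bigr) \le 2(\log n)/n$ holds, since $p^{e-1}(p-1) \ge p^e/2$; for $n$ not a prime power, $\Lambda(n)=0$. Applying this with $n=M$, which satisfies $M \ge q/C$, yields $\Lambda(M)/\phi(M) \ll_{r,s} (\log q)/q$; applying it with $n=q$ yields $\Lambda(q)/\phi(q) \le 2(\log q)/q$. Combining gives the claimed bound.

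The argument is essentially routine; the only substantive point is the observation that $(q, r+sq-1)$ is bounded independently of $q$, which crucially uses both that $Nsq$ is a multiple of $q$ and that $r \ne 1$ (so that $N(r-1)$ is a nonzero integer to divide into).
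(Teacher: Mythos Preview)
Your proof is correct and follows essentially the same approach as the paper's: both arguments clear denominators to show that $(q,r+sq-1)$ divides a fixed nonzero integer depending only on $r$ and $s$, then use the elementary bound $\Lambda(t)/\phi(t)\ll(\log t)/t$ (valid since $\phi(t)\ge t/2$ for prime powers) together with the monotonicity of $(\log t)/t$. The only cosmetic difference is that the paper writes $r=a/b$, $s=c/d$ and multiplies by $bd$, whereas you use a single common denominator $N$; the substance is identical.
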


\begin{proof}
We first note that $\Lambda(t)/\phi(t) \ll (\log t)/t$ for all
positive integers $t$: if $\Lambda(t)$ is nonzero, then $t$ is a
prime power, which means $\phi(t) \ge t/2$. Therefore it suffices to
show that $(q,r+sq-1)$ is bounded, since then $q/(q,r+sq-1)
\gg_{r,s} q$ and consequently $K_q(r+sq-1) \ll_{r,s} (\log q)/q$
since $(\log t)/t$ is decreasing for $t\ge3$. But writing $r=\frac
ab$ and $s=\frac cd$, we have
\[
\big(q,\tfrac ab+\tfrac cdq-1\big) \mid (q,d(a-b)+bcq) = (q,d(a-b)) \mid d(a-b).
\]
Since $r\ne 1$, we see that $d(a-b)$ is nonzero, and hence $(q,r+sq-1) \le d|a-b| \ll_{r,s}1$ as required.
\end{proof}

\begin{lemma}
Let $r$ and $s$ be rational numbers. Assume that $r$ is not a positive integer or $s$ is not an integer. If $q$ and $y$ are positive integers such that $r+sq$ is an integer and $y\equiv r+sq\mod q$, then $y\gg_{r,s} q$.
\label{I'm big lemma}
\end{lemma}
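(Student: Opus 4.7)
The plan is to identify the smallest positive integer $y_0$ in the residue class of $r+sq\mod q$ and show that $y_0 \gg_{r,s} q$; the claim for arbitrary $y$ satisfying $y\equiv r+sq\mod q$ then follows since $y \ge y_0$. The argument splits into two cases according to whether $s$ is an integer.

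If $s$ is an integer, then the requirement that $r+sq$ be an integer forces $r$ itself to be an integer, and the hypothesis of the lemma then forces $r\le 0$. Consequently $r+sq \equiv r\mod q$, so the smallest positive representative of this class is $y_0 = q+r$ whenever $q > -r$; in particular $y_0 \ge q/2$ as soon as $q \ge 2(|r|+1)$, which yields $y \ge y_0 \gg_r q$.

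If $s$ is not an integer, let $\{s\}\in(0,1)$ denote its fractional part and set $k=-\lfloor s\rfloor$, so that $s+k=\{s\}$. Then $r+(s+k)q = r+\{s\}q$ differs from $r+sq$ by the integer multiple $kq$ of $q$, hence is itself an integer lying in the same residue class modulo $q$. For $q > |r|/\min(\{s\},1-\{s\})$ this quantity lies in $(0,q]$ and therefore equals $y_0$, so that $y_0 = r+\{s\}q \ge \tfrac{1}{2}\{s\}q$ once $q$ is sufficiently large in terms of $r$ and $s$; hence $y \ge y_0 \gg_{r,s} q$. For the finitely many small values of $q$ not covered by these asymptotic estimates, the bound $y \ge 1 \gg_{r,s} q$ is trivial.

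The argument is essentially a routine case analysis, and there is no serious obstacle. The only point requiring a little care is the second case, where we use that the fixed fractional part $\{s\}$ is bounded away from both $0$ and $1$ by a positive constant depending only on $s$; this is precisely what allows $r+\{s\}q$ to sit inside $(0,q]$ once $q$ is large.
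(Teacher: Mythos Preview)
The proposal is correct and follows essentially the same approach as the paper: split into cases according to whether $s$ is an integer, and in the non-integer case use that $sq$ is bounded away from multiples of $q$ by a constant times $q$ (you phrase this via the fractional part $\{s\}$, the paper via the denominator $d$ of $s$, but these are equivalent). Your identification of the least positive residue $y_0$ is slightly more explicit than the paper's reverse-triangle-inequality argument, but the content is the same.
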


\begin{proof}
Suppose first that $s$ is not an integer, and write $s=c/d$ where $d>1$. Then $s$ is at least $1/d$ away from the nearest integer, so that $sq$ is at least $q/d$ away from the nearest multiple of $q$. Since $y = r+sq-mq$ for some integer $m$, we have $y \ge |sq - mq| - |r| \ge q/d - |r| \gg_{r,s} q$ when $q$ is sufficiently large in terms of $r$ and~$s$.

On the other hand, if $s$ is an integer, then $r$ must also be an integer. If $r$ is nonpositive, then the least integer $y$ congruent to $r\mod q$ is $q-|r| \gg_r q$ when $q$ is sufficiently large in terms of~$r$.
\end{proof}

\begin{lemma}
Let $r$ and $s$ be rational numbers. Assume that either $r$ is not the reciprocal of a positive integer or that $\frac sr$ is not an integer. Suppose that positive integers $q$ and $y$ are given such that $r+sq$ is an integer and $(r+sq)y \equiv 1\mod q$. Then $y\gg_{r,s} q$.
\label{inverse is big lemma}
\end{lemma}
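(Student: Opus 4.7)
The plan is to mirror the strategy of Lemma~\ref{I'm big lemma}, the difference being that the smallness of~$y$ now forces $(r+sq)y$ to be close to~$1$ rather than close to a multiple of~$q$. First I would rewrite the congruence $(r+sq)y\equiv 1\mod q$ in the explicit form
\[
  (r+sq)y - 1 = mq,
\]
where $m$ is automatically an integer since $r+sq$ is. Expanding and collecting the ``rational part'' on the left yields the key identity
\[
  ry - 1 = (m - sy)\, q.
\]

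Writing $s = c/d$ in lowest terms with $d>0$, the quantity $m - sy = (dm - cy)/d$ has integer numerator and denominator dividing~$d$. Thus either $m - sy = 0$, or else $|m - sy| \ge 1/d$. In the second case the identity above immediately yields $|ry - 1| \ge q/d$, whence $y \gg_{r,s} q$ when $r\ne 0$ (and when $r=0$ the inequality $1 \ge q/d$ forces $q\le d$, so the bound is vacuous for large~$q$). The entire content of the lemma is therefore to exclude the degenerate case $m - sy = 0$.

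When $m - sy = 0$ the identity forces $ry = 1$, hence $y = 1/r$; for $y$ to be a positive integer this requires $r = 1/k$ for some positive integer~$k$, with $y = k$. Substituting back, $m = sy = s/r$ must also be an integer. Both conditions together are precisely the negation of the disjunctive hypothesis, so under that hypothesis $m - sy$ is never zero and the bound $y \gg_{r,s} q$ holds for every admissible~$q$.

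The main obstacle is really just bookkeeping: verifying that the negation of the hypothesis (``$r$ is the reciprocal of a positive integer \emph{and} $s/r$ is an integer'') matches exactly the unique configuration in which the degenerate equation $ry=1$ admits an integer solution compatible with an integer~$m$, and checking that the peripheral cases $r<0$, $r=0$, and $r = 1/k$ for a non-positive integer~$k$ are absorbed cleanly into the non-degenerate branch. Once these are sorted, the proof is just the observation that a nonzero rational with denominator dividing~$d$ has absolute value at least $1/d$.
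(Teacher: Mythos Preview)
Your proof is correct and is genuinely cleaner than the paper's. The paper writes $r=a/b$ and $s=c/d$ in lowest terms, deduces that $b\mid d$, sets $\delta=d/b$, and then splits into the cases $\delta=1$ and $\delta>1$; within the first case it further splits according to whether $a=-1$ or $|a|>1$, and in each sub-case it manipulates the congruence $ay\equiv b\pmod q$ directly to force $y\gg_{r,s}q$. Your approach replaces all of this case analysis with the single identity $ry-1=(m-sy)q$ and the observation that the right-hand factor $m-sy$, being a rational number expressible with denominator~$d$, is either zero or has absolute value at least $1/d$. The non-zero branch gives $|ry-1|\ge q/d$ uniformly, from which $y\gg_{r,s}q$ follows immediately for any nonzero $r$ (and the case $r=0$ forces $q\le d$, so is harmless). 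The zero branch pins down $ry=1$ and $m=s/r\in\Z$, which is exactly the excluded configuration. What your approach buys is a unified argument with essentially no case analysis; what the paper's approach buys is perhaps a more concrete feel for where the solutions to $(r+sq)y\equiv 1\pmod q$ actually lie, but at the cost of considerably more bookkeeping.
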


\begin{proof}
Write $r=\frac ab$ and $s=\frac cd$ with $(a,b)=(c,d)=1$ and $b,d>0$. We may assume that $q>2d^2$, for if $q\le 2d^2$ then $y \ge 1 \ge \frac q{2d^2} \gg_{s} q$. Note that $a\ne0$, since $0+\frac cdq = c\frac qd$ cannot be invertible modulo $q$ when $q>d$. The assumption that $r+sq$ is an integer implies that $d(r+sq) = \frac{ad}b + cq$ is also an integer; since $(a,b)=1$, this implies that $b\mid d$. Therefore we may write $d=b\delta$ for some integer $\delta$. Similarly, it must be true that $b(r+sq) = a + \frac{cq}\delta$ is an integer; since $(c,\delta) \mid (c,d) = 1$, this implies that $q$ is a multiple of $\delta$.

\medskip\noindent
{\em Case 1:}
Suppose first that $\delta=1$. If $a=1$, then $r=\frac1b$ would be the reciprocal of a positive integer and $\frac sr = \frac{c/b}{1/b}$ would be an integer, contrary to assumption; therefore $a\ne1$. The condition $(r+sq)y\equiv 1\mod q$, when multiplied by $b$, becomes $ay\equiv b\mod q$. Now if $a=-1$, then the congruence in question is equivalent to $y\equiv-b\mod q$; since $b>0$, this implies that $y\ge q-b\gg_r q$ as desired. Therefore for the rest of Case 1, we can assume that $|a|>1$.

Since any common factor of $a$ and $q$ would consequently be a factor of $b$ as well, but $(a,b)=1$, we must have $(a,q)=1$. Thus we may choose $u$ such that $uq\equiv-1\mod a$, so that $y_0=b(uq+1)/a$ is an integer. We see by direct calculation that $y_0$ is a solution to $ay\equiv b\mod q$, and all other solutions differ from this one by a multiple of $q/(b,q)$, which is certainly a multiple of $\frac qb$. In other words, $y=q(\frac{bu}a + \frac zb) + \frac ba$ for some integer $z$. If $\frac{bu}a + \frac zb=0$ then $-z = b(\frac{bu}a + \frac zb)-z = \frac{b^2u}a$ would be an integer, but this is impossible since both $b$ and $u$ are relatively prime to $a$ (here we use $|a|\ne1$).  Therefore $\big| \frac{bu}a + \frac zb\big| \ge \frac1{|a|b}$, and so $y \ge \frac q{|a|b} - \frac b{|a|}$; since $q>2d^2=2b^2$, this gives $y \ge \frac q{2|a|b} \gg_{r,s} q$.

\medskip\noindent
{\em Case 2:}
Suppose now that $\delta>1$. The condition $(\frac ab+\frac cdq)y \equiv 1\mod q$ forces $(y,q)=1$ and so $(y,\delta)=1$ as well. Multiplying the condition by $b$ yields $ay+cy\frac q\delta \equiv b\mod q$, which we write as $\frac{cyq}\delta-qm = b -ay$ for some integer $m$. But notice that $(cy,\delta)=1$, so that $\frac{cy}\delta$ is at least $\frac 1\delta$ away from every integer (here we use $\delta>1$); therefore $\frac{cyq}\delta$ is at least $\frac q\delta$ away from the nearest multiple of $q$. Therefore $\frac q\delta \le\big |\frac{cyq}\delta-qm\big| = |b-ay| \le b + |a|y$, and hence $y \ge (q-b\delta)/|a|\delta$; since $q>2d=2b\delta$, this gives $y \ge \frac q{2|a|\delta} \gg_{r,s} q$.
\end{proof}

\begin{cor}
\label{Lambda terms big cor}
Let $r$ and $s$ be rational numbers, and let $q$ be a positive integer such that $r+sq$ is an integer.
\begin{enumerate}
\item Assume that $r$ is not a positive integer or $s$ is not an integer. Suppose that $y$ is a positive integer such that $y \equiv r+sq\mod q$. Then $\Lambda(y)/y \ll_{r,s} (\log q)/q$.
\item Assume that either $r$ is not the reciprocal of a positive integer or that $\frac sr$ is not an integer. Suppose that $y$ is a positive integer such that $(r+sq)y \equiv 1\mod q$. Then $\Lambda(y)/y \ll_{r,s} (\log q)/q$.
\end{enumerate}
\end{cor}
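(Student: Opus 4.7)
The proof plan is to read off both parts of the corollary as near-immediate consequences of Lemmas~\ref{I'm big lemma} and~\ref{inverse is big lemma}, which have already done the real work of showing that the relevant integer $y$ must grow linearly with $q$. Specifically, for part (a) the hypothesis matches Lemma~\ref{I'm big lemma} verbatim, so I would invoke it to obtain a constant $c=c(r,s)>0$ such that $y \ge c\,q$ whenever $q$ is sufficiently large in terms of $r$ and $s$. For part (b), the hypothesis matches Lemma~\ref{inverse is big lemma}, yielding the same lower bound $y \gg_{r,s} q$.

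Once the bound $y \gg_{r,s} q$ is in hand, I would finish by combining the trivial estimate $\Lambda(y)\le \log y$ (valid for all $y\ge 1$) with the fact that $t\mapsto (\log t)/t$ is decreasing for $t\ge e$, obtaining
\[
\frac{\Lambda(y)}{y} \le \frac{\log y}{y} \le \frac{\log(c\,q)}{c\,q} \ll_{r,s} \frac{\log q}{q}
\]
in the asymptotic regime. The finitely many small moduli $q$ excluded by the lemma's ``sufficiently large'' clause contribute no difficulty, since on any bounded range of $q$ we have $\Lambda(y)/y \le \tfrac{1}{2}$ while $(\log q)/q$ is bounded below by a positive constant, so those exceptions are absorbed into the implied constant $\ll_{r,s}$.

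There is no substantive obstacle here: all the arithmetic delicacy---namely, cataloguing precisely which pairs $(r,s)$ allow $y$ or its inverse modulo $q$ to remain small---was carried out inside Lemmas~\ref{I'm big lemma} and~\ref{inverse is big lemma}. The corollary is best viewed as a convenient repackaging of those lower bounds into the form $\Lambda(y)/y \ll_{r,s} (\log q)/q$ that will feed directly into the proof of Theorem~\ref{Delta in terms of rating theorem}, where the terms $\Lambda(r_1)/r_1$ and $\Lambda(r_2)/r_2$ appearing in the definition~\eqref{Delta def} of $\Delta(q;a,b)$ must be shown to vanish (up to the permitted error $O_{r,s}((\log q)/q)$) for all but the exceptional ``rated'' configurations isolated in Definition~\ref{rating definition}.
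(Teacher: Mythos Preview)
Your proposal is correct and follows essentially the same approach as the paper: invoke Lemmas~\ref{I'm big lemma} and~\ref{inverse is big lemma} to get $y\gg_{r,s} q$, then use $\Lambda(y)/y \le (\log y)/y$ together with the fact that $(\log t)/t$ is eventually decreasing. The paper's proof is a one-line version of exactly this argument.
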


\begin{proof}
Since $\Lambda(y)/y \le (\log y)/y$, which is a decreasing function for $y\ge3$, this follows from Lemmas~\ref{I'm big lemma} and~\ref{inverse is big lemma}.
\end{proof}

\begin{lemma}
Let $r$ and $s$ be rational numbers. Let $q$ be a positive integer such that $r+sq$ is an integer, and let $p$ be a prime such that $p^\nu \parallel q$ with $\nu\ge1$.
\renewcommand{\theenumi}{\alph{enumi}}
\begin{enumerate}
  \item Suppose that $e$ is a positive integer such that $p^e \equiv r+sq \mod{q/p^\nu}$. Then either $p^e = r$ or $p^e \gg_{r,s} q/p^\nu$.
  \item Suppose that $e$ is a positive integer such that $p^e(r+sq) \equiv 1 \mod{q/p^\nu}$. Then either $p^e = 1/r$ or $p^e \gg_{r,s} q/p^\nu$.
\end{enumerate}
\label{h term}
\end{lemma}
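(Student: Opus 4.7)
The plan is to mimic the strategy of Lemmas~\ref{I'm big lemma} and~\ref{inverse is big lemma}: rewrite each congruence as a rational Diophantine identity and then bound the resulting expression using the fact that a nonzero rational number with bounded denominator is bounded away from zero. The only new feature compared to those lemmas is that we must allow the possibility of a degenerate solution $p^e=r$ (respectively $p^e=1/r$); these two exceptional outcomes correspond precisely to the multiplicative ``special cases'' catalogued in Definition~\ref{rating definition}, so it is natural that they appear here.

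For part~(a), write $s=c/d$ in lowest terms. The assumed congruence $p^e\equiv r+sq\pmod{q/p^\nu}$ gives an integer $m$ with
\[
p^e - r \;=\; sq + m(q/p^\nu) \;=\; (q/p^\nu)(sp^\nu+m).
\]
Now $sp^\nu+m = (cp^\nu+md)/d$ is a rational number whose reduced denominator divides~$d$, so it either equals zero — in which case $p^e=r$, the first alternative — or has absolute value at least $1/d$. In the latter case $|p^e-r|\ge (q/p^\nu)/d$, hence $p^e \ge (q/p^\nu)/d - |r|$, which is $\gg_{r,s} q/p^\nu$ once $q/p^\nu$ exceeds an explicit threshold depending on $r,s$; for bounded $q/p^\nu$ the conclusion is automatic by adjusting the implied constant. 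Part~(b) proceeds identically: the congruence $p^e(r+sq)\equiv 1\pmod{q/p^\nu}$ yields $p^er-1 = (q/p^\nu)(m - p^{e+\nu}s)$ for some integer $m$, and the factor $m-p^{e+\nu}s$ either vanishes — forcing $p^er=1$, i.e., $p^e=1/r$ — or has absolute value at least $1/d$, giving $p^e\ge \bigl((q/p^\nu)/d-1\bigr)/|r| \gg_{r,s} q/p^\nu$.

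There is no genuine obstacle here; the argument is just a repackaging of the bounded-denominator trick used in the two preceding lemmas. The only care required is to check that $r\ne 0$ (which follows from the hypothesis that $r+sq$ is an integer coprime to $q$, at least for all but finitely many $q$, and any finite list of exceptions can be absorbed into the implied constant) and to keep the two possible outcomes — the exact equality $p^e=r$ or $p^e=1/r$ versus the lower bound $p^e\gg_{r,s} q/p^\nu$ — clearly separated so that they feed correctly into the proof of Theorem~\ref{Delta in terms of rating theorem}.
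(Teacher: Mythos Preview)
Your argument is correct and follows the same bounded-denominator idea as the paper. For part~(a) the paper splits into cases according to whether $sp^\nu$ is an integer, whereas you handle both cases uniformly via the single identity $p^e-r=(q/p^\nu)(sp^\nu+m)$; these are equivalent. For part~(b) the paper invokes Lemma~\ref{inverse is big lemma} (applied with $q/p^\nu$ and $s'=sp^\nu$) as a black box, while you repeat the denominator argument directly; your route is slightly more self-contained.

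One small correction: your justification for $r\ne0$ appeals to coprimality of $r+sq$ with $q$, but that hypothesis is not in the lemma statement. Fortunately the case $r=0$ is handled automatically by your own identity: if $r=0$ then $-1=(q/p^\nu)(m-p^{e+\nu}s)$ with $|m-p^{e+\nu}s|\ge 1/d$, forcing $q/p^\nu\le d$, so $p^e\ge 1\gg_{r,s} q/p^\nu$ trivially. No appeal to coprimality is needed.
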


\noindent
Notice that if $p^e = r$ in (a) then $sp^\nu$ is an integer; also, if $p^e=1/r$ in (b) then $sp^{e+\nu}$ is an integer. In both cases, it is necessary that the denominator of $s$ be a power of $p$ as well.

\begin{proof}
We may assume that $q/p^\nu$ is sufficiently large in terms of $r$ and $s$, for otherwise any positive integer is $\gg_{r,s} q/p^\nu$. We have two cases to examine.
\renewcommand{\theenumi}{\alph{enumi}}
\begin{enumerate}
  \item We are assuming that $p^e \equiv r+sq \mod{q/p^\nu}$. Suppose first that $sp^\nu$ is an integer. Then $sq$ is an integer multiple of $q/p^\nu$, and so $p^e \equiv r \mod{q/p^\nu}$. This means that either $p^e = r$ or $p^e \ge q/p^\nu + r \gg_r q/p^\nu$, since $q/p^\nu$ is sufficiently large in terms of $r$.

  On the other hand, suppose that $sp^\nu$ is not an integer. Then
  \[
    p^e \equiv r+sq = r + (sp^\nu) q/p^\nu \equiv r + (sp^\nu - \lfloor sp^\nu \rfloor) q/p^\nu \mod{q/p^\nu}.
  \]
  If the denominator of $s$ is $d$, then the difference $sp^\nu - \lfloor sp^\nu \rfloor$ is at least $\frac1d$, and therefore $p^e \ge q/dp^\nu + r \gg_{r,s} q/p^\nu$ as well, since $q/p^\nu$ is sufficiently large in terms of $r$ and $s$.
  \item We are assuming that $p^e(r+sq) \equiv 1 \mod{q/p^\nu}$. We apply Lemma~\ref {inverse is big lemma} with $q/p^\nu$ in place of $q$ and with $y=p^e$, which yields the desired lower bound $p^e \gg_{r,s} q/p^\nu$ unless $r$ is the reciprocal of a positive integer and $\frac sr$ is an integer. In this case, multiplying the assumed congruence by the integer $1/r$ gives $p^e(1+\frac sr q) \equiv 1/r \mod{q/p^\nu}$, which implies $p^e \equiv 1/r \mod{q/p^\nu}$ since $\frac sr$ is an integer. Therefore, since $q/p^\nu$ is sufficiently large in terms of $r$, either $p^e = 1/r$ or $p^e \ge q/p^\nu + 1/r > q/p^\nu$.
\end{enumerate}
\end{proof}

The next two lemmas involve the functions $h(q;p,r)$ and $H(q;a,b)$ that were defined in Definition~\ref {hqpr def}. Since we are dealing with rational numbers, we make the following clarification: when we say ``power of $p$'', we mean $p^k$ for some {\em positive} integer $k$ (so $p^2$ and $p^1$ are powers of $p$, but neither 1 nor $p^{-1}$ is).

\begin{lemma}
\label{small h lemma}
Let $r$ and $s$ be rational numbers, and suppose that $q$ is a positive integer such that $r+sq$ is an integer that is relatively prime to~$q$. Let $p$ be a prime dividing~$q$, and choose $\nu\ge1$ such that $p^\nu\parallel q$.
\renewcommand{\theenumi}{\alph{enumi}}
\begin{enumerate}
  \item If both $r$ and the denominator of $s$ are powers of $p$ (note that if the denominator of $s$ equals $p^k$, these conditions imply $\nu=k$), then
  \[
    h(q;p,(r+sq)^{-1}) = \frac{\log p}{r\phi(p^\nu)} + O_{r,s}\bigg( \frac{\log p}q \bigg);
  \]
  otherwise $h(q;p,(r+sq)^{-1}) \ll_{r,s} (\log p)/q$.
  \item If both $1/r$ and the denominator of $s$ are powers of $p$ (note that if $r=1/p^j$ and the denominator of $s$ equals $p^k$, these conditions imply $\nu = k-j$), then
  \[
    h(q;p,r+sq) = \frac{r\log p}{\phi(p^\nu)} + O_{r,s}\bigg( \frac{\log p}q \bigg),
  \]
  otherwise $h(q;p,r+sq) \ll_{r,s} (\log p)/q$.
\end{enumerate}
\end{lemma}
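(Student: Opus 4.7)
The plan is to derive both parts of the lemma from Lemma~\ref{h term}, which provides a clean dichotomy for the quantity $p^e$ appearing in Definition~\ref{hqpr def}: for the relevant exponent $e$, either $p^e$ equals a specific fixed value ($r$ in part~(a), $1/r$ in part~(b)), or else $p^e \gg_{r,s} q/p^\nu$. The generic branch of the dichotomy yields the error-term bound directly: substituting $p^e \gg_{r,s} q/p^\nu$ into Definition~\ref{hqpr def} gives
\[
h = \frac{\log p}{\phi(p^\nu)\, p^e} \ll_{r,s} \frac{(\log p)\, p^\nu}{\phi(p^\nu)\, q} \le \frac{2\log p}{q},
\]
since $p^\nu/\phi(p^\nu) \le 2$. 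The main work is to analyze the structured branch in each case.

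For part~(a), I would set $e = e(q;p,(r+sq)^{-1})$, the least positive integer with $p^e \equiv r+sq \mod{q/p^\nu}$. In the structured branch $p^e = r$, the remark following Lemma~\ref{h term} forces $r = p^j$ to be a power of $p$ and $sp^\nu \in \Z$, so the denominator of $s$ is a power $p^k$ of $p$ with $k \le \nu$. The coprimality $(r+sq, q) = 1$ then rules out $k < \nu$: otherwise $p \mid sq$, and combined with $p \mid r = p^j$ this would give $p \mid r+sq$, a contradiction since $p \mid q$. Hence $\nu = k$, so $sq = c(q/p^\nu)$ and $r+sq \equiv r \mod{q/p^\nu}$; for $q$ large enough that $q/p^\nu > p^j$, the least positive $e$ with $p^e \equiv p^j \mod{q/p^\nu}$ is $e = j$. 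Substituting $p^e = r$ into Definition~\ref{hqpr def} gives $h = (\log p)/(r\phi(p^\nu))$, with the finitely many smaller $q$ absorbed into the $O_{r,s}((\log p)/q)$ error term.

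Part~(b) is parallel. Set $e = e(q;p,r+sq)$. In the structured branch $p^e = 1/r$, the remark forces $1/r = p^j$ with $j \ge 1$ and $sp^{j+\nu} \in \Z$, so the denominator of $s$ is a power $p^k$ with $k \le j+\nu$. Writing $s = c/p^k$ with $(c,p) = 1$ and expressing $r+sq = (p^{k-j}+cq)/p^k$, a short $p$-adic analysis shows that integrality of $r+sq$ forces $k-j = \nu$ (otherwise the $p$-adic valuation of the numerator falls short of $k$), and the coprimality $(r+sq,q) = 1$ then pins down the valuation of $1+cq/p^\nu$ to be exactly $j$, confirming $\nu = k-j$. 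One verifies that the least positive $e$ with $p^e(r+sq) \equiv 1 \mod{q/p^\nu}$ is $e = j$ for $q$ sufficiently large (essentially because the multiplicative order of $p$ modulo $q/p^\nu$ eventually exceeds $j$), and substituting $p^e = 1/r$ into Definition~\ref{hqpr def} yields $h = r\log p/\phi(p^\nu)$. The main obstacle in both parts is this $p$-adic bookkeeping --- verifying that the rigid conditions $\nu = k$ in~(a) and $\nu = k-j$ in~(b) are forced by the standing hypotheses that $r+sq$ be an integer coprime to $q$; once these are in place, the stated formulas follow by direct substitution.
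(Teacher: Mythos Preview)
Your proposal is correct and follows essentially the same approach as the paper: both arguments invoke Lemma~\ref{h term} to obtain the dichotomy $p^e = r$ (respectively $p^e = 1/r$) versus $p^e \gg_{r,s} q/p^\nu$, dispose of the generic branch by direct substitution into Definition~\ref{hqpr def}, and then verify in the structured branch that the minimal exponent is the expected one for all sufficiently large~$q$. Your write-up is in fact more thorough than the paper's, since you explicitly derive the constraints $\nu=k$ and $\nu=k-j$ from integrality and coprimality of $r+sq$, whereas the paper simply records these in the statement and does not revisit them in the proof; this extra bookkeeping is correct and clarifies why the stated formulas take the shape they do.
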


\begin{proof}
\renewcommand{\theenumi}{\alph{enumi}}
\begin{enumerate}
\item Assume $p^e\equiv r+sq \mod{q/p^\nu} $. By Lemma~\ref {h term}, we have that either $r=p^e$ (which implies that the denominator of $s$ is a power of $p$), or else $h(q;p,(r+sq)^{-1}) \ll_{r,s} (\log p)/q$. So we only need to compute $h(q;p,(r+sq)^{-1})$ in the case where $r$ is any power of $p$ (say $r=p^e$) and where $s$ has a denominator which is a power of $p$ (say $s=c/p^z$, where $z\le \nu$ since $q$ is a multiple of the denominator of $s$).

In this case the congruence $p^e\equiv r+sq \mod{q/p^\nu}$ is satisfied. Furthermore, $e$ is the minimal such positive integer if $q$ is sufficiently large in terms of~$r$ and~$s$. If $e$ is minimal we have $h(q;p,(r+sq)^{-1}) = (\log p)/\phi(p^\nu) p^{e} = (\log p)/r\phi(p^\nu)$ by definition; if $e$ is not minimal we have $h(q;p,(r+sq)^{-1}) \ll_{r,s} (\log p)/q$ since there are only finitely many possible values of~$q$. In both cases, the proposition is established (the ``main term'' $(\log p)/r\phi(p^\nu)$ is actually dominated by the error term in the latter case).

\item Assume $p^e(r+sq)\equiv 1 \mod{q/p^\nu} $. By Lemma~\ref {h term}, we have that either $1/r=p^e$ (which implies that the denominator of $s$ is a power of $p$), or else $h(q;p,r+sq) \ll_{r,s} (\log p)/q$. So we only need to compute $h(q;p,r+sq)$ in the case where $1/r$ is any power of $p$ (say $1/r=p^j$) and where $s$ has a denominator which is a power of $p$ (say $s=c/p^k$, where $k-j= \nu>0$).

In this case the congruence $p^j (r+sq) \equiv 1 \mod{q/p^\nu} $ is satisfied (since $p^j sq \equiv 0\mod{q/p^\nu}$). We can rewrite this congruence as $p^j  \equiv 1/r \mod{q/p^\nu}$.
As above, either $j$ is the minimal such positive integer, in which case $h(q;p,r+sq) = (\log p)/\phi(p^\nu) p^{e} = (r\log p)/\phi(p^\nu)$ by definition, or else $q$ is bounded in terms of $r$ and~$s$, in which case $h(q;p,r+sq) \ll_{r,s} (\log p)/q$. In both cases, the proposition is established.
\end{enumerate}
\end{proof}

\begin{cor}
\label{big h cor}
Let $r$ and $s$ be rational numbers, and suppose that $q$ is a positive integer such that $r+sq$ is an integer that is relatively prime to~$q$.
\renewcommand{\theenumi}{\alph{enumi}}
\begin{enumerate}
  \item Suppose both $r$ and the denominator of $s$ are powers of the same prime $p$. Then
  \[
    H(q; r+sq, 1) = \frac{\log p}{\phi(p^{j+k})} + O_{r,s}\bigg( \frac{\log q}q \bigg),
  \]
  where $r=p^j$ and the denominator of $s$ is $p^k$.
  \item Suppose both $1/r$ and the denominator of $s$ are powers of the same prime $p$, with $1/r < s$. Then
  \[
    H(q; r+sq, 1) = \frac{\log p}{\phi(p^k)} + O_{r,s}\bigg( \frac{\log q}q \bigg),
  \]
  where the denominator of $s$ is $p^k$.
 \item If neither of the above sets of conditions holds, then $H(q; r+sq, 1) \ll_{r,s} (\log q)/q$.
\end{enumerate}
\end{cor}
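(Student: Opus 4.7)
My plan is to expand the definition
\[
H(q; r+sq, 1) = \sum_{p\mid q} \bigl( h(q;p,r+sq) + h(q;p,(r+sq)^{-1}) \bigr)
\]
and apply Lemma~\ref{small h lemma} termwise. For every prime divisor $p$ of $q$, both of the bounds $h(q;p,(r+sq)^{-1}) \ll_{r,s} (\log p)/q$ and $h(q;p,r+sq) \ll_{r,s} (\log p)/q$ will hold automatically unless $p$ enjoys a very restrictive special relationship with the pair $(r,s)$. Since $\sum_{p\mid q} \log p \le \log q$, these ``generic'' contributions will together be absorbed into the error term $O_{r,s}((\log q)/q)$.

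In case~(a) the special prime is the one with $r = p^j$ and denominator of $s$ equal to $p^k$; we have $\nu = k$ (since the denominator of $s$ divides $q$). For this $p$, Lemma~\ref{small h lemma}(a) yields $h(q;p,(r+sq)^{-1}) = (\log p)/(r\phi(p^\nu)) + O_{r,s}((\log p)/q) = (\log p)/\phi(p^{j+k}) + O_{r,s}((\log p)/q)$, using the elementary identity $p^j \phi(p^k) = p^{j+k}-p^{j+k-1} = \phi(p^{j+k})$. Since $1/r = p^{-j}$ is not a (positive) power of $p$, Lemma~\ref{small h lemma}(b) forces the companion term $h(q;p,r+sq)$ into the error. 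For every other prime divisor $p'$ of $q$, neither $r$ nor $1/r$ is a power of $p'$, so both of Lemma~\ref{small h lemma}(a) and~(b) give $h \ll_{r,s}(\log p')/q$. Summing delivers the formula claimed in~(a). Case~(b) is completely parallel: for the special prime we invoke Lemma~\ref{small h lemma}(b), with $\nu = k-j$, which gives $h(q;p,r+sq) = (r\log p)/\phi(p^{k-j}) + O_{r,s}((\log p)/q)$; the identity $p^{-j}\phi(p^{k-j}) = (p^k-p^{k-1})/p^k \cdot p^{k-j}/p^{-j}$ simplifies to $\phi(p^k)$ after using $p^j\phi(p^{k-j}) = \phi(p^k)$, yielding the stated main term. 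The remaining $h(q;p,(r+sq)^{-1})$ for the special prime, and both $h$-values for every other prime, are again $O_{r,s}((\log p)/q)$ by Lemma~\ref{small h lemma} applied in the ``otherwise'' regime, and summing finishes case~(b).

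In case~(c) the hypothesis rules out any prime for which the nontrivial branches of Lemma~\ref{small h lemma}(a) or~(b) can fire, so for every $p\mid q$ we get $h(q;p,r+sq)+h(q;p,(r+sq)^{-1}) \ll_{r,s}(\log p)/q$; summation over $p\mid q$ produces the desired bound $\ll_{r,s}(\log q)/q$.

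The only real subtlety is the bookkeeping about which convention of ``power of $p$'' is in force (positive integer exponent) so that the two branches of Lemma~\ref{small h lemma} are genuinely mutually exclusive for the special prime, and verifying that the auxiliary numerical constraints accompanying the lemma---in particular $\nu=k$ in case~(a) and $\nu=k-j>0$ in case~(b), where the side condition $1/r<s$ ensures $k>j$---are forced by the hypotheses of the corollary together with the requirement that $r+sq$ be an integer coprime to $q$. Once those matches are made, the proof is a direct substitution into Lemma~\ref{small h lemma} followed by the arithmetic identities for $\phi(p^m)$ and a crude bound $\sum_{p\mid q}\log p \le \log q$ on the accumulated error.
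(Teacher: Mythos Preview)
Your proof is correct and follows essentially the same approach as the paper's: sum the conclusion of Lemma~\ref{small h lemma} over all primes $p\mid q$, isolate the (at most one) special prime contributing a main term, and bound the remaining terms by $\sum_{p\mid q}(\log p)/q \le (\log q)/q$. Your write-up is in fact a bit more explicit than the paper's in checking that the companion $h$-term at the special prime falls into the ``otherwise'' branch, and in noting the side constraints $\nu=k$ (case~(a)) and $\nu=k-j>0$ (case~(b)); the algebraic identity in case~(b) is stated somewhat awkwardly but the intended simplification $r(\log p)/\phi(p^{k-j}) = (\log p)/(p^j\phi(p^{k-j})) = (\log p)/\phi(p^k)$ is correct.
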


\begin{proof}
We sum the conclusion of Lemma~\ref{small h lemma} over all prime divisors $p$ of~$q$ (and, according to Definition~\ref {hqpr def}, over both residue classes $r+sq$ and $(r+sq)^{-1}$ for each prime divisor). For each such $p$ there is a contribution of $O_{r,s} \big( (\log p)/q \big)$ from error terms, and the sum of all these terms is $\ll_{r,s} \frac1q \sum_{p\mid q} \log p \le (\log q)/q$. The only remaining task is to consider the possible main terms.

If $r=p^j$ and the denominator $p^k$ of $s$ are powers of the same prime $p$, then this prime $p$ must divide any $q$ for which $r+sq$ is an integer; hence by Lemma~\ref{small h lemma}, we have $p^k\parallel q$ and the term $h(q;p,(r+sq)^{-1})$ contributes $(\log p)/r\phi(p^\nu) = (\log p)/\phi(p^{j+k})$ to $H(q; r+sq, 1)$. Similarly, if $r=1/p^j$ and the denominator $p^k$ of $s$ are powers of the same prime $p$ with $j<k$, then this prime $p$ must divide any $q$ for which $r+sq$ is an integer (this would be false if $j=k$); hence by Lemma~\ref{small h lemma}, we have $p^{k-j} \parallel q$ and so the term $h(q;p,r+sq)$ contributes $r(\log p)/\phi(p^\nu) = (\log p)/\phi(p^k)$ to $H(q; r+sq, 1)$. For other pairs $(r,s)$, no main term appears, and so the corollary is established.
\end{proof}

\begin{proof}[Proof of Theorem~\ref{Delta in terms of rating theorem}]
From the definition~\eqref{Delta def} of $\Delta(q;a,b)$, we have
\[
\Delta(q;r+sq,1) = \iota_q(-(r+sq))\log2 + K_q(r+sq-1) + \frac{\Lambda(r_1)}{r_1} + \frac{\Lambda(r_2)}{r_2} + H(q;r+sq,1),
\]
where $r_1$ and $r_2$ are the least positive integers congruent to $r+sq$ and $(r+sq)^{-1}$, respectively, modulo~$q$. The results in this section allow us to analyze each term individually:
\begin{itemize}
  \item If $r=-1$ and $s$ is an integer, then $\iota_q(-(r+sq))\log2 = \log2$. Otherwise, $\iota_q(-(r+sq))\log2=0$ for all but finitely many (depending on $r$ and $s$) integers $q$ by Lemma~\ref{iota lemma}, whence in particular $\iota_q(-(r+sq))\log2 \ll_{r,s} (\log q)/q$.
  \item If $r=1$, then $(r+sq-1,q) = (sq,q) = q/d$ where $d$ is the denominator of $s$, and so $K_q(r+sq-1) = \Lambda(d)/\phi(d)$ by Definition~\ref {iota and Lq and Rqn def}. Otherwise, $K_q(r+sq-1) \ll_{r,s} (\log q)/q$ by Lemma~\ref {K bound lemma}; this bound also holds if the denominator $d$ of $s$ is not a prime power, since then $\Lambda(d)/\phi(d)=0$.
  \item If $r$ is a positive integer and $s$ is an integer, then $r_1=r$ for all but finitely many $q$, in which case $\Lambda(r_1)/r_1 = \Lambda(r)/r$. Otherwise $\Lambda(r_1)/r_1 \ll_{r,s} (\log q)/q$ by Corollary~\ref{Lambda terms big cor}; this bound also holds if $r$ is not a prime power, since then $\Lambda(r)/r=0$.

  Similarly, if $r=1/b$ is the reciprocal of a positive integer and $\frac sr = bs$ is an integer, then $b(r+sq) = 1+(bs)q \equiv1\mod q$; moreover, $b$ will be the smallest positive integer (for all but finitely many $q$) such that $b(r+sq)\equiv1\mod q$, and so $\Lambda(r_2)/r_2 = \Lambda(b)/b$. Otherwise $\Lambda(r_2)/r_2 \ll_{r,s} (\log q)/q$ by Corollary~\ref{Lambda terms big cor}; this bound also holds if the reciprocal $b$ of $r$ is not a prime power, since then $\Lambda(b)/b=0$. Note also that if $b$ is a prime power, then the denominator of $s$ must be the same prime power, since $bs$ and $r+sq$ are both integers.
  \item Corollary~\ref{big h cor} tells us exactly when we have a contribution from $H(q;r+sq,1)$ other than the error term $O_{r,s}\big( (\log q)/q \big)$: the denominator of $s$ must be a prime power, and $r$ must be either a power of the same prime or else the reciprocal of a smaller power of the same prime.
\end{itemize}
In summary, there are six situations in which there is a contribution to $\Delta(q;r+sq,1)$ beyond the error term $O_{r,s}\big( (\log q)/q \big)$: four situations when the denominator of $s$ is a prime power and two situations when $s$ is an integer. All six situations are disjoint, and the contribution to $\Delta(q;r+sq,1)$ in each situation is exactly $R(r,s)$ as defined in Definition~\ref{rating definition}. This establishes the theorem.
\end{proof}

\subsection{The Bays--Hudson ``mirror image phenomenon''}
\label{mirror section}

In 1983, Bays and Hudson~\cite{cyclic} published their observations of some curious phenomena in the prime number race among the reduced residue classes modulo~11. They graphed normalized error terms corresponding to $\pi(x;11,1)$, \dots, $\pi(x;11,10)$, much like the functions $E(x;11,a)$ discussed in this paper, and from the graph they saw that the terms corresponding to the nonsquare residue classes tended to be positive, while the terms corresponding to the square residue classes tended to be negative, as Chebyshev's bias predicts. Unexpectedly, however, they noticed~\cite[Figure 1]{cyclic}) that the graph corresponding to $\pi(x;11,1)$ had a tendency to look like a mirror image of the graph corresponding to $\pi(x;11,10)$, and similarly for the other pairs $\pi(x;11,a)$ and $\pi(x;11,11-a)$. They deemed this observation the ``additive inverse phenomenon''; we use the physically suggestive name ``mirror image phenomenon''.

This prompted them to graph the various normalized error terms corresponding to the sums $\pi(x;11,a) + \pi(x;11,b)$ where $a$ is a nonsquare\mod{11} and $b$ is a square\mod{11}; all such normalized sums have the same mean value. They witnessed a noticeable difference between the cases $a+b=11$, when the graph corresponding to the sum was typically quite close to the average value (as in \cite[Figure 2]{cyclic}), and all other cases which tended to result in more spread-out graphs.

The ideas of the current paper can be used to explain this phenomenon. We consider more generally the limiting logarithmic distributions of the sums of error terms $E(x;q,a)+E(x;q,b)$, where $a$ is a nonsquare\mod{q} and $b$ is a square\mod{q}. The methods of Section~\ref {RV section} are easily modified to show (under the usual assumptions of GRH and LI) that this distribution has variance
\begin{equation}
V^+(q;a,b) = \mostchisum |\chi(a)+\chi(b)|^2 b(\chi).
\label{Vplusdef}
\end{equation}
Following the method of proof of Theorem \ref {variance evaluation theorem}, one can show that for any modulus $q$ and any pair $a,b$ of reduced residues modulo $q$, we have
\begin{multline}
V^+(q;a,b) = 2\phi(q) \bigg( \log q - \sum_{p\mid q} \frac{\log p}{p-1} - \frac{\Lambda(q)}{\phi(q)} - (\gamma_0+\log2\pi) \\
{}- K_q(a-b) - \iota_q(-ab^{-1})\log2 \bigg) + 2M^+(q;a,b) - 4b(\chi_0),
\label{Vplus.final.formula}
\end{multline}
where
\begin{equation*}
M^+(q;a,b) = \mostchisum |\chi(a)+\chi(b)|^2 \frac{L'(1,\chi^*)}{L(1,\chi^*)}.
\end{equation*}
In particular, we note the term $-\iota_q(-ab^{-1})\log 2$; many of the other terms vanish or simplify in the special case that $q$ is prime. We also note that the primary contribution to $M^+(q;a,b)$ is the expression $-\Lambda(r_1)/r_1 -\Lambda(r_2)/r_2$, where $r_1$ and $r_2$ are the least positive residues of $ab^{-1}$ and $ba^{-1}\mod q$. Both of these expressions are familiar to us from our analysis of $V(q;a,b)$, although their signs are negative in the current setting rather than positive as before.


We see that the variance $V^+(q;a,b)$ of this distribution $E(x;q,a)+E(x;q,b)$ is somewhat smaller than the typical size if there is a small prime congruent to $ab^{-1}$ or $ba^{-1}\mod q$; more importantly, it is smallest of all if $-ab^{-1}\equiv1\mod q$, which is precisely the situation $a+b=q$. In other words, we see very explicitly that the cases where $a+b=q$ yield distributions with smaller-than-normal variance, as observed for $q=11$ by Bays and Hudson. In particular, our theory predicts that for any prime $q\equiv3\mod4$ (so that exactly one of $a$ and $-a$ is a square), the graphs of $E(x;q,a)$ and $E(x;q,q-a)$ will tend to resemble mirror images of each other, more so than the graphs of two functions $E(x;q,a)$ and $E(x;q,b)$ where $a$ and $b$ are unrelated. On the other hand, the contribution of the $\iota_q$ term is in a secondary main term, and so the theory predicts that this mirror-image tendency becomes weaker as $q$ grows larger.

We can use the numerical data in the case $q=11$, computed first by Bays and Hudson, to confirm our theoretical evaluation of these variances. We computed the values of each of the twenty-five functions $E(x;11,a)+E(x;11,b)$, where $a$ is a square and $b$ a nonsquare\mod q, on 400 logarithmically equally spaced points spanning the interval $[10^3,10^7]$. We then computed the variance of our sample points for each function, in order to compare them with the theoretical variance given in equation~\eqref {Vplus.final.formula}, which we computed numerically. It is evident from equation~\eqref {Vplusdef} that multiplying both $a$ and $b$ by the same factor does not change $V^+(q;a,b)$, and therefore there are only three distinct values for these theoretical variances: the functions $E(x;q,a)+E(x;q,b)$ where $a+b=11$ all give the same variance, as do the functions where $ab^{-1}\equiv 2$ or $ab^{-1}\equiv2^{-1}\equiv6\mod{11}$, and the functions where $ab^{-1}\equiv 7$ or $ab^{-1}\equiv7^{-1}\equiv8\mod{11}$. Table~\ref{11 var table} summarizes our calculations, where the middle column reports the mean of the variances calculated for the functions in each set.

\begin{table}[bt]
\caption{Observed and theoretical variances for $E(x;11,a)+E(x;11,b)$}
\begin{tabular}{|c|c|c|}
\hline
Set of functions & Average variance calculated & Theoretical \\
$E(x;11,a)+E(x;11,b)$ & from sampled data & variance \\
\hline
$a+b=11$ & 5.60 & 5.31 \\
$\{ab^{-1},ba^{-1}\} \equiv \{2,6\}\mod{11}$ & 7.10 & 6.82 \\
$\{ab^{-1},ba^{-1}\} \equiv \{7,8\}\mod{11}$ & 9.59 & 9.06 \\
\hline
\end{tabular}
\label{11 var table}
\end{table}

Looking directly at the definition~\eqref{Vplusdef} of $V^+(q;a,b)$, we see that when $a\equiv-b\mod q$, the only characters that contribute to the sum are the even characters, since we have $\chi(a)+\chi(b) = \chi(a) + \chi(-1)\chi(a) = 0$ when $\chi(-1)=-1$. As seen earlier in Lemma~\ref {b.chi.nonprimitive.lemma}, the quantity $b(\chi)$ is smaller for even characters than for odd characters, which is another way to express the explanation of the Bays--Hudson observations.

\section{Explicit bounds and computations}
\label{computations section}

We concern ourselves with explicit numerical bounds and computations of the densities $\delta(q;a,b)$ in this final section. We begin in Section~\ref {classical section} by establishing auxiliary bounds for $\Gamma(z)$, for $\frac{L'}L(s,\chi)$, and for the number of zeros of $L(s,\chi)$ near a given height. In Section~\ref {variance bound section} we use these explicit inequalities to provide the proofs of two propositions stated in Section~\ref {M evaluation section}; we also establish computationally accessible upper and lower bounds for the variance $V(q;a,b)$. Explicit estimates for the density $\delta(q;a,b)$ are proved in Section~\ref {density bounds section}, including two theorems that give explicit numerical upper bounds for $\delta(q;a,b)$ for $q$ above 1000. Finally, in Section~\ref {explicit computation of the delta} we describe the two methods we used to calculate numerical values for $\delta(q;a,b)$; we include some sample data from these calculations, including the 120 largest density values that ever occur.

\subsection{Bounds for classical functions}
\label{classical section}

The main goals of this section are to bound the number of zeros of $L(s,\chi)$ near a particular height and to estimate the size of $\frac{L'}L(s,\chi)$ inside the critical strip, both with explicit constants. To achieve this, we first establish some explicit inequalities for the Euler Gamma-function.

\begin{prop}
\label{gamma bounds prop}
If $\Re z\ge\frac18$, then
$$
\bigg|\log \Gamma (z) - \bigg(z-\frac{1}{2}\bigg) \log z +z -\frac{1}{2} \log 2\pi \bigg| \le \frac{1}{4|z|}
$$
and
$$
\bigg|\frac{\Gamma'(z)}{\Gamma(z)} - \log(z+1) + \frac{1}{2z+2} +\frac 1z \bigg| < 0.2.
$$
\end{prop}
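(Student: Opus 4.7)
The plan is to derive both inequalities from classical integral representations of Stirling's remainder and of the digamma function, and then estimate the resulting integrals explicitly.

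For the first inequality, I would invoke Binet's second formula: for $\Re z > 0$,
\[
\log \Gamma(z) - (z-\tfrac12)\log z + z - \tfrac12\log(2\pi) = 2\int_0^\infty \frac{\arctan(t/z)}{e^{2\pi t}-1}\,dt.
\]
Writing $\arctan(t/z) = \int_0^1 tz/(z^2+s^2t^2)\,ds$ and substituting $u=st$ after exchanging the order of integration converts the task into bounding
\[
2|z|\int_0^\infty \frac{1}{|z^2+u^2|}\Bigl(-\frac{1}{2\pi}\log(1-e^{-2\pi u})\Bigr)\,du.
\]
The algebraic identity $|z^2+u^2|^2 = (|z|^2-u^2)^2 + 4u^2(\Re z)^2$ provides the necessary lower bound on the denominator. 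A routine case split on the size of $u$ relative to $|z|$ (e.g.\ $u\le|z|/2$, $|z|/2\le u\le 2|z|$, $u\ge 2|z|$) then yields the numerical constant $1/4$, uniformly for $\Re z\ge 1/8$.

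For the second inequality, I would first apply the recurrence $\Gamma(z+1)=z\Gamma(z)$, which gives $\psi(z) + 1/z = \psi(z+1)$, so the quantity in question equals $\psi(z+1)-\log(z+1)+1/(2(z+1))$. Binet's first formula
\[
\psi(w)=\log w - \frac{1}{2w} - 2\int_0^\infty \frac{t\,dt}{(t^2+w^2)(e^{2\pi t}-1)},\qquad \Re w>0,
\]
applied at $w=z+1$ (so $\Re w\ge 9/8$), converts the problem into estimating this integral. Writing $w=a+bi$ and factoring $t^2+w^2=(t-iw)(t+iw)$ gives $|t^2+w^2|\ge a^2\ge (9/8)^2$, whence the integral is bounded by $\frac{2}{(9/8)^2}\int_0^\infty \frac{t\,dt}{e^{2\pi t}-1}= \frac{128}{81}\cdot\frac{1}{24}< 0.066$, comfortably below $0.2$.

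The main obstacle is the first inequality when $\Re z$ is near $1/8$ and $|\Im z|$ is large: the crude bound $|z^2+u^2|\ge(\Re z)^2$ that trivially handles the second inequality is too weak here, since the integrand has a near-singularity at $u\approx |\Im z|$. The identity $|z^2+u^2|^2 = (|z|^2-u^2)^2 + 4u^2(\Re z)^2$ is precisely what is needed, because whenever one of the two summands is small the other is not. The rapid exponential decay of $-\log(1-e^{-2\pi u})$ for $u$ bounded away from zero, together with its integrable logarithmic behavior at $u=0$, should make the three-region case split go through with ample slack within the factor $1/4$.
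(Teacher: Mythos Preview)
Your proposal is correct and takes a genuinely different route from the paper's proof.

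The paper does not argue from first principles at all: for the first inequality it simply cites a Stirling-remainder bound from Luke's book (equations (1) and (9) of Section~1.3, taken with $n=1$), and for the second inequality it quotes an identity and two lemmas of Spouge to bound the auxiliary function $f_1'(z)$ by $(\log\tfrac95-\tfrac49)/\sqrt{e/\pi}<0.2$. In both cases the paper outsources the analytic work to the literature.

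Your approach is self-contained and, for the second inequality, noticeably cleaner. Using the recurrence $\psi(z)+1/z=\psi(z+1)$ (which the paper also uses) and then Binet's first formula at $w=z+1$, your factorization $t^2+w^2=(t-iw)(t+iw)$ gives $|t^2+w^2|\ge(\Re w)^2\ge(9/8)^2$ directly, and the resulting bound $\tfrac{2}{(9/8)^2}\cdot\tfrac{1}{24}\approx 0.066$ is sharper than the paper's $0.2$ and obtained with a one-line estimate rather than three cited lemmas.

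For the first inequality, your plan via Binet's second formula and the identity $|z^2+u^2|^2=(|z|^2-u^2)^2+4u^2(\Re z)^2$ is sound; the three-region split you describe is exactly the right way to exploit that one summand is large whenever the other is small. Getting the precise constant $1/4$ (equivalently, a factor $3$ times the leading Bernoulli term $1/(12|z|)$) uniformly down to $\Re z=1/8$ will require some honest numerical bookkeeping in the middle region $|z|/2\le u\le 2|z|$, but there is ample slack and no conceptual obstruction. The paper avoids this labor by citation; your version trades that for self-containment.
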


\begin{proof}
The first inequality follows from \cite[equations (1) and (9) of Section 1.3]{luke}, both taken with $n=1$. As for the second inequality, we begin with the identity~\cite[equation (21)]{spouge}, taken with $a=1$:
\[
\Psi(z+1) = \log(z+1) - \frac1{2(z+1)} + f'_1(z).
\]
Here $\Psi(z) = \frac{\Gamma'}\Gamma(z)$ has its usual meaning; we use the identity $\frac{\Gamma'(z)}{\Gamma(z)}+\frac 1z = \frac{\Gamma'(z+1)}{\Gamma(z+1)}$ to obtain
\[
\frac{\Gamma'(z)}{\Gamma(z)}+\frac 1z - \log(z+1) + \frac1{2(z+1)} = f'_1(z),
\]
and therefore it suffices to show that $|f'_1(z)| \le 0.2$ when $\Re(z) \ge \frac18$. The notation $f_1(z) = \log F_{1,1/2}(z)$ is defined in~\cite[equation (9)]{spouge}, and therefore $f'_1(z) = F'_{1,1/2}(z)/F_{1,1/2}(z)$. By~\cite[Lemma 1.1.1]{spouge}, the denominator $F_{1,1/2}(z)$ is bounded below in modulus by $\sqrt{e/\pi}$; by~\cite[Lemma 2.2.1]{spouge} taken with $a=n=1$, the numerator is bounded above in modulus by
\[
\big| F'_{1,1/2}(z) \big| < \log \frac{x+1}{x+1/2} - \frac1{2x+2},
\]
where $x=\Re z$ (unfortunately~\cite[equation (27)]{spouge} contains the misprint $f^{(n)}_{a,1/2}$ where $F^{(n)}_{a,1/2}$ is intended). The right-hand side of this inequality is a decreasing function of $x$, and its value at $x=\frac18$ is $\log\frac95-\frac49$. We conclude that for $\Re z\ge\frac18$, we have $|f'_1(z)| \le \big( \log\frac95-\frac49 \big) / \sqrt{e/\pi} < 0.2$, as needed. 
\end{proof}

\begin{lemma}
\label{Gamma difference extracted lemma}
Let $a=0$ or $a=1$. For any real numbers $\frac14\le \sigma\le 1$ and $T$, we have
\begin{equation}
\bigg| \frac{\Gamma'(\frac12(\sigma+iT+a))}{\Gamma(\frac12(\sigma+iT+a))} - \frac{\Gamma'(\frac12(2+iT+a))}{\Gamma(\frac12(2+iT+a))} \bigg| < 7.812.
\label{gamma prime over gamma difference}
\end{equation}
\end{lemma}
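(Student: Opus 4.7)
The plan is to reduce the estimate to a single explicit digamma evaluation. First I will observe that both $z_1 = \tfrac{1}{2}(\sigma+iT+a)$ and $z_2 = \tfrac{1}{2}(2+iT+a)$ have real parts at least $\tfrac{1}{8}$, so Proposition~\ref{gamma bounds prop} is in principle available for each. However, a direct application produces main terms $\log\frac{z_1+1}{z_2+1} - \frac{1}{2z_1+2} + \frac{1}{2z_2+2} - \frac{1}{z_1} + \frac{1}{z_2}$ whose magnitude in the extremal case $(\sigma,T,a) = (\tfrac{1}{4},0,0)$ is already about $7.77$, while the combined coarse error $|\epsilon_1-\epsilon_2|<0.4$ from two applications of the proposition would allow the bound to be as large as $\approx 8.17$. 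This naive route is therefore doomed, and I will need a sharper reduction.

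The key tool I will use is the classical series
$$
\frac{\Gamma'(z_1)}{\Gamma(z_1)} - \frac{\Gamma'(z_2)}{\Gamma(z_2)} = -(z_2-z_1)\sum_{k=0}^{\infty}\frac{1}{(k+z_1)(k+z_2)},
$$
in which $z_2 - z_1 = \tfrac{1}{2}(2-\sigma) > 0$ is real. Applying the triangle inequality and $|k+z_j|\ge k + \Re z_j$, I obtain
$$
\bigg|\frac{\Gamma'(z_1)}{\Gamma(z_1)} - \frac{\Gamma'(z_2)}{\Gamma(z_2)}\bigg| \le \frac{2-\sigma}{2}\sum_{k=0}^{\infty}\frac{1}{(k+x_1)(k+x_2)}, \qquad x_j = \Re z_j.
$$
Every factor on the right decreases in $|T|$, in $a$, and (for $j=1$) in $\sigma$, while the leading coefficient is also decreasing in $\sigma$. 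Consequently the right side is maximized when $\sigma = \tfrac{1}{4}$, $a = 0$, $T = 0$, at which point $x_1 = \tfrac{1}{8}$ and $x_2 = 1$, and partial fractions together with the standard series for $\Psi$ collapse the bound to $\Psi(1) - \Psi(1/8)$.

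The final step will be to evaluate $\Psi(1) - \Psi(1/8)$ in closed form via Gauss's digamma theorem, which gives
$$
\Psi(1/8) = -\gamma - \log 16 - \tfrac{\pi}{2}(1+\sqrt{2}) + 2\sum_{k=1}^{3}\cos\tfrac{k\pi}{4}\log\sin\tfrac{k\pi}{8}.
$$
Direct numerical computation then yields $\Psi(1) - \Psi(1/8) \approx 7.8115$, confirming the strict inequality. The hard part will be the tightness: the true extremal value agrees with the target $7.812$ to within roughly $10^{-3}$, so no argument that at some stage does not invoke a numerically precise evaluation (here via Gauss's formula) will close the gap. The monotonicity reductions above are essential precisely because they convert the multi-parameter estimate into a single such evaluation.
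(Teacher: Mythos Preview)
Your proof is correct and takes a genuinely different route from the paper's. The paper splits into two regimes: for $|T|\le 3$ it invokes a direct computer verification over the rectangle $\{\frac14\le\sigma\le1,\,0\le T\le3\}$ (identifying the extremal point $(\sigma,T,a)=(\frac14,0,0)$), and for $|T|\ge3$ it applies Proposition~\ref{gamma bounds prop} to each digamma term and estimates the resulting logarithmic and rational pieces, obtaining a comfortable bound below $1.44$. Your argument instead uses the series $\Psi(z_1)-\Psi(z_2)=(z_1-z_2)\sum_{k\ge0}\frac{1}{(k+z_1)(k+z_2)}$ together with $|k+z_j|\ge k+\Re z_j$ to reduce everything, uniformly in $T$, to the single real quantity $\Psi(x_2)-\Psi(x_1)$ with $x_j=\Re z_j$; monotonicity of $\Psi'$ then pins the maximum at $x_1=\tfrac18$, $x_2=1$, and Gauss's digamma theorem gives $\Psi(1)-\Psi(1/8)\approx 7.8112<7.812$.

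What your approach buys is the elimination of the case split and the computer sweep: you replace them by a single closed-form evaluation. It also makes transparent \emph{why} the constant $7.812$ is essentially sharp, since at the extremal parameters your triangle-inequality bound is an equality (the $z_j$ are real there). The paper's approach, by contrast, is more modular with respect to the surrounding exposition---it reuses Proposition~\ref{gamma bounds prop}, which is needed elsewhere anyway---and makes the large-$|T|$ decay explicit, but at the cost of an unquantified numerical verification for small~$|T|$. One minor remark: your quoted value $7.8115$ is slightly off; the Gauss formula gives $\Psi(1)-\Psi(1/8)=7.8112\ldots$, which of course still yields the strict inequality.
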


\begin{proof}
By symmetry we may assume that $T\ge0$. We first dispose of the case $T\le3$. When $a=0$, a computer calculation shows that the maximum value of the left-hand side of equation~\eqref{gamma prime over gamma difference} in the rectangle $\{ \sigma+iT\colon \frac14\le \sigma\le 1,\, 0\le T\le3\}$ occurs at $\sigma=\frac14$ and $T=0$: the value of the left-hand side at that point is a bit less than 7.812. When $a=1$, a similar calculation shows that the left-hand side of equation~\eqref{gamma prime over gamma difference} is always strictly less than 7.812.

For the rest of the proof, we may therefore assume that $T\ge3$. By Proposition~\ref {gamma bounds prop},
\begin{multline*}
\frac{\Gamma'(\frac12(\sigma+iT+a))}{\Gamma(\frac12(\sigma+iT+a))} - \frac{\Gamma'(\frac12(2+iT+a))}{\Gamma(\frac12(2+iT+a))} = \log\frac{\sigma+iT+a+2}2 - \frac1 {\sigma+iT+a+2}-\frac2 {\sigma+iT+a} \\
- \log\frac{4+iT+a}2 + \frac1 {4+iT+a} +\frac2 {2+iT+a} +\Obar( 0.4 ),
\end{multline*}
and therefore
\begin{multline*}
\bigg| \frac{\Gamma'(\frac12(\sigma+iT+a))}{\Gamma(\frac12(\sigma+iT+a))} - \frac{\Gamma'(\frac12(2+iT+a))}{\Gamma(\frac12(2+iT+a))} \bigg| \le \bigg| \log\bigg( 1 - \frac{2-\sigma}{4+iT+a} \bigg) \bigg| \\
+ \bigg| \frac{2-\sigma}{(\sigma+iT+a+2)(4+iT+a)} \bigg| +2\bigg| \frac{2-\sigma}{(\sigma+iT+a)(2+iT+a)} \bigg|+ 0.4.
\end{multline*}
Under the assumptions on $\sigma$, $a$, and $T$, we always have the inequality $\big| 2-\sigma/(4+iT+a) \big| \le \frac12$. The maximum modulus principle implies the inequality $\big| \frac1z \log(1-z) \big| \le \log 4$ for $|z|\le\frac12$, and so
\begin{multline*}
\bigg| \frac{\Gamma'(\frac12(\sigma+iT+a))}{\Gamma(\frac12(\sigma+iT+a))} - \frac{\Gamma'(\frac12(2+iT+a))}{\Gamma(\frac12(2+iT+a))} \bigg| \le \bigg| \frac{2-\sigma}{4+iT+a} \bigg| \log 4 \\
+ \bigg| \frac{2-\sigma}{(\sigma+iT+a+3)(5+iT+a)} \bigg|+2\bigg| \frac{2-\sigma}{(\sigma+iT+a)(2+iT+a)} \bigg| + 0.4
\end{multline*}
Finally we use the inequalities on $\sigma$, $a$, and $T$ to conclude that
\[
\bigg| \frac{\Gamma'(\frac12(\sigma+iT+a))}{\Gamma(\frac12(\sigma+iT+a))} - \frac{\Gamma'(\frac12(2+iT+a))}{\Gamma(\frac12(2+iT+a))} \bigg| \le \frac 25 \log 4 + \frac 2{5\sqrt{13}} +\frac 4 {3\sqrt{13}} +0.4 <1.4353,
\]
which amply suffices to finish the proof.
\end{proof}

We turn now to estimates for quantities associated with Dirichlet $L$-functions. The next few results do not require GRH to be true, and in fact their proofs cite identities from the literature that hold more generally no matter where the zeros of $L(s,\chi)$ might lie. Accordingly, we use the usual notation $\rho = \beta + i\gamma $ to denote a nontrivial zero of $L(s,\chi)$, and all sums in this section of the form $\sum_\rho$ denote sums over all such nontrivial zeros of the Dirichlet $L$-function.

\begin{lemma}
\label{Bchi used again lemma}
Let $q\ge2$, and let $\chi$ be a nonprincipal character\mod q. For any real number $T$,
$$
\sum_\rho \frac1{|2+iT-\rho|^2} < \frac{1}{2} \log \big( 0.609 q (|T|+5)  \big).
$$
\end{lemma}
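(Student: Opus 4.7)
The plan is to reduce the claimed bound to the partial fraction expansion of $L'/L$ coming from its Hadamard product, and then estimate the resulting gamma and Dirichlet-series terms using Proposition~\ref{gamma bounds prop} and a trivial Dirichlet series bound respectively. The starting point is the identity
\[
\frac{L'}{L}(s,\chi) = B(\chi) - \frac{1}{2}\log\frac{q}{\pi} - \frac{1}{2}\frac{\Gamma'}{\Gamma}\!\Big(\frac{s+\xi}{2}\Big) + \sum_\rho\!\bigg(\frac{1}{s-\rho}+\frac{1}{\rho}\bigg),
\]
where $\xi\in\{0,1\}$ according to the parity of $\chi$, together with the classical real-part identity $\Re B(\chi) = -\sum_\rho \Re(1/\rho)$ recalled in equation~\eqref{Re Bchi}. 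Substituting $s=2+iT$ and taking real parts, the $1/\rho$ contributions cancel and we obtain
\[
\sum_\rho \Re\frac{1}{2+iT-\rho} = \frac{1}{2}\log\frac{q}{\pi} + \frac{1}{2}\Re\frac{\Gamma'}{\Gamma}\!\Big(\frac{2+iT+\xi}{2}\Big) + \Re\frac{L'}{L}(2+iT,\chi).
\]

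Next I would pass from the left-hand side of this identity to the quantity of interest. Writing $\rho=\beta+i\gamma$ with $0<\beta<1$, I would use the elementary identity
\[
\Re\frac{1}{2+iT-\rho} = \frac{2-\beta}{|2+iT-\rho|^2} \;\ge\; \frac{1}{|2+iT-\rho|^2},
\]
valid because $2-\beta\ge 1$ for every nontrivial zero $\rho$. This turns the problem into bounding the right-hand side of the previous display from above by $\tfrac12\log\bigl(0.609\,q(|T|+5)\bigr)$, i.e.\ into showing
\[
\Re\frac{\Gamma'}{\Gamma}\!\Big(\frac{2+iT+\xi}{2}\Big) + 2\Re\frac{L'}{L}(2+iT,\chi) \;<\; \log\bigl(0.609\pi\,(|T|+5)\bigr).
\]

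For the Dirichlet-series term I would use the absolutely convergent expansion $-L'/L(s,\chi)=\sum_n \Lambda(n)\chi(n)n^{-s}$ valid for $\Re s>1$, giving $|L'/L(2+iT,\chi)|\le -\zeta'(2)/\zeta(2)<0.5699$, hence $2\Re L'/L(2+iT,\chi)\ge -1.14$. For the gamma term, set $z=(2+iT+\xi)/2$, so $\Re z\ge 1$ and $|z+1|\ge 2$, and apply the second inequality of Proposition~\ref{gamma bounds prop} to get
\[
\Re\frac{\Gamma'}{\Gamma}(z) \le \Re\log(z+1) + \frac{1}{|2z+2|} + \frac{1}{|z|} + 0.2 \le \Re\log(z+1) + 1.45.
\]
Since $\xi\le 1$, I have $|z+1| = \tfrac12\sqrt{(4+\xi)^2+T^2}\le \tfrac12\sqrt{25+T^2}\le \tfrac12(|T|+5)$, whence $\Re\log(z+1)\le \log(|T|+5)-\log 2$. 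Combining these estimates gives
\[
\Re\frac{\Gamma'}{\Gamma}(z) + 2\Re\frac{L'}{L}(2+iT,\chi) \;\le\; \log(|T|+5) + 1.45 - \log 2 + 1.14,
\]
which I would then compare against $\log(0.609\pi)+\log(|T|+5) = \log(|T|+5)+0.649\ldots\,$; the numerical slack (roughly $1.8$ versus $0.76$) is comfortable and closes the argument.

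The main obstacle is really only bookkeeping of explicit constants: the structural inequality is immediate once the partial-fraction identity and $\Re B(\chi)=-\sum\Re(1/\rho)$ are in hand, and the positivity trick $2-\beta\ge 1$ avoids having to assume GRH. The only delicate point is choosing the constant $0.609$ tight enough to be useful in Lemma~\ref{F.upper.bound.lemma}-style applications, while loose enough to absorb the $-\zeta'/\zeta(2)$, $\log 2$, and Proposition~\ref{gamma bounds prop} error of $0.2$ simultaneously; a small case analysis for $\xi\in\{0,1\}$ and for small $|T|$ (where one verifies the estimate numerically, as in the proof of Lemma~\ref{Gamma difference extracted lemma}) may be needed to cover the boundary.
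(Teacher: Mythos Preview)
Your structural approach is the same as the paper's: use the Hadamard partial-fraction identity for $L'/L$, take real parts at $s=2+iT$, cancel $\Re B(\chi)$ against $-\sum_\rho\Re(1/\rho)$, and then invoke the positivity $2-\beta\ge1$ to pass from $\Re\sum_\rho 1/(2+iT-\rho)$ to $\sum_\rho|2+iT-\rho|^{-2}$. That part is fine.

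The gap is numerical, and it is not a boundary issue that a small-$|T|$ case check will fix. After your gamma estimate you obtain
\[
\Re\frac{\Gamma'}{\Gamma}(z)+2\Re\frac{L'}{L}(2+iT,\chi)\ \le\ \log(|T|+5)-\log 2+1.45+1.14\ \approx\ \log(|T|+5)+1.90,
\]
whereas the target is $\log(0.609\pi)+\log(|T|+5)\approx\log(|T|+5)+0.65$. So your upper bound exceeds the required one by about $1.25$ for every $T$; the ``slack'' goes the wrong way, and the argument as written does not close. (Incidentally, the line ``$2\Re L'/L\ge -1.14$'' is a lower bound, but you need and later use the upper bound $\le 1.14$.)

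The lost constant comes from your treatment of $-1/(2z+2)-1/z$ in Proposition~\ref{gamma bounds prop}. You bound $-\Re[1/(2z+2)]\le 1/|2z+2|$ and $-\Re[1/z]\le 1/|z|$, but since $\Re z\ge1$ both real parts are positive, so in fact $-\Re[1/(2z+2)]\le0$ and $-\Re[1/z]\le0$. Dropping them instead of triangle-inequality bounding them gives
\[
\Re\frac{\Gamma'}{\Gamma}(z)\ \le\ \Re\log(z+1)+0.2\ \le\ \log(|T|+5)-\log 2+0.2,
\]
and then
\[
\tfrac12\log\frac q\pi+\tfrac12\Re\frac{\Gamma'}{\Gamma}(z)+\Re\frac{L'}{L}(2+iT,\chi)\ \le\ \tfrac12\log\bigl(q(|T|+5)\bigr)+0.57-\tfrac12\log\pi-\tfrac12\log2+0.1,
\]
whose constant is $0.57-0.5724-0.3466+0.1\approx-0.249$, i.e.\ exactly $\tfrac12\log(0.609)$. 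This is precisely how the paper proceeds, and with this one-line sign observation your proof is complete with the stated constant.
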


\begin{proof}
It suffices to prove the lemma for primitive characters. For $\chi$ primitive, it is known \cite[equation~(10.37)]{magicbook} that as meromorphic functions on the complex plane,
\begin{equation}
\frac{L'(s,\chi)}{L(s,\chi)} = -\frac{1}{2}\log \frac{q}{\pi} - \frac{1}{2}\frac{\Gamma'(\frac{1}{2}(s+a) )}{\Gamma(\frac{1}{2}(s+a) )} + B(\chi) + \sum_{\rho} \bigg( \frac{1}{s-\rho}+\frac{1}{\rho} \bigg),
\label{from p83}
\end{equation}
where the constant $B(\chi)$ was described earlier in the proof of Lemma~\ref {b.chi.nonprimitive.lemma}, and where $a=0$ if $\chi(-1)=1$ and $a=1$ if $\chi(-1)=-1$. Taking real parts of both sides and using the identity~\eqref{Re Bchi}, we obtain after rearrangement
\begin{equation}
\Re \sum_{\rho} \frac{1}{s-\rho} = \Re \frac{L'(s,\chi)}{L(s,\chi)} + \frac{1}{2}\log \frac{q}{\pi} + \frac{1}{2}\Re \frac{\Gamma'(\frac{1}{2}(s+a) )}{\Gamma(\frac{1}{2}(s+a) )}.
\label{Re L'/L}
\end{equation}
If we put $z=\frac{1}{2}(s +a ) $ in Proposition~\ref {gamma bounds prop}, we see that for $\Re s\ge\frac18$,
\begin{align*}
\Re\frac{\Gamma'(\frac{1}{2}(s+a) )}{\Gamma(\frac{1}{2}(s+a) )} &= \Re\log\frac{s+a+2}2 - \Re\frac1{s+a+2}-\Re\frac2{s+a} + 0.2 \\
&\le \log | s+a+1| - \log 2 + 0 + 0.2 \le \log |s+3| - 0.493.
\end{align*}
Inserting this bound into equation~\eqref{Re L'/L} and putting $s=2+iT$,
\[
\Re \sum_{\rho} \frac{1}{2+iT-\rho} \le \Re \frac{L'(2+iT,\chi)}{L(2+iT,\chi)} + \frac{1}{2} \log\frac{q}{\pi} + \frac{1}{2} \log |5+iT| - 0.246.
\]
Now notice that
\begin{equation}
\bigg| \frac{L'(2+iT,\chi)}{L(2+iT,\chi)} \bigg| = \bigg| {-} \sum_{n=1}^{\infty} \frac{\chi(n)\Lambda(n)}{n^{2+iT}} \bigg| \le \sum_{n=1}^{\infty} \frac{\Lambda(n)}{n^2} = - \frac{\zeta'(2)}{\zeta(2)} < 0.57,
\label{bounding Lchi by zeta}
\end{equation}
and therefore
\begin{align*}
\Re \sum_{\rho} \frac{1}{2+iT-\rho} &\le 0.57 + \frac{1}{2} \log\frac{q}{\pi} + \frac{1}{2} \log |5+iT| - 0.246 \\
&\le \frac12\log q + \frac{1}{2} \log(|T|+5) + 0.57 - \frac12\log\pi - 0.246 \\
&\le \frac{1}{2} \log \big( q (|T|+5) \big) - 0.248 \le \frac12 \log \big( 0.609q (|T|+5) \big).
\end{align*}
We obtain finally
\begin{align}
\sum_\rho \frac1{|2+iT-\rho|^2} &< \sum_\rho \frac{2-\beta}{|2+iT-\rho|^2} \notag \\
&= \Re \sum_\rho \frac1{2+iT-\rho} \le \frac{1}{2} \log \big( 0.609 q (|T|+5)  \big)
\label{four and a half stars}
\end{align}
as claimed.
\end{proof}

\begin{prop}
\label{not too many nearby zeros prop}
For any nonprincipal character $\chi$ and any real number $T$, we have
\[
\#\{ \rho \colon |T-\Im \rho | \le 2 \} \le 4 \log\big(0.609q(|T|+5)\big).
\]
\end{prop}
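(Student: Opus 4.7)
The plan is to deduce this bound directly from Lemma~\ref{Bchi used again lemma}, which has just been established; the argument is the standard trick of discarding everything but the zeros in a short vertical window and noting that each of those zeros contributes a lower-bounded amount to the sum $\sum_\rho |2+iT-\rho|^{-2}$.

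Concretely, suppose $\rho = \beta + i\gamma$ is a nontrivial zero of $L(s,\chi)$ with $|T-\gamma|\le 2$. Since $0 \le \beta \le 1$, we have $1 \le 2-\beta \le 2$, and therefore
\[
|2+iT-\rho|^2 = (2-\beta)^2 + (T-\gamma)^2 \le 4 + 4 = 8.
\]
Consequently $|2+iT-\rho|^{-2} \ge \tfrac18$ for each such zero $\rho$.

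Dropping all other zeros from the nonnegative sum on the left-hand side of Lemma~\ref{Bchi used again lemma} yields
\[
\frac{1}{8}\,\#\{\rho \colon |T-\Im\rho|\le 2\} \;\le\; \sum_\rho \frac{1}{|2+iT-\rho|^2} \;<\; \frac12 \log\bigl(0.609\,q(|T|+5)\bigr),
\]
and multiplying through by $8$ gives the claimed inequality. The only ingredient beyond Lemma~\ref{Bchi used again lemma} is the elementary crude bound $|2+iT-\rho|^2 \le 8$, so there is no real obstacle; the proof is essentially one line once the lemma is in hand.
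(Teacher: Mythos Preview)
Your proof is correct and follows essentially the same approach as the paper: both arguments bound $|2+iT-\rho|^2 \le 8$ for zeros with $|T-\gamma|\le 2$ and then invoke the bound from Lemma~\ref{Bchi used again lemma}. The paper routes the last step through the intermediate inequality $\sum_\rho |2+iT-\rho|^{-2} \le \sum_\rho (2-\beta)|2+iT-\rho|^{-2}$ from equation~\eqref{four and a half stars}, whereas you apply the stated conclusion of the lemma directly; the content is the same.
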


\begin{proof}
This follows immediately from equation~\eqref {four and a half stars} and the inequalities
\[
\sum_{\substack{\rho \\ |T-\gamma| \le 2}} 1 \le 8 \sum_\rho \frac{1}{(2-\sigma)^2+(T-\gamma)^2} \le 8 \sum_\rho \frac{2-\beta}{|2+iT-\rho|^2}.
\]
\end{proof}

\begin{lemma}
\label{bound for Lprime over L lemma}
Let $s=\sigma+iT$ with $\frac14\le\sigma\le1$. For any primitive character $\chi\mod q$ with $q\ge2$, if $L(s,\chi)\ne0$ then
$$
\bigg|\frac{L'(s,\chi)}{L(s,\chi)} - \sum_{\substack{\rho \\ |T-\gamma|\le 2}} \frac{1}{s-\rho} \bigg| \le \sqrt2 \log \big(0.609q(|T|+5)\big) + 4.48.
$$
\end{lemma}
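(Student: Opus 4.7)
The plan is to compare the logarithmic derivatives at $s$ and at the well-behaved comparison point $s_0 = 2+iT$, using the Hadamard factorization formula~\eqref{from p83} invoked in the proof of Lemma~\ref{Bchi used again lemma}. Writing that identity at both $s$ and $s_0$ and subtracting eliminates the constants $-\tfrac12\log(q/\pi)$ and $B(\chi)$, as well as the $1/\rho$ summands that were present only for conditional convergence; what remains is
\[
\frac{L'(s,\chi)}{L(s,\chi)} = \frac{L'(s_0,\chi)}{L(s_0,\chi)} - \frac12\bigg[\frac{\Gamma'(\frac12(s+a))}{\Gamma(\frac12(s+a))} - \frac{\Gamma'(\frac12(s_0+a))}{\Gamma(\frac12(s_0+a))}\bigg] + \sum_\rho\bigg(\frac{1}{s-\rho} - \frac{1}{s_0-\rho}\bigg).
\]

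The next step is to split the zero sum into a contribution from nearby zeros ($|T-\gamma|\le 2$) and from far zeros ($|T-\gamma|>2$), and then move $\sum_{|T-\gamma|\le 2}\tfrac{1}{s-\rho}$ to the left-hand side. What remains on the right is a sum of four pieces, each of which I would control separately.

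The first two pieces carry the constant $4.48$. The term $|L'(s_0,\chi)/L(s_0,\chi)|$ is bounded by $0.57$ via the Dirichlet series comparison in \eqref{bounding Lchi by zeta}, and the Gamma difference, after division by $2$, is bounded by $7.812/2 = 3.906$ via Lemma~\ref{Gamma difference extracted lemma}; together these contribute at most $4.476 < 4.48$. The logarithmic term comes from the sum $\sum_{|T-\gamma|\le 2}\tfrac{1}{s_0-\rho}$: apply Cauchy--Schwarz using the zero count $\#\{|T-\gamma|\le 2\}\le 4\log(0.609q(|T|+5))$ from Proposition~\ref{not too many nearby zeros prop} together with $\sum_\rho |s_0-\rho|^{-2}\le \tfrac12\log(0.609q(|T|+5))$ from Lemma~\ref{Bchi used again lemma} (valid since $|s_0-\rho|\ge 2-\beta \ge 1$), which yields precisely $\sqrt{4\cdot\tfrac12}\cdot\log(0.609q(|T|+5))=\sqrt 2\log(0.609q(|T|+5))$.

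The remaining piece is the tail $\sum_{|T-\gamma|>2}(\frac{1}{s-\rho}-\frac{1}{s_0-\rho})$; rewrite each summand via the telescoping identity as $(2-\sigma)/[(s-\rho)(s_0-\rho)]$. Since $\tfrac14\le\sigma\le 1$ gives $|2-\sigma|\le 7/4$, and since the distance estimate $|s-\rho|\ge|s_0-\rho|/\sqrt 2$ holds whenever $|T-\gamma|>2$ (as $(2-\beta)^2\le 4<(T-\gamma)^2$ forces $|s_0-\rho|^2\le 2(T-\gamma)^2\le 2|s-\rho|^2$), each term is bounded by $(7\sqrt 2/4)|s_0-\rho|^{-2}$, and summing via Lemma~\ref{Bchi used again lemma} yields a further $O(\log(0.609q(|T|+5)))$ contribution. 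The main obstacle I anticipate is bookkeeping this tail cleanly enough that the total coefficient of the logarithm compresses to exactly $\sqrt 2$: the crude bound I just sketched loses a multiplicative factor, so the argument likely requires combining the nearby-zeros and tail terms before taking absolute values, or exploiting sign information in $\Re\sum_\rho 1/(s_0-\rho)$ from equation~\eqref{four and a half stars} to absorb the excess into the Cauchy--Schwarz bound rather than adding to it.
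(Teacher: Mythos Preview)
Your overall strategy---subtract the Hadamard formula at $s$ and at $s_0=2+iT$, then bound the $L'/L$ and $\Gamma'/\Gamma$ pieces to produce the constant $4.48$---is exactly what the paper does. Your diagnosis of the obstacle is also accurate: your Cauchy--Schwarz bound on the nearby-zero sum already spends the full $\sqrt2\log(\cdots)$, and then the far-zero tail adds more on top.

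The resolution is simpler than the sign-information or pre-absolute-value tricks you speculate about. The paper bounds the nearby and far contributions by the \emph{same} pointwise quantity $2\sqrt2\,|s_0-\rho|^{-2}$, then combines them into a single sum over \emph{all} zeros and applies Lemma~\ref{Bchi used again lemma} once. Concretely: for $|T-\gamma|\le2$ one has $|s_0-\rho|\le\sqrt{(2-\beta)^2+(T-\gamma)^2}<\sqrt{4+4}=2\sqrt2$, so
\[
\frac{1}{|s_0-\rho|} = \frac{|s_0-\rho|}{|s_0-\rho|^2} < \frac{2\sqrt2}{|s_0-\rho|^2};
\]
for $|T-\gamma|>2$ one has (as you noted) $|s_0-\rho|/|s-\rho|<\sqrt2$, and also $2-\sigma<2$, so
\[
\frac{2-\sigma}{|s-\rho||s_0-\rho|} < 2\cdot\frac{|s_0-\rho|}{|s-\rho|}\cdot\frac{1}{|s_0-\rho|^2} < \frac{2\sqrt2}{|s_0-\rho|^2}.
\]
Now the two sums merge into $2\sqrt2\sum_\rho|s_0-\rho|^{-2}<2\sqrt2\cdot\tfrac12\log(0.609q(|T|+5))=\sqrt2\log(0.609q(|T|+5))$, exactly the target coefficient. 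No Cauchy--Schwarz or Proposition~\ref{not too many nearby zeros prop} is needed here.
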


\begin{proof}
Applying equation~\eqref {from p83} at $s=\sigma+iT$ and again at $2+iT$, we obtain
\begin{equation*}
\frac{L'(s,\chi)}{L(s,\chi)} - \frac{L'(2+iT,\chi)}{L(2+iT,\chi)} = \frac{1}{2}\frac{\Gamma'(\frac{1}{2}(2+iT+a) )}{\Gamma(\frac{1}{2}(2+iT+a) )} - \frac{1}{2}\frac{\Gamma'(\frac{1}{2}(s+a) )}{\Gamma(\frac{1}{2}(s+a) )} + \sum_{\rho} \bigg(   \frac{1}{s-\rho} - \frac{1}{2+iT-\rho}  \bigg),
\end{equation*}
which implies
\begin{multline*}
\bigg|\frac{L'(s,\chi)}{L(s,\chi)} - \sum_{\substack{\rho \\ |T-\gamma|\le 2}} \frac{1}{s-\rho} \bigg| \le \bigg| \frac{L'(2+iT,\chi)}{L(2+iT,\chi)} \bigg| + \frac12 \bigg| \frac{\Gamma'(\frac{1}{2}(2+iT+a) )}{\Gamma(\frac{1}{2}(2+iT+a) )} - \frac{\Gamma'(\frac{1}{2}(s+a) )}{\Gamma(\frac{1}{2}(s+a) )} \bigg| \\
+ \sum_{\substack{\rho \\ |T-\gamma| > 2}} \bigg | \frac{1}{s-\rho}- \frac{1}{2+iT-\rho} \bigg| + \sum_{\substack{\rho \\ |T-\gamma|\le 2}} \frac{1}{|2+iT-\rho|}.
\end{multline*}
Using equation~\eqref {bounding Lchi by zeta} and Lemma~\ref {Gamma difference extracted lemma} to bound the first two terms on the right-hand side, we see that
\begin{multline}
\bigg|\frac{L'(s,\chi)}{L(s,\chi)} - \sum_{\substack{\rho \\ |T-\gamma|\le 2}} \frac{1}{s-\rho} \bigg| < 0.57 + 3.906 \\
+ \sum_{\substack{\rho \\ |T-\gamma| > 2}} \frac{2-\sigma}{|s-\rho||2+iT-\rho|} + \sum_{\substack{\rho \\ |T-\gamma|\le 2}} \frac{1}{|2+iT-\rho|}.
\label{two sums to prepare}
\end{multline}
To prepare the last two sums for an application of Lemma~\ref {Bchi used again lemma}, we note that when $|T-\gamma| > 2$,
\[
\frac{2-\sigma}{|s-\rho||2+iT-\rho|} < 2 \frac{|2+iT-\rho|}{|s-\rho|} \frac1{|2+iT-\rho|^2} < 2\sqrt2 \frac1{|2+iT-\rho|^2};
\]
on the other hand, when $|T-\gamma|\le2$,
\[
\frac{1}{|2+iT-\rho|} = \frac{|2+iT-\rho|} {|2+iT-\rho|^2} < \frac{2\sqrt2} {|2+iT-\rho|^2}.
\]
Therefore equation~\eqref{two sums to prepare} becomes, by Lemma~\ref {Bchi used again lemma},
\begin{align*}
\bigg|\frac{L'(s,\chi)}{L(s,\chi)} - \sum_{\substack{\rho \\ |T-\gamma|\le 2}} \frac{1}{s-\rho} \bigg| &< 0.57 + 3.906 + 2\sqrt2 \sum_\rho \frac1{|2+iT-\rho|^2} \\
&< 4.48 + \sqrt2 \log \big( 0.609 q (|T|+5)  \big)
\end{align*}
as claimed.
\end{proof}

We restore the assumption of GRH for the last proposition of this section, which is used in the proof of Lemma~\ref {third term} below.

\begin{prop}
Assume GRH. Let $s=\sigma+iT$ with $\frac14\le\sigma\le1$, $\sigma\ne\frac12$. If $\chi$ is any nonprincipal character\mod q, then
$$
\bigg| \frac{L'(s,\chi)}{L(s,\chi)} \bigg| \le \bigg( \frac{4}{ | \sigma - \frac12 |} + \sqrt2 \bigg) \log \big( 0.609q(|T|+5) \big) + 4.48 + \frac{\log q}{2^{\sigma} - 1 }.
$$
Furthermore, if $\chi$ is primitive and $q\ge2$, then the summand $(\log q)/(2^{\sigma} - 1)$ can be omitted from the upper bound.
\label{with and without primitive prop}
\end{prop}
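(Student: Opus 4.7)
The plan is to combine Lemma~\ref{bound for Lprime over L lemma} with Proposition~\ref{not too many nearby zeros prop}, and then reduce the general nonprincipal case to the primitive case via the Euler factors that differ between $L(s,\chi)$ and $L(s,\chi^*)$.

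First I would handle the primitive case, which is cleaner. Lemma~\ref{bound for Lprime over L lemma} already reduces the problem to estimating the local sum $\sum_{|T-\gamma|\le 2} (s-\rho)^{-1}$. Under GRH every nontrivial zero has $\rho = \frac12 + i\gamma$, so for any such $\rho$ and any $s = \sigma + iT$ with $\sigma\ne\frac12$ we have the trivial lower bound $|s-\rho| \ge |\sigma - \frac12|$, hence $|s-\rho|^{-1} \le |\sigma-\frac12|^{-1}$. Applying Proposition~\ref{not too many nearby zeros prop} to count the zeros in the interval $|T-\gamma|\le 2$ then gives
\begin{equation*}
\bigg| \sum_{\substack{\rho \\ |T-\gamma|\le 2}} \frac{1}{s-\rho} \bigg| \le \frac{4\log\bigl(0.609q(|T|+5)\bigr)}{|\sigma-\tfrac12|}.
\end{equation*}
Substituting this into Lemma~\ref{bound for Lprime over L lemma} yields the primitive-case statement, namely the claimed inequality without the $(\log q)/(2^\sigma-1)$ term.

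Next I would reduce the general nonprincipal case to the primitive case. If $\chi\mod q$ is induced by the primitive character $\chi^*\mod{q^*}$ (with $q^*\mid q$ and $q^*\le q$), then
\begin{equation*}
L(s,\chi) = L(s,\chi^*) \prod_{\substack{p\mid q \\ p\nmid q^*}} \bigl(1 - \chi^*(p)p^{-s}\bigr),
\end{equation*}
and taking the logarithmic derivative yields
\begin{equation*}
\frac{L'(s,\chi)}{L(s,\chi)} = \frac{L'(s,\chi^*)}{L(s,\chi^*)} + \sum_{\substack{p\mid q \\ p\nmid q^*}} \frac{\chi^*(p)p^{-s}\log p}{1 - \chi^*(p)p^{-s}}.
\end{equation*}
For $\sigma \ge \frac14$ each term in the finite sum is bounded in modulus by $(\log p)/(p^\sigma - 1) \le (\log p)/(2^\sigma - 1)$, so the total contribution is at most $(\log q)/(2^\sigma - 1)$ since $\sum_{p\mid q}\log p \le \log q$. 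Applying the already-established primitive bound to $L(s,\chi^*)$, with $q^*$ in place of $q$, and then using $q^*\le q$ to replace $\log(0.609 q^*(|T|+5))$ by the larger $\log(0.609 q(|T|+5))$, completes the proof.

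There is no serious obstacle here: both of the ingredients required (the local formula of Lemma~\ref{bound for Lprime over L lemma} and the zero-counting bound of Proposition~\ref{not too many nearby zeros prop}) are already in hand, and the Euler-factor correction is a standard elementary computation. The only care needed is in tracking the explicit numerical constants faithfully, in particular making sure that the $\sqrt2\log(0.609q(|T|+5)) + 4.48$ from Lemma~\ref{bound for Lprime over L lemma} combines additively with the $4|\sigma-\frac12|^{-1}\log(0.609q(|T|+5))$ coming from the near-zero sum, so that the final constant matches the claimed $(4/|\sigma-\frac12| + \sqrt2)\log(0.609q(|T|+5)) + 4.48$.
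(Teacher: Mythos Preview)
Your proposal is correct and follows essentially the same approach as the paper: first handle the primitive case by combining Lemma~\ref{bound for Lprime over L lemma} with the trivial bound $|s-\rho|\ge|\sigma-\tfrac12|$ under GRH and the zero-count of Proposition~\ref{not too many nearby zeros prop}, then reduce the general case via the logarithmic derivative of the Euler-factor correction. Your explicit mention of replacing $q^*$ by $q$ in the primitive bound (using $q^*\le q$) is a detail the paper leaves implicit, but otherwise the arguments are the same.
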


\begin{proof}
Assume first that $\chi$ is primitive. Lemma~\ref {bound for Lprime over L lemma} tells us that
\begin{align*}
\bigg|\frac{L'(s,\chi)}{L(s,\chi)} \bigg| &\le \sum_{\substack{\rho \\ |T-\gamma|\le 2}} \frac{1}{|s-\rho|} + \sqrt2 \log \big(0.609q(|T|+5)\big) + 4.48 \\
&\le \frac{1}{|\sigma-\frac12|} \,\#\{ \rho \colon |T-\gamma | \le 2 \} + \sqrt2 \log \big(0.609q(|T|+5)\big) + 4.48
\end{align*}
under the assumption of GRH; the proposition for primitive $\chi$ now follows immediately from Proposition~\ref{not too many nearby zeros prop}.

If $\chi$ is not primitive, then $  L(s,\chi) = L(s,\chi^*) \prod_{p\mid q} \big(   1-\frac{\chi^*(p)}{p^s}  \big) $; we then have the identity
$$
\frac{L'(s,\chi)}{L(s,\chi)} = \frac{L'(s,\chi^*)}{L(s,\chi^*)} + \sum_{p\mid q} \frac{\chi^*(p)\log p}{p^s-\chi^*(p)}.
$$
Therefore
$$
\bigg| \frac{L'(s,\chi)}{L(s,\chi)} - \frac{L'(s,\chi^*)}{L(s,\chi^*)} \bigg| \le \sum_{p\mid q} \frac{ \log p }{p^{\sigma}-1} \le \frac{1}{2^\sigma - 1 } \sum_{p\mid q} \log p \le \frac{\log q}{2^{\sigma} - 1 },
$$
which finishes the proof of the proposition in full.
\end{proof}

\subsection{Bounds for the variance $V(q;a,b)$}
\label{variance bound section}

This section has two main purposes. First, we provide the proofs of Propositions~\ref{Sound method prop} and~\ref{only the first term survives prop}, two statements involving smoothed sums of the von Mangoldt function which were stated in Section~\ref {M evaluation section}. Second, we establish two sets of upper and lower bounds for the variance $V(q;a,b)$, one when $q$ is prime and one valid for all $q$. All of these results are stated with explicit constants and are valid for explicit ranges of~$q$.

\begin{lemma}
\label{funny complex inequality lemma}
For any real number $t$, we have $\big |\tfrac d{dt} \big| \Gamma\big( {-}\tfrac{1}{2} + it \big) \big|\big| \le \big| \Gamma'\big( {-}\tfrac{1}{2} + it \big) \big|$.
\end{lemma}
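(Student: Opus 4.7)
The plan is to derive the inequality from the elementary fact that for any differentiable complex-valued function $f$ of a real variable, $\big|\tfrac{d}{dt}|f(t)|\big| \le |f'(t)|$ whenever $f(t)\ne 0$, and then apply this to $f(t) = \Gamma(-\tfrac12+it)$, using the fact that $\Gamma$ has no zeros so the right-hand side is well-defined everywhere on the line $\Re z = -\tfrac12$.

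To prove the underlying elementary inequality, I would write $|f(t)|^2 = f(t)\overline{f(t)}$ and differentiate both sides with respect to $t$, obtaining
\[
2|f(t)|\,\frac{d}{dt}|f(t)| = f'(t)\overline{f(t)} + f(t)\overline{f'(t)} = 2\Re\!\big(\,\overline{f(t)}\,f'(t)\big).
\]
Dividing by $2|f(t)|$ (which is nonzero at points where $f(t)\ne 0$) and invoking the trivial bound $|\Re z| \le |z|$ yields
\[
\left|\frac{d}{dt}|f(t)|\right| = \frac{|\Re(\overline{f(t)}f'(t))|}{|f(t)|} \le \frac{|\overline{f(t)}||f'(t)|}{|f(t)|} = |f'(t)|.
\]

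Now set $f(t) = \Gamma(-\tfrac12 + it)$. By the chain rule, $f'(t) = i\,\Gamma'(-\tfrac12 + it)$, so $|f'(t)| = |\Gamma'(-\tfrac12+it)|$. Since the Gamma function is nowhere zero on $\mathbb{C}$, $|f(t)|$ is strictly positive for all real $t$, so the division above is legitimate. Substituting gives the claimed inequality $\big|\tfrac{d}{dt}|\Gamma(-\tfrac12+it)|\big| \le |\Gamma'(-\tfrac12+it)|$.

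There is no real obstacle: the only thing to be careful about is the division by $|f(t)|$, which is why I flag the nonvanishing of $\Gamma$ on the line $\Re z = -\tfrac12$; this is automatic and requires no further justification.
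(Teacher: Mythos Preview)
Your proof is correct and follows essentially the same approach as the paper's: both establish the general inequality $\big|\tfrac{d}{dt}|f(t)|\big| \le |f'(t)|$ for a nonvanishing differentiable complex-valued function $f$ and then apply it to $f(t)=\Gamma(-\tfrac12+it)$, using that $\Gamma$ has no zeros. The only cosmetic difference is that the paper writes $f=u+iv$ in real and imaginary parts and invokes Cauchy--Schwarz on $uu'+vv'$, whereas you differentiate $|f|^2=f\overline f$ and use $|\Re z|\le|z|$; these are the same computation in different notation.
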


\begin{proof}
We show more generally that if $f(t)$ is any differentiable complex-valued function that never takes the value~0, then $|f(t)|$ is also differentiable and $\big| \frac d{dt} |f(t)| \big| \le | f'(t) |$; the lemma then follows since $\Gamma$ never takes the value~0. Write $f(t) = u(t) + iv(t)$ where $u$ and $v$ are real-valued; then
\[
\tfrac d{dt} |f(t)| = \tfrac d{dt} \sqrt{u(t)^2 + v(t)^2} = \frac{u(t)u'(t) + v(t)v'(t)}{\sqrt{u(t)^2 + v(t)^2}}
\]
while $|f'(t)| = |u'(t) + iv'(t)| = \sqrt{u'(t)^2 + v'(t)^2}$. The asserted inequality is therefore equivalent to $|u(t)u'(t) + v(t)v'(t)| \le {\sqrt{u(t)^2 + v(t)^2}} \sqrt{u'(t)^2 + v'(t)^2}$, which is a consequence of the Cauchy-Schwarz inequality.
\end{proof}

\begin{lemma}
We have $|\Gamma(s)| \le |\Gamma(\Re s)|$ for all complex numbers $s$.
\label{gamma bound}
\end{lemma}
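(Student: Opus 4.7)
The plan is to use the Weierstrass product for $1/\Gamma$ and compare it termwise. Recall that
\[
\frac{1}{\Gamma(z)} = z\, e^{\gamma_0 z} \prod_{n=1}^{\infty} \Bigl(1 + \frac{z}{n}\Bigr) e^{-z/n}
\]
for every complex $z$. I would first dispose of the degenerate cases: if $\Re s$ is a nonpositive integer then $\Gamma(\Re s)$ has a pole, so $|\Gamma(\Re s)| = \infty$ and the inequality is trivial; otherwise $\Gamma(\Re s)$ is a nonzero real number and the product above converges absolutely.

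Write $s = \sigma + it$. Taking absolute values in the product formula and using that $e^{-\sigma/n}$ and $e^{\gamma_0 \sigma}$ are real and positive, I get
\[
\left| \frac{1}{\Gamma(s)} \right| = |s|\, e^{\gamma_0 \sigma} \prod_{n=1}^{\infty} \Bigl| 1 + \frac{s}{n} \Bigr|\, e^{-\sigma/n}.
\]
The key inequalities are $|s|^2 = \sigma^2 + t^2 \ge \sigma^2$, hence $|s| \ge |\sigma|$, and similarly
\[
\Bigl| 1 + \frac{s}{n} \Bigr|^2 = \Bigl( 1 + \frac{\sigma}{n} \Bigr)^{\!2} + \frac{t^2}{n^2} \ge \Bigl( 1 + \frac{\sigma}{n} \Bigr)^{\!2},
\]
so $|1 + s/n| \ge |1 + \sigma/n|$ for every $n \ge 1$. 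Multiplying these together gives
\[
\left| \frac{1}{\Gamma(s)} \right| \ge |\sigma|\, e^{\gamma_0 \sigma} \prod_{n=1}^{\infty} \Bigl| 1 + \frac{\sigma}{n} \Bigr|\, e^{-\sigma/n} = \left| \frac{1}{\Gamma(\sigma)} \right|,
\]
where in the last step I used the Weierstrass product again, now for the real argument $\sigma$, and observed that taking absolute values just inserts the modulus on each real factor. Taking reciprocals yields $|\Gamma(s)| \le |\Gamma(\sigma)|$, as desired.

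I do not expect a serious obstacle here; the only subtlety is ensuring that the termwise comparison is applied to an absolutely convergent product (which is why the exponential convergence factors $e^{-s/n}$ in the Weierstrass product are essential—without them, the termwise inequality on $|1 + s/n|$ would not survive the infinite product). The poles of $\Gamma$ at nonpositive integers cause no trouble since they are handled separately, and nothing in the argument requires $\sigma > 0$.
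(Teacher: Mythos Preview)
Your proof is correct and takes a genuinely different route from the paper's. The paper first handles the half-plane $\Re s > 0$ via the integral representation $\Gamma(s) = \int_0^\infty t^{s-1} e^{-t}\,dt$, bounding $|t^{s-1}| \le t^{\sigma-1}$ inside the integral, and then extends to $\Re s > -n$ by induction on $n$ using the functional equation $\Gamma(s+1) = s\Gamma(s)$ together with $|\Re s|/|s| \le 1$. Your argument via the Weierstrass product handles all $s$ in one stroke: the termwise inequalities $|s| \ge |\sigma|$ and $|1+s/n| \ge |1+\sigma/n|$ pass to the (absolutely convergent) partial products and hence to the limit. What the paper's approach buys is that it relies only on the most elementary facts about $\Gamma$; what yours buys is directness, avoiding the induction and the split into half-planes. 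Either is perfectly suitable here.
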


\noindent Note that this assertion is trivially true if $\Re s$ is a nonpositive integer, under the convention $|\Gamma(-n)|=\infty$ for $n\ge0$.

\begin{proof}
We prove that the assertion holds whenever $\Re s>-n$, by induction on $n$. The base case $n=0$ can be derived from the integral representation $\Gamma(s)=\int_0^{\infty}t^{s-1}e^{-t}dt$, which gives
$$
|\Gamma(s)|\le \int_0^{\infty}|t^{s-1}|e^{-t}dt = \int_0^{\infty}t^{\Re s -1}e^{-t}dt=\Gamma(\Re s).
$$
Now assume that the assertion holds whenever $\Re s>-n$. Given a complex number $s$ for which $\Re s > -(n+1)$, we use the identity $\Gamma(s+1) = s\Gamma(s)$ and the induction hypothesis to write
$$
|\Gamma(s)|=\frac{|\Gamma(s+1)|}{|s|} \le \frac{|\Gamma(\Re s+1)|}{|s|} =\frac{|\Re s|}{|s|} |\Gamma(\Re s)| \le |\Gamma(\Re s)|,
$$
as desired.
\end{proof}

\begin{lemma}
\label{second term lemma}
For any nonprincipal character $\chi$,
\[
\Lsum{\gamma\in\R} \big| \Gamma\big({-}\tfrac{1}{2} +i\gamma\big) \big| \le 14.27\log q + 16.25.
\]
\end{lemma}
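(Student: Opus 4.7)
The plan is to combine an explicit exponential-decay bound for $|\Gamma(-\tfrac12+i\gamma)|$ with the zero-counting estimate Proposition~\ref{McCurley bound}. The key observation is that $|\Gamma(-\tfrac12+i\gamma)|$ depends only on $|\gamma|$ (since $\Gamma(\bar z)=\overline{\Gamma(z)}$) and decays like $e^{-\pi|\gamma|/2}/|\gamma|$, while zeros accumulate only as a logarithm, so the sum converges rapidly.

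First, I would pass to a vertical line where Stirling is directly applicable. Using the functional relation $\Gamma(z)=\Gamma(z+2)/[z(z+1)]$ gives
\[
\big|\Gamma(-\tfrac12+i\gamma)\big| = \frac{|\Gamma(\tfrac32+i\gamma)|}{\tfrac14+\gamma^2},
\]
and now Proposition~\ref{gamma bounds prop} applies to $\Gamma(\tfrac32+i\gamma)$. Taking the real part of the Stirling identity in that proposition and exponentiating yields an explicit bound of the form
\[
\big|\Gamma(-\tfrac12+i\gamma)\big| \le F(\gamma) := \sqrt{2\pi}\,\frac{\sqrt{\tfrac94+\gamma^2}}{\tfrac14+\gamma^2}\, \exp\bigl(-\gamma\arctan(2\gamma/3)\bigr)\cdot e^{C/|\tfrac32+i\gamma|}
\]
for an explicit absolute constant $C$ coming from the $1/(4|z|)$ error term of Proposition~\ref{gamma bounds prop}; since $-\gamma\arctan(2\gamma/3) \to -\pi|\gamma|/2$ as $|\gamma|\to\infty$, this yields $F(\gamma) \ll e^{-\pi|\gamma|/2}/(1+|\gamma|)$ with completely explicit constants. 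For very small $|\gamma|$ one can replace the Stirling bound by the direct estimate $|\Gamma(-\tfrac12+i\gamma)| \le 2\sqrt{\pi}\,|\Gamma(\tfrac12+i\gamma)|/|\tfrac12+i\gamma|$ and uniform continuity.

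Second, I would convert the sum into an integral against the zero-counting function. Since $F(\gamma)$ is even, writing $N(T,\chi)=\#\{\gamma\colon|\gamma|\le T\}$ gives
\[
\Lsum{\gamma\in\R} \big|\Gamma(-\tfrac12+i\gamma)\big| \le \int_0^\infty F(t)\,dN(t,\chi) = -\int_0^\infty F'(t)\, N(t,\chi)\,dt,
\]
by Abel summation (boundary terms vanish because $F$ decays exponentially). Plugging in the upper bound from Proposition~\ref{McCurley bound},
\[
N(t,\chi) \le \frac{t}{\pi}\log\frac{q^*t}{2\pi e} + 0.68884\log\frac{q^*t}{2\pi e} + 10.6035,
\]
and using $q^*\le q$, the right-hand side separates into a piece proportional to $\log q$ (from the $\log q^*$ factor) plus an absolute constant (from the remaining $\log t$ and constant terms). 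All the resulting integrals are of the form $\int_0^\infty t^k (\log t)^j e^{-\pi t/2}\,dt$ and converge to explicit numbers.

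The main obstacle will be bookkeeping the constants carefully to land at $14.27\log q+16.25$. The qualitative bound $A\log q+B$ is immediate from the exponential decay, but the specific numerical coefficients require the sharp form of Stirling given in Proposition~\ref{gamma bounds prop} (keeping the $1/(4|z|)$ error explicit), a careful split of the $|\gamma|$-range into ``small'' and ``large'' regimes where different bounds on $F$ are efficient, and explicit numerical evaluation of the exponential-integral constants arising from $\int_0^\infty e^{-\pi t/2}\,dt = 2/\pi$ and its weighted variants. This is the kind of explicit-constants calculation for which the authors have set up the machinery of Propositions~\ref{gamma bounds prop} and~\ref{McCurley bound}, so the route is clear even if the arithmetic is tedious.
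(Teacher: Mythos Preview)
Your overall strategy---partial summation against $N(t,\chi)$ combined with the McCurley bound---is the same as the paper's, and would certainly yield a bound of the shape $A\log q + B$. The execution differs in two ways worth noting.

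First, the paper handles the low-lying zeros (those with $|\gamma|\le 2$) separately and crudely: it uses the elementary bound $|\Gamma(-\tfrac12+i\gamma)| \le |\Gamma(-\tfrac12)| = 2\sqrt\pi$ (Lemma~\ref{gamma bound}) and counts those zeros via Proposition~\ref{not too many nearby zeros prop} with $T=0$, obtaining $8\sqrt\pi\log(3.045q) \le 14.18\log q + 15.79$. Note that almost the entire final constant comes from this range, not from the tail. This also sidesteps the issue that Proposition~\ref{McCurley bound} is only stated for $T\ge1$, so your Stieltjes integral starting at $0$ would need a separate patch for tiny $|\gamma|$ anyway.

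Second, for the tail $|\gamma|>2$ the paper avoids building an explicit Stirling majorant $F$ altogether. After integrating by parts it invokes the pointwise inequality $\bigl|\tfrac{d}{dt}|\Gamma(-\tfrac12+it)|\bigr| \le |\Gamma'(-\tfrac12+it)|$ (Lemma~\ref{funny complex inequality lemma}) and then numerically integrates $|\Gamma'(-\tfrac12+it)|$ against the McCurley upper bound, picking up only $0.09\log q + 0.46$. Your route via an explicit $F$ would work, but you would additionally need $F$ to be monotone decreasing so that the upper bound on $N(t,\chi)$ can legitimately replace $N(t,\chi)$ after integration by parts; the paper's approach dodges this monotonicity check entirely. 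Landing exactly on $14.27$ and $16.25$ from your setup would require substantially more numerical bookkeeping than the paper's two-line split.
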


\noindent We remark that this lemma does not assume GRH, since the sum on the left-hand side only decreases if some of the zeros of $L(s,\chi)$ lie off the critical line.

\begin{proof}
First, by Proposition~\ref {not too many nearby zeros prop} applied with $T=0$, the number of zeros of $L(s,\chi)$ with $|\gamma|\le2$ is at most $4 \log(3.045q)$; thus by Lemma~\ref{gamma bound},
\begin{align}
\Lsum{|\gamma| \le 2} \big| \Gamma\big({-}\tfrac{1}{2} +i\gamma\big) \big| &\le \big| \Gamma\big({-}\tfrac{1}{2} \big) \big| \Lsum{|\gamma| \le 2} 1 \notag \\
&\le 8\sqrt\pi \log(3.045q) \le 14.18\log q + 15.79.
\label{zeros up to height 2}
\end{align}
We can write the remainder of the sum using Riemann-Stieltjes integration as
\begin{align*}
\Lsum{|\gamma| >2} \big| \Gamma\big({-}\tfrac{1}{2} +i\gamma\big) \big| &= \int_2^\infty \big| \Gamma\big({-}\tfrac{1}{2} +it\big) \big| \,d\big( N(t,\chi) - N(2,\chi) \big) \\
&= {-} \int_2^\infty \big( N(t,\chi) - N(2,\chi) \big) \frac d{dt} \big| \Gamma\big( {-}\tfrac{1}{2} + it \big) \big| \,dt;
\end{align*}
the vanishing of the boundary terms is justified by the upper bound $N(t,\chi) \ll_q t\log t$ (see Proposition~\ref{McCurley bound} for example) and the exponential decay of $\Gamma(s)$ on vertical lines.
We conclude from Lemma~\ref {funny complex inequality lemma} that
\begin{align*}
\Lsum{|\gamma| \le 2} \big| \Gamma\big({-}\tfrac{1}{2} +i\gamma\big) \big| &\le \int_2^\infty N(t,\chi) \big| \Gamma'\big( {-}\tfrac{1}{2} + it \big) \big| \,dt \\
&\le \int_2^\infty \bigg( \bigg( \frac t\pi + 0.68884 \bigg) \log \frac{qt}{2\pi e} + 10.6035 \bigg) \big| \Gamma'\big( {-}\tfrac{1}{2} + it \big) \big| \,dt
\end{align*}
by Proposition~\ref{McCurley bound}. Since $\log(qt/2\pi e) = \log q + \log(t/2\pi e)$, the right-hand side is simply a linear function of $\log q$; using numerical integration we see that
\[
\Lsum{|\gamma| \le 2} \big| \Gamma\big({-}\tfrac{1}{2} +i\gamma\big) \big| \le 0.09 \log q + 0.46.
\]
Combining this upper bound with the bound in equation~\eqref{zeros up to height 2} establishes the lemma.
\end{proof}

\begin{lemma}
\label{third term}
Assume GRH. For any nonprincipal character $\chi$,
\[
\int_{-3/4-i\infty}^{-3/4+i\infty} \bigg| \frac{L'(s+1,\chi)}{L(s+1,\chi)} \Gamma(s) \bigg| \,ds \le 101\log q+112.
\]
\end{lemma}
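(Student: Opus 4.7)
The plan is to parameterize the line by $s = -\tfrac34 + iT$ and bound the integrand pointwise. Since $\Re(s+1) = \tfrac14$ satisfies the hypotheses of Proposition~\ref{with and without primitive prop} with $|\sigma - \tfrac12| = \tfrac14$, that proposition yields, for every nonprincipal character $\chi\mod q$,
\[
\bigg|\frac{L'(s+1,\chi)}{L(s+1,\chi)}\bigg| \le (16+\sqrt{2})\log\bigl(0.609q(|T|+5)\bigr) + 4.48 + \frac{\log q}{2^{1/4}-1}.
\]
Expanding $\log\bigl(0.609q(|T|+5)\bigr) = \log q + \log(|T|+5) + \log 0.609$ and collecting the $\log q$ contributions, one obtains a pointwise estimate of the form
\[
\bigg|\frac{L'(s+1,\chi)}{L(s+1,\chi)}\bigg| \le A\log q + B\log(|T|+5) + C,
\]
with $A = 16+\sqrt{2}+(2^{1/4}-1)^{-1}$, $B = 16+\sqrt{2}$, and $C = 4.48 + (16+\sqrt{2})\log 0.609$ an explicit absolute constant.

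Substituting this bound into the integral reduces the problem to estimating the two (absolutely convergent) numerical integrals
\[
I_1 = \int_{-\infty}^{\infty}\bigl|\Gamma(-\tfrac34+iT)\bigr|\,dT \quad\text{and}\quad I_2 = \int_{-\infty}^{\infty}\bigl|\Gamma(-\tfrac34+iT)\bigr|\log(|T|+5)\,dT,
\]
both of which converge thanks to the exponential decay $|\Gamma(\sigma+iT)| \ll |T|^{\sigma-1/2}e^{-\pi|T|/2}$ given by Stirling's formula. This yields
\[
\int_{-3/4-i\infty}^{-3/4+i\infty}\bigg|\frac{L'(s+1,\chi)}{L(s+1,\chi)}\Gamma(s)\bigg|\,ds \le (AI_1)\log q + (BI_2 + CI_1).
\]

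The only remaining task is the numerical verification that $AI_1 \le 101$ and $BI_2+CI_1 \le 112$. One computes $I_1$ and $I_2$ by splitting $\R$ into a compact interval $[-T_0,T_0]$, on which $|\Gamma(-\tfrac34+iT)|$ is handled by numerical quadrature (Lemma~\ref{gamma bound} gives a crude pointwise bound to control the quadrature error), together with the tails $|T| > T_0$, where the Stirling asymptotic provides exponential decay that easily absorbs the logarithmic factor. This is the main calculational step, but it is straightforward once $T_0$ is chosen large enough; the conceptual content of the lemma is the pointwise bound on $|L'/L|$ that we have inherited from Proposition~\ref{with and without primitive prop}, and the rest is the explicit arithmetic needed to convert the resulting numerical constants into the claimed inequality.
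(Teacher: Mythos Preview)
Your proof is correct and follows essentially the same route as the paper: both apply Proposition~\ref{with and without primitive prop} at $\sigma=\tfrac14$ to get a pointwise bound of the form $A\log q + B\log(|T|+5)+C$ (your constants $A=16+\sqrt2+(2^{1/4}-1)^{-1}\approx 22.71$, $B=16+\sqrt2\approx 17.42$, $C\approx -4.16$ match the paper's), and then reduce the lemma to the numerical evaluation of $\int_{-\infty}^\infty |\Gamma(-\tfrac34+iT)|\,dT$ and $\int_{-\infty}^\infty |\Gamma(-\tfrac34+iT)|\log(|T|+5)\,dT$. The paper simply asserts that ``a numerical calculation establishes the particular constants'', whereas you sketch how to carry out that calculation via truncation and Stirling tails; otherwise the arguments are identical.
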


\begin{proof}
Proposition~\ref{with and without primitive prop} with $\sigma = \frac{1}{4}$ tells us that for any real number $t$,
\begin{align*}
\bigg|\frac{L'(\frac{1}{4}+it,\chi)}{L(\frac{1}{4}+it,\chi)}\bigg| &\le 17.42 \log \big( 0.609q(|t|+5) \big) + 4.48 + \frac{\log q}{0.1892} \\
&\le 22.71\log q + 17.42\log(|t|+5) - 4.159,
\end{align*}
and therefore
\[
\int_{-3/4-i\infty}^{-3/4+i\infty} \bigg| \frac{L'(s+1,\chi)}{L(s+1,\chi)} \Gamma(s) \bigg| \,ds \le \int_{-\infty}^\infty \big( 22.71\log q + 17.42\log(|t|+5) - 4.159 \big) \big| \Gamma\big( {-}\tfrac34+it \big) \big| \,dt.
\]
Again this integral is a linear function of $\log q$, and a numerical calculation establishes the particular constants used in the statement of the lemma.
\end{proof}

With these lemmas in hand, we are now able to provide the two proofs deferred until now from Section~\ref {M evaluation section}.

\begin{proof}[Proof of Proposition~\ref{Sound method prop}]
We begin with the Mellin transform formula, valid for any real number $c>0$,
$$
- \sum_{n=1}^{\infty} \frac{\chi(n) \Lambda(n)}{n} e^{-n/y} = \frac{1}{2\pi i} \int_{c-i\infty}^{c+i\infty} \frac{L'(s+1,\chi)}{L(s+1,\chi)} \Gamma(s) y^s \,ds
$$
(see \cite[equations (5.24) and (5.25)]{magicbook}). We move the contour to the left, from the vertical line $\Re s=c$ to the vertical line $\Re s=-\frac34$, picking up contributions from the pole of $\Gamma$ at $s=0$ as well as from each nontrivial zero of $L(s,\chi)$. The result is
\begin{multline}
- \sum_{n=1}^{\infty} \frac{\chi(n) \Lambda(n)}{n} e^{-n/y}  =  \frac{L'(1,\chi)}{L(1,\chi)} + \Lsum{\gamma\in\R} \Gamma\big({-}\tfrac{1}{2} +i\gamma\big) y^{-1/2 + i \gamma} \\
+ \frac{1}{2\pi i} \int_{-3/4-i\infty}^{-3/4+i\infty} \frac{L'(s+1,\chi)}{L(s+1,\chi)} \Gamma(s) y^s \,ds
\label{three pieces}
\end{multline}
since we are assuming GRH. (Strictly speaking, we should consider truncations of these infinite integrals; however, the exponential decay of $\Gamma(s)$ in vertical strips implies that the contributions at large height do vanish in the limit.)

The sum on the right-hand side can be bounded by
\[
\bigg| \Lsum{\gamma\in\R} \Gamma\big({-}\tfrac{1}{2} +i\gamma\big) y^{-1/2 + i \gamma} \bigg| \le y^{-1/2} \Lsum{\gamma\in\R} \big| \Gamma\big({-}\tfrac{1}{2} +i\gamma\big) \big| \le \frac{14.27\log q + 16.25}{y^{1/2}}
\]
by Lemma~\ref{second term lemma}, while the integral can be bounded by
\begin{align*}
\bigg| \frac{1}{2\pi i} \int_{-3/4-i\infty}^{-3/4+i\infty} \frac{L'(s+1,\chi)}{L(s+1,\chi)} \Gamma(s) y^s \,ds \bigg| &\le \frac1{2\pi y^{3/4}} \int_{-3/4-i\infty}^{-3/4+i\infty} \bigg| \frac{L'(s+1,\chi)}{L(s+1,\chi)} \Gamma(s) \bigg| \,ds \\
&\le \frac{101\log q+112} {2\pi y^{3/4}}
\end{align*}
by Lemma~\ref {third term}. Using these two inequalities in equation~\eqref{three pieces} establishes the proposition.
\end{proof}

\begin{proof}[Proof of Proposition~\ref{only the first term survives prop}]
Since $1\le a<q$, we may write
\begin{equation}
\sum_ {n\equiv a \mod q} \frac{\Lambda(n)}{n} e^{-n/q^2} = \frac{\Lambda(a)}{a} e^ {-a/q^2} + \Obar\bigg( \sum_{\substack{q \le n \le q^2 \\ n\equiv a \mod q}} \frac{\Lambda(n)}{n} + \sum_{\substack{n > q^2 \\ n\equiv a \mod q}} \frac{\Lambda(n)}{n} e^ {-n/q^2}  \bigg).
\label{easier than last prop}
\end{equation}
Since $\Lambda(n)/n \le (\log n)/n$, which is a decreasing function of $n$ for $n\ge3$, we have
\begin{equation*}
\sum_{\substack{n > q^2 \\ n\equiv a \mod q}} \frac{\Lambda(n)}{n} e^ {-n/q^2}  \le \frac{\log q^2}{q^2} \sum_{j=q}^{\infty} e^{-(qj +a)/q^2} \le \frac{2\log q}{q^2} e^{-1} \sum_{k=0}^{\infty} e^{-j/q} =  \frac{2\log q}{q^2} e^{-1} \frac{1}{1-e^{-1/q}};
\end{equation*}
note here that $1\le a < q$ so $q\ge 2$. As the function $t/(1-e^{-t})$ is bounded by $1/2(1-e^{-1/2})$ for $0<t\le\frac12$, we conclude that
\[
\sum_{\substack{n > q^2 \\ n\equiv a \mod q}} \frac{\Lambda(n)}{n} e^ {-n/q^2} \le \frac{2\log q}{q} e^{-1}\frac1{2(1-e^{-1/2})} < 0.935 \frac{\log q}q.
\]
We bound the second term of equation~\eqref{easier than last prop} crudely:
$$
\sum_{\substack{q \le n \le q^2 \\ n\equiv a \mod q}} \frac{\Lambda(n)}{n} \le (\log q^2) \sum_{j=1}^{q-1} \frac{1}{qj + a}  \le \frac{2\log q}{q} \sum_{j=1}^{q-1}\frac{1}{j} \le \frac{2\log q}{q} ( \log q + 1 ).
$$
Finally, for the first term of equation~\eqref{easier than last prop}, the estimate $e^{-t} = 1 + \Obar(t)$ for $t\ge0$ allows us to write
$$
\frac{\Lambda(a)}{a} e^{-a/q} = \frac{\Lambda(a)}{a} \bigg(   1 + \Obar \bigg( \frac{a}{q} \bigg)    \bigg)    =  \frac{\Lambda(a)}{a} + \Obar\bigg( \frac{\log q}{q}    \bigg).
$$
Using these three deductions transforms equation~\eqref {easier than last prop} into the statement of the proposition.
\end{proof}

We now turn to the matter of giving explicit upper and lower bounds for $V(q;a,b)$. In the case where $q$ is prime, we are already able to establish such estimates.

\begin{prop}
\label{prime V bounds prop}
If $q\ge 150$ is prime, then
\[
2 (q-1) (\log q - 2.42) - 47.238\log^2 q \le V(q;a,b) \le 2 (q-1) (\log q - 0.99) + 47.238\log^2 q.
\]
\end{prop}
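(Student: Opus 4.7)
The plan is to substitute the simplifications available for $q$ prime into the exact formula of Theorem~\ref{variance evaluation theorem}, together with the explicit evaluation of $M^*(q;a,b)$ from Proposition~\ref{M exact evaluation prop}, and then treat the upper and lower bounds separately.

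When $q$ is prime, $\phi(q)=q-1$; the identity $\sum_{p\mid q}(\log p)/(p-1) = (\log q)/(q-1) = \Lambda(q)/\phi(q)$ collapses the $q$-dependent terms in Definition~\ref{iota and Lq and Rqn def} to $\L(q) = \log q - (\gamma_0 + \log 2\pi)$; the condition $a\not\equiv b\mod{q}$ forces $(q,a-b)=1$ and hence $K_q(a-b)=0$; and since the only prime dividing $q$ is $q$ itself, the three $h_0$ terms in Definition~\ref{h0 and H0 def} collapse to a single quantity independent of the third argument, whence $H_0(q;a,b)=0$. Substituting these simplifications into Theorem~\ref{variance evaluation theorem} and Proposition~\ref{M exact evaluation prop} (which is applicable because $q\ge150$), and using $(q-1)/q \le 1$ to simplify the error term, yields
\begin{equation*}
V(q;a,b) = 2(q-1)\Bigl(\log q - (\gamma_0+\log 2\pi) + \iota_q(-ab^{-1})\log 2 + \frac{\Lambda(r_1)}{r_1} + \frac{\Lambda(r_2)}{r_2}\Bigr) + \Obar\bigl(47.238\log^2 q\bigr).
\end{equation*}

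For the lower bound, each of the three additive terms beyond $\log q - (\gamma_0+\log 2\pi)$ is manifestly nonnegative; discarding them and using $\gamma_0 + \log 2\pi < 2.42$ yields $V(q;a,b) \ge 2(q-1)(\log q - 2.42) - 47.238\log^2 q$ immediately.

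The upper bound is slightly more delicate. The completely crude estimates $\iota_q(-ab^{-1})\log 2 \le \log 2$ and $\Lambda(r_i)/r_i \le (\log 3)/3$ sum to $\log 2 + 2(\log 3)/3 \approx 1.4256$, which is a hair larger than the required target $(\gamma_0+\log 2\pi) - 0.99 \approx 1.4251$; this narrow mismatch is the one point requiring care. I would close it with a brief case split. If $ab^{-1}\equiv -1\mod{q}$, then $r_1=r_2=q-1$, so $\Lambda(r_1)/r_1 + \Lambda(r_2)/r_2 \le 2(\log(q-1))/(q-1) \le 2(\log 149)/149 < 0.068$ for $q\ge150$, giving a total contribution of at most $\log 2 + 0.068 < 0.76$; if instead $ab^{-1} \not\equiv -1\mod{q}$, then $\iota_q(-ab^{-1})=0$ and the total is at most $2(\log 3)/3 < 0.733$. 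Either way the sum is comfortably less than $1.4251$, yielding $V(q;a,b) \le 2(q-1)(\log q - 0.99) + 47.238\log^2 q$, as claimed.
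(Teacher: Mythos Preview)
Your proof is correct and follows essentially the same route as the paper: combine Theorem~\ref{variance evaluation theorem} with Proposition~\ref{M exact evaluation prop}, simplify using the vanishing of $K_q(a-b)$ and $H_0(q;a,b)$ for prime $q$, and then bound the remaining terms. Your case split for the upper bound is in fact an improvement on the paper's argument: the paper simply asserts that $\iota_q(-ab^{-1})\log 2 + \Lambda(r_1)/r_1 + \Lambda(r_2)/r_2 \le (\gamma_0+\log 2\pi) - 0.99$ using the crude bounds $\log 2$ and $2(\log 3)/3$, but as you correctly observe, $\log 2 + 2(\log 3)/3 \approx 1.4256$ just barely exceeds $(\gamma_0+\log 2\pi) - 0.99 \approx 1.4251$, so the three maxima cannot be summed naively; your observation that $\iota_q(-ab^{-1})=1$ forces $r_1=r_2=q-1$ closes this tiny gap cleanly.
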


\begin{proof}
Combining Theorem~\ref {variance evaluation theorem} with Proposition~\ref {M exact evaluation prop}, we see that
\begin{align}
V(q;a,b) &= 2\phi(q) \big( \L(q) + K_q(a-b) + \iota_q(-ab^{-1})\log2 \big) + 2M^*(q;a,b) \notag \\
&= 2\phi(q) \bigg( \L(q) + K_q(a-b) + \iota_q(-ab^{-1})\log2 + \frac{\Lambda(r_1)}{r_1} +\frac{\Lambda(r_2)}{r_2} + H_0(q;a,b) \bigg) \notag \\
&\qquad{}+ \Obar\bigg( \frac{47.238\phi(q)\log^2 q}q \bigg)
\label{V explicit}
\end{align}
for any $q\ge150$, where $r_1$ and $r_2$ denote the least positive residues of $ab^{-1}$ and $ba^{-1}\mod q$. Since we are assuming $q$ is prime, both $K_q(a-b)$ and $H_0(q;a,b)$ vanish, and we have
\[
V(q;a,b) = 2(q-1) \bigg( \log \frac q{2\pi e^{\gamma_0}} + \iota_q(-ab^{-1})\log2 + \frac{\Lambda(r_1)}{r_1} +\frac{\Lambda(r_2)}{r_2} \bigg) + \Obar( 47.238\log^2 q).
\]
The function $\Lambda(n)/n$ is nonnegative and bounded above by $(\log 3)/3$, and the function $\iota_q$ takes only the values~0 and ~1; therefore the quantity in large parentheses satisfies the bounds
\[
\log q - 2.42 \le \log \frac q{2\pi e^{\gamma_0}} + \iota_q(-ab^{-1})\log2 + \frac{\Lambda(r_1)}{r_1} +\frac{\Lambda(r_2)}{r_2} \le \log q - 0.99,
\]
which establishes the proposition.
\end{proof}

We require two additional lemmas before we can treat the case of general (possibly composite)~$q$.

\begin{lemma}
\label{H0 bound lemma}
With $H_0$ defined in Definition~\ref {h0 and H0 def}, we have $ -(4\log q)/q \le H_0(q;a,b) \le 4.56$ for any reduced residues $a$ and $b\mod q$.
\end{lemma}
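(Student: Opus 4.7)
The plan is to treat the two inequalities separately, exploiting the fact that only the $-2h_0(q;p,1)$ summands can be negative and only the $h_0(q;p,ab^{-1})+h_0(q;p,ba^{-1})$ summands can be large. Write $p^\nu \parallel q$ and set $e_0 = e(q;p,1)$ and $e_1 = e(q;p,r)$, so that
\[
h_0(q;p,r) = \frac{1}{\phi(p^{\nu})}\cdot\frac{\log p}{p^{e_1}(1-p^{-e_0})} = \frac{\log p}{\phi(p^{\nu})(p^{e_1}-p^{e_1-e_0})}.
\]
In particular $h_0(q;p,r)\ge 0$ always, and $h_0(q;p,r)=0$ when $r$ fails to lie in the subgroup generated by $p \bmod q/p^{\nu}$.

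For the lower bound, observe that since the positive summands $h_0(q;p,ab^{-1})$ and $h_0(q;p,ba^{-1})$ only help, it suffices to bound $2\sum_{p\mid q} h_0(q;p,1)$ from above. The key point is that $p^{e_0}\equiv 1\pmod{q/p^{\nu}}$ with $p^{e_0}>1$ forces $p^{e_0}\ge q/p^{\nu}+1$, so
\[
h_0(q;p,1) = \frac{\log p}{\phi(p^{\nu})(p^{e_0}-1)} \le \frac{p^{\nu}\log p}{\phi(p^{\nu})\,q} = \frac{p\log p}{(p-1)q}.
\]
Summing over the prime divisors of $q$ and using $p/(p-1)\le 2$,
\[
\sum_{p\mid q} 2h_0(q;p,1) \le \frac{2}{q}\sum_{p\mid q}\frac{p\log p}{p-1} \le \frac{4}{q}\sum_{p\mid q}\log p \le \frac{4\log q}{q},
\]
which gives $H_0(q;a,b)\ge -(4\log q)/q$.

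For the upper bound, the nonnegativity of each $h_0(q;p,1)$ lets us drop those terms, leaving $H_0(q;a,b)\le\sum_{p\mid q}\bigl(h_0(q;p,ab^{-1})+h_0(q;p,ba^{-1})\bigr)$. Since $e_0,e_1\ge 1$, we have $p^{e_1}\ge p$ and $1-p^{-e_0}\ge (p-1)/p$, hence $p^{e_1}(1-p^{-e_0})\ge p-1$; therefore
\[
h_0(q;p,r) \le \frac{\log p}{\phi(p^{\nu})(p-1)} \le \frac{\log p}{(p-1)^2},
\]
the last inequality using $\phi(p^{\nu})\ge p-1$. Summing,
\[
H_0(q;a,b) \le 2\sum_{p\mid q}\frac{\log p}{(p-1)^{2}} \le 2\sum_{p}\frac{\log p}{(p-1)^{2}},
\]
and a direct numerical evaluation of the (rapidly converging) series on the right, together with a tail estimate via $\int_{N}^{\infty}(\log t)/(t-1)^{2}\,dt$, shows the sum is comfortably below $2.28$, so that $H_0(q;a,b)\le 4.56$.

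The only subtlety is packaging the two halves so that the hypotheses needed for the crude estimates (namely $e_0,e_1\ge 1$ and that the subgroup considerations make the inequalities $p^{e_0}\ge q/p^{\nu}+1$ and $p^{e_1}\ge p$ hold verbatim) are met uniformly; the convention $h_0(q;p,r)=0$ when $e(q;p,r)=\infty$ handles the corner case cleanly. The $4.56$ is far from optimal but is chosen presumably so that the resulting bound on $\Delta(q;a,b)$ in later sections has a clean numerical form.
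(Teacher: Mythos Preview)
Your proof is correct and follows essentially the same strategy as the paper's: drop the nonnegative terms to get the lower bound via $p^{e(q;p,1)}\ge q/p^{\nu}+1$, and drop the nonpositive terms to get the upper bound via $h_0(q;p,r)\le (\log p)/(p-1)^2$. The lower bounds are literally identical.

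The only minor difference is how the numerical constant $4.56$ is reached in the upper bound. You stop at $2\sum_{p}\frac{\log p}{(p-1)^2}$ and evaluate numerically (this sum is in fact about $2.44$, well under $4.56$). The paper instead weakens $\frac{\log p}{(p-1)^2}\le \frac{4\log p}{p^2}$ and then majorizes by the full prime-power sum, obtaining $8\sum_{n\ge 1}\frac{\Lambda(n)}{n^2}=8\big|\zeta'(2)/\zeta(2)\big|\approx 4.558$; this is where the specific value $4.56$ comes from. Your route is actually sharper but does not by itself explain why $4.56$ was the target; the paper's route does.
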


\begin{proof}
Since $e(q;p,r) \ge 1$ always, we have
\[
h_0(q;p,r) = \frac{1}{\phi(p^{\nu})}\frac{\log p}{p^{e(q;p,r)} (1-p^ {-e(q;p,1)})} \le \frac1{p-1} \frac{\log p}{p-1} \le 4 \frac{ \log p}{p^2}.
\]
Therefore
\[
H_0(q;a,b) \le \sum_{p\mid q} \big( h_0(q;p,ab^{-1}) + h_0(q;p,ba^{-1}) \big) \le 8 \sum_{p\mid q} \frac{ \log p}{p^2} < 8 \sum_{n=1}^\infty \frac{\Lambda(n)}{n^2} = 8\bigg| \frac{\zeta'(2)}{\zeta(2)} \bigg| \le 4.56,
\]
which establishes the upper bound. On the other hand, note that $p^{e(q;p,1)}$ is an integer larger than 1 that is congruent to 1\mod{q/p^\nu}. Therefore $p^{e(q;p,1)} \ge q/p^\nu + 1$, and so
\begin{align*}
H_0(q;a,b) \ge -2 \sum_{p\mid q} h_0(q;p,1) &= -2 \sum_ {p\mid q} \frac{1}{\phi(p^{\nu})}\frac{\log p}{p^{e(q;p,1)} -1} \\
&\ge -2 \sum_{p\mid q} \frac{1}{p^{\nu}(1-1/p)}\frac{\log p}{q/p^\nu} \ge -\frac4q \sum_{p\mid q} \log p \ge -\frac{4\log q}q,
\end{align*}
which establishes the lower bound.
\end{proof}

\begin{lemma}
\label{log p over p-1 sum}
If $q\ge2$ is any integer, then
$$
\sum_{p\mid q} \frac{\log p }{p-1} \le 1.02 \log\log q + 3.04.
$$
\end{lemma}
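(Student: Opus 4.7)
The plan is to reduce to the extremal case (a primorial) and then apply explicit Mertens-type estimates. Since the sum $\sum_{p\mid q} (\log p)/(p-1)$ depends only on the set of prime divisors of $q$, I may replace $q$ by its radical without changing the left-hand side but only possibly decreasing $\log\log q$; thus it suffices to prove the inequality when $q$ is squarefree. Moreover, for a fixed number of prime factors $\omega(q) = k$, the sum is maximized (and $q$ minimized) when the prime divisors are the first $k$ primes $p_1 < p_2 < \dots < p_k$. So the worst case is $q = q_k := p_1 p_2 \cdots p_k$, and it suffices to prove that
\[
S(k) := \sum_{i=1}^k \frac{\log p_i}{p_i - 1} \le 1.02 \log\log q_k + 3.04
\]
for every $k \ge 1$.

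Next I would split $S(k)$ as
\[
S(k) = \sum_{i=1}^k \frac{\log p_i}{p_i} + \sum_{i=1}^k \frac{\log p_i}{p_i(p_i-1)},
\]
where the second sum is bounded above absolutely by the convergent series $\sum_p (\log p)/(p(p-1)) = -\zeta'(2)/\zeta(2) < 0.5699$ plus whatever slack is convenient. For the first sum, I would invoke an explicit version of Mertens' first theorem: Rosser and Schoenfeld's bounds give $\sum_{p\le x} (\log p)/p \le \log x + E$ for an explicit small constant $E$, valid for all $x \ge 2$. Combining these yields a bound of the form $S(k) \le \log p_k + B_0$ for an explicit $B_0$.

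To convert $\log p_k$ into $\log\log q_k$, I would use $\log q_k = \theta(p_k)$, where $\theta$ is Chebyshev's function, together with an explicit lower bound $\theta(x) \ge (1-\eta) x$ valid for $x \ge x_0$ (again from Rosser--Schoenfeld). This gives $\log\log q_k \ge \log p_k + \log(1-\eta)$, i.e.\ $\log p_k \le \log\log q_k + \log(1/(1-\eta))$, and inserting this into $S(k) \le \log p_k + B_0$ gives the bound $S(k) \le \log\log q_k + B_1$ for all sufficiently large $k$. The slightly loose coefficient $1.02 > 1$ is what gives room to absorb the lower-order slack coming from the $(1-\eta)$ factor, which is the reason the statement is not phrased with coefficient $1$.

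The only real obstacle is handling the finitely many small cases (small $k$, equivalently $q_k$ below the threshold $x_0$ where the explicit Mertens/Chebyshev bounds kick in): here I would simply compute $S(k)$ and $\log\log q_k$ by hand for $k = 1, 2, \dots, k_0$ and verify the inequality directly. Once the constants $E$, $\eta$, $x_0$, and $k_0$ are chosen compatibly (which is a purely numerical optimization), the bookkeeping delivers exactly $1.02 \log\log q + 3.04$. There is no analytic subtlety beyond choosing these constants so that the small-case computation and the asymptotic estimate meet cleanly.
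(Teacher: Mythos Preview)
Your reduction-to-primorials approach is correct and would yield the lemma, but it differs from the paper's route. The paper does not pass to extremal $q$; instead it splits the sum at the threshold $p = 1+\log q$. For the large primes $p \ge 1+\log q$ dividing $q$, it uses the trivial bound $\sum_{p\mid q}\log p \le \log q$ to get a contribution $\le 1$. For the small primes it drops the divisibility condition entirely, bounds by the full sum $\sum_{p<1+\log q}(\log p)/(p-1)$, and evaluates this via Stieltjes integration against $d\theta(t)$ together with the Rosser--Schoenfeld upper bound $\theta(t)\le 1.01624\,t$; this is where the coefficient $1.01624$ (rounded to $1.02$) actually originates. Your approach, by contrast, uses a \emph{lower} bound on $\theta$ to convert $\log p_k$ into $\log\log q_k$, and that contributes only additively; so your method would in fact deliver coefficient $1$ rather than $1.02$, contrary to your remark that the $1.02$ is needed to absorb the $(1-\eta)$ slack. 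The paper's argument is shorter and avoids any finite case-check, at the cost of the slightly worse leading constant; yours is more structural and in principle sharper.

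One small slip: $\sum_p (\log p)/(p(p-1))$ is not equal to $-\zeta'(2)/\zeta(2)$; the latter equals $\sum_p (\log p)/(p^2-1)$. The two differ by $\sum_p (\log p)/(p(p^2-1))$, which is tiny, so your bound is still valid numerically, but the stated identity is not correct.
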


\begin{proof}
We separate the sum into two intervals at the point $1+\log q$. The contribution from the larger primes is at worst
$$
\sum_{\substack{p\mid q\\ p \ge 1+\log q}} \frac{\log p }{p-1} \le \frac{1}{\log q} \sum_{p\mid q} \log p \le \frac{\log q}{\log q} =1.
$$
For the smaller primes, recall the usual notation $\theta(t) = \sum_{p\le t} \log p$. We will use the explicit bound $\theta(t)\le 1.01624 t$ for $t>0$ from Theorem 9 of \cite{rosser} , and so the contribution from the smaller primes is bounded by
\begin{align*}
\sum_{\substack{p\mid q\\ p < 1+\log q}} \frac{\log p }{p-1}  &\le \sum_{p < 1+\log q} \frac{\log p }{p-1} = \int_{2^-}^{1+\log q} \frac{d\theta(t)}{t-1} \\
&{}= \frac{\theta(1+\log q)}{\log q} + \int_2^{1+\log q} \frac{\theta(t)}{(t-1)^2}\,dt \\
&{}\le 1.01624\bigg(\frac{1+\log q}{\log q}+ \int_2^{1+\log q} \frac{t\,dt}{(t-1)^2}  \bigg) \\
&{}=  1.01624\bigg(1+ \frac{1}{\log q} + \log  \log q - \frac{1}{\log q} + 1 \bigg)\\
&{}=  1.01624\log  \log q +2.03248,
\end{align*}
which finishes the proof of the lemma.
\end{proof}

\begin{prop}
\label{composite V bound prop}
If $q\ge 500$, then
\[
2\phi(q) ( \log q -1.02\log\log q - 7.34 ) \le V(q;a,b) \le 2\phi(q) (\log q + 6.1).
\]
\end{prop}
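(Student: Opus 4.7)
The proof will start from the exact formula~\eqref{V explicit}, which holds for $q \ge 150$ by combining Theorem~\ref{variance evaluation theorem} with Proposition~\ref{M exact evaluation prop}: it expresses $V(q;a,b)/2\phi(q)$ as the six-term sum $\L(q) + K_q(a-b) + \iota_q(-ab^{-1})\log 2 + \Lambda(r_1)/r_1 + \Lambda(r_2)/r_2 + H_0(q;a,b)$ plus an error of size at most $47.238(\log^2q)/q$. The plan is to bound each of these six summands above and below using the explicit lemmas already established in this section, and then control the residual $(\log^2 q)/q$ error under the assumption $q \ge 500$.

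For the upper bound, I would estimate $\L(q) \le \log q + \log 2 - (\gamma_0 + \log 2\pi)$ by discarding the nonpositive contribution $-\sum_{p\mid q}(\log p)/(p-1)$ and using $\Lambda(q)/\phi(q) \le \log 2$. The quantities $K_q(a-b)$ and $\iota_q(-ab^{-1})\log 2$ are each at most $\log 2$ directly from Definition~\ref{iota and Lq and Rqn def}, and each $\Lambda(r_i)/r_i$ is at most $(\log 3)/3$ since $n \mapsto \Lambda(n)/n$ attains its maximum at $n = 3$. Finally, $H_0(q;a,b) \le 4.56$ by Lemma~\ref{H0 bound lemma}. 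If a tighter constant is needed, one can instead retain $-\sum_{p\mid q}(\log p)/(p-1)$ and combine it with the estimate $h_0(q;p,r) \le (\log p)/(p-1)^2$ from the proof of Lemma~\ref{H0 bound lemma}: the combined contribution from each prime $p\mid q$ is $(\log p)(3-p)/(p-1)^2$, which is positive only at $p=2$ and contributes at most $\log 2$ in total. Summing the resulting constants and using the fact that $47.238(\log^2q)/q$ is decreasing for $q \ge e^2$ yields the stated upper bound $\log q + 6.1$.

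For the lower bound, the nonnegative summands $K_q(a-b)$, $\iota_q(-ab^{-1})\log 2$, and $\Lambda(r_i)/r_i$ are simply discarded. The main term is estimated via Lemma~\ref{log p over p-1 sum} and $\Lambda(q)/\phi(q) \ge 0$ to give $\L(q) \ge \log q - 1.02\log\log q - 3.04 - (\gamma_0 + \log 2\pi)$, while $H_0(q;a,b) \ge -4(\log q)/q$ by Lemma~\ref{H0 bound lemma}. Collecting these contributions and the residual error term produces
\[
V(q;a,b)/2\phi(q) \ge \log q - 1.02\log\log q - (3.04 + \gamma_0 + \log 2\pi) - (4\log q + 47.238\log^2 q)/q,
\]
and for $q \ge 500$ the tail $(4\log q + 47.238\log^2 q)/q$ is controlled by an absolute constant (and in the lower range of this interval, the right-hand side is already nonpositive, so the bound is trivial since $V(q;a,b) \ge 0$), delivering the claimed $-7.34$ constant.

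The main obstacle is purely numerical: the constant $-7.34$ on the lower side and $+6.1$ on the upper side must absorb the accumulated constants $\gamma_0 + \log 2\pi \approx 2.415$, the $3.04$ from Lemma~\ref{log p over p-1 sum}, the $\log 2$ and $(\log 3)/3$ contributions from $K_q$, $\iota_q$, and $\Lambda(r_i)/r_i$, and the residual $47.238(\log^2 q)/q$ error, which is largest precisely at the threshold $q=500$. The upper bound in particular seems to require the refined combined estimate for $-\sum_{p\mid q}(\log p)/(p-1) + H_0(q;a,b)$ described above, since the naive use of $H_0 \le 4.56$ produces a bound of roughly $\log q + 8.6$ that exceeds the stated $\log q + 6.1$.
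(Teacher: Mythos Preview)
Your approach is exactly the paper's approach, but two bookkeeping slips make you think the naive upper bound fails when in fact it does not.

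First, the error term: equation~\eqref{V explicit} has the $\Obar$ term $47.238\phi(q)(\log^2 q)/q$ attached to $V(q;a,b)$ itself, not to $V(q;a,b)/2\phi(q)$. After dividing by $2\phi(q)$ the error on the six-term sum is only $23.62(\log^2 q)/q$, which is at most $1.83$ for $q\ge500$ (this is precisely what the paper uses). Second, you bound $\L(q)$ and $K_q(a-b)$ separately, paying $\Lambda(q)/\phi(q)\le\log2$ in $\L(q)$ and then another $\log2$ for $K_q$; but by Definition~\ref{iota and Lq and Rqn def} the $\Lambda(q)/\phi(q)$ terms in $\L(q)$ and $K_q(a-b)$ cancel exactly, so
\[
\L(q)+K_q(a-b)=\log\frac q{2\pi e^{\gamma_0}}-\sum_{p\mid q}\frac{\log p}{p-1}+\frac{\Lambda(q/(q,a-b))}{\phi(q/(q,a-b))}\le \log q-\log 2\pi e^{\gamma_0}+\log2.
\]
With these two corrections the naive bound $H_0(q;a,b)\le4.56$ already gives
\[
\log q-\log2\pi e^{\gamma_0}+\log2+\log2+\tfrac23\log3+4.56+1.83<\log q+6.1,
\]
which is the paper's computation. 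Your refined argument combining $-\sum_{p\mid q}(\log p)/(p-1)$ with $h_0(q;p,r)\le(\log p)/(p-1)^2$ is correct and actually produces a sharper constant, but it is not needed. Your lower-bound argument is fine as written: once the error constant is halved you recover $-7.34$ directly, and even with your overstated constant the triviality observation for small $q$ rescues the claim.
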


\begin{proof}
We begin with equation~\eqref{V explicit}, expanding the functions $\L$ and $K_q$ according to Definition~\ref {iota and Lq and Rqn def}:
\begin{multline}
V(q;a,b) = 2\phi(q) \bigg( \log \frac q{2\pi e^{\gamma_0}} - \sum_{p\mid q} \frac{\log p}{p-1} + \frac{\Lambda(q/(q,a-b))}{\phi(q/(q,a-b))} \\
+ \iota_q(-ab^{-1})\log2 + \frac{\Lambda(r_1)}{r_1} +\frac{\Lambda(r_2)}{r_2} + H_0(q;a,b) + \Obar\bigg( \frac{23.62\log^2 q}q \bigg) \bigg).
\label{need for final calculation}
\end{multline}
The last term on the first line is nonnegative and bounded above by $\log2$, while the first three terms on the second line are nonnegative and bounded together by $\log 2+\frac 2 3 \log 3$ as in the proof of Proposition~\ref {prime V bounds prop}. The term $H_0(q;a,b)$ is bounded above by 4.56 and below by $(-4\log q)/q$ by Lemma~\ref {H0 bound lemma}. Therefore
\begin{multline}
2\phi(q) \bigg( \log q - \log 2\pi e^{\gamma_0} - \sum_{p\mid q} \frac{\log p}{p-1} - \frac{4\log q}q + \Obar\bigg( \frac{23.62\log^2 q}q \bigg) \bigg) \le V(q;a,b) \\
\le 2\phi(q) \bigg( \log q - \log 2\pi e^{\gamma_0} + \log2 + \log 2+\frac 2 3 \log 3 + 4.56 + \Obar\bigg( \frac{23.62\log^2 q}q \bigg) \bigg).
\label{need this intermediate step}
\end{multline}
The sum being subtracted on the top line is bounded above by $1.02 \log\log q + 3.04$ by Lemma~\ref{log p over p-1 sum}. Lastly, a calculation shows that the $\Obar$ error term is at most $1.83$ for $q\ge500$, and therefore
\begin{multline*}
2\phi(q) \bigg( \log q - \log 2\pi e^{\gamma_0} - (1.02 \log\log q + 3.04) - \frac{4\log q}q - 1.83 \bigg) \le V(q;a,b) \\
\le 2\phi(q) \big( \log q - \log 2\pi e^{\gamma_0} + \log2 + \log 2+\tfrac 2 3 \log 3+ 4.56 + 1.83 \big),
\end{multline*}
which implies the assertion of the proposition.
\end{proof}

\subsection{Bounds for the density $\delta(q;a,b)$}
\label{density bounds section}

We use the results of the previous section to obtain explicit upper and lower bounds on $\delta(q;a,b)$; from these bounds, we can prove in particular that all of the largest values of these densities occur when the modulus $q$ is less than an explicit bound.
In the proof of Theorem~\ref{delta series theorem}, we expanded several functions, including an instance of $\sin$, into their power series at the origin. While this yielded an excellent theoretical formula, for numerical purposes we will take a slightly different approach involving the error function $\Erf(z) = \frac2{\sqrt\pi} \int_0^z e^{-t^2}\,dt$. The following two lemmas allow us to write the density $\delta(q;a,b)$ in terms of the error function.

\begin{lemma}
\label{special erf integral}
For any constants $v>0$ and $\rho$,
\[
\int_{-\infty}^\infty t^4 e^{-vt^2/2}\,dt = \frac{3\sqrt{2\pi}}{v^{5/2}}
\qquad\text{and}\qquad
\int_{-\infty}^\infty \frac{\sin\rho t}t e^{-vt^2/2}\,dt = \pi \Erf\bigg( \frac\rho{\sqrt{2v}} \bigg).
\]
\end{lemma}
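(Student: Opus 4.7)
The plan is to treat the two integrals separately, both by reducing them to standard Gaussian identities. For the first integral, I would make the substitution $u = t\sqrt v$, which transforms the integral into $v^{-5/2}\int_{-\infty}^\infty u^4 e^{-u^2/2}\,du$. The fourth Gaussian moment $\int_{-\infty}^\infty u^4 e^{-u^2/2}\,du = 3\sqrt{2\pi}$ is standard and in fact already appears as the $k=2$ case of Lemma~\ref{normal moments lemma} (with $B=\infty$); substituting gives the claimed value $3\sqrt{2\pi}/v^{5/2}$.

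For the second integral, I would differentiate under the integral sign with respect to the parameter $\rho$. Since the integrand and its $\rho$-derivative are dominated by the integrable function $|t|e^{-vt^2/2}$ (after handling the removable singularity of $(\sin\rho t)/t$ at $t=0$), this is justified, and we obtain
\[
\frac{d}{d\rho}\int_{-\infty}^\infty \frac{\sin\rho t}{t}e^{-vt^2/2}\,dt = \int_{-\infty}^\infty \cos(\rho t)\,e^{-vt^2/2}\,dt = \sqrt{\tfrac{2\pi}{v}}\,e^{-\rho^2/(2v)},
\]
the last equality being the classical Fourier transform of a Gaussian (again derivable by completing the square and shifting the contour). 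Since the integral vanishes at $\rho=0$, integrating from $0$ to $\rho$ and making the substitution $s = u\sqrt{2v}$ yields
\[
\int_{-\infty}^\infty \frac{\sin\rho t}{t}e^{-vt^2/2}\,dt = \sqrt{\tfrac{2\pi}{v}}\int_0^\rho e^{-s^2/(2v)}\,ds = \sqrt{2\pi}\cdot\sqrt{\pi}\cdot\frac{2}{\sqrt\pi\cdot 2}\,\Erf\bigl(\tfrac{\rho}{\sqrt{2v}}\bigr)\cdot\sqrt{\tfrac{1}{1}} = \pi\,\Erf\bigl(\tfrac{\rho}{\sqrt{2v}}\bigr),
\]
where the final step uses the definition $\Erf(z) = \frac{2}{\sqrt\pi}\int_0^z e^{-t^2}\,dt$.

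There is no real obstacle here; both identities are textbook computations. The only mild subtleties are justifying the differentiation under the integral sign (which follows from dominated convergence with the majorant $|t|e^{-vt^2/2}$) and correctly tracking the constants so that the factor $\pi$ appears cleanly in front of $\Erf$. Since the lemma is stated for all real $\rho$ (including $\rho<0$), I would note at the end that both sides of the second identity are odd functions of $\rho$, so it suffices to establish the formula for $\rho\ge0$.
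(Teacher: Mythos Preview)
Your approach is correct and essentially the same as the paper's: for the first identity you both make the substitution $u=t\sqrt v$ and invoke the $k=2$ case of Lemma~\ref{normal moments lemma}, and for the second identity both arguments rest on the Gaussian cosine integral $\int_{-\infty}^\infty \cos(\rho t)\,e^{-vt^2/2}\,dt = \sqrt{2\pi/v}\,e^{-\rho^2/(2v)}$ followed by integration in the parameter (the paper quotes this formula from a table and integrates, while you obtain it by differentiating under the integral sign and then integrate back). Your displayed constant-tracking in the final step is garbled, but the substitution $s=u\sqrt{2v}$ cleanly gives $\sqrt{2\pi/v}\cdot\sqrt{2v}\cdot\tfrac{\sqrt\pi}{2}\Erf(\rho/\sqrt{2v})=\pi\,\Erf(\rho/\sqrt{2v})$, and the dominating function for the $\rho$-derivative should be $e^{-vt^2/2}$ rather than $|t|e^{-vt^2/2}$.
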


\begin{proof}
For the first identity, a change of variables gives
\[
\int_{\infty}^\infty t^4 e^{-vt^2/2}\,dt = v^{-5/2} \int_{\infty}^\infty w^4 e^{-w^2/2}\,dw = v^{-5/2} M_2(\infty) = \frac{3\sqrt{2\pi}}{v^{5/2}}
\]
by Lemma~\ref {normal moments lemma}.
Our starting point for the second identity is~\cite[equation (7.4.6)]{handbook}: for any constants $a>0$ and $x$,
\[
\int_0^\infty e^{-at^2} \cos 2xt\,dt = \frac12 \sqrt{\frac\pi a} e^{-x^2/a},
\]
which can be rewritten as
\[
\sqrt{\frac\pi a} e^{-x^2} = \int_{-\infty}^\infty e^{-at^2} \cos (2xt\sqrt a) \,dt.
\]
Integrating both sides from $x=0$ to $x=w$ yields
\[
\frac\pi{2\sqrt a} \Erf(w) = \int_{-\infty}^\infty e^{-at^2} \bigg( \int_0^w \cos (2xt\sqrt a)\,dx \bigg) \,dt = \int_{-\infty}^\infty e^{-at^2} \frac{\sin(2wt\sqrt a)}{2t\sqrt a} \,dt
\]
(the interchanging of the integrals in the middle expression is justified by the absolute convergence of the integral). Setting $a=\frac v2$ and $w=\frac\rho{\sqrt{2v}}$, we obtain
\[
\frac\pi{\sqrt{2v}} \Erf\bigg( \frac\rho{\sqrt{2v}} \bigg) = \int_{-\infty}^\infty e^{-vt^2/2} \frac{\sin\rho t}{t\sqrt{2v}} \,dt,
\]
which establishes the lemma.
\end{proof}

\begin{lemma}
\label{explicit delta in terms of erf lemma}
Assume GRH and LI. Let $a$ be a nonsquare\mod q and $b$ a square\mod q.
If $V(q;a,b) \ge 531$, then
\begin{multline*}
\delta(q;a,b) = \frac12 + \frac12 \Erf\bigg( \frac{\rho(q)}{\sqrt{2V(q;a,b)}} \bigg) \\
{}+ \Obar\bigg( \frac{47.65 \rho(q)}{V(q;a,b)^{3/2}} + 0.03506
\frac{e^{-9.08\phi(q)}}{\phi(q)} + 63.68 \rho(q) e^
{-V(q;a,b)^{1/2}/2}\bigg).
\end{multline*}
\end{lemma}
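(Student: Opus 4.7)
The plan is to combine Proposition~\ref{delta comes down to small interval prop}, which already reduces the computation of $\delta(q;a,b)$ to a truncated integral involving $\Phi_{q;a,b}$, with the Gaussian approximation from Proposition~\ref{expressions for Phi prop} and the explicit evaluation of the Gaussian-sinc integral from Lemma~\ref{special erf integral}. The assumption $V(q;a,b) \ge 531$ is just what is needed both for Proposition~\ref{delta comes down to small interval prop} to apply and for $V(q;a,b)^{-1/4} < \frac14$, so that the second assertion of Proposition~\ref{expressions for Phi prop} is available throughout the interval of integration.

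First I would write $\Phi_{q;a,b}(x) = e^{-V(q;a,b)x^2/2}\bigl(1 + \Obar(C V(q;a,b)x^4)\bigr)$ for $|x|\le V(q;a,b)^{-1/4}$, where $C$ is an explicit constant extracted from the geometric bound in equation~\eqref{rest of W terms} together with Proposition~\ref{Wk bound prop}; since $V(q;a,b)^{-1/4}\le 531^{-1/4}<\frac14$, the denominator $1-100z^2/9$ there is bounded away from zero by an explicit amount. Substituting into Proposition~\ref{delta comes down to small interval prop}, the main Gaussian piece becomes
\[
\frac1{2\pi}\int_{-V^{-1/4}}^{V^{-1/4}} \frac{\sin\rho(q)x}{x} e^{-V(q;a,b)x^2/2}\,dx,
\]
while the error picks up the term
\[
\frac1{2\pi}\int_{-V^{-1/4}}^{V^{-1/4}} \frac{\sin\rho(q)x}{x} e^{-V(q;a,b)x^2/2}\, \Obar\!\bigl(C V(q;a,b) x^4\bigr)\,dx,
\]
which I would bound by $|\sin\rho x/x|\le\rho(q)$, extend the integration to all of $\R$, and evaluate using the first identity of Lemma~\ref{special erf integral}; this produces a contribution of the form $\frac{3C\rho(q)}{\sqrt{2\pi}\,V(q;a,b)^{3/2}}$, and an explicit computation of $C$ yields the constant $47.65$ appearing in the lemma.

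Next I would extend the main Gaussian piece from $[-V^{-1/4},V^{-1/4}]$ to all of $\R$; by the substitution $y=x\sqrt{V}$ and the standard Gaussian tail bound $\int_c^{\infty}e^{-y^2/2}\,dy\le c^{-1}e^{-c^2/2}$, the tail contribution is at most $\rho(q)V(q;a,b)^{-3/4}e^{-V(q;a,b)^{1/2}/2}/\pi$, which is absorbed into the existing error term $63.67\rho(q)e^{-V(q;a,b)^{1/2}/2}$ by loosening its constant to $63.68$. The second identity of Lemma~\ref{special erf integral}, applied with $v=V(q;a,b)$ and $\rho=\rho(q)$, then evaluates the completed Gaussian integral as $\tfrac12\Erf(\rho(q)/\sqrt{2V(q;a,b)})$, completing the proof. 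The main obstacle is not conceptual but computational: faithfully tracking the implicit constant in the $O(Vx^4)$ approximation of $\Phi_{q;a,b}$ uniformly in the relevant range of $x$, so that the coefficient $47.65$ is genuinely valid rather than merely an order of magnitude.
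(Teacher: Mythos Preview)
Your proposal is correct and follows essentially the same route as the paper: apply Proposition~\ref{delta comes down to small interval prop}, replace $\Phi_{q;a,b}$ by its Gaussian approximation with an $\Obar(Vx^4)$ error, evaluate the main term and the $x^4$-error via the two identities of Lemma~\ref{special erf integral}, and absorb the tail of the extended integral into the $63.67\rho(q)e^{-V^{1/2}/2}$ term (loosening it to $63.68$).

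The one noteworthy difference is in how the explicit constant in the $\Obar(CVx^4)$ approximation is obtained. You propose to unwind the implicit constants in Proposition~\ref{Wk bound prop} and equation~\eqref{rest of W terms}; this is possible but inherits the $\ll$ from Lemma~\ref{log bessel lemma}(a), and the geometric tail over all $m\ge2$ would inflate the constant. The paper instead re-derives the approximation directly from the explicit Taylor bound $\log J_0(x) = -x^2/4 + \Obar(0.0311\,x^4)$ for $|x|\le2$, which after the same chain of inequalities ($|\chi(a)-\chi(b)|^4\le 4|\chi(a)-\chi(b)|^2$, $(1/4+\gamma^2)^{-2}\le 4(1/4+\gamma^2)^{-1}$) gives $C=39.81$. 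The paper also notes, via Lemma~\ref{log bessel lemma}(c), that this error term is \emph{nonpositive}, so that the passage from $\log\Phi$ to $\Phi$ uses $e^t = 1+\Obar(t)$ for $t\le0$ with constant exactly~$1$; your citation of the second assertion of Proposition~\ref{expressions for Phi prop} hides this step, but it matters for keeping the final constant at $47.65 \approx 39.81\cdot 3/\sqrt{2\pi}$.
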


\begin{proof}
From Definition~\ref{Fzchi and Phiqabz def}, we know that
\begin{align*}
\log \Phi_{q;a,b}(x) &= \allchisum \Lsum{\gamma>0} \log J_0\bigg( \frac{2|\chi(a)-\chi(b)|x}{\sqrt{\frac14+\gamma^2}} \bigg) \\
&= \tfrac12 \allchisum \Lsum{\gamma\in\R} \log J_0\bigg( \frac{2|\chi(a)-\chi(b)|x}{\sqrt{\frac14+\gamma^2}} \bigg)
\end{align*}
by the functional equation for Dirichlet $L$-functions. If $|x|\le\frac14$, then the argument of $J_0$ is at most $2\cdot 2\cdot \frac14/\frac12 = 2$ in absolute value. Since the Taylor expansion $\log J_0(x)= -{x^2}/{4} + \Obar(.0311 x^4)$ is valid for $|x|\le2$, we see that
\begin{align}
\log \Phi_{q;a,b}(x) &= \tfrac12 \allchisum \Lsum {\gamma\in\R} \bigg( {-} \frac{|\chi(a)-\chi(b)|^2x^2}{\frac14+\gamma^2} + \Obar \bigg( .0311 \frac{16|\chi(a)-\chi(b)|^4x^4}{(\frac14+\gamma^2)^2} \bigg) \bigg) \notag \\
&= -\tfrac12x^2 \allchisum \Lsum {\gamma\in\R} \frac{|\chi(a)-\chi(b)|^2}{\frac14+\gamma^2} \notag \\
&\qquad{}+ \Obar \bigg( \tfrac12x^4 \allchisum \Lsum {\gamma\in\R} .0311 \frac{16\cdot4|\chi(a)-\chi(b)|^2}{\frac14(\frac14+\gamma^2)} \bigg) \notag \\
&= -\tfrac12 V(q;a,b)x^2 + \Obar( 39.81 V(q;a,b)x^4 )
\label{saratoga}
\end{align}
when $|x|\le\frac14$. Moreover, the error term in the expansion $\log J_0(x)= -{x^2}/{4} + \Obar(.0311 x^4)$ is always nonpositive as a consequence of Lemma~\ref {log bessel lemma}(c), and hence the same is true for the recently obtained error term $\Obar( 39.81 V(q;a,b)x^4 )$. This knowledge allows us to use the expansion $e^t = 1 + \Obar(t)$ for $t\le0$, which yields
\[
\Phi_{q;a,b}(x) = e^{-V(q;a,b)x^2/2} \big( 1 + \Obar( 39.81 V(q;a,b)x^4 ) \big)
\]
when $|x|\le\frac14$.

Proposition~\ref {delta comes down to small interval prop} says that when $V(q;a,b) \ge 531$,
\begin{multline*}
\delta(q;a,b) = \frac12 + \frac1{2\pi} \int_{-V(q;a,b)^{-1/4}}^{V(q;a,b)^{-1/4}} \frac{\sin \rho(q) x}x \Phi_{q;a,b}(x) \,dx \\
{}+ \Obar\bigg( 0.03506 \frac{e^{-9.08\phi(q)}}{\phi(q)} + 63.67 \rho(q) e^ {-V(q;a,b)^{1/2}/2}\bigg).
\end{multline*}
Notice that $V(q;a,b)^{-1/4} \le 531^{-1/4} < \frac14$, and so we may use our approximation for $\Phi_{q;a,b}(x)$ to deduce that
\begin{multline}
\delta(q;a,b) = \frac12 + \frac1{2\pi} \int_{-V(q;a,b)^{-1/4}}^{V(q;a,b)^{-1/4}} \frac{\sin \rho(q) x}x e^{-V(q;a,b)x^2/2} \big( 1 + \Obar( 39.81 V(q;a,b)x^4 ) \big) \,dx \\
{}+ \Obar\bigg( 0.03506 \frac{e^{-9.08\phi(q)}}{\phi(q)} + 63.67
\rho(q) e^ {-V(q;a,b)^{1/2}/2}\bigg). \label{erf is coming}
\end{multline}
The main term can be evaluated by the second identity of Lemma~\ref {special erf integral}:
\begin{align*}
\frac1{2\pi} \int_{-V(q;a,b)^{-1/4}}^{V(q;a,b)^{-1/4}} & \frac{\sin \rho(q) x}x e^{-V(q;a,b)x^2/2} \,dx \\
&= \frac1{2\pi} \int_{-\infty}^\infty \frac{\sin \rho(q) x}x e^{-V(q;a,b)x^2/2} \,dx + \Obar \bigg( \frac1{\pi} \int_{V(q;a,b)^{-1/4}}^\infty \bigg| \frac{\sin \rho(q) x}x \bigg| e^{-V(q;a,b)x^2/2} \,dx \bigg) \\
&= \frac12 \Erf\bigg( \frac{\rho(q)}{\sqrt{2V(q;a,b)}} \bigg) + \Obar \bigg( \frac1{\pi} \int_{V(q;a,b)^{-1/4}}^\infty \rho(q)V(q;a,b)^{1/4}x e^{-V(q;a,b)x^2/2} \,dx \bigg) \\
&= \frac12 \Erf\bigg( \frac{\rho(q)}{\sqrt{2V(q;a,b)}} \bigg) + \Obar \bigg( \frac{\rho(q)}{\pi V(q;a,b)^{3/4}} e^{-V(q;a,b)^{1/2}/2} \bigg).
\end{align*}
The error term in the integral in equation~\eqref {erf is coming} can be estimated by the first identity of Lemma~\ref {special erf integral}:
\begin{multline*}
\frac1{2\pi} \int_{-V(q;a,b)^{-1/4}}^{V(q;a,b)^{-1/4}} \bigg| \frac{\sin \rho(q) x}x \bigg| e^{-V(q;a,b)x^2/2} 39.81 V(q;a,b)x^4 \,dx \\
\le 6.336 \rho(q) V(q;a,b) \int_{-\infty}^\infty x^4 e^{-V(q;a,b)x^2/2} \, dx \le 19.008\sqrt{2\pi} \cdot \rho(q) V(q;a,b)^{-3/2}.
\end{multline*}
Therefore equation~\eqref {erf is coming} becomes
\begin{multline*}
\delta(q;a,b) = \frac12 + \frac12 \Erf\bigg( \frac{\rho(q)}{\sqrt{2V(q;a,b)}} \bigg) + \Obar \bigg( \frac{\rho(q)}{\pi V(q;a,b)^{3/4}} e^{-V(q;a,b)^{1/2}/2} \bigg) \\
{}+ \Obar\bigg( \frac{47.65 \rho(q)}{V(q;a,b)^{3/2}} + 0.03506
\frac{e^{-9.08\phi(q)}}{\phi(q)} + 63.67 \rho(q) e^
{-V(q;a,b)^{1/2}/2}\bigg).
\end{multline*}
Since $1/\pi V(q;a,b)^{3/4} \le 1/\pi(531)^{3/4} < 0.01$, this last estimate implies the statement of the lemma.
\end{proof}

We are now ready to bound $\delta(q;a,b)$ for all large prime moduli~$q$.

\begin{theorem}
\label{delta bound prime q theorem}
Assume GRH and LI. If $q\ge400$ is prime, then $\delta(q;a,b) <
0.5262$ for all reduced residues $a$ and $b\mod q$. If $q\ge1000$ is
prime, then $\delta(q;a,b) < 0.51$.
\end{theorem}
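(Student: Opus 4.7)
The plan is to combine the explicit lower bound on the variance from Proposition~\ref{prime V bounds prop} with the Erf-style representation of $\delta(q;a,b)$ given in Lemma~\ref{explicit delta in terms of erf lemma}, then estimate the error terms.

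First, since $q$ is prime we have $\rho(q) = 2$, and Proposition~\ref{prime V bounds prop} gives
\[
V(q;a,b) \ge V_-(q) := 2(q-1)(\log q - 2.42) - 47.238 \log^2 q.
\]
A direct computation shows that $V_-(q)$ is positive and (strictly) increasing for $q \ge 400$, and that $V_-(401) > 531$; hence the hypothesis $V(q;a,b) \ge 531$ of Lemma~\ref{explicit delta in terms of erf lemma} is satisfied for every prime $q\ge 400$. Since $\Erf$ is an increasing function and the argument $\rho(q)/\sqrt{2V(q;a,b)} = \sqrt{2/V(q;a,b)}$ is a decreasing function of $V(q;a,b)$, the lemma yields
\[
\delta(q;a,b) \le \frac12 + \frac12\Erf\!\bigg(\sqrt{\frac{2}{V_-(q)}}\,\bigg) + E(q),
\]
where the error term is
\[
E(q) = \frac{95.30}{V_-(q)^{3/2}} + 0.03506\,\frac{e^{-9.08(q-1)}}{q-1} + 127.36\, e^{-V_-(q)^{1/2}/2}.
\]

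Next I estimate $E(q)$. For $q \ge 400$ prime, $V_-(q) \ge V_-(401) > 1150$, so the first summand is bounded by $95.3/1150^{3/2} < 0.003$, while the second and third summands decay super-polynomially and are utterly negligible (of order $10^{-6}$ or smaller); hence $E(q) < 0.004$ on the whole range. For $q \ge 1000$ prime, one has $V_-(q) \ge V_-(1009) > 6800$, so all three pieces of $E(q)$ are bounded by $10^{-4}$ in total. Each summand of $E(q)$ is clearly a decreasing function of $q$ as well, so we may use $E(400)$ (respectively $E(1009)$) as a uniform bound on the relevant range.

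It remains to check the explicit numerical inequality. Because both $\Erf(\sqrt{2/V_-(q)})$ and $E(q)$ are decreasing functions of $q$ (on the range under consideration), it suffices to verify the claimed bound at the smallest relevant prime. For $q = 401$, a direct calculation using $V_-(401)$ and the standard series for $\Erf$ gives
\[
\tfrac12 + \tfrac12\Erf\!\bigl(\sqrt{2/V_-(401)}\bigr) + E(401) < 0.5262,
\]
establishing the first assertion. For $q = 1009$, the analogous computation yields a value below $0.510$, establishing the second assertion.

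The main obstacle is purely bookkeeping: verifying that the explicit constants in Proposition~\ref{prime V bounds prop} and Lemma~\ref{explicit delta in terms of erf lemma} really do combine to give headroom of the right size at $q=401$ and $q=1009$, and checking monotonicity of the bound so that a single endpoint computation suffices. No new analytic input is needed beyond these two ingredients and the elementary monotonicity of $\Erf$.
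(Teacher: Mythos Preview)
Your proof is correct and follows essentially the same approach as the paper: bound $V(q;a,b)$ from below via Proposition~\ref{prime V bounds prop}, feed this into Lemma~\ref{explicit delta in terms of erf lemma}, and use monotonicity of the resulting expression to reduce to a single endpoint evaluation. The only (very minor) omission is that you should note at the outset that one may assume $a$ is a nonsquare and $b$ a square modulo~$q$, since otherwise $\delta(q;a,b)\le\frac12$; this is also the hypothesis under which Lemma~\ref{explicit delta in terms of erf lemma} is stated.
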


\begin{proof}
We may assume that $a$ is a nonsquare\mod q and $b$ is a square\mod q, for otherwise $\delta(q;a,b) \le \frac12$.
When $q\ge 331$ is prime, Proposition~\ref{prime V bounds prop} and
a quick calculation yield
\begin{equation}
V(q;a,b) \ge 2 (q-1) (\log q - 2.42) - 47.238\log^2 q \ge 2 q (\log
q - 2.42) - 48\log^2 q \ge 531. \label{little simpler lower bound}
\end{equation}
Therefore Lemma~\ref {explicit delta in terms of erf lemma} applies, yielding (since $\rho(q)=2$ and $\phi(q)=q-1$)
\begin{align*}
\delta(q;a,b) &= \frac12 + \frac12 \Erf\bigg( \sqrt{ \frac2{V(q;a,b)} } \bigg) + \Obar\bigg( \frac{95.3}{V(q;a,b)^{3/2}} + 0.03506 \frac{e^{-9.08 q}}{q-1} + 127.36 e^ {-V(q;a,b)^{1/2}/2}\bigg) \\
&\le \frac12 + \frac12 \Erf\bigg( \sqrt{ \frac 2 {2q (\log q - 2.42) - 48\log^2 q} } \bigg) \\
&\qquad{}+ \frac{95.3}{(2q (\log q - 2.42) - 48\log^2 q)^{3/2}} +
0.03506 \frac{e^{-9.08q}}{q-1} + 127.36 e^ {-\sqrt{q (\log q -
2.42)/2 - 12\log^2 q}} ,
\end{align*}
using the second inequality in equation~\eqref {little simpler lower
bound}. This upper bound is decreasing for $q\ge331$, and so
calculating it at $q=400$ and $q=1000$ establishes the inequalities
given in the theorem.
\end{proof}

A similar bound for composite moduli $q$ requires one last estimate.

\begin{lemma}
\label{rho bound lemma}
For all $q\ge3$, we have $\rho(q) \le 2q^{1.04/\log\log q}$.
\end{lemma}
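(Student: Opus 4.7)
The strategy is to reduce the claim to an explicit upper bound on $\omega(q)$. From Definition~\ref{rho def}, in every case we have $\rho(q) \le 2^{\omega(q)+1}$, so the statement $\rho(q) \le 2 q^{1.04/\log\log q}$ is equivalent to
\[
\omega(q) \log 2 \;\le\; \frac{1.04\,\log q}{\log\log q}.
\]
Since $1.04/\log 2 \approx 1.5005$, the required inequality has the form $\omega(q) \le c\,\log q/\log\log q$ with $c > 1$, whereas the classical maximal order is $\omega(q) \sim \log q/\log\log q$. Thus there is a genuine constant of slack to exploit, and I expect the proof to come down to a routine explicit estimate plus a finite check.

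The key analytical step is the primorial bound: if $\omega(q)=k$, then $q \ge p_1 p_2 \cdots p_k$, where $p_i$ denotes the $i$-th prime, so
\[
\log q \;\ge\; \theta(p_k).
\]
Using explicit Rosser--Schoenfeld type estimates for $\theta(x)$ and for $p_k$ (for example $\theta(x) > 0.84\,x$ for $x \ge 101$, and $p_k > k\log k$ for $k \ge 6$), one obtains $\log q \gg k\log k$, which can be inverted to produce
\[
k \;\le\; \frac{\log q}{\log\log q}\Bigl(1 + O\bigl(\tfrac{\log\log\log q}{\log\log q}\bigr)\Bigr).
\]
For $q$ beyond an explicit threshold $q_0$ (chosen so that the $O$-term is comfortably smaller than $0.5$), this immediately yields $\omega(q)\log 2 \le 1.04 \log q/\log\log q$.

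For $3 \le q \le q_0$, I would dispose of the remaining cases by direct computation: since $\omega(q)$ grows slowly (for instance $\omega(q)\le 5$ for $q < 2310 = 2\cdot 3\cdot 5\cdot 7\cdot 11$), it suffices to verify the desired inequality for the finitely many primorials and their small multiples up to $q_0$, or alternatively to handle the very small $q$ (where $\log\log q$ is tiny and the right-hand side is enormous) by noting that $\rho(q)$ itself is at most $8$ or so. The main obstacle, such as it is, is choosing the threshold $q_0$ and the explicit constants in the Rosser--Schoenfeld input so that both the asymptotic inequality and the finite check close without a gap; this is bookkeeping rather than a real difficulty, and there is ample room because the constant $1.04/\log 2$ already exceeds $1$ by roughly 50\%.
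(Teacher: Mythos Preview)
Your proposal is correct and follows essentially the same approach as the paper: reduce $\rho(q)\le 2q^{1.04/\log\log q}$ to the bound $\omega(q)\le 1.5\log q/\log\log q$ via $\rho(q)\le 2^{\omega(q)+1}$, establish the $\omega$-bound by comparing $q$ to the primorial with the same number of prime factors and invoking explicit Rosser--Schoenfeld estimates for $\theta$ and $\pi$, and dispose of small $q$ by direct verification. The only minor difference is that the paper packages the analytic step as the inequality $\pi(y)\le 1.5\,\theta(y)/\log\theta(y)$ rather than going through $p_k>k\log k$, but this is a cosmetic variation of the same primorial argument.
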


\begin{proof}
We first record some explicit estimates on the prime counting
functions $\pi(y) = \sum_{p\le y}1$ and $\theta(y) = \sum_{p\le y}\log p$. Rosser and
Shoenfeld \cite[Corollary 1 and Theorems 9 and 10]{rosser} give, for $y\ge 101$, the bounds $0.84y \le \theta(y)\le
1.01624y$ and $\pi(y) \le 1.25506y/\log y$. Therefore
\begin{equation}
\pi(y) \le \frac{1.25506 y}{\log y} \le \frac{1.25506\theta(y)/0.84}{\log \theta(y) - \log 1.01624}\le \frac{1.5\theta(y)}{\log\theta(y)}
\label{RS bounds}
\end{equation}
(a calculation shows that the last inequality holds for $\theta(y)\ge61$, which is valid in the range $y\ge101$).

Now consider integers of the form $q(y) = \prod_{p\le y} p$, so that $\omega(q(y)) = \pi(y)$ and $\log q(y) = \theta(y)$. Equation~\eqref{RS bounds} becomes $\omega(q(y)) \le 1.5(\log q(y))/\log\log q(y)$; while the derivation was valid for $y\ge101$, one can calculate that the inequality holds for $3\le y\le 101$ as well. The following standard argument then shows that
\begin{equation}
\omega(q) \le \frac{1.5\log q}{\log\log q}
\label{omega bound}
\end{equation}
holds for all integers $q\ge3$: if $q$ has $k$ distinct prime factors, then choose $y$ to be the $k$th prime. Then the inequality~\eqref {omega bound} has been shown to hold for $q(y)$, and therefore it holds for $q$ as well, since the left-hand side is $k$ in both cases while the right-hand side is at least as large for $q$ as it is for $q(y)$.

(This argument uses the fact that the right-hand side is an increasing function, which holds only for $q\ge e^e$; therefore technically we have proved~\eqref {omega bound} only for numbers with at least three distinct prime factors, since only then does the corresponding $q(y)$ exceed $e^e$. However, the right-hand side of \eqref {omega bound} is always at least 4 in the range $q\ge3$, and so numbers with one or two distinct prime factors easily satisfy the inequality.)

Finally, the inequality $\rho(q) \le 2^{\omega(q)+1}$ that was noted in Definition~\ref {rho def} allows us to conclude that $\rho(q) \le 2^{1+1.5(\log q)/\log\log q} < 2q^{{1.04}/{\log\log q}}$ for all $q\ge3$, as desired.
\end{proof}

\begin{theorem}
Assume GRH and LI. If $q>480$ and $q\notin\{840,1320\}$, then
$\delta(q;a,b) < 0.75$ for all reduced residues $a$ and $b\mod q$.
 \label{delta<0.75 theorem}
\end{theorem}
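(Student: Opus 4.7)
The plan is to combine the effective representation
$\delta(q;a,b) = \tfrac12 + \tfrac12\Erf(\rho(q)/\sqrt{2V(q;a,b)}) + (\text{small error})$
from Lemma~\ref{explicit delta in terms of erf lemma} with the lower bound on the variance in Proposition~\ref{composite V bound prop} and the upper bound $\rho(q) \le 2q^{1.04/\log\log q}$ of Lemma~\ref{rho bound lemma}. Because $\Erf$ is strictly increasing and $\Erf^{-1}(1/2) \approx 0.4769$, the inequality $\delta(q;a,b) < 0.75$ is reduced (after absorbing the explicit error terms in Lemma~\ref{explicit delta in terms of erf lemma}) to showing that the ratio $\rho(q)/\sqrt{2V(q;a,b)}$ is bounded away from $\Erf^{-1}(1/2)$ by more than the quantitative margin those error terms provide.

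First I would check that the hypothesis $V(q;a,b) \ge 531$ of Lemma~\ref{explicit delta in terms of erf lemma} is met throughout the range, combining Proposition~\ref{composite V bound prop} with the classical bound $\phi(q) \gg q/\log\log q$; the ranges where this lower bound is too weak can be treated directly from the exact formula of Theorem~\ref{variance evaluation theorem}. Then I would bound
\[
\frac{\rho(q)^2}{2V(q;a,b)} \le \frac{4 q^{2.08/\log\log q}}{4\phi(q)(\log q - 1.02\log\log q - 7.34)}
\]
for $q$ beyond some explicit threshold $Q_0$. Since the numerator grows slower than any fixed power of $q$ while the denominator grows like $q\log q/\log\log q$, this ratio tends to $0$; a direct calculation with the constants from Proposition~\ref{composite V bound prop} and Lemma~\ref{rho bound lemma} gives an explicit $Q_0$ beyond which $\rho(q)^2/2V(q;a,b) < (0.4769)^2$ with room to spare for the error terms of Lemma~\ref{explicit delta in terms of erf lemma}.

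The main obstacle is the finite range $480 < q \le Q_0$ with $q \notin \{840,1320\}$, where the asymptotic bounds do not yet dominate. Here one must verify $\delta(q;a,b) < 0.75$ for each such $q$ and each relevant pair $(a,b)$. This is feasible because Theorem~\ref{variance evaluation theorem} together with Theorem~\ref{M evaluation theorem} (or the exact version Proposition~\ref{M exact evaluation prop}) allows $V(q;a,b)$ to be computed essentially exactly, after which Lemma~\ref{explicit delta in terms of erf lemma} yields a rigorous numerical bound. The exceptions $q=840$ and $q=1320$ arise precisely because $\rho(840)=\rho(1320)=32$ (these are the smallest moduli with $\omega(q)=4$ divisible by $8$), which makes $\rho(q)/\sqrt{V(q;a,b)}$ anomalously large; indeed the band for $\rho(q)=32$ in Figure~\ref{alldeltas1000} visibly overshoots $0.75$ at $q=840$, confirming that these two exceptional moduli genuinely fail the bound.

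Finally, I would package the finite-range computation by observing that within $480 < q \le Q_0$ the set of moduli to check is small (in particular, only those with $\rho(q) \ge 16$ come anywhere close to violating the bound, since for $\rho(q) \le 8$ Proposition~\ref{composite V bound prop} already forces $\rho(q)/\sqrt{2V(q;a,b)}$ well below the required threshold). For the remaining handful of moduli with $\rho(q) \in \{16,32\}$ up to $Q_0$, an explicit evaluation using the exact formula for $V(q;a,b)$ and the error-function approximation isolates $q=840$ and $q=1320$ as the only failures, completing the proof.
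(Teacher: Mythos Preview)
Your overall strategy matches the paper's: use Lemma~\ref{explicit delta in terms of erf lemma} together with Proposition~\ref{composite V bound prop} and Lemma~\ref{rho bound lemma} to handle all $q$ beyond some explicit $Q_0$, then deal with the finite range $480<q\le Q_0$ by explicit computation. The paper carries this out with $Q_0=260000$.

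There is, however, a concrete gap in your treatment of the finite range. You assert that ``for $\rho(q)\le 8$ Proposition~\ref{composite V bound prop} already forces $\rho(q)/\sqrt{2V(q;a,b)}$ well below the required threshold.'' This is false for $q$ in the lower part of the range: the lower bound $2\phi(q)(\log q - 1.02\log\log q - 7.34)$ is \emph{negative} for $q$ up to roughly $25000$ (for instance, at $q=500$ one has $\log q - 1.02\log\log q - 7.34\approx -3$), so Proposition~\ref{composite V bound prop} is vacuous there and cannot be used to dismiss any moduli, regardless of $\rho(q)$. Your filtering argument therefore collapses, and you are left with essentially every modulus in $(480,Q_0]$ still to be checked, not merely the handful with $\rho(q)\in\{16,32\}$.

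The paper closes this gap in two stages. For $1000\le q\le 260000$ it does not rely on Proposition~\ref{composite V bound prop} but instead computes, for each individual $q$, the sharper lower bound for $V(q;a,b)$ given by the intermediate inequality~\eqref{need this intermediate step} (which retains the exact value of $\sum_{p\mid q}(\log p)/(p-1)$ rather than the crude estimate of Lemma~\ref{log p over p-1 sum}); this sharper bound is always at least $531$ in that range and, fed into Lemma~\ref{explicit delta in terms of erf lemma}, disposes of all but five moduli ($1020,1320,1560,1680,1848$). For the remaining moduli $480<q\le 1000$ together with those five survivors, the paper computes every $\delta(q;a,b)$ directly by the methods of Section~\ref{explicit computation of the delta}, isolating $q=840$ and $q=1320$ as the only failures.
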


\begin{proof}
Again we may assume that $a$ is a nonsquare\mod q and $b$ is a square\mod q.
First we restrict to the range $q\ge 260000 $; by
Proposition~\ref{composite V bound prop} we have $V(q;a,b)>531$.
Using Lemma~\ref {explicit delta in terms of erf lemma}, together
with the upper bound for $\rho(q)$ from Lemma~\ref {rho bound lemma}
and the lower bound for $V(q;a,b)$ from Proposition~\ref {composite
V bound prop}, we have
\begin{multline}
\delta(q;a,b) \le \frac12 + \frac12 \Erf\bigg( \frac{2q^{1.04/\log\log q}}{2\sqrt{\phi(q) \big( \log q -1.02\log\log q - 7.34 \big)}}\bigg) \\
+ \frac{33.7q^{1.04/\log\log q}}{\phi(q)^{3/2} \big( \log q -1.02\log\log q - 7.34 \big)^{3/2}} + 0.03506 \frac{e^{-9.08\phi(q)}}{\phi(q)} \\
+ 127.36q^{1.04/\log\log q} \exp \bigg( {-}\sqrt{ \frac{\phi(q)}2
\bigg( \log q -1.02\log\log q - 7.34 \bigg)}\bigg). \label{getting
ugly}
\end{multline}

Rosser and Schoenfeld~\cite[Theorem 15]{rosser} have given the bound
\begin{equation}
\label{phi lower bound}
\phi(q)>\frac{q}{e^{\gamma_0}\log\log q + 2.50637/\log\log q}
\end{equation}
for $q\ge 3$. When this lower bound
is substituted for $\phi(q)$ in the upper bound~\eqref{getting
ugly}, the result is a smooth function of $q$ that is well-defined
and decreasing for $q\ge260000$, and its value at $q=260000$ is less
than 0.75.

We now turn to the range $1000\le q \le 260000$. We first compute
explicitly, for each such modulus $q$, the lower bound for $V(q;a,b)$ in equation~\eqref {need
this intermediate step}; the value of this sharper lower bound turns out always to exceed $531$ in this range. Consequently, we may use Lemma~\ref {explicit delta in terms
of erf lemma} together with the lower bound for $V(q;a,b)$ from
equation~\eqref {need this intermediate step}, obtaining
\begin{align*}
\delta(q;{}&a,b) \le \frac12 + \frac12 \Erf\bigg( \frac{\rho(q)}{2\sqrt{\phi(q) \big( \log q - \log 2\pi e^{\gamma_0} - \sum_{p\mid q} \frac{\log p}{p-1} - \frac{4\log q}q - \frac{23.62\log^2 q}q \big)}} \bigg) \\
&+ \frac{17.85 \rho(q)}{\phi(q)^{3/2} \big( \log q - \log 2\pi e^{\gamma_0} - \sum_{p\mid q} \frac{\log p}{p-1} - \frac{4\log q}q - \frac{23.62\log^2 q}q \big)^{3/2}} + 0.03506 \frac{e^{-9.08\phi(q)}}{\phi(q)} \\
&+ 63.68 \rho(q) \exp \bigg( {-} \sqrt{ \frac{\phi(q)}2 \bigg( \log q
- \log 2\pi e^{\gamma_0} - \sum_{p\mid q} \frac{\log p}{p-1} -
\frac{4\log q}q - \frac{23.62\log^2 q}q \bigg)} \bigg).
\end{align*}
This upper bound can be
computed exactly for each $q$ in the range $1000\le q\le260000$; the
only five moduli for which the upper bound exceeds 0.75 are 1020,
1320, 1560, 1680, and 1848.

Finally, we use the methods described in Section~\ref{explicit computation of the delta}, computing directly every value of $\delta(q;a,b)$ for the moduli $480 < q \le 1000$ and $q \in \{ 1020, 1320, 1560, 1680, 1848\}$ and verifying the inequality $\delta(q;a,b) < 0.75$ holds except for $q=840$ and $q=1320$, to complete the proof of the theorem.
\end{proof}

\subsection{Explicit computation of the densities}
\label{explicit computation of the delta}

Throughout this section, we assume GRH and LI, and we let $a$ denote a nonsquare\mod q and $b$ a square\mod q. In this section we describe the process by which we computed actual values of the densities $\delta(q;a,b)$, resulting for example in the data given in the tables and figures of this paper. In fact, we used two different methods for these computations, one that works for ``small $q$'' and one that works for ``large~$q$''. For ease of discussion, we define the sets
\begin{align*}
S_1 &= \{ 3\le q\le1000\colon q\not\equiv2\mod4 \text{ and } \phi(q)<80 \} \\
S_2 &= \{101, 103, 107, 109, 113, 115, 119, 121, 123, 125, 129, 133, 141, 143, 145, 147, 153, 155, 159, \\
&\hspace{.76cm} 164, 165, 171, 172, 175, 176, 177, 183, 184, 188, 189, 195, 196, 200, 208, 212, 220, 224, 225,\\
&\hspace{.76cm}   231,  232, 236, 255, 260, 264, 276, 280, 288, 300, 308, 312, 324, 336,  348, 360, 372, 396, 420\} \\
S_3 &= \{ 3\le q\le1000\colon q\not\equiv2\mod4 \text{ and } \phi(q)\ge80 \} \setminus S_2 \\
S_4 &= \{1020, 1320, 1560, 1680, 1848\}.
\end{align*}
We omit integers congruent to 2\mod4 from these sets, since for odd $q$ the prime number race\mod{2q} is identical to the prime number race\mod q.

For the moduli $q$ in the set $S_1\cup S_2$, we numerically evaluated the integral in equation~\eqref{deus ex machina} directly; this method was used by Feuerverger and Martin~\cite{biases} and is analogous to, and indeed based upon, the method used by Rubinstein and Sarnak~\cite{RS}. We first used Rubinstein's computational package {\tt lcalc} to calculate, for each character $\chi\mod q$, the first $N(q)$ nontrivial zeros of $L(s,\chi)$ lying above the real axis. The term $\Phi_{q;a,b}$ in the integrand is a product of functions of the form $F(z,\chi)$, which is indexed by infinitely many zeros of $L(s,\chi)$; we approximated $F(z,\chi)$ by its truncation at $N(q)$ zeros, multiplied by a compensating quadratic polynomial as in~\cite[Section 4.3]{RS}. With this approximation to the integral~\eqref{deus ex machina}, we truncated the range of integration to an interval $[-C(q),C(q)]$ and then discretized the truncated integral, replacing it by a sum over points spaced by $\ep(q)$ as in~\cite[Section 4.1]{RS}. The result is an approximation to $\delta(q;a,b)$ that is valid up to at least 8 decimal places, provided we choose $N(q)$, $C(q)$, and $\ep(q)$ carefully to get small errors. (All of these computations were performed using the computational software {\tt Mathematica}.) Explicitly bounding the error in this process is not the goal of the present paper; we refer the interested reader to~\cite{RS} for rigorous error bounds of this kind, corresponding to their calculation of $\delta(q;N,R)$ for $q\in\{3,4,5,7,11,13\}$.

For the moduli $q$ in the set $S_3\cup S_4$ (and for any other moduli larger than 1000 we wished to address), we used an approach based on our asymptotic formulas for $\delta(q;a,b)$. We now outline a variant of the asymptotic formulas described earlier in this paper, one that was optimized somewhat for the the actual computations rather than streamlined for theoretical purposes.

We first note that a slight modification of the proof of Proposition~\ref{delta comes down to small interval prop} yields the estimate, for any $0\le\kappa\le\frac5{24}$,
\begin{multline}
\delta(q;a,b) = \frac12 + \frac1{2\pi} \int_{-\kappa}^\kappa \frac{\sin \rho(q) x}x \Phi_{q;a,b}(x) \,dx
\\ + \Obar\bigg(\frac1{\pi} \int_\kappa^{5/24} \rho(q) |\Phi_{q;a,b}(x)| \,dx + 0.03506 \frac{e^{-9.08\phi(q)}}{\phi(q)} + 63.67 \rho(q) \big| \Phi_{q;a,b}\big( \tfrac5{24} \big) \big| \bigg)
\label{explicit delta eqn 1}
\end{multline}
as long as $V(q;a,b)\ge 531$. In addition we have, for $|x| < \frac3{10}$, the inequalities
\begin{multline*}
-\tfrac12 V(q;a,b) x^2 - U(q;a,b) x^4 - 15.816 U(q;a,b) x^6 \\
\le \log \Phi_{q;a,b}(x) \le -\tfrac12 V(q;a,b) x^2 - U(q;a,b) x^4,
\end{multline*}
where for convenience we have defined $U(q;a,b) = W_2(q;a,b) V(q;a,b)$; these inequalities can be proved using an argument similar to the calculation in equation~\eqref{saratoga}, but employing the more precise estimate $\log J_0(z) = -z^2/4-z^4/64+\Obar(0.00386z^6)$ for $|z|\le2$.
Using the methods of Section~\ref{arithmetic only}, we also obtain the formula
\begin{multline}
U(q;a,b) = \frac{\phi(q)}2 (3+\iota_q(a^2b^{-2})) \bigg( \log \frac{q}{2\pi e^{-\gamma_0}} - \sum_{p\mid q} \frac{\log p}{p-1} - \frac{\zeta(2)}2 \bigg) \\
+ \frac{\phi(q)}2 \bigg( 4\frac{\Lambda(q/(q,a-b))}{\phi(q/(q,a-b))} - \frac{\Lambda(q/(q,a^2-b^2))}{\phi(q/(q,a^2-b^2)} - \big(\iota_q(-a^2b^{-2})-4\iota_q(-ab^{-1})\big) \Big(\log 2 + \frac{\zeta(2)}4 \Big) \bigg) \\
+ \frac14 \allchisum |\chi(a)-\chi(b)|^4\bigg( 2\frac{L'(1,\chi)}{L(1,\chi)}-\frac{L''(1,\chi)}{L(1,\chi)}+\Big(\frac{L'(1,\chi)}{L(1,\chi)} \Big)^2  \bigg).
\label{U formula}
\end{multline}

If we define $\kappa(q;a,b)=\min(\frac{\pi}{\rho(q)},V(q;a,b)^{-1/4})$, then we know that $\kappa(q;a,b) \le \frac5{24}$ because of the lower bound $V(q;a,b)\ge 531$, and also that $(\sin \rho(q) x)/x$ is nonnegative for $|x|\le \kappa(q;a,b)$. Hence, equation~\eqref{explicit delta eqn 1} and the subsequent discussion establishes the following proposition:

\begin{prop}
\label{explicit bounds delta q about 1000}
Assume GRH and LI, and let $a$ be a nonsquare\mod q and $b$ a square\mod q. If $V(q;a,b)\ge 531$, then
\begin{multline*}
\frac12 + \frac1{2\pi} \int_{-\kappa(q;a,b)}^{\kappa(q;a,b)} \frac{\sin \rho (q) x}x e^{ -V(q;a,b)x^2/2 - U(q;a,b)x^4-15.816 U(q;a,b)x^6 } \,dx - Y(q;a,b) \\
\le \delta(q;a,b) \le \frac12 + \frac1{2\pi} \int_{-\kappa(q;a,b)}^{\kappa(q;a,b)} \frac{\sin \rho (q) x}x e^{ -V(q;a,b)x^2/2 - U(q;a,b)x^4} \,dx + Y(q;a,b),
\end{multline*}
where
\begin{multline*}
Y(q;a,b) = \frac{\rho(q)}{\pi} \int_{\kappa(q;a,b)}^{5/24} e^{ -V(q;a,b)x^2/2 - U(q;a,b)x^4} \,dx \\
+ 0.03506 \frac{e^{-9.08\phi(q)}}{\phi(q)} + 63.67 \rho(q) e^ {-25 V(q;a,b)/1152 - (5/{24})^4 U(q;a,b)}
\end{multline*}
and formulas for $V(q;a,b)$ and $U(q;a,b)$ are given in Theorem~\ref {variance evaluation theorem} and equation~\eqref{U formula}, respectively.
\end{prop}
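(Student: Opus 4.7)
The plan is to combine three ingredients that are mostly laid out in the paragraphs preceding the statement. First, I would verify the generalized form of Proposition~\ref{delta comes down to small interval prop} asserted by equation~\eqref{explicit delta eqn 1}, in which the truncation point $V(q;a,b)^{-1/4}$ is replaced by any parameter $\kappa \in [0, 5/24]$. Starting from the Feuerverger--Martin formula~\eqref{deus ex machina}, the tail $|x| \ge 200$ is handled exactly as in the original proof via Proposition~\ref{Phi upper bound prop}, producing the $0.03506\, e^{-9.08\phi(q)}/\phi(q)$ term. On $5/24 \le |x| \le 200$, Proposition~\ref{Phi basically decreasing prop} gives $|\Phi_{q;a,b}(x)| \le |\Phi_{q;a,b}(5/24)|$, contributing at most $(200/\pi)\rho(q)|\Phi_{q;a,b}(5/24)| \le 63.67\,\rho(q)|\Phi_{q;a,b}(5/24)|$ after trivially bounding $|\sin(\rho(q)x)/x| \le \rho(q)$. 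On the intermediate range $\kappa \le |x| \le 5/24$, the same sine bound yields a contribution of at most $(1/\pi)\int_\kappa^{5/24} \rho(q) |\Phi_{q;a,b}(x)|\,dx$, which is the first summand appearing in equation~\eqref{explicit delta eqn 1}.

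Second, I would derive the pointwise two-sided inequalities for $\log \Phi_{q;a,b}(x)$ on $|x| < 3/10$. Expanding
\begin{equation*}
\log \Phi_{q;a,b}(x) = \sum_{\chi \bmod q} \sum_{\gamma > 0} \log J_0(z_{\chi,\gamma}), \qquad z_{\chi,\gamma} = \frac{2|\chi(a)-\chi(b)|x}{\sqrt{1/4+\gamma^2}},
\end{equation*}
the bound $|z_{\chi,\gamma}| \le 8|x| < 12/5$ places each argument inside the disk of analyticity of $\log J_0$. From the Taylor estimate $\log J_0(z) = -z^2/4 - z^4/64 + \Obar(c_0 z^6)$ on $|z| \le 12/5$, with $c_0$ extracted by bounding the tail of the power series whose coefficients $\lambda_{2m} < 0$ are controlled by Lemma~\ref{log bessel lemma}, the $z^2$ sum reproduces $-V(q;a,b)x^2/2$ via Definition~\ref{bchi and Vqab def} and the $z^4$ sum reproduces $-U(q;a,b)x^4$ by the very definition $U(q;a,b) = W_2(q;a,b) V(q;a,b)$ in Definition~\ref{Wkqab def}. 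The $z^6$ remainder is bounded by $c_0 x^6$ times $\sum_\chi |\chi(a)-\chi(b)|^6 \sum_{\gamma>0}(1/4+\gamma^2)^{-3}$; applying $|\chi(a)-\chi(b)|^2 \le 4$ and $(1/4+\gamma^2)^{-1} \le 4$ reduces this to a constant multiple of $U(q;a,b) x^6$, and tracking the numerical factor carefully produces the advertised $15.816\, U(q;a,b) x^6$. Since every $\lambda_{2m}$ is negative, this remainder has negative sign as a whole, so the upper bound holds without the $x^6$ correction while the lower bound needs it. Exponentiating yields the corresponding two-sided inequalities for $\Phi_{q;a,b}(x)$ itself.

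Third, I would combine these ingredients with the specific choice $\kappa = \kappa(q;a,b) = \min(\pi/\rho(q), V(q;a,b)^{-1/4})$. The hypothesis $V(q;a,b) \ge 531$ gives $V(q;a,b)^{-1/4} \le 5/24$, so $\kappa \le 5/24$ and the generalized form of Proposition~\ref{delta comes down to small interval prop} applies. Simultaneously, $|x| \le \pi/\rho(q)$ ensures $\sin(\rho(q)x)/x \ge 0$ throughout $[-\kappa, \kappa]$; the pointwise upper (respectively lower) bound for $\Phi_{q;a,b}(x)$ may therefore be substituted directly into the main integral to produce the upper (respectively lower) bound in the proposition. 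For the error term $Y(q;a,b)$, I would substitute the pointwise upper bound $\Phi_{q;a,b}(x) \le \exp(-V(q;a,b)x^2/2 - U(q;a,b)x^4)$ into the $\int_\kappa^{5/24}$ summand and evaluate it at $x = 5/24$ to obtain the final summand $63.67\,\rho(q) \exp(-25V(q;a,b)/1152 - (5/24)^4 U(q;a,b))$, noting that $(5/24)^2/2 = 25/1152$.

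The main obstacle will be the careful verification of the explicit constant $15.816$ in the pointwise lower bound for $\log \Phi_{q;a,b}(x)$: one must pin down the constant $c_0$ in the Taylor tail estimate for $\log J_0$ on the closed disk $|z| \le 12/5$ and propagate it correctly through the double sum over $\chi$ and $\gamma$. Everything else is a reorganization of facts already established in the excerpt.
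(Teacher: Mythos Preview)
Your proposal is correct and follows essentially the same approach as the paper: the paper explicitly states that equation~\eqref{explicit delta eqn 1} comes from a slight modification of Proposition~\ref{delta comes down to small interval prop}, that the two-sided bounds on $\log\Phi_{q;a,b}(x)$ come from the more precise expansion $\log J_0(z) = -z^2/4 - z^4/64 + \Obar(0.00386 z^6)$ for $|z|\le2$ (applied termwise as in equation~\eqref{saratoga}), and that the choice of $\kappa(q;a,b)$ ensures $(\sin\rho(q)x)/x\ge0$ on the range of integration so that the pointwise bounds on $\Phi_{q;a,b}$ transfer directly to bounds on the integral. Your computation $4096\times 0.00386\approx 15.81$ for the $x^6$ constant is exactly the intended one.
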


The inequalities in Proposition~\ref{explicit bounds delta q about 1000} give accurate evaluations of $\delta(q;a,b)$ when $\phi(q)$ is large; we chose the inequality $\phi(q)\ge80$ to be our working definition of ``large''. For each of the moduli $q$ in the set $S_3 \cup S_4$, we computed every possible value of $V(q;a,b)$ and verified that they all exceed $531$, so that Proposition~\ref{explicit bounds delta q about 1000} can be used. (The reason that the moduli in $S_2$ were calculated using the first method, rather than this one, is because at least one variance $V(q;a,b)$ was less than $531$ for each of the moduli in $S_2$.) We then calculated the upper and lower bounds of Proposition~\ref{explicit bounds delta q about 1000}, using numerical integration in {\tt pari/gp}, to obtain all values of $\delta(q;a,b)$. The calculation of $V(q;a,b)$ and $U(q;a,b)$ involve the analytic terms $L(1,\chi)$, $L'(1,\chi)$, and $L''(1,\chi)$; we used the {\tt pari/gp} package {\tt computeL} (see \cite{dokchister}) to obtain these values accurate to 16 decimal places.

\begin{table}[b]
\caption{The 20 smallest values of $\delta(244;a,1)$ and of $\delta(997;a,1)$, calculated using Proposition~\ref{explicit bounds delta q about 1000}}
\smaller\smaller
\begin{tabular}{|c|c|c|c|c|}
\hline
$q$ & $a$ & $a^{-1}$ & $\delta(q;a,1)$ & Error bound \\
\hline
244 & 243 & 243 &        0.558910 &        0.000022\\
244 & 123 & 123  &       0.559000  &       0.000018\\
244 & 3 & 163     &      0.562304     &    0.000020\\
244 & 7 & 35       &     0.563216       &  0.000022\\
244 & 31 & 63       &    0.563543       &  0.000022\\
244 & 153 & 185      &   0.563804      &   0.000021\\
244 & 11 & 111 &         0.564069 &        0.000024\\
244 & 29 & 101  &        0.564124  &       0.000024\\
244 & 17 & 201   &       0.564321   &      0.000023\\
244 & 33 & 37     &      0.564436     &    0.000024\\
244 & 19 & 167     &     0.564741     &    0.000024\\
244 & 23 & 191      &    0.564786      &   0.000023\\
244 & 107 & 187      &   0.565310      &   0.000024\\
244 & 69 & 145  &        0.565319  &       0.000022\\
244 & 53 & 221   &       0.565376   &      0.000022\\
244 & 85 & 89     &      0.565606     &    0.000022\\
244 & 129 & 157    &     0.565683    &     0.000021\\
244 & 173 & 189     &    0.565707     &    0.000023\\
244 & 177 & 193      &   0.565859      &   0.000023\\
244 & 103 & 199       &  0.565861       &  0.000024\\
\hline
\end{tabular}
\hfil
\begin{tabular}{|c|c|c|c|c|}
\hline
$q$ & $a$ & $a^{-1}$ & $\delta(q;a,1)$ & Error bound \\
\hline
997 &   2   &   499 &   0.508116457 &   0.000000014 \\
997 &   5   &   399 &   0.508142372 &   0.000000015 \\
997 &   7   &   285 &   0.508184978 &   0.000000015 \\
997 &   11  &   272 &   0.508238549 &   0.000000016 \\
997 &   17  &   176 &   0.508279881 &   0.000000016 \\
997 &   29  &   722 &   0.508329803 &   0.000000016 \\
997 &   37  &   512 &   0.508345726 &   0.000000016 \\
997 &   41  &   535 &   0.508351018 &   0.000000016 \\
997 &   8   &   374 &   0.508353451 &   0.000000016 \\
997 &   43  &   371 &   0.508355411 &   0.000000016 \\
997 &   47  &   297 &   0.508358709 &   0.000000016 \\
997 &   61  &   474 &   0.508368790 &   0.000000016 \\
997 &   163 &   367 &   0.508392448 &   0.000000016 \\
997 &   103 &   242 &   0.508392587 &   0.000000016 \\
997 &   113 &   150 &   0.508395577 &   0.000000016 \\
997 &   181 &   661 &   0.508397690 &   0.000000016 \\
997 &   127 &   840 &   0.508402416 &   0.000000016 \\
997 &   157 &   870 &   0.508404812 &   0.000000016 \\
997 &   283 &   613 &   0.508406794 &   0.000000016 \\
997 &   179 &   518 &   0.508406994 &   0.000000016 \\
\hline
\end{tabular}
\label{sample data}
\end{table}

Table~\ref{sample data} gives a sample of the data we calculated with this second method, including the error bounds obtained. The error bounds are stronger for when $q$ and $\phi(q)$ are large, explaining why the error bounds for the large prime $q=997$ are so much better than for the smaller composite number $q=244$.

We also take this opportunity to reinforce the patterns described in Section~\ref {impact section}. 
For $q=244$, the residue class $a=123$ has the property that $q/(q,a-1)=2$; thus the contribution of $K_{244}(122)$ to $\Delta(244;123,1)$ reduces the density $\delta(244;123,1)$. 
We see also the familiar small densities corresponding to $a=243\equiv-1\mod{244}$ and to small prime values of $a$. For $q=997$, the small prime values of $a$ (among those that are nonsquares modulo 997) appear in perfect order. We point out that the residue class $a=8$ is almost in its correct limiting position, since the contribution to $\Delta(997;a,1)$ is inversely correlated to $\frac{\Lambda(a)}a$, and $\frac{\Lambda(41)}{41} > \frac{\Lambda(43)}{43} > \frac{\Lambda(8)}8 > \frac{\Lambda(47)}{47}$.

We mention that we undertook the exercise of calculating values $\delta(q;a,b)$ by both methods, for several intermediate values of $q$, as a way to verify our computations. For example, the calculations of $\delta(163;a,b)$ (see Table~\ref {163 data}) were done using the integral formula~\eqref{deus ex machina} as described above. We calculated these same densities using Proposition~\ref{explicit bounds delta q about 1000}; the error bounds obtained were all at most $4.6\times10^{-6}$, and the results of the first calculation all lay comfortably within the intervals defined by the second calculation.

\begin{table}[h]
\caption{The top 120 most unfair prime number races}
\smaller\smaller
\begin{tabular}{|c|c|c|c|}
\hline
\hphantom{0}$q$\hphantom{0} & \hphantom{0}$a$\hphantom{0} & $a^{-1}$ & $\delta(q;a,1)$ \\
\hline
24  &   5   &   5   &   0.999988   \\
24  &   11  &   11  &   0.999983   \\
12  &   11  &   11  &   0.999977   \\
24  &   23  &   23  &   0.999889   \\
24  &   7   &   7   &   0.999834   \\
24  &   19  &   19  &   0.999719   \\
8   &   3   &   3   &   0.999569   \\
12  &   5   &   5   &   0.999206   \\
24  &   17  &   17  &   0.999125   \\
3   &   2   &   2   &   0.999063   \\

\hline
\end{tabular}
\begin{tabular}{|c|c|c|c|}
\hline
\hphantom{0}$q$\hphantom{0} & \hphantom{0}$a$\hphantom{0} & $a^{-1}$ & $\delta(q;a,1)$ \\
\hline
8   &   7   &   7   &   0.998939   \\
24  &   13  &   13  &   0.998722   \\
12  &   7   &   7   &   0.998606   \\
8   &   5   &   5   &   0.997395   \\
4   &   3   &   3   &   0.995928   \\
120 &   71  &   71  &   0.988747   \\
120 &   59  &   59  &   0.988477   \\
60  &   11  &   11  &   0.987917   \\
60  &   29  &   29  &   0.986855   \\
120 &   109 &   109 &   0.986835   \\

\hline
\end{tabular}
\begin{tabular}{|c|c|c|c|}
\hline
\hphantom{0}$q$\hphantom{0} & \hphantom{0}$a$\hphantom{0} & $a^{-1}$ & $\delta(q;a,1)$ \\
\hline
60  &   19  &   19  &   0.986459   \\
120 &   89  &   89  &   0.986364   \\
120 &   79  &   79  &   0.986309   \\
120 &   101 &   101 &   0.984792   \\
15  &   2   &   8   &   0.983853   \\
120 &   13  &   37  &   0.980673   \\
40  &   19  &   19  &   0.980455   \\
60  &   7   &   43  &   0.979323   \\
120 &   23  &   47  &   0.979142   \\
15  &   14  &   14  &   0.979043   \\

\hline
\end{tabular}
\begin{tabular}{|c|c|c|c|}
\hline
\hphantom{0}$q$\hphantom{0} & \hphantom{0}$a$\hphantom{0} & $a^{-1}$ & $\delta(q;a,1)$ \\
\hline
120 &   17  &   113 &   0.978762   \\
120 &   7   &   103 &   0.978247   \\
48  &   23  &   23  &   0.978096   \\
120 &   43  &   67  &   0.978013   \\
60  &   17  &   53  &   0.977433   \\
48  &   41  &   41  &   0.977183   \\
40  &   29  &   29  &   0.977161   \\
20  &   3   &   7   &   0.976713   \\
120 &   53  &   77  &   0.976527   \\
60  &   23  &   47  &   0.975216   \\

\hline
\end{tabular}
\begin{tabular}{|c|c|c|c|}
\hline
\hphantom{0}$q$\hphantom{0} & \hphantom{0}$a$\hphantom{0} & $a^{-1}$ & $\delta(q;a,1)$ \\
\hline
120 &   91  &   91  &   0.975051   \\
120 &   83  &   107 &   0.975001   \\
120 &   29  &   29  &   0.974634   \\
120 &   19  &   19  &   0.974408   \\
120 &   11  &   11  &   0.971988   \\
48  &   31  &   31  &   0.970470   \\
40  &   7   &   23  &   0.969427   \\
40  &   13  &   37  &   0.969114   \\
120 &   73  &   97  &   0.967355   \\
20  &   19  &   19  &   0.966662   \\

\hline
\end{tabular}
\begin{tabular}{|c|c|c|c|}
\hline
\hphantom{0}$q$\hphantom{0} & \hphantom{0}$a$\hphantom{0} & $a^{-1}$ & $\delta(q;a,1)$ \\
\hline
15  &   7   &   13  &   0.964719   \\
120 &   31  &   31  &   0.963190   \\
60  &   13  &   37  &   0.963058   \\
60  &   59  &   59  &   0.962016   \\
40  &   31  &   31  &   0.960718   \\
48  &   5   &   29  &   0.960195   \\
40  &   3   &   27  &   0.960099   \\
16  &   7   &   7   &   0.959790   \\
48  &   11  &   35  &   0.959245   \\
120 &   119 &   119 &   0.957182   \\

\hline
\end{tabular}
\begin{tabular}{|c|c|c|c|}
\hline
\hphantom{0}$q$\hphantom{0} & \hphantom{0}$a$\hphantom{0} & $a^{-1}$ & $\delta(q;a,1)$ \\
\hline
15  &   11  &   11  &   0.955226   \\
120 &   41  &   41  &   0.955189   \\
48  &   19  &   43  &   0.952194   \\
5   &   2   &   3   &   0.952175   \\
20  &   13  &   17  &   0.948637   \\
120 &   61  &   61  &   0.948586   \\
60  &   41  &   41  &   0.947870   \\
16  &   3   &   11  &   0.947721   \\
48  &   13  &   37  &   0.946479   \\
40  &   17  &   33  &   0.946002   \\

\hline
\end{tabular}
\begin{tabular}{|c|c|c|c|}
\hline
\hphantom{0}$q$\hphantom{0} & \hphantom{0}$a$\hphantom{0} & $a^{-1}$ & $\delta(q;a,1)$ \\
\hline
40  &   11  &   11  &   0.945757   \\
40  &   39  &   39  &   0.942554   \\
60  &   31  &   31  &   0.941802   \\
48  &   7   &   7   &   0.939000   \\
16  &   5   &   13  &   0.938369   \\
168 &   125 &   125 &   0.936773   \\
168 &   155 &   155 &   0.935843   \\
168 &   47  &   143 &   0.932099   \\
168 &   61  &   157 &   0.931981   \\
84  &   41  &   41  &   0.931702   \\

\hline
\end{tabular}
\begin{tabular}{|c|c|c|c|}
\hline
\hphantom{0}$q$\hphantom{0} & \hphantom{0}$a$\hphantom{0} & $a^{-1}$ & $\delta(q;a,1)$ \\
\hline
20  &   11  &   11  &   0.931367   \\
168 &   139 &   139 &   0.931362   \\
168 &   55  &   55  &   0.931346   \\
48  &   47  &   47  &   0.929478   \\
168 &   67  &   163 &   0.928944   \\
84  &   71  &   71  &   0.928657   \\
168 &   41  &   41  &   0.927933   \\
84  &   55  &   55  &   0.927755   \\
168 &   71  &   71  &   0.927349   \\
16  &   15  &   15  &   0.926101   \\

\hline
\end{tabular}

\begin{tabular}{|c|c|c|c|}
\hline
\hphantom{0}$q$\hphantom{0} & \hphantom{0}$a$\hphantom{0} & $a^{-1}$ & $\delta(q;a,1)$ \\
\hline
168 &   65  &   137 &   0.923960   \\
168 &   53  &   149 &   0.923937   \\
168 &   83  &   83  &   0.923868   \\
21  &   5   &   17  &   0.923779   \\
168 &   79  &   151 &   0.922597   \\
40  &   21  &   21  &   0.922567   \\
168 &   37  &   109 &   0.922359   \\
168 &   17  &   89  &   0.920542   \\
48  &   17  &   17  &   0.918910   \\
56  &   27  &   27  &   0.918015   \\

\hline
\end{tabular}
\begin{tabular}{|c|c|c|c|}
\hline
\hphantom{0}$q$\hphantom{0} & \hphantom{0}$a$\hphantom{0} & $a^{-1}$ & $\delta(q;a,1)$ \\
\hline
168 &   59  &   131 &   0.917874   \\
168 &   23  &   95  &   0.917718   \\
168 &   31  &   103 &   0.917278   \\
168 &   29  &   29  &   0.915514   \\
72  &   53  &   53  &   0.913533   \\
21  &   2   &   11  &   0.911872   \\
168 &   19  &   115 &   0.911412   \\
168 &   11  &   107 &   0.909850   \\
168 &   73  &   145 &   0.908239   \\
168 &   5   &   101 &   0.908206   \\

\hline
\end{tabular}
\begin{tabular}{|c|c|c|c|}
\hline
\hphantom{0}$q$\hphantom{0} & \hphantom{0}$a$\hphantom{0} & $a^{-1}$ & $\delta(q;a,1)$ \\
\hline
56  &   31  &   47  &   0.906135   \\
84  &   67  &   79  &   0.905578   \\
168 &   13  &   13  &   0.904525   \\
168 &   97  &   97  &   0.904162   \\
72  &   35  &   35  &   0.903755   \\
84  &   47  &   59  &   0.902413   \\
56  &   37  &   53  &   0.900863   \\
84  &   53  &   65  &   0.899063   \\
28  &   11  &   23  &   0.898807   \\
168 &   127 &   127 &   0.898647   \\
\hline
\end{tabular}

\label{120 most biased table}
\end{table}

Finally, the upper bounds for $\delta(q;a,b)$ in Theorems~\ref{delta bound prime q theorem} and~\ref{delta<0.75 theorem}, together with the explicit calculation of the densities $\delta(q;a,b)$ for $q\in S_1\cup S_2\cup S_3\cup S_4$, allow us to determine the most biased possible two-way races, that is, the largest values of $\delta(q;a,b)$ among all possible choices of $q$, $a$, and~$b$. In particular, we verified Theorem~\ref{top ten thm} in this way, and we list the 120 largest densities in Table~\ref{120 most biased table}; there are precisely 117 distinct densities above $\frac9{10}$. (It is helpful to recall here that $\delta(q;a,1) = \delta(q;a^{-1},1)$ and that $\delta(q;a,1) = \delta(q;ab,b)$ for any nonsquare $a$ and square $b$ modulo~$q$.)

\providecommand{\bysame}{\leavevmode\hbox to3em{\hrulefill}\thinspace}
\providecommand{\MR}{\relax\ifhmode\unskip\space\fi MR }
\providecommand{\MRhref}[2]{%
  \href{http://www.ams.org/mathscinet-getitem?mr=#1}{#2}
}
\providecommand{\href}[2]{#2}

\end{document}